\theoremstyle{plain}
\newtheorem{lem}{Lemma}[section]
\newtheorem{theo}[lem]{Theorem}
\newtheorem{prop}[lem]{Proposition}
\newtheorem{corollary}[lem]{Corollary}
\newtheorem{remark}[lem]{Remark}
\newtheorem{definition}[lem]{Definition}
\font\k=cmr7
\font\rm=cmr12
  \newcommand {\di}{\mbox{\k dis}}
  \newcommand {\fin}{\mbox{\k fin}}
  \newcommand{\unip}{\operatorname{unip}}
  \newcommand {\reg}{\mbox{\k reg}}
  \newcommand {\spec}{\mbox{\k spec}}
  \newcommand {\geo}{\mbox{\k geo}}
  \newcommand {\tors}{\mbox{\k tors}}
  \newcommand {\C}{{\mathbb C}}
  \newcommand {\bH}{{\mathbb H}}
  \newcommand {\N}{{\mathbb N}}
  \newcommand {\R}{{\mathbb R}}
  \newcommand {\Z}{{\mathbb Z}}
  \newcommand {\Q}{{\mathbb Q}}
  \newcommand {\A}{{\mathbb A}}
  \newcommand {\af}{{\mathfrak a}}
  \newcommand {\gf}{{\mathfrak g}}
  \newcommand {\kf}{{\mathfrak k}}
  \newcommand {\of}{{\mathfrak o}}
  \newcommand {\nf}{{\mathfrak n}}
  \newcommand {\pg}{{\mathfrak p}}
   \newcommand {\pf}{{\mathfrak p}}
 \newcommand{\ho}{{\mathfrak o}}
 \newcommand{\fS}{{\mathfrak S}}
\renewcommand {\H}{{\mathcal H}}
  \newcommand {\M}{{\mathcal M}}
  \newcommand {\cF}{{\mathcal F}}
  \newcommand {\Co}{{\mathcal C}}
 \newcommand {\cO}{{\mathcal O}}
 \newcommand {\ccC}{{\mathscr C}}
  \newcommand {\G}{{\bf G}}
 \newcommand {\cH}{{\mathcal H}}
 \newcommand {\cP}{{\mathcal P}}
 \newcommand {\cL}{{\mathcal L}}
 \newcommand {\cA}{{\mathcal A}}
 \newcommand {\cM}{{\mathcal M}}
\newcommand {\cT}{{\mathcal T}}
\newcommand  {\cZ}{{\mathcal Z}}
\newcommand  {\cU}{{\mathcal U}}
\newcommand {\bs}{\backslash}
\newcommand {\bx}{{\bf x}}
\newcommand{\cV}{{\mathcal V}}
\newcommand{\levis}{{\mathcal L}}
\newcommand{\Ai}{A_M(\R)^0}
\newcommand{\Ag}{A_G(\R)^0}
\renewcommand{\Re}{\operatorname{Re}}
\newcommand{\Tr}{\operatorname{Tr}}
\newcommand{\End}{\operatorname{End}}
\newcommand{\tr}{\operatorname{tr}}
\newcommand{\Id}{\operatorname{Id}}
\newcommand{\Hom}{\operatorname{Hom}}
\newcommand{\Ind}{\operatorname{Ind}}
\newcommand{\rk}{\operatorname{rank}}
\newcommand{\vol}{\operatorname{vol}}
\newcommand{\SL}{\operatorname{SL}}
\newcommand{\GL}{\operatorname{GL}}
\newcommand{\SO}{\operatorname{SO}}
\newcommand{\Ad}{\operatorname{Ad}}
\newcommand{\rO}{\operatorname{O}}
\newcommand{\supp}{\operatorname{supp}}
\renewcommand{\det}{\operatorname{det}}
\newcommand{\arcsinh}{\operatorname{arcsinh}}
\newcommand{\Rep}{\operatorname{Rep}}
\newcommand{\ve}{\varepsilon}
\newcommand{\norm}[1]{\lVert#1\rVert}
\newcommand{\abs}[1]{\lvert#1\rvert}
\newcommand{\eps}{\epsilon}
\newcommand{\one}{\mathbf 1}
\newcommand{\aaa}{\mathfrak{a}}
  \newcommand {\K}{{\bf K}}
  \newcommand {\bU}{{\bf U}}
  \newcommand{\Ht}{H}
\newcommand{\sprod}[2]{\left\langle#1,#2\right\rangle}
\newcommand{\PPP}{\mathcal{P}}
\newcommand{\FFF}{{\mathcal F}}
\newcommand{\rts}{\Sigma}
\newcommand{\disc}{\operatorname{disc}}
\newcommand{\srts}{\Delta}
\newcommand{\modulus}{\delta}
\newcommand{\AF}{{\mathcal A}}
\newcommand{\zzz}{\mathfrak{z}}
\newcommand{\iii}{{\mathrm i}}
\newcommand{\LieG}{\mathfrak{g}}
\newcommand{\bases}{\mathfrak{B}}
\newcommand{\bss}{\underline{\beta}}
\newcommand{\dtup}{\mathcal{X}}
\newcommand{\card}[1]{\lvert#1\rvert}
\newcommand{\ka}{\mathfrak{a}}
\newcommand{\kg}{\mathfrak{g}}
\newcommand{\kn}{\mathfrak{n}}
\newcommand{\CmP}{\mathcal{P}}
\newcommand{\CmU}{\mathcal{U}}
\newcommand{\Mid}{\operatorname{id}}
\newcommand{\Wt}{\operatorname{Wt}}
\newcommand{\diag}{\operatorname{diag}}
\newcommand{\CmO}{\mathcal{O}}
\newcommand{\CmF}{\mathcal{F}}
\newcommand{\CmN}{\mathcal{N}}
\newcommand{\km}{\mathfrak{m}}
\newcommand{\kv}{\mathfrak{v}}
\newcommand{\cpt}{\mathbf{K}}
\newcommand{\FP}{\operatorname{FP}}
\newcommand{\Mat}{\operatorname{Mat}}
\newcommand{\1}{{\bf 1}}
\begin{document}

\title[]
{Analytic torsion of arithmetic quotients of the symmetric space 
$\SL(n, \R)/\SO(n)$}
\date{\today}

\author{Jasmin Matz}
\address{The Hebrew University of Jerusalem\\
Einstein Institute of Mathematics}
\email{jasmin.matz@mail.huij.ac.il}

\author{Werner M\"uller}
\address{Universit\"at Bonn\\
Mathematisches Institut\\
Endenicher Allee 60\\
D -- 53115 Bonn, Germany}
\email{mueller@math.uni-bonn.de}

\keywords{analytic torsion, locally symmetric spaces}
\subjclass{Primary: 58J52, Secondary: 11M36}

\begin{abstract}
In this paper we define a regularized version of the analytic torsion for 
arithmetic 
quotients of the symmetric space $\SL(n,\R)/\SO(n)$. The definition is based on
the study of the renormalized trace of the corresponding heat operators, which 
is defined as the geometric side of the Arthur trace formula applied to the 
heat operator. 

\end{abstract}

\maketitle
\setcounter{tocdepth}{1}
\tableofcontents

\section{Introduction}
In various papers \cite{BV}, \cite{MaM}, \cite{MP4} the Ray-Singer
analytic torsion \cite{RS} has been used to study the 
growth of torsion in the cohomology of cocompact arithmetic groups. Since many
important arithmetic groups are not cocompact, it is very desirable to extend
these results to the noncompact case. There exist some results for hyperbolic 
3-manifolds. In  \cite{PR}, Pfaff and Raimbault obtained upper and lower bounds
for the growth of torsion in the cohomology of congruence subgroups of Bianchi 
groups if the local system varies. 
In  \cite{Ra1}, \cite{Ra2}, J. Raimbault has studied the case of sequences
$(\Gamma_i)$ of congruence subgroups of Bianchi groups such that 
$\vol(\Gamma_i\bs\bH^3)\to\infty$ as $i\to\infty$. 

The approach in the cocompact case relies on the equality of analytic torsion
and Reidemeister torsion of the corresponding locally symmetric manifolds. We
briefly recall the definition of the Ray-Singer analytic torsion. 
Let $X$ be a compact Riemannian manifold of dimension $n$  and  
$\rho\colon \pi_1(X)\to\GL(V)$
a finite dimensional representation of its fundamental group. 
Let $E_\rho\to X$ be the flat vector bundle associated with $\rho$. Choose a
Hermitian fiber metric in $E_\rho$. Let $\Delta_p(\rho)$ be the Laplace 
operator on
$E_\rho$-valued $p$-forms with respect to the  metrics on $X$ and in $E_\rho$. 
It is
an elliptic differential operator, which is formally self-adjoint and 
non-negative. Let $h_p(\rho):=\dim\ker\Delta_p(\rho)$. Using the trace of the
heat operator $e^{-t\Delta_p(\rho)}$, the zeta function $\zeta_p(s;\rho)$ of 
$\Delta_p(\rho)$ can be defined by
\begin{equation}\label{zeta-fct}
\zeta_p(s;\rho):= \frac{1}{\Gamma(s)}\int_0^\infty 
\left(\Tr\left(e^{-t\Delta_p(\rho)}\right)-h_p(\rho)\right)t^{s-1} \;dt.
\end{equation} 
The integral converges for $\Re(s)>n/2$ and admits a meromorphic extension to
the whole complex plane, which is holomorphic at $s=0$. 
Then the Ray-Singer analytic torsion $T_X(\rho)\in\R^+$ is defined by
\begin{equation}\label{analtor0}
\log T_X(\rho)=\frac{1}{2}\sum_{p=1}^d (-1)^p p 
\frac{d}{ds}\zeta_p(s;\rho)\big|_{s=0}.
\end{equation}
The analytic torsion has a topological counterpart. This is
the Reidemeister torsion $\tau_X(\rho)$, which is defined in terms of a 
smooth triangulation of $X$ \cite{RS}, \cite{Mu5}. It is known that for 
unimodular representations $\rho$ (meaning that $|\det\rho(\gamma)|=1$ for
all $\gamma\in\pi_1(X)$) one has the equality $T_X(\rho)=\tau_X(\rho)$
\cite{Ch}, \cite{Mu4}, \cite{Mu5}. In the general case of a non-unimodular 
representation the equality does not hold, but the defect can be described 
\cite{BZ}. This equality  has the following interesting consequence. 
Assume that the space of the representation $\rho$
contains a lattice which is invariant under $\pi_1(X)$. Let $\cM$ be the 
associated local system of free $\Z$-modules. Let $H^p(X,\cM)_{\tors}$ be the
torsion subgroup of $H^p(X,\cM)$. Then
\begin{equation}\label{tor-coh}
T_X(\rho)= R\cdot\prod_{p=0}^d |H^p(X,\cM)_{\tors}|^{(-1)^{p+1}},
\end{equation}
where $R$ is the so called ``regulator'', defined in terms of the free part
of the cohomology $H^p(X,\cM)$ (see \cite{BV}, \cite{MP4}). In particular, if 
$\rho$ is acyclic, i.e., $H^\ast(X,E_\rho)=0$, then $R=1$. The equality 
\eqref{tor-coh} is the starting point for the application of the analytic 
torsion to the study of the torsion in the cohomology of cocompact arithmetic 
groups.

The definition of the analytic torsion \eqref{analtor0} obviously depends on
the compactness of the underlying manifold. Without this assumption, the heat 
operator $e^{-t\Delta_p(\rho)}$ is, in general, not a trace class operator.
 If one attempts 
to generalize the above method to non-cocompact arithmetic groups, the first 
problem is to define an appropriate regularized trace of the heat operators.
For hyperbolic manifolds of finite volume one can proceed as in 
Melrose \cite{Me} to define
the regularized trace by means of the renormalized trace of the heat kernel.
This method has been used in \cite{CV}, \cite{PR}, \cite{MP1}, 
\cite{MP3}, \cite{MP4}. One uses an appropriate height function to truncate 
the hyperbolic
manifold $X$ at height $T>0$. This amounts to cut off the cusps at sufficiently
high level $T>T_0$. Then one integrates the point wise trace of the heat 
kernel over
the truncated manifold $X(T)$. This integral has an asymptotic expansion in 
$\log T$. The constant term is defined to be the renormalized trace of the 
heat operator.

The purpose of the present paper is to start the investigation of the case of
finite volume locally symmetric spaces of any rank by defining a regularized
analytic torsion for arithmetic quotients associated to split forms of type
$A_n$ over $\Q$. In the higher rank case we proceed in the same 
way as in the case of hyperbolic manifolds. The first problem is to
define the truncation in the right way. For this we can build on Arthur's work.
The definition of the truncation operator is an important issue in Arthur's 
trace formula \cite{Ar1}, which we will use for our purpose. To this end we
need to switch to the adelic framework. 

Now we will describe the approach in more detail. For simplicity assume that
$G$ is a connected semisimple algebraic group defined over $\Q$. Assume that
$G(\R)$ is not compact. Let $K_\infty$ be a maximal compact subgroup of $G(\R)$.
Put $\widetilde X=G(\R)/K_\infty$. Let $\A$ be
the ring of adeles of $\Q$ and $\A_f$ the ring of finite adeles. Let 
$K_f\subset G(\A_f)$
be an open compact subgroup. We consider the adelic quotient
\begin{equation}
X(K_f)=G(\Q)\bs (\widetilde X\times G(\A_f))/K_f.
\end{equation}
This is the adelic version of a locally symmetric space. In fact, 
$X(K_f)$ is the disjoint union of finitely many locally symmetric spaces 
$\Gamma_i\bs\widetilde X$, $i=1,\dots,l$, (see section \ref{sec-arithm-mfd}). 
If $G$ is simply connected, then by strong approximation we 
\[
X(K_f)=\Gamma\bs \widetilde X,
\]
where $\Gamma=(G(\R)\times K_f)\cap G(\Q)$. We will assume that $K_f$ is neat, that is, the eigenvalues of any element in $\Gamma$ generate a torsion free subgroup in $\C^\times$, 
so that $X(K_f)$ is a manifold. Let $\nu\colon K_\infty\to \GL(V_\nu)$ be a 
finite dimensional unitary representation. It induces a homogeneous 
Hermitian vector bundle $\widetilde E_\nu$ over $\widetilde X$,  which is 
equipped with the canonical connection $\nabla^\nu$. Being homogeneous,
$\widetilde E_\nu$  can be pushed down to a locally homogeneous Hermitian 
vector bundle over each component $\Gamma_i\bs \widetilde X$ of $X(K_f)$. Their 
disjoint union is a Hermitian vector bundle $E_\nu$ over $X(K_f)$. 
Let $\widetilde\Delta_\nu$ (resp. $\Delta_\nu$) be the associated
Bochner-Laplace operator acting in the space of 
smooth section of $\widetilde E_\nu$ (resp. $E_\nu$).  
Let $e^{-t\widetilde\Delta_\nu}$ (resp. $e^{-t\Delta_\nu}$), $t>0$,  be the 
heat semigroup generated by $\widetilde\Delta_\nu$ (resp. $\Delta_\nu$). Since
$\widetilde \Delta_\nu$ commutes with the action of $G(\R)$, it follows that
$e^{-t\widetilde\Delta_\nu}$ is
a convolution operator with kernel given by a smooth map $H_t^\nu\colon 
G(\R)\to \End(V_\nu)$. Let $h_t^\nu(g)=\tr H_t^\nu(g)$, $g\in G(\R)$. In fact,
$h_t^\nu$ belongs to Harish-Chandra's Schwartz space $\Co(G(\R))$. Let
$\chi_{K_f}$ be the characteristic function of $K_f$ in $G(\A_f)$. We define
the function $\phi_t^\nu\in C^\infty(G(\A))$ by
\[
\phi_t^\nu(g_\infty g_f)=h_t^\nu(g_\infty)\chi_{K_f}(g_f),\quad g_\infty\in G(\R),\;
g_f\in G(\A_f).
\]
In fact, $\phi_t^\nu$ belongs to $\Co(G(\A);K_f)$, the adelic version of the 
Schwartz space (see section \ref{sec-trunc} for its definition). 
If $X(K_f)$ is compact, then one has
\begin{equation}\label{trcocom}
\Tr\left(e^{-t\Delta_\nu}\right)=\int_{G(\Q)\bs G(\A)}\sum_{\gamma\in G(\Q)} 
\phi_t^\nu(x^{-1}\gamma x)\;dx.
\end{equation}
This is our starting point for defining the renormalized trace in the noncompact
case. We fix a minimal Levi subgroup $M_0$ of $G$. If $M\subseteq G$ is a Levi subgroup containing $M_0$, let $A_M$ be the split 
component of the center of $M$. Let $\af_0:=\af_{M_0}$ be the Lie algebra of
$A_{M_0}(\R)$. Let $J_{\geo}$ be the geometric side of the Arthur trace 
formula introduced in
\cite{Ar1}; see also \cite{Ar} for an introduction to the trace formula. For $f\in C_c^\infty(G(\A))$, Arthur defines $J_{\geo}(f)$ as the 
value at a point $T_0\in\af_0$, specified in \cite[Lemma 1.1]{Ar3}, 
of a polynomial $J^T(f)$ on $\af_0$.  In fact,
by \cite[Theorem 7.1]{FL1}, $J^T(f)$ is defined for all $f\in\Co(G(\A);K_f)$.
Furthermore,  we use an appropriate height function to 
truncate $G(\A)$.  For $T\in\af_0$ let $G(\A)_{\le T}$ be obtained by
truncating $G(\A)$ at level $T$ (see \eqref{trunc1}). This is a compact subset
of $G(\A)$. By \cite[Theorem 7.1]{FL1} it follows that for sufficiently regular
$T\in\af_0$ we have
\begin{equation}\label{truncker}
\int_{G(\Q)\bs G(\A)_{\le T}}\sum_{\gamma\in G(\Q)} \phi_t^\nu(x^{-1}\gamma x)\;dx =
J^T(\phi_t^\nu)+O\left(e^{-c\|T\|}\right).
\end{equation}
Since $J^T(\phi_t^\nu)$ is a polynomial in $T$, we get an asymptotic expansion 
in $T$ of the truncated integral. Under additional assumption on $G$, which 
are satisfied for $\GL(n)$ and $\SL(n)$, the point $T_0\in
\af_0$, determined by \cite[Lemma 1.1]{Ar1}, is equal to 0. Thus in this case
$J_{\geo}(\phi_t^\nu)$ is the constant term of the polynomial $J^T(\phi_t^\nu)$. 
This leads to our definition of the regularized trace
\begin{equation}
\Tr_{\reg}\left(e^{-t\Delta_\nu}\right):=J_{\geo}(\phi_t^\nu).
\end{equation}
In general, $J_{\geo}(\phi_t^\nu)$ is not the constant term of the polynomial
$J^T(\phi_t^\nu)$. Nevertheless, we prefer this definition, because of its
independence on the choice of the minimal parabolic subgroup $P_0$.

The next goal is to determine the asymptotic behavior of 
$\Tr_{\reg}\left(e^{-t\Delta_\nu}\right)$ as
$t\to 0$ and $t\to\infty$, respectively. To this end we use the Arthur trace 
formula.  Currently we are only able to deal with these problems
for the groups $G=\GL(n)$ or $G=\SL(n)$. For $N\in\N$ let $K(N)\subset G(\A_f)$
be the principal congruence subgroup of level $N$. Recall that $K(N)$ is neat
for $N\ge 3$. Our first main result is the following proposition. 
\begin{theo}\label{prop-asymp3}
Let $G=\GL(n)$ or $\SL(n)$. Let $K_f\subset G(\A_f)$ be an open compact 
subgroup. Assume that $K_f$ is contained in $K(N)$ for some $N\ge 3$. 
 Let $\nu$ be finite dimensional unitary representation of $K_\infty$
and let $\Delta_\nu$ be the associated Bochner-Laplace operator.
Let $d=\dim X(K_f)$. As $t\to +0$, there is an asymptotic expansion 
\begin{equation}\label{asex}
\Tr_{\reg}\left(e^{-t\Delta_\nu}\right)\sim t^{-d/2}\sum_{j=0}^\infty a_j(\nu) t^j
+t^{-(d-1)/2}\sum_{j=0}^\infty\sum_{i=0}^{r_j} b_{ij}(\nu)t^{j/2}(\log t)^i.
\end{equation}
Moreover $r_j\le n-1$ for all $j\in\N_0$.
\end{theo}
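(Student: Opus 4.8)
The starting point is the identity \eqref{truncker}, which expresses the truncated heat trace, up to an exponentially small error in $\|T\|$, as the polynomial $J^T(\phi_t^\nu)$ in $T$, and hence identifies $\Tr_{\reg}(e^{-t\Delta_\nu})=J_{\geo}(\phi_t^\nu)$ with the value of this polynomial at $T=0$ (under the hypothesis, valid for $\GL(n)$ and $\SL(n)$, that Arthur's point $T_0$ vanishes). The plan is therefore to expand the geometric side of the trace formula $J_{\geo}(\phi_t^\nu)=\sum_{\cal{O}}J_{\cal{O}}(\phi_t^\nu)$ over the semisimple conjugacy classes $\cal{O}$ of $G(\Q)$, and to analyze the small-$t$ asymptotics of each term separately. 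Since $h_t^\nu\in\Co(G(\R))$, the weighted orbital integrals occurring in $J_{\cal{O}}(\phi_t^\nu)$ are all absolutely convergent by \cite[Theorem 7.1]{FL1}, so the decomposition is legitimate termwise.

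First I would isolate the identity contribution $J_{\{1\}}(\phi_t^\nu)=\vol(G(\Q)\bs G(\A)^1)\,h_t^\nu(1)$, which is just the volume times the value of the scalar heat kernel on $\widetilde X$ at the origin; the local index-theoretic small-time expansion of $H_t^\nu(1)$ on the symmetric space $\widetilde X=\SL(n,\R)/\SO(n)$ gives the purely integral-power series $t^{-d/2}\sum_j a_j(\nu)t^j$, with $a_j(\nu)$ built from the curvature of $\widetilde E_\nu$ and of $\widetilde X$ — this is the first sum in \eqref{asex}. Next, the other unipotent contributions and the weighted orbital integrals attached to non-central semisimple classes must be shown to produce only the second type of terms, namely $t^{-(d-1)/2}$ times a series in $t^{1/2}$ and in integer powers of $\log t$. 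The mechanism is Arthur's (and Finis–Lapid–Müller's) description of the weighted orbital integrals: after applying the Plancherel/Fourier expansion of $h_t^\nu$ in terms of spherical functions, or equivalently by using the explicit structure of the unipotent contribution as an integral over $\R$-points of unipotent radicals of the point-wise heat kernel against log-polynomial weights coming from Arthur's $(G,M)$-families, one reduces everything to one-dimensional integrals of the form $\int_0^\infty p_t(r)\,r^k(\log r)^i\,dr$ where $p_t$ is (a component of) the heat kernel in the radial variable. Because the heat kernel on $\widetilde X$ behaves like $t^{-d/2}e^{-r^2/4t}$ times a polynomial correction, each such integral contributes $t^{-(d-1)/2}$ times a half-integer power of $t$ and at most as many powers of $\log t$ as there are derivatives of the $(G,M)$-families involved; the number of such log-factors is bounded by the rank, which for split type $A_n$ is $n-1$, giving the bound $r_j\le n-1$.

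The key technical input is thus: (i) the absolute convergence and explicit shape of the weighted orbital integrals for the Schwartz function $\phi_t^\nu$, for which I invoke \cite[Theorem 7.1]{FL1}; (ii) the Harish-Chandra Plancherel expansion of $h_t^\nu$ on $G(\R)$ together with the known Gindikin–Karpelevich / Harish-Chandra asymptotics of the $c$-function, which controls the poles and hence the emergence of the $\log t$ powers; and (iii) a Mellin-transform bookkeeping argument that converts the spectral-side integrals into the claimed asymptotic series and keeps track of the degree in $\log t$. I expect the main obstacle to be step (ii)–(iii) for the non-minimal parabolic contributions: one must show that the weighted orbital integrals $J_{\cal{O}}^M(\phi_t^\nu)$ associated to proper Levi subgroups $M$, after integrating the heat kernel against the Arthur weight factors $v_M$, genuinely produce terms of order $t^{-(d-1)/2}$ and not of a smaller order in $t$, and that the accumulation of $\log t$ factors from iterated residues never exceeds $n-1$. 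Handling the interplay between the exponential decay $e^{-r^2/4t}$ of the heat kernel and the polynomial-in-$\log$ growth of the weight functions uniformly over all Levi subgroups and all semisimple classes is where the real work lies; the identity and central contributions, by contrast, are essentially a direct application of the classical heat-kernel asymptotics on the symmetric space.
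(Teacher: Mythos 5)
Your overall plan---expand $J_{\geo}(\phi_t^\nu)$ over conjugacy classes, handle the identity contribution via local heat-kernel asymptotics, and control the remaining orbital integrals using the log-polynomial structure of Arthur's weight functions---is in the right spirit, but there are several concrete gaps, and the route you propose for the hardest step is not the one the paper takes and is not developed far enough to be checkable.

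First, you never make essential use of the hypothesis $K_f\subset K(N)$ with $N\ge 3$. In the paper this hypothesis is precisely what guarantees (via \cite[Corollary 5.2]{LM}) that, after shrinking the support of the test function, $J_{\geo}(f)=J_{\unip}(f)$, so that \emph{only} the unipotent class contributes. Your proposal instead plans to treat ``weighted orbital integrals attached to non-central semisimple classes'' in general; besides being unnecessary here, such terms are not covered by the analysis you sketch (the fine geometric expansion for those classes involves centralizers and orbital integrals of a different shape), so this would substantially complicate, rather than simplify, the argument.

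Second, you skip the reduction to a compactly supported test function. Arthur's fine geometric expansion \cite[Theorem 8.1]{Ar4}---which is what converts $J_{\unip}$ into a finite sum of weighted orbital integrals $J_M(U,f)$---is available for $f\in C_c^\infty(G(\A)^1)$, not a priori for the Schwartz function $\phi_t^\nu$. The paper therefore first replaces $\phi_t^\nu$ by a compactly supported $\widetilde\phi_t^\nu$ and shows, using the spectral side and Gaussian estimates for $h_t^\nu$, that $J_{\geo}(\phi_t^\nu)-J_{\geo}(\widetilde\phi_t^\nu)=O(e^{-c/t})$ (Proposition \ref{prop-asympexp5}). Invoking \cite[Theorem 7.1]{FL1} gives you the coarse expansion termwise, but does not give you the fine expansion, so this step cannot be omitted.

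Third, and most significantly, the analytical mechanism you propose is different from the paper's and is not worked out. You appeal to the Plancherel/spherical-function expansion of $h_t^\nu$ and Gindikin--Karpelevich $c$-function asymptotics; the paper instead uses a parametrix for the heat kernel on $\widetilde X$ (Proposition \ref{prop-asympexp}), the Gaussian off-diagonal bound $|h_t^\nu(g)|\ll t^{-d/2}e^{-r^2(gx_0,x_0)/4t}$, and the lower bound $r(n(x)x_0,x_0)\gtrsim\log(1+\|x\|)$ on horospheres. More importantly, the crucial ingredient is the structure theorem for the weight functions, Proposition \ref{prop}: $w_{M,\cV}(\Mid+X)$ is a finite sum of products of $\log\|P_i(X)\|$ with $P_i$ homogeneous, and $w_{M,\cV}(x_s)$ is a polynomial of degree $\le\dim\af_M^G$ in $\log s$. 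This homogeneity under the scaling $X\mapsto sX$ is exactly what, after the change of variables $x\mapsto \sqrt{t}x$ in the parametrix expansion, produces the clean $t^{j/2}(\log t)^i$ terms and the bound $r_j\le n-1$. Your reduction to ``one-dimensional integrals $\int_0^\infty p_t(r)r^k(\log r)^i\,dr$'' is not justified: the weight function is \emph{not} radial on $N(\R)$, and one must integrate a log-polynomial in several distinct homogeneous polynomials against a Gaussian---the convergence of such integrals is itself nontrivial (Lemma \ref{logconv}). Without Proposition \ref{prop} (or an equivalent statement) and without the reduction to unipotent/compactly supported contributions, the argument as stated does not close.
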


For hyperbolic manifolds a similar result was proved in \cite{Mu6}.

To study the large time behavior we restrict attention to twisted Laplace
operators, which are relevant for studying the analytic torsion with coefficients in local systems. 
Let $\tau\colon G(\R)\to \GL(V_\tau)$ be a finite dimensional complex 
representation. Let $\Gamma_i\bs \widetilde X$, $i=1,\dots,l$,  be the
components of $X(K_f)$. The restriction of $\tau$ to $\Gamma_i$ induces a flat 
vector bundle $E_{\tau,i}$ over $\Gamma_i\bs \widetilde X$. The disjoint union is
 a flat vector bundle $E_\tau$ over $X(K_f)$. By \cite{MM} it is isomorphic to 
the locally homogeneous vector bundle associated to $\tau|_{K_\infty}$. It 
can be equipped with a fiber metric induced from the homogeneous 
bundle. Let $\Delta_p(\tau)$ be the corresponding twisted Laplace operator on
$p$-forms with values in $E_\tau$. Let $\Ad_{\pf}\colon K_\infty\to \GL(\pf)$ 
be the adjoint representation of $K_\infty$ on $\pf$, where $\pf=\kf^\perp$, and 
$\nu_p(\tau)=\Lambda^p\Ad_{\pf}^\ast\otimes\tau$. 
Up to a vector bundle endomorphism, $\Delta_p(\tau)$ equals the 
Bochner-Laplace operator $\Delta_{\nu_p(\tau)}$. So 
$\Tr_{\reg}\left(e^{-t\Delta_p(\tau)}\right)$ is well defined. Let $\theta$
be the Cartan involution of $G(\R)$ with respect to $K_\infty$. Put $\tau_\theta:=
\tau\circ\theta$. The large time 
behavior of the regularized trace is described by the following proposition.

\begin{theo}\label{theo-lt}
Let  $G=\GL(n)$ or $\SL(n)$. Let $K_f\subset G(\A_f)$ be an open compact 
subgroup which is contained in $K(N)$ for some $N\ge 3$. 
Let $\tau$ be finite dimensional representation
of $G(\R)$. Assume that $\tau\not\cong\tau_\theta$. Then we have
\begin{equation}\label{largetime}
\Tr_{\reg}\left(e^{-t\Delta_p(\tau)}\right)=O(e^{-ct})
\end{equation}
as $t\to\infty$ for all $p=0,\dots,d$. 
\end{theo}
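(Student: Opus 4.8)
The plan is to use the Arthur trace formula to express $\Tr_{\reg}(e^{-t\Delta_p(\tau)})$ as a sum of contributions indexed by the semisimple conjugacy classes, and then to show that only the contributions from central elements survive into the large-time limit, while those contributions vanish outright under the hypothesis $\tau\not\cong\tau_\theta$. Concretely, let me first recall that $\Delta_p(\tau)=\Delta_{\nu_p(\tau)}+c(\tau)$ for a scalar $c(\tau)$ depending on the Casimir eigenvalue of $\tau$ (and that $c(\tau)>0$ precisely when $\tau\not\cong\tau_\theta$, by the Kuga lemma / the fact that the minimal eigenvalue of the form Laplacian on $L^2$-cohomology-degree-$p$ forms is $c(\tau)$ unless $\tau$ is self-dual). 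So $\Tr_{\reg}(e^{-t\Delta_p(\tau)})=e^{-tc(\tau)}\Tr_{\reg}(e^{-t\Delta_{\nu_p(\tau)}})$, and it suffices to show that $\Tr_{\reg}(e^{-t\Delta_{\nu_p(\tau)}})$ grows at most polynomially in $t$ as $t\to\infty$, because then the exponential factor $e^{-tc(\tau)}$ with $c(\tau)>0$ forces the bound $O(e^{-ct})$.

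Next I would expand $J_{\geo}(\phi_t^\nu)$ along the coarse geometric expansion $J_{\geo}=\sum_{\mathfrak o}J_{\mathfrak o}$ over semisimple conjugacy classes $\mathfrak o$ in $G(\Q)$ (equivalently, over the $\mathfrak o\in\mathcal O$ of \cite{Ar1}). For the noncentral classes one must show each $J_{\mathfrak o}(\phi_t^\nu)$ is exponentially decaying in $t$: this is the analogue of the estimate in the cocompact/hyperbolic case and rests on the fact that the orbital integral $\int_{G_\gamma(\A)\bs G(\A)}\phi_t^\nu(x^{-1}\gamma x)\,dx$ (suitably weighted by Arthur's weight factors $v_M(x)$ for the non-elliptic terms) is controlled by the value of the heat kernel $h_t^\nu$ on group elements bounded away from the center, and $h_t^\nu(g)$ decays like $e^{-d(gK_\infty,K_\infty)^2/4t}$ times polynomial factors; since $\gamma$ noncentral forces a positive lower bound on this displacement (using neatness of $K_f$, which excludes unipotent and torsion contributions that would otherwise be problematic), one gets $e^{-c/t}$-type decay, and in particular an $O(1)$ bound, uniformly for $t\ge 1$. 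Summing over $\mathfrak o$ requires a finiteness/uniform-convergence input, which one gets from \cite[Theorem 7.1]{FL1} (the trace formula converges on the adelic Schwartz space) together with uniform estimates on the Schwartz norms of $\phi_t^\nu$ for $t\ge 1$.

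The remaining term is the central contribution $\sum_{z\in Z(\Q)\cap K_f}J_z(\phi_t^\nu)$ (a finite sum, equal to a single term $z=1$ when $G=\SL(n)$ with $K_f$ neat, or running over the relevant finite set of central elements for $\GL(n)$). For $\gamma=z$ central the weight factors are trivial and $J_z(\phi_t^\nu)=\vol(G(\Q)\bs G(\A)^1)\cdot h_t^\nu(z)$ up to normalization, so this term is essentially $h_t^\nu(z)$. Now $h_t^\nu(z)=\tr H_t^\nu(z)$ is the value at $z$ of the heat kernel of the Bochner-Laplacian $\widetilde\Delta_\nu$ on $\widetilde X$, and because $\widetilde X=\SL(n,\R)/\SO(n)$ has no discrete spectrum contribution at the bottom — more precisely, because $\nu=\nu_p(\tau)$ with $\tau\not\cong\tau_\theta$ means the associated Casimir shift makes the bundle Laplacian have spectrum bounded below by $c(\tau)>0$ — actually this is exactly the point already absorbed into the factor $e^{-tc(\tau)}$ above, so what remains, $h_t^{\nu_p(\tau)}(z)$, can be estimated by the Plancherel/heat-kernel expansion on $\widetilde X$ and is bounded (even decaying) polynomially for large $t$. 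So the central term contributes $O(e^{-ct})$ as well.

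The main obstacle will be the uniformity of the estimates in $t$ as $t\to\infty$ together with the convergence of the full geometric side: one needs that the sum $\sum_{\mathfrak o}J_{\mathfrak o}(\phi_t^\nu)$ converges absolutely, uniformly for $t\ge 1$, with the tail controlled independently of $t$, so that term-by-term exponential decay yields exponential decay of the sum. This requires combining the Finis--Lapid convergence theorem with explicit bounds on how the Harish-Chandra-Schwartz seminorms of $h_t^\nu$ behave for large $t$ (they grow at most polynomially in $t$ once the Casimir shift is extracted), and checking that Arthur's weight functions $v_M(x)$ are absorbed by the Gaussian decay of the heat kernel uniformly in $t\ge 1$. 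A secondary technical point is verifying the identity $\Delta_p(\tau)=\Delta_{\nu_p(\tau)}+c(\tau)$ with $c(\tau)>0$ under the hypothesis $\tau\not\cong\tau_\theta$ — this is a representation-theoretic computation with the Casimir element, of the type already used in the cocompact setting, and should be quoted or reproved directly.
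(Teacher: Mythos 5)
The approach you take — expanding $J_{\geo}(\phi_t^\nu)$ along the coarse geometric expansion and arguing orbit by orbit — is not the one the paper uses, and it cannot work as you describe: the paper proves Theorem \ref{theo-lt} from the \emph{spectral side} of the Arthur trace formula, and that choice is essential. The geometric side is the right tool for the $t\to 0$ analysis; it is not well suited to $t\to\infty$.

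The central error is a confusion between small-time and large-time decay of the heat kernel. You argue that for noncentral $\gamma$ the orbital integrals are controlled by $h_t^\nu(g)$ at elements $g$ with $d(gK_\infty,K_\infty)$ bounded away from $0$, which by Corollary \ref{estim3} gives a factor $\exp(-r^2/4t)$. But that bound is $O(e^{-c/t})$, which tends to $1$ as $t\to\infty$ — it only gives decay as $t\to 0$. As $t\to\infty$ the Gaussian flattens and you lose, not gain, control; there is no exponential decay to be extracted from the geometric expansion at large time. In the same vein, the central term $\vol\cdot h_t^\nu(1)$ does not decay exponentially as $t\to\infty$ for the Bochner-Laplacian: $h_t^\nu(1)$ is governed by the Plancherel measure on $\widehat{G(\R)}$, whose support for $\Delta_\nu$ reaches down to the bottom of the tempered spectrum, giving at best polynomial decay. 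So your "main obstacle" paragraph is in fact hiding an insurmountable gap: the sum $\sum_{\mathfrak{o}}J_{\mathfrak{o}}(\phi_t^\nu)$ does not visibly decay at all for large $t$ before the spectral cancellation is brought in.

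A second, smaller issue is the claimed scalar identity $\Delta_p(\tau)=\Delta_{\nu_p(\tau)}+c(\tau)$. By Kuga's formula (equation \eqref{kuga} combined with \eqref{BLO}), one has $\widetilde\Delta_p(\tau)=\widetilde\Delta_{\nu_p(\tau)}+\tau(\Omega)\Id-E_p(\tau)$, where $E_p(\tau)=\nu_p(\tau)(\Omega_{K_\infty})$ is a genuine bundle endomorphism, not a scalar (it acts by different scalars on the different $K_\infty$-isotypic components of $\Lambda^p\mathfrak{p}^*\otimes V_\tau$). The correct consequence, used in the paper via \eqref{equ-kernel2} and \eqref{TrFT}, is a decomposition over $K_\infty$-types $\nu\in\cT$, each carrying its own shift; the positivity that plays the role of your $c(\tau)>0$ is obtained not from a single scalar but from Lemma \ref{est-casim} (a Borel--Wallach vanishing argument applied to each induced representation appearing) and from \cite[Lemma 4.1]{BV} for the discrete part.

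The argument the paper actually runs is: write $\Tr_{\reg}(e^{-t\Delta_p(\tau)})=J_{\spec}(\phi_t^{\tau,p})=\sum_{[M]}J_{\spec,M}(\phi_t^{\tau,p})$ via the refined spectral expansion of \cite{FLM1} (Theorem \ref{thm-specexpand}); the $M=G$ term is $\Tr R_{\disc}(\phi_t^{\tau,p})$ and decays exponentially by the strong acyclicity of $\tau$; for proper $M$ one uses \eqref{TrFT} to factor out $e^{-t(\tau(\Omega)-c(\pi_\infty))}$, Lemma \ref{est-casim} to guarantee $\tau(\Omega)-c(\pi_\infty)>0$, and then the estimates \eqref{log-deriv} (global normalizing factors, from \cite{Mu2}) and \eqref{log-deriv1} (normalized local intertwining operators, from \cite{MS}) together with the counting bounds \eqref{estim10} and Lemma \ref{lem-finite} to make the continuous-spectrum integrals converge uniformly and to show they, too, decay exponentially. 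None of this analytic-number-theoretic input appears in your proposal, and it is exactly what controls the continuous spectrum that the geometric side cannot reach.
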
 
The proof is an immediate consequence of Proposition \ref{asympinf} together
with the trace formula. Without the assumption $\tau\not\cong\tau_\theta$ the
behavior of $\Tr_{\reg}\left(e^{-t\Delta_p(\tau)}\right)$ as $t\to\infty$ is more
complicated and it is definitely not exponentially decreasing. This condition
is also relevant in  \cite{BV}. It implies that the representation $\tau$ is
strongly acyclic \cite[Lemma 4.1]{BV}, which is a necessary condition to
establish the main results of \cite{BV}. It is a very challenging problem
to eliminate this condition. We also
note that the condition $\tau\not\cong\tau_\theta$ 
implies the vanishing theorem of Borel-Wallach for
the cohomology of a cocompact lattice in a semisimple Lie group 
\cite[Theorem 6.7, Ch. VII]{BW}.

By Theorems \ref{prop-asymp3} and \ref{theo-lt} we can define the zeta
function of $\Delta_p(\tau)$ as in \eqref{zetafct}, using the regularized
trace of $e^{-t\Delta_p(\tau)}$ in place of the usual trace. The corresponding 
Mellin transform converges absolutely
and uniformly on compact subsets of the half-plane $\Re(s)>d/2$ and admits
a meromorphic extension to the whole complex plane. Because of the presence
of the log-terms in the expansion \eqref{asex}, the zeta function may have a
pole at $s=0$. Let $f(s)$ be a meromorphic function on $\C$. For $s_0\in\C$ 
let $f(s)=\sum_{k\ge k_0}a_k(s-s_0)^k$
be the Laurent expansion of $f$ at $s_0$. Put $\FP_{s=s_0}f(s):=a_0$. Now we define
the analytic torsion $T_{X(K_f)}(\tau)\in\C\setminus\{0\}$ by
\begin{equation}\label{analtor}
\log T_{X(K_f)}(\tau)=\frac{1}{2}\sum_{p=0}^d (-1)^p p 
\left(\FP_{s=0}\frac{\zeta_p(s;\tau)}{s}\right).
\end{equation}
In the case of $G=\GL(3)$ we are able to determine the coefficients of the 
log-terms. This shows that the 
zeta functions definitely have a pole at $s=0$. However, the combination
$\sum_{p=1}^5 (-1)^p p \zeta_p(s;\tau)$ turns out to be holomorphic at $s=0$ and
we can define the logarithm of the analytic torsion by
\[
\log T_{X(K_f)}(\tau)=\frac{d}{ds}
\left(\frac{1}{2}\sum_{p=1}^5 (-1)^p p \zeta_p(s;\tau)\right)\bigg|_{s=0}.
\]
Let $\{K_f(N)\}_{N\in\N}$  be the family of principal congruence subgroups
of $\GL(n,\A_f)$, and $X(N):=X(K_f(N))$, $N\in\N$. The next problem is to 
study the limiting behavior of $\log T_{X(N)}(\tau)/\vol(X(N))$ as $N\to\infty$ which we do in subsequent work. 
In consideration of the results for the cocompact case in \cite{BV}, one can expect a different behavior of $\log T_{X(N)}(\tau)/\vol(X(N))$ in the limit $N\to\infty$ for different $n$. More precisely, the fundamental rank $\rk G(\R)- \rk K_\infty$ determined in \cite{BV} whether the limit vanishes, which it does unless if the rank equals $1$. In our case of $\SL_n(\R)$, the fundamental rank is $1$ precisely when $n=3$ or $n=4$.

An even more difficult problem is the question if there is a combinatorial
counterpart of $T_{X(K_f)}(\tau)$ as there is in the compact case. 

Now we briefly explain our method to prove Theorems \ref{prop-asymp3}
and \ref{theo-lt}. To determine the asymptotic 
behavior of the regularized trace as $t\to +0$, we use the geometric side of 
trace formula. The first step is to show that $\phi_t^\nu$ can be replaced by 
a  compactly supported function 
$\widetilde \phi_t^\nu\in C_c^\infty(G(\A))$ without changing the asymptotic
behavior. Next we use the coarse geometric expansion 
of the geometric side, which expresses $J_{\geo}(f)$, $f\in C_c^\infty(G(\A))$,
 as a sum of distributions $J_{\of}(f)$ associated to semisimple conjugacy
classes of $G(\Q)$. Let $J_{\unip}(f)$ be the distribution associated to the 
class of $1$.  If the support of $\widetilde \phi_t^\nu$ is a sufficiently 
small neighborhood of $1$, it follows that
\begin{equation}
\Tr_{\reg}\left(e^{-t\Delta_\nu}\right):=J_{\unip}(\widetilde\phi_t^\nu)+
O\left(e^{-c/t}\right)
\end{equation}
as $t\to +0$. To analyze $J_{\unip}(\widetilde\phi_t^\nu)$, we use the fine
geometric expansion \cite{Ar4} which expresses 
$J_{\unip}(\widetilde\phi_t^\nu)$ in terms
of weighted orbital integrals. If the  real rank of $G(\R)$  is one, the 
weighted orbital integrals are rather simple and the weight factors are 
explicitly known (see \cite{Wa}). 
In order to deal with the weighted orbital integrals in the higher rank case,
we need to restrict to the groups $\GL(n)$ or $\SL(n)$. In this case all
unipotent orbits are Richardson, which simplifies the analysis considerably. 
We are only interested in the situation over the field $\R$. Let $M$ be a
Levi subgroup of $G$. Let $\cU_M$ be the unipotent variety in $M$ and 
$\cV\in (\cU_M)$ a conjugacy class. Let $U$ be an $M(\R)$ conjugacy class
in $\cV(\R)$. There exists a standard parabolic subgroup $Q=LN\in\cF$ and
a constant $c>0$ such 
that for every $\rO(n)$-conjugation invariant function $f\in C_c^\infty(G(\R))$
the weighted orbital integral $J_M(U,f)$ is given by 
\begin{equation}\label{orbint0}
J_M(U,f)=c\int_{N(\R)}f(n) w_{M,\cV}(n)\;dn,
\end{equation}
where $w_{M,\cV}(n)$ is a certain weight function. The main problem is now to determine
the structure of the weight function. For $G(\R)=\SO_0(n,1)$ the weighted
orbital integral is of the same form with weight function $w(n)=\log\|\log n\|$,
where the inner log is the isomorphism $\log \colon N\to\nf$. 
This fact has been exploited in \cite{Mu6} in order to establish the asymptotic
expansion of the regularized trace in the case of hyperbolic manifolds of 
finite volume. It turns out that $w_{M,\cV}$ has a similar
behavior with respect to scaling. Note that the map $x\mapsto X=x-\Mid$ defines
a bijection between the variety of unipotent elements in $G(\R)$ and the
nilpotent cone in the Lie algebra $\kg(\R)$. For $s\in\R$ let 
$x_s:=\Mid+s(x-\Mid)$. Let $x\in\cV^G(\R)$ such that $w_{M,\cV}(x)$ is defined.
Then by Proposition \ref{prop}, $w_{M,\cV}(x_s)$ is well-defined for every
$s>0$ and $s\mapsto w_{M,\cV}(x_s)$ is a polynomial in 
$\log s$ of degree at most $\dim\af_M^G$. Inserting a standard parametrix for
the heat kernel into \eqref{orbint0} and using the structure of $w_{M,\cV}$,
we obtain Theorem \ref{prop-asymp3}. To eliminate the assumption that $K_f$ 
is contained in some $K(N)$ with $N\ge 3$, we would have to consider orbital 
integrals associated to classes of finite order. For $\GL(2)$ and $\GL(3)$ we 
discuss this issue in section \ref{sec-finite-ord}. 

To prove Theorem \ref{theo-lt}, we use the spectral side of the trace formula.
Let $\phi_t^{\tau,p}$ be the function in $\Co(G(\A);K_f)$, which is defined in 
the same way
as $\phi_t^\nu$ in terms of the kernel of the heat operator on the universal
covering. Then by the trace formula
\[
\Tr_{\reg}\left(e^{-t\Delta_p(\tau)}\right)=J_{\spec}(\phi^{\tau,p}_t).
\]
The key input to deal with the spectral side is the refinement of the spectral
 expansion of the Arthur trace formula established in \cite{FLM1}
(see Theorem \ref{thm-specexpand}). 
For $f\in\Co(G(\A))$ we have
\[
J_{\spec}(f)=\sum_{[M]}J_{\spec,M}(f),
\]
where $[M]$ runs over the conjugacy classes of Levi subgroups of $G$ and 
$J_{\spec,M}(f)$ is a distribution associated to $M$. The distribution 
associated to $G$ is $\Tr R_{\di}(f)$, where
$R_{\di}$ denotes the restriction of the regular representation of $G(\A)$ 
in $L^2(G(\Q)\bs G(\A))$ to the discrete subspace. 
For a proper Levi subgroup $M$ of $G$, $J_{\spec,M}(f)$ is an integral whose
main ingredient are logarithmic derivatives of intertwining operators. Using
our assumption that $\tau\neq\tau_\theta$, we obtain
$\dim\ker\Delta_p(\tau)=0$. Then it follows as in the compact case
that there exists $c>0$ such that 
\[
\Tr R_{\di}(\phi^{\tau,p}_t)=O(e^{-ct}), \quad\text{as}\;\; t\to\infty.
\]
For a proper Levi subgroup $M$, the determination of the asymptotic behavior 
of $J_{\spec,M}(\phi^{\tau,p}_t)$ as $t\to\infty$ 
relies on two conjectural properties, one global and one local, of the
intertwining operators. The global property is a uniform 
estimate on the winding number of the normalizing factors of the intertwining
operators in the co-rank one case. For $\GL(n)$ and $\SL(n)$, this property
follows from known, but delicate, properties of the Rankin-Selberg $L$-functions
\cite{FLM2}. The local property is concerned with the estimation 
of logarithmic derivatives of normalized local intertwining operators, which
are uniform in $\pi$. For $\GL(n)$ the pertinent estimates have been
established in \cite[Proposition 0.2]{MS}. They are a consequence of a weak
version of the Ramanujan conjecture. The case of $\SL(n)$ can be reduced to
$\GL(n)$ in the same way as in the proof of \cite[Lemma 5.14]{FLM2}. Let
$\theta\colon G\to G$ be the Cartan involution and let $\tau_\theta:=\tau\circ
\theta$. Using these estimations, it follows that for 
$G=\GL(n)$ or $G=\SL(n)$, a proper Levi 
subgroup $M$ of $G$ and a finite dimensional representation $\tau$ of $G(\R)$
such that $\tau\not\cong\tau_\theta$, one has $J_{\spec,M}(\phi^{\tau,p}_t)=
O(e^{-ct})$ as $t\to\infty$. Putting everything together, we obtain Theorem
\ref{theo-lt}.

We end this introduction with some remarks on the possible extension of the 
our results to other groups $G$. 
First of all, Theorem \ref{theo-lt} depends on the estimations of 
logarithmic derivatives of global normalizing factors and normalized local  
intertwining operators. Using functoriality, T. Finis and E. Lapid \cite{FL2}
have recently established similar estimates of the logarithmic derivatives of
global normalizing factors associated to intertwining operators for the 
following reductive groups over number fields: inner forms of $\GL(n)$, 
quasi-split classical groups and their similitude groups, and the exceptional
groups $G_2$. One can expect that the estimates of the logarithmic derivatives 
of the normalized local intertwining operators can be established by the same
methods. This would lead to an extension of Theorem \ref{theo-lt}
to these groups. It remains to deal with the unipotent orbital integrals for 
the groups above.

The paper is organized as follows. In section \ref{sec-prelim} we fix notations 
and recall some basic facts. In section \ref{sec-arithm-mfd} we introduce the
locally symmetric manifolds as adelic quotients. In section \ref{sec-trunc} 
we compare two different methods of truncation. One of them is based on the
truncation of kernels of integral operators which leads to the geometric side 
of the trace formula. The other one consists in the truncation of the underlying
manifold, which is the basis for the renormalization of the trace of the heat
operator.    
 In section \ref{sec-trform} we recall the spectral side of the Arthur
trace formula. In section \ref{sec-unipotent} we are assuming that $G=\GL(n)$ or
$G=\SL(n)$. We discuss the unipotent contribution to the trace formula and 
derive a simplified formula for the weighted orbital integral. Section 
\ref{sec-wfct} is devoted to the study of the weight functions for the
groups $\GL(n)$ and $\SL(n)$. The main result is Proposition \ref{prop}, which
is the key result that enables us to determine the asymptotic behavior as
$t\to +0$ of the corresponding orbital integrals. Examples of low rank are
discussed in section \ref{sec-lowrank}. These are cases where the weight 
function is given explicitly. In section \ref{sec-bochlapl} we collect some
basic facts concerning Bochner-Laplace operators. The regularized trace of 
the corresponding heat operators is introduced in section \ref{sec-regtr}. 
The definition is based on section \ref{sec-trunc}, which deals with 
truncation. In section \ref{sec-heatkernel} we establish some estimates of 
the heat kernel for Bochner-Laplace operators on the symmetric space
$\widetilde X$. Combined with the analysis of the weight functions in section
\eqref{sec-wfct}, the estimations are used in section \eqref{sec-asymp} to
prove Theorem \ref{prop-asymp3}. In section \eqref{sec-analtor} we first
use the spectral side of the Arthur trace formula to establish Theorem 
\ref{theo-lt}, which concerns the large time asymptotic behavior of the 
regularized trace of the heat operators. This finally enables us to define
the regularized analytic torsion. In section \ref{sect-gl3} we 
assume that $G=\GL(3)$. Using the explicit form of the weight functions
described in section \ref{sec-lowrank}, we determine the coefficients of the 
possible poles at $s=0$ of the zeta functions. It turns out that the 
combination of the zeta functions, which is used to define the analytic
torsion, is holomorphic at $s=0$. In the final section \ref{sec-finite-ord}
we consider for $G=\GL(2)$ or $G=\GL(3)$ an arbitrary subgroup $K_f$ of
$G(\hat\Z)$ and study the additional weighted orbital integrals that arise 
in this case.

\noindent
{\bf Acknowledgment.}  The authors  would like to thank the
referees for the careful reading of the manuscript and for their very helpful 
suggestions and comments.

\section{Preliminaries}\label{sec-prelim}

Let $G$ be a reductive algebraic group defined
over $\Q$. We fix a minimal parabolic subgroup $P_0$ of $G$ 
defined over $\Q$ and a Levi decomposition $P_0=M_0\cdot N_0$, both defined 
over $\Q$.  Let $\cF$ be the set of parabolic subgroups of $G$ which contain 
$M_0$ and are defined over $\Q$. Let $\cL$ be the set of subgroups of $G$ 
which contain $M_0$ and are Levi components of groups in $\cF$. 
For any $P\in\cF$ we write
\[
P=M_PN_P,
\]
where $N_P$ is the unipotent radical of $P$ and $M_P$ belongs to $\cL$. 

Let $M\in\cL$. Denote by $A_M$ the $\Q$-split component of the center of $M$. 
Put $A_P=A_{M_P}$. Let $L\in\cL$ and assume that $L$ contains $M$. Then $L$ is
a reductive group defined over $\Q$ and $M$ is a Levi subgroup of $L$. We 
shall denote the set of Levi subgroups of $L$ which contain $M$ by $\cL^L(M)$.
We also write $\cF^L(M)$ for the set of parabolic subgroups of $L$, defined 
over $\Q$, which contain $M$, and $\cP^L(M)$ for the set of groups in $\cF^L(M)$
for which $M$ is a Levi component. Each of these three sets is finite. If 
$L=G$, we shall usually denote these sets by $\cL(M)$, $\cF(M)$ and $\cP(M)$.

Let $X(M)_\Q$ be the group of characters of $M$ which are defined over $\Q$. 
Put
\begin{equation}\label{liealg}
\af_{M}:=\Hom(X(M)_\Q,\R).
\end{equation}
This is a real vector space whose dimension equals that of $A_M$. Its dual 
space is
\[
\af_{M}^\ast=X(M)_\Q\otimes \R.
\]
 We shall write, 
\begin{equation}\label{liealg1}
\af_P=\af_{M_P},\;A_0=A_{M_0}\quad\text{and}\quad \af_0=\af_{M_0}.
\end{equation}
For $M\in\cL$ let $A_M(\R)^0$ be the connected component of the identity of
the group $A_M(\R)$. 
Let $W_0=N_{\G(\Q)}(A_0)/M_0$ be the Weyl group of $(G,A_0)$,
where $N_{G(\Q)}(H)$ is the normalizer of $H$ in $G(\Q)$.
For any $s\in W_0$ we choose a representative $w_s\in G(\Q)$.
Note that $W_0$ acts on $\levis$ by $sM=w_s M w_s^{-1}$. For $M\in\cL$ let
$W(M)=N_{\G(\Q)}(M)/M$, which can be identified with a subgroup of $W_0$.

For any $L\in\cL(M)$ we identify $\af_L^\ast$ with a subspace of $\af_M^\ast$.
We denote by $\af_M^L$ the annihilator of $\af_L^\ast$ in $\af_M$. 
We set
\[
\levis_1(M)=\{L\in\levis(M):\dim\aaa_M^L=1\}
\]
and
\begin{equation}\label{f1}
\cF_1(M)=\bigcup_{L\in\levis_1(M)}\cP(L).
\end{equation}
We shall denote the simple roots of $(P,A_P)$ by $\Delta_P$. They are
elements of $X(A_P)_\Q$ and are canonically embedded in $\af_P^\ast$. 
Let
$\Sigma_P\subset \af_P^\ast$ be the set of reduced roots of $A_P$ on the Lie
algebra of $G$. The
set $\Delta_0=\Delta_{P_0}$ is a base for a root system. In particular, for 
every $\alpha\in\Delta_P$ we have a co-root $\alpha^\vee\in\af_{P_0}$.

Let $P_1$ and $P_2$ be parabolic subgroups with
$P_1\subset P_2$. Then $\af_{P_2}^\ast$ is embedded into $\af_{P_1}^\ast$, while
$\af_{P_2}$ is a natural quotient vector space of $\af_{P_1}$. The group
$M_{P_2}\cap P_1$ is a parabolic subgroup of $M_{P_2}$. Let $\Delta_{P_1}^{P_2}$
denote the set of simple roots of $(M_{P_2}\cap P_1,A_{P_1})$. It is a subset
of $\Delta_{P_1}$. For a parabolic subgroup $P$ with $P_0\subset P$ we write
$\Delta_0^P:=\Delta_{P_0}^P$. 

Let $\A$ (resp. $\A_f$) be the ring of adeles (resp. finite adeles) of $\Q$. 
We fix a maximal compact subgroup $\K=\prod_v K_v = K_\infty\cdot 
\K_{f}$ of $G(\A)=G(\R)\cdot G(\A_{f})$. We assume that the maximal 
compact subgroup $\K \subset G(\A)$ is admissible with respect to 
$M_0$ \cite[\S 1]{Ar5}. Let $\Ht_M: M(\A)\rightarrow\aaa_M$ be the 
homomorphism given by
\begin{equation}\label{homo-M}
e^{\sprod{\chi}{\Ht_M(m)}}=\abs{\chi (m)}_\A = \prod_v\abs{\chi(m_v)}_v
\end{equation}
for any $\chi\in X(M)$ and denote by $M(\A)^1 \subset M(\A)$ the kernel 
of $\Ht_M$. Then $M(\A)$ is the direct product of $M(\A)^1$ and $A_M(\R)^0$, the
component of $1$ in $A_M(\R)$. By the conditions on $\K$, we have $G(\A)=P(\A)
\K$. Hence any $x\in G(\A)$ can be written as
\[
nmak,\quad n\in N(\A),\;m\in M(\A)^1,\; a\in A_M(\R)^0,\; k\in\K.
\]
Define $H_P\colon G(\A)\to\af_P$ by
\begin{equation}\label{hp}
H_P(x):=H_M(a),
\end{equation}
where $x=nmak$ as above.
Let $\gf$ and $\kf$ denote the Lie algebras of $G(\R)$ and $K_\infty$,
respectively. Let $\theta$ be the Cartan involution of $G(\R)$ with respect to
$K_\infty$. It induces a Cartan decomposition $\mathfrak{g}= 
\mathfrak{p} \oplus \mathfrak{k}$. 
We fix an invariant bi-linear form $B$ on $\mathfrak{g}$ which is positive 
definite on $\mathfrak{p}$ and negative definite on $\mathfrak{k}$.
This choice defines a Casimir operator $\Omega$ on $G(\R)$. Let $\Pi(G(\R))$ denote the set of equivalence classes of irreducible unitary representations of $G(\R)$. 
We denote the Casimir eigenvalue of any $\pi \in \Pi (G(\R))$ by 
$\lambda_\pi$. Similarly, we obtain
a Casimir operator $\Omega_{K_\infty}$ on $K_\infty$ and write $\lambda_\tau$ for 
the Casimir eigenvalue of a
representation $\tau \in \Pi (K_\infty)$ (cf.~\cite[\S 2.3]{BG}).
The form $B$ induces a Euclidean scalar product $(X,Y) = - B (X,\theta(Y))$ on 
$\mathfrak{g}$ and all its subspaces.
For $\tau \in \Pi (K_\infty)$ we define $\norm{\tau}$ as in 
\cite[\S 2.2]{CD}. Note that the restriction of the scalar product 
$(\cdot,\cdot)$ on $\gf$ to $\af_0$ gives $\af_0$ the structure of a 
Euclidean space. In particular, this fixes Haar measures on the spaces 
$\af_M^L$ and their duals $(\af_M^L)^\ast$. We follow Arthur in the 
corresponding normalization of Haar measures on the groups $M(A)$ 
(\cite[\S 1]{Ar1}). 

Let $L^2_{\disc}(\Ai M(\Q)\bs M(\A))$ be the discrete part of 
$L^2(\Ai M(\Q)\bs M(\A))$, i.e., the
closure of the sum of all irreducible subrepresentations of the regular 
representation of $M(\A)$.
We denote by $\Pi_{\disc}(M(\A))$ the countable set of equivalence classes of 
irreducible unitary
representations of $M(\A)$ which occur in the decomposition of the discrete 
subspace $L^2_{\disc}(\Ai M(\Q)\bs M(\A))$ into irreducible representations.

\section{Arithmetic manifolds}\label{sec-arithm-mfd}
Let $G$ be a reductive algebraic group over $\Q$.  Let 
$K_f\subset G(\A_f)$ be an open compact 
subgroup. The double coset space $\Ag G(\Q)\bs G(\A)/G(\R)K_f$ is known to
be finite (see \cite[\S 5]{Bo1}). Let $x_1=1, x_2,\dots,x_l$ be a set of 
representatives in $G(\A_f)$ of the double cosets. Then the groups
\[
\Gamma_i:=\left( G(\R)\times x_i K_f x_i^{-1}\right)\cap G(\Q),\quad 1\le i\le l,
\]
are arithmetic subgroups of $G(\R)$ and the action of $G(\R)$ on the space of 
double
cosets $\Ag G(\Q)\bs G(\A)/K_f$ induces the following decomposition into
$G(\R)$-orbits:
\begin{equation}\label{adel-quot1}
\Ag G(\Q)\bs G(\A)/K_f\cong \bigsqcup_{i=1}^l
\left(\Gamma_i\bs G(\R)^1\right),
\end{equation}
where $G(\R)^1=G(\R)/\Ag$. Thus we get an isomorphism of $G(\R)$-modules
\begin{equation}\label{g-modules}
L^2(\Ag G(\Q)\bs G(\A))^{K_f}\cong \bigoplus_{i=1}^lL^2(\Gamma_i\bs G(\R)^1).
\end{equation}
We note that, in general, $l>1$. However, if
$G$  is semisimple,
simply connected, and without any $\Q$-simple factors $H$ for which $H(\R)$
is compact, then by strong approximation we have 
\[
G(\Q)\bs G(\A)/K_f\cong \Gamma\bs G(\R),
\] 
where $\Gamma=(G(\R)\times K_f)\cap G(\Q)$. In particular this is the case for
$G=\SL(n)$. Let $K_\infty\subset G(\R)$ be a maximal compact subgroup. Let
\begin{equation}\label{symspace}
\widetilde X:=G(\R)^1/K_\infty
\end{equation}
be the associated global Riemannian symmetric space.
Given an open compact subgroup $K_f\subset G(\A_f)$, we  define the
arithmetic manifold $X(K_f)$ by
\begin{equation}\label{adel-quot2}
X(K_f):= G(\Q)\bs (\widetilde X \times G(\A_f))/K_f.
\end{equation}
By \eqref{adel-quot1} we have
\begin{equation}
X(K_f)=\bigsqcup_{i=1}^l \left(\Gamma_i\bs \widetilde X\right),
\end{equation}
where each component $\Gamma_i\bs \widetilde X$ is a locally symmetric space.
We will assume that $K_f$ is neat. Then $X(K_f)$ is a locally symmetric
manifold of finite volume. 

Now consider $G=\GL(n)$ as algebraic group over $\Q$. Then $\Ag$ is the
group of scalar matrices with a positive real scalar and $K_\infty=\rO(n)$. 
Let $N=\prod_pp^{r_p}$, 
$r_p\ge 0$. Put
\[
K_p(N):=\{k\in G(\Z_p)\colon k\equiv 1\;\text{mod}\;p^{r_p}\Z_p\}
\]
and
\[
K(N):=\prod_{p<\infty}K_p(N).
\]
Then $K(N)$ is an open compact subgroup of $G(\A_f)$ and
\begin{equation}\label{gln-iso}
\Ag G(\Q)\bs G(\A)/K(N)\cong \bigsqcup_{i=1}^{\varphi(N)}\Gamma(N)\bs SL(n,\R)
\end{equation}
where $\varphi(N)=\#[(\Z/N\Z)^\ast]$ (see \cite{Ar6}). Hence we have
\begin{equation}\label{l2-gln}
L^2(\Ag G(\Q)\bs G(\A))^{K(N)}\cong \bigoplus_{i=1}^{\varphi(N)} 
L^2(\Gamma(N)\bs \SL(n,\R))
\end{equation}
as $\SL(n,\R)$-modules. We have
\[
\widetilde X=\SL(n,\R)/\SO(n). 
\]
Let
\begin{equation}
X(N):=G(\Q)\bs (\widetilde X\times G(\A_f))/K(N).
\end{equation}
Let $\nu\colon K_\infty\to\GL(V_\nu)$ be a finite dimensional unitary 
representation of $K_\infty$. Let $\widetilde E_\nu$ be the associated 
homogeneous Hermitian vector  bundle over $\widetilde X$. Over each component 
of $X(K_f)$, $\widetilde E$ induces a locally homogeneous Hermitian vector 
bundle $E_{i,\nu}\to\Gamma_i\bs\widetilde X$.  Let
\begin{equation}\label{vectbdl}
E_\nu:=\bigsqcup_{i=1}^l E_{i,\nu}.
\end{equation}
Then $E_\nu$ is a vector bundle over $X(K_f)$, which is locally homogeneous.

\section{Truncation and the geometric side of the trace formula}
\label{sec-trunc}

The Arthur trace formula is obtained by truncating the kernels of integral 
operators associated to functions in $C^\infty_c(G(\A)^1)$.  On the other hand, 
the regularization of the trace of heat operators is based on the truncation
of the underlying locally symmetric space. In this section we compare the two 
methods. 
Let $P_0$ be the fixed minimal parabolic subgroup of $G$. 

For $f\in C^\infty_c(G(\A)^1)$ let
\[
K_f(x,y)=\sum_{\gamma\in G(\Q)}f(x^{-1}\gamma y).
\]
This is the kernel of an integral operator. In general, $K_f(x,x)$ is not
integrable over $G(\Q)\bs G(\A)^1$ and needs to be truncated to get an
integrable function. To define the truncated kernel we need to introduce some 
notations.

Let $P=M_PN_P$ be a standard parabolic subgroup
and let $Q$ be a parabolic subgroup containing $P$. Let $\Delta^Q_P$ be the set
of simple roots of $(M_Q\cap P,A_P)$. 
 Similarly, we have the set of coroots $\Delta_0^\vee$ and, more
generally and, the set $(\Delta_P^Q)^\vee$ which forms a basis of
$\af_P^Q:=\af_P\cap \af_0^Q$. We denote the basis of $(\af_P^Q)^\ast$ (resp. 
$\af_P^Q$) dual to $(\Delta_P^Q)^\vee$ (resp. $\Delta_P^Q$) by $\hat\Delta_P^Q$
(resp. $(\hat\Delta_P^Q)^\vee$. Let $\tau_P^Q$ and $\widehat\tau_P^Q$ denote the 
characteristic functions of the set
\[
\{X\in\af_0\colon \langle\alpha,X\rangle>0\;\text{for}\;\text{all}\;
\alpha\in\Delta_P^Q\}
\]
and
\[
\{X\in\af_0\colon \langle\varpi,X\rangle>0\;\text{for}\;\text{all}
\;\varpi\in\hat\Delta_P^Q\},
\]
respectively. If $Q=G$, we will suppress the superscript. Moreover we 
put $\tau_0:=\tau_0^G$ and $\hat\tau_0:=\widehat\tau_0^G$. Now we can define
the truncated kernel. Put
\[
K^P_f(x,y):=\int_{N_P(\Q)\bs N_P(\A)}\sum_{\gamma\in P(\Q)} f(x^{-1}\gamma ny)\;dn.
\]
Let $H_P\colon G(\A)\to\af_P$ be the map defined by \eqref{hp}. For any 
$T\in \af_0^+$  define
\begin{equation}\label{truncker1}
k^T(x,f):=\sum_P (-1)^{\dim(A_P/A_G)}\sum_{\delta\in P(\Q)\bs G(\Q)}
K^P_f(\delta x,\delta x)\widehat\tau_P(H_P(\delta x)-T_P),
\end{equation}
where $T_P$ denotes the projection of $T$ on $\af_P$.
Note that the term in \eqref{truncker1} which corresponds to $P=G$ is 
$K_f(x,x)$. If $G(\Q)\bs G(\A)^1$ is compact, there are no proper parabolic
subgroups of $G$ over $\Q$. Thus, in this case we have $k^T(x,f)=K_f(x,x)$, and
the truncation operation is trivial. By \cite[Theorem 6.1]{Ar4} the integral
\begin{equation}\label{trf1}
J^T(f):=\int_{G(\Q)\bs G(\A)^1} k^T(x,f)\;dx
\end{equation}
converges absolutely. This is the first step toward the trace formula. 
As shown by 
Hoffmann \cite{Ho}, $J^T(f)$ is defined  for a larger class of functions $f$
and $J^T(f)$ is a polynomial in $T\in\af_0$ of degree at most 
$d_0=\dim\af_{P_0}^G$. There is a distinguished point $T_0\in\af_0$ specified 
by \cite[Lemma 1.1]{Ar3}, and Arthur defines the distribution $J$ on $G(\A)^1$
by 
\begin{equation}\label{j-distr}
J(f):=J^{T_0}(f),\quad f\in C^\infty_c(G(\A)^1).
\end{equation}
This is the geometric side of the trace formula. To distinguish it from the
spectral side, we will denote it by $J_{\geo}$. 

In \cite{Ar1} Arthur has 
introduced the coarse geometric expansion of $J^T(f)$. 
To define it, one has to introduce an equivalence relation in $G(\Q)$. 
Define two elements $\gamma$ and $\gamma^\prime$ in $G(\Q)$ to be equivalent,
if the semisimple components $\gamma_s$ and $\gamma_s^\prime$ of their Jordan 
decompositions are $G(\Q)$-conjugate.
Let $\cO$ be the set of equivalence classes. Note that the set $\cO$ is in 
obvious bijection with the semisimple conjugacy classes in $G(\Q)$. Furthermore,
in case $G=\GL(n)$, the Jordan decomposition is given by the Jordan normal form.
For $\of\in\cO$ and $f\in C^\infty_c(G(\A)^1)$ let
\[
K_\of^P(x,y):=\int_{N_P(\Q)\bs N_P(\A)}\sum_{\gamma\in P(\Q)\cap \of} f(x^{-1}\gamma ny)\;
dn.
\]
Given $T\in\af_0$ and $x\in G(\A)^1$, let 
\begin{equation}\label{truncker2}
k^T_\of(x,f):=\sum_{P} (-1)^{\dim(A_P/A_G)}\sum_{\delta\in P(\Q)\bs G(\Q)}
K_\of^P(\delta x,\delta x)\widehat\tau_P\left(H_P(\delta x)-T_P\right).
\end{equation}
Let
\begin{equation}\label{distrib5}
J^T_\of(f):=\int_{G(\Q)\bs G(\A)^1} k^T_\of(x,f)\;dx.
\end{equation}
The integral converges absolutely and one obtains an absolutely convergent
expansion
\begin{equation}\label{coarseexp}
J^T(f)=\sum_{\of\in\cO} J^T_\of(f), \quad f\in C_c^\infty(G(\A)^1).
\end{equation}
This is the coarse geometric expansion, introduced in \cite{Ar1}. In \cite{FL1},
Finis and Lapid have shown that the coarse geometric expansion 
\eqref{coarseexp} extends continuously to the space of Schwartz functions
$\Co(G(\A)^1)$ which is defined as follows. For any compact open subgroup
$K_f$ of $G(\A_f)$ the space $G(\A)^1/K_f$ is the countable disjoint union of
copies of $G(\R)^1=G(\R)\cap G(\A)^1$ and therefore, it is a differentiable
manifold. Any element $X\in\mathcal{U}(\gf^1_\infty)$ of the universal 
enveloping algebra of the Lie algebra $\gf_\infty^1$ of $G(\R)^1$ defines a
left invariant differential operator $f\mapsto f\ast X$ on $G(\A)^1/K_f$. Let
$\Co(G(\A)^1;K_f)$ be the space of smooth right $K_f$-invariant functions on
$G(\A)^1$ which belong, together with all their derivatives, to $L^1(G(\A)^1)$.
The space $\Co(G(\A)^1;K_f)$ becomes a Fr\'echet space under the seminorms
\[
\|f\ast X\|_{L^1(G(\A)^1)},\quad X\in\mathcal{U}(\gf^1_\infty).
\]
Denote by $\Co(G(\A)^1)$ the union of the spaces $\Co(G(\A)^1;K_f)$ as $K_f$ 
varies over the compact open subgroups of $G(\A_f)$ and endow 
$\Co(G(\A)^1)$ with the inductive
limit topology. For $f\in\Co(G(\A)^1;K_f)$ and $\of\in\cO$ let $J^T(f)$ 
and $J^T_\of(f)$  be defined by \eqref{trf1} and \eqref{distrib5}, respectively.
By \cite[Theorem 7.1]{FL1}, the integrals defining $J^T(f)$ and $J^T_\of(F)$
are absolutely convergent and we have
\begin{equation}\label{coarseexp1}
J^T(f)=\sum_{\of\in\cO}J_\of^T(f),\quad f\in\Co(G(\A)^1;K_f).
\end{equation}
We shall now discuss how $J^T(f)$ is related the integral of the
kernel over the truncated manifold, where the truncated manifold is defined
by a certain height function. For $T\in\af_0$ let 
\begin{equation}\label{trunc1}
G(\A)^1_{\le T}=\{g\in G(\A)^1\colon \hat\tau_0(T-H_0(\gamma g))=1,
 \;\text{for}\;\text{all}\;\gamma\in G(\Q)\}.
\end{equation}
Note that by definition, $G(\A)^1_{\le T}$ is $G(\Q)$-invariant.
Furthermore, for $T_1\in\af_0$ let
\[
\fS_{T_1}=\{x\in G(\A)\colon \tau_0(H_0(x)-T_1)=1\}
\]
and more generally
\[
\fS^P_{T_1}=\{x\in G(\A)\colon \tau_0^P(H_0(x)-T_1)=1\}
\]
for any $P\supset P_0$. Note that these sets are left $P_0(\A)^1$-invariant.
By reduction theory, there exists $T_1\in\af_0$ such that
\[
P(\Q)\fS^P_{T_1}=G(\A)
\]
for all $P\supset P_0$, in particular for $P=G$. We fix such $T_1$. 
 Let
\begin{equation}\label{dT}
d(T)=\min_{\alpha\in\Delta_0}\langle\alpha,T\rangle.
\end{equation}
There exists $d_0>0$, which depends only on $G$, $P_0$ and ${\bf K}$, such 
that for all $T\in\af_0$ with $d(T)>d_0$ one has
\[
G(\A)^1_{\le T}\cap \fS_{T_1}=\{g\in G(A)^1\colon \tau_0(H_0(g)-T_1)
\hat\tau_0(T-H_0(g))=1\}.
\]
For $f\in \Co(G(\A)^1)$  recall that
\begin{equation}\label{kernel4}
K_f(x,y)=\sum_{\gamma\in G(\Q)}f(x^{-1}\gamma y)
\end{equation}
The series converges absolutely and uniformly on compact subsets. 
Then the following theorem, which  is an immediate consequence of 
\cite[Theorem 7.1]{FL1}, establishes the relation between $J^T(f)$ and naive
truncation.
\begin{theo}\label{theo-trunc}
For every open compact subgroup $K_f$ of $G(\A_f)$ there exists $r\ge 0$ and a 
continuous seminorm $\mu$ on $\Co(G(\A)^1;K_f)$ such that
\[
\left|\int_{G(\Q)\bs G(\A)^1_{\le T}}K_f(x,x)\;dx-J^T(f)\right|\le\mu(f)
(1+\|T\|)^r e^{-d(T)}
\]
for all $f\in \Co(G(\A)^1;K_f)$ and $T\in\af_0$ such that $d(T)>d_0$.
\end{theo}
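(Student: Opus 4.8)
The plan is to deduce Theorem~\ref{theo-trunc} directly from \cite[Theorem 7.1]{FL1}, which already packages the two key inputs: the absolute convergence of all the integrals $J^T_\of(f)$ for $f\in\Co(G(\A)^1;K_f)$ and a quantitative bound, with a continuous seminorm and a polynomial-times-$e^{-d(T)}$ error, comparing the truncated kernel integral to $J^T(f)$. So the first step is to recall the precise form of that statement and fix a compact open subgroup $K_f\subset G(\A_f)$, which pins down the Fr\'echet space $\Co(G(\A)^1;K_f)$ with its seminorms $\|f\ast X\|_{L^1}$.

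Next I would set up the comparison between the two notions of truncation. On the one hand we have Arthur's combinatorial truncation $k^T(x,f)$ from \eqref{truncker1} with its convergent integral $J^T(f)$; on the other hand we have the naive geometric truncation $G(\A)^1_{\le T}$ from \eqref{trunc1}, over which we integrate the raw diagonal kernel $K_f(x,x)$. The bridge is the identity, valid for $d(T)>d_0$, that intersecting with the Siegel-type set $\fS_{T_1}$ gives
\[
G(\A)^1_{\le T}\cap\fS_{T_1}=\{g\in G(\A)^1\colon \tau_0(H_0(g)-T_1)\,\hat\tau_0(T-H_0(g))=1\},
\]
together with the reduction-theory fact $P(\Q)\fS^P_{T_1}=G(\A)$ for every standard $P$. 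Folding $G(\Q)\bs G(\A)^1$ back onto the Siegel domain and expanding $K_f(\delta x,\delta x)$ over parabolic subgroups using the partition-of-unity identity $\sum_{P}\sum_{\delta\in P(\Q)\bs G(\Q)}(-1)^{\dim(A_P/A_G)}\widehat\tau_P(H_P(\delta x)-T_P)=\one_{G(\A)^1_{\le T}}(x)$ (valid for $d(T)>d_0$ and $x$ in a fixed Siegel set), one rewrites $\int_{G(\Q)\bs G(\A)^1_{\le T}}K_f(x,x)\,dx$ as the integral of $k^T(x,f)$ plus correction terms coming from the parabolic pieces where the constant term $K_f^P$ has been replaced by the full kernel $K_f$. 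Each such correction is controlled, for each standard $P$, by the difference $\int_{N_P(\Q)\bs N_P(\A)}(K_f - K_f^P)(x,nx)\,dn$ integrated against $\widehat\tau_P(H_P(x)-T_P)$ over the relevant region; these are precisely the quantities that \cite[Theorem 7.1]{FL1} estimates by $\mu_P(f)(1+\|T\|)^{r_P}e^{-d(T)}$ for suitable seminorms $\mu_P$ and exponents $r_P$.

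Finally I would assemble the estimate: take $r=\max_P r_P$ and $\mu=\sum_P\mu_P$ (a finite sum, since the set $\cF$ of standard parabolics is finite), so that $\mu$ is again a continuous seminorm on $\Co(G(\A)^1;K_f)$ and the total error is bounded by $\mu(f)(1+\|T\|)^r e^{-d(T)}$ for all $f\in\Co(G(\A)^1;K_f)$ and all $T\in\af_0$ with $d(T)>d_0$. I expect the main obstacle to be purely bookkeeping rather than conceptual: one must carefully match the region of integration $G(\A)^1_{\le T}$ (defined by the condition $\hat\tau_0(T-H_0(\gamma g))=1$ for all $\gamma\in G(\Q)$) with the combinatorial sum defining $k^T$, keeping track of which $T_1$, $d_0$, and seminorms are permitted, and verifying that the passage to a fixed Siegel domain does not lose the uniformity in $f$. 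Since all three ingredients — the convergence, the truncation identity, and the quantitative bound — are already supplied by \cite[Theorem 7.1]{FL1} and by the reduction theory recalled above, the argument reduces to reorganizing the terms and collecting seminorms.
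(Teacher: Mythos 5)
Your proposal is correct in spirit, but it takes a genuinely different route from the paper's own proof, which is considerably shorter. The paper's argument never touches the parabolic combinatorics at all: it simply writes $K_f(x,y)=\sum_{\of\in\cO}K_\of(x,y)$, uses the coarse expansion $J^T(f)=\sum_{\of\in\cO}J_\of^T(f)$ from \eqref{coarseexp1}, applies the triangle inequality, and then reduces the whole statement to a single invocation of \cite[Theorem 7.1]{FL1} bounding $\sum_{\of\in\cO}\bigl|\int_{G(\Q)\bs G(\A)^1_{\le T}}K_\of(x,x)\,dx-J_\of^T(f)\bigr|$. By contrast, you are essentially reproducing Arthur's original parabolic argument from \cite[\S 7]{Ar1} (which the paper explicitly credits at the end of its proof for the $C_c^\infty$ case): fold onto a Siegel domain, use the alternating-sum identity for the characteristic function of $G(\A)^1_{\le T}$, and bound the differences $\int\widehat\tau_P(H_P-T_P)\,(K_f-K_f^P)$ for each proper standard $P$. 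One step in your write-up deserves scrutiny: you assert that these $P$-corrections ``are precisely the quantities that \cite[Theorem 7.1]{FL1} estimates,'' but that theorem is formulated in terms of the orbit decomposition indexed by $\of\in\cO$, not in terms of parabolic constant-term differences. To make your route rigorous you would either have to first decompose over $\of$ (as the paper does) and then run Arthur's parabolic combinatorics on each $K_\of$ separately, or appeal to the internal machinery of the proof in \cite{FL1} (where such parabolic estimates do in fact appear) rather than its statement. The trade-off is the usual one: your route makes the geometric mechanism --- constant-term approximation together with reduction theory --- transparent and recovers Arthur's classical proof as a special case, while the paper's treats \cite{FL1} as a black box and is a two-line deduction.
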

\begin{proof}
For $\of\in\cO$ let
\[
 K_\of(x,y):=\sum_{\gamma\in\of} f(x^{-1}\gamma y).
\]
We have
\begin{equation}\label{kernel-sum}
K_f(x,y)=\sum_{\of\in\cO} K_\of(x,y),
\end{equation}
where the series converges absolutely.
Using \eqref{coarseexp1}, we get
\[
\left|\int_{G(\Q)\bs G(\A)^1_{\le T}}K_f(x,x)\;dx-J^T(f)\right|\le \sum_{\of\in\cO}
\left|\int_{G(\Q)\bs G(\A)^1_{\le T}}K_\of(x,x)\;dx - J_\of^T(f)\right|.
\]
and the theorem follows from \cite[Theorem 7.1]{FL1}. We note that for the case
of compactly supported functions $f$ this is due to Arthur 
(see \cite[\S 7]{Ar1}).
\end{proof}

\section{The non-invariant trace formula}\label{sec-trform}
\setcounter{equation}{0}

Arthur's (non-invariant) trace formula is the equality 
\begin{equation}\label{tracef1}
J_{\geo}(f)=J_{\spec}(f),\quad f\in C_c^\infty(G(\A)^1),
\end{equation}
of the geometric side $J_{\geo}(f)$ and the spectral side $J_{\spec}(f)$ of the
trace formula. The geome\-tric side has been described in the previous section.
In this section we recall the definition of the spectral side, and in particular
the refinement of the spectral expansion obtained in \cite{FLM1}. Combining
\cite{FLM1} and \cite{FL1}, it follows that \eqref{tracef1} extends continuously
to $f\in\Co(G(\A)^1)$.

The main ingredient of the spectral side  are logarithmic derivatives of 
intertwining operators. We briefly recall the structure of the intertwining 
operators.

Let $P\in\cP(M)$. 
Let $U_P$ be the unipotent radical of $P$. 
Recall that we denote  by $\rts_P\subset\af_P^*$ the set of reduced roots of 
$A_M$ of the Lie algebra $\mathfrak{u}_P$ of $U_P$.
Let $\srts_P$ be the subset of simple roots of $P$, which is a basis for 
$(\af_P^G)^*$.
Write $\af_{P,+}^*$ for the closure of the Weyl chamber of $P$, i.e.
\[
\aaa_{P,+}^*=\{\lambda\in\aaa_M^*:\sprod{\lambda}{\alpha^\vee}\ge0
\text{ for all }\alpha\in\rts_P\}
=\{\lambda\in\aaa_M^*:\sprod{\lambda}{\alpha^\vee}\ge0\text{ for all }
\alpha\in\srts_P\}.
\]
Denote by $\modulus_P$ the modulus function of $P(\A)$.
Let $\bar\AF_2(P)$ be the Hilbert space completion of
\[
\{\phi\in C^\infty(M(\Q)U_P(\A)\bs G(\A)):\modulus_P^{-\frac12}\phi(\cdot x)\in
L^2_{\disc}(\Ai M(\Q)\bs M(\A)),\ \forall x\in G(\A)\}
\]
with respect to the inner product
\[
(\phi_1,\phi_2)=\int_{\Ai M(\Q)\bU_P(\A)\bs \G(\A)}\phi_1(g)
\overline{\phi_2(g)}\ dg.
\]
Let $\alpha\in\rts_M$.
We say that two parabolic subgroups $P,Q\in\cP(M)$ are \emph{adjacent} along 
$\alpha$, and write $P|^\alpha Q$, if $\rts_P\cap-\rts_Q=\{\alpha\}$.
Alternatively, $P$ and $Q$ are adjacent if the group $\langle P,Q\rangle$
generated by $P$ and $Q$ belongs to $\cF_1(M)$ (see \eqref{f1} for its
definition).
Any $R\in\cF_1(\M)$ is of the form $\langle P,Q\rangle$, where $P,Q$ are
the elements of $\cP(M)$ contained in $R$. We have $P|^\alpha Q$ with 
$\alpha^\vee\in\rts_P^\vee \cap\af^R_M$.
Interchanging $P$ and $Q$ changes $\alpha$ to $-\alpha$.

For any $P\in\cP(M)$ let $\Ht_P\colon G(\A)\rightarrow\af_P$ be the 
extension of $\Ht_M$ to a left $U_P(\A)$-and right $\K$-invariant map.
Denote by $\cA^2(P)$ the dense subspace of $\bar\cA^2(P)$ consisting of its 
$\K$- and $\zzz$-finite vectors,
where $\zzz$ is the center of the universal enveloping algebra of 
$\mathfrak{g} \otimes \C$.
That is, $\cA^2(P)$ is the space of automorphic forms $\phi$ on 
$U_P(\A)M(\Q)\bs G(\A)$ such that
$\modulus_P^{-\frac12}\phi(\cdot k)$ is a square-integrable automorphic form on
$\Ai M(\Q)\bs M(\A)$ for all $k\in\K$.
Let $\rho(P,\lambda)$, $\lambda\in\af_{M,\C}^*$, be the induced
representation of $G(\A)$ on $\bar\cA^2(P)$ given by
\[
(\rho(P,\lambda,y)\phi)(x)=\phi(xy)e^{\sprod{\lambda}{\Ht_P(xy)-\Ht_P(x)}}.
\]
It is isomorphic to the induced representation 
\[
\Ind_{P(\A)}^{G(\A)}\left(L^2_{\disc}(\Ai M(\Q)\bs M(\A))
\otimes e^{\sprod{\lambda}{\Ht_M(\cdot)}}\right).
\]
For alternative descriptions see \cite[\S 1]{Ar8}, \cite[I.2.17, I.2.18]{MW}.

For $P,Q\in\cP(M)$ let
\[
M_{Q|P}(\lambda):\cA^2(P)\to\cA^2(Q),\quad\lambda\in\af_{M,\C}^*,
\]
be the standard \emph{intertwining operator} \cite[\S 1]{Ar9}, which is the 
meromorphic continuation in $\lambda$ of the integral
\[
[M_{Q|P}(\lambda)\phi](x)=\int_{U_Q(\A)\cap U_P(\A)\bs U_Q(\A)}\phi(nx)
e^{\sprod{\lambda}{\Ht_P(nx)-\Ht_Q(x)}}\ dn, \quad \phi\in\cA^2(P), \ x\in G(\A).
\]
Given $\pi\in\Pi_{\di}(M(\A))$, let $\cA^2_\pi(P)$ be the space of all 
$\phi\in\cA^2(P)$ for which the function 
$M(\A)\ni x\mapsto \delta_P^{-\frac{1}{2}}\phi(xg)$,
$g\in G(\A)$, belongs to the $\pi$-isotypic subspace of the space
$L^2(\Ai M(\Q)\bs M(\A))$.
For any $P\in\cP(\M)$ we have a canonical isomorphism of 
$G(\A_f)\times(\LieG_{\C},K_\infty)$-modules
\[
j_P:\Hom(\pi,L^2(\Ai M(\Q)\bs M(\A)))\otimes 
\Ind_{P(\A)}^{G(\A)}(\pi)\rightarrow\cA^2_\pi(P).
\]
If we fix a unitary structure on $\pi$ and endow 
$\Hom(\pi,L^2(\Ai M(\Q)\bs M(\A)))$ with the inner product 
$(A,B)=B^\ast A$
(which is a scalar operator on the space of $\pi$), the isomorphism $j_P$ 
becomes an isometry. 

Suppose that $P|^\alpha Q$.
The operator $M_{Q|P}(\pi,s):=M_{Q|P}(s\varpi)|_{\cA^2_\pi(P)}$, where $\varpi\in
\af^\ast_M$ is such that $\langle\varpi,\alpha^\vee\rangle=1$, admits a 
normalization by a global factor
$n_\alpha(\pi,s)$ which is a meromorphic function in $s$. We may write
\begin{equation} \label{normalization}
M_{Q|P}(\pi,s)\circ j_P=n_\alpha(\pi,s)\cdot j_Q\circ(\Id\otimes R_{Q|P}(\pi,s))
\end{equation}
where $R_{Q|P}(\pi,s)=\otimes_v R_{Q|P}(\pi_v,s)$ is the product
of the locally defined normalized intertwining operators and 
$\pi=\otimes_v\pi_v$
\cite[\S 6]{Ar9}, (cf.~\cite[(2.17)]{Mu2}). In many cases, the 
normalizing factors can be expressed in terms automorphic $L$-functions 
\cite{Sha1}, \cite{Sha2}. For example, let $G=\GL(n)$. Then
the global normalizing factors $n_\alpha$ can be expressed in terms
of Rankin-Selberg $L$-functions. The
known properties of these functions are collected and analyzed in 
\cite[\S\S 4,5]{Mu1}.
Write $M \simeq \prod_{i=1}^r \GL (n_i)$, where the root $\alpha$ is trivial on 
$\prod_{i \ge 3} \GL (n_i)$,
and let $\pi \simeq \otimes \pi_i$ with representations $\pi_i \in 
\Pi_{\disc}(\GL(n_i,\A))$.
Let $L(s,\pi_1\times\tilde\pi_2)$ be the completed Rankin-Selberg $L$-function 
associated to $\pi_1$ and $\pi_2$. It satisfies the functional equation
\begin{equation}\label{functequ}
L(s,\pi_1\times\tilde\pi_2)=\eps(\frac12,\pi_1\times\tilde\pi_2)
N(\pi_1\times\tilde\pi_2)^{\frac12-s}L(1-s,\tilde\pi_1\times\pi_2)
\end{equation}
where $\abs{\eps(\frac12,\pi_1\times\tilde\pi_2)}=1$ and $N(\pi_1\times\tilde
\pi_2)\in\N$ is the conductor. Then we have
\begin{equation}\label{rankin-selb}
n_\alpha (\pi,s) = \frac{L(s,\pi_1\times\tilde\pi_2)}{\eps(\frac12,\pi_1\times
\tilde\pi_2)N(\pi_1\times\tilde\pi_2)^{\frac12-s}L(s+1,\pi_1\times\tilde\pi_2)}.
\end{equation}

We now turn to the spectral side. Let $L \supset M$ be Levi subgroups in 
$\levis$, $P \in \PPP (M)$, and
let $m=\dim\aaa_L^G$ be the co-rank of $L$ in $G$.
Denote by $\bases_{P,L}$ the set of $m$-tuples $\bss=(\beta_1^\vee,\dots,
\beta_m^\vee)$
of elements of $\rts_P^\vee$ whose projections to $\af_L$ form a basis for 
$\af_L^G$.
For any $\bss=(\beta_1^\vee,\dots,\beta_m^\vee)\in\bases_{P,L}$ let
$\vol(\bss)$ be the co-volume in $\af_L^G$ of the lattice spanned by $\bss$ 
and let
\begin{align*}
\Xi_L(\bss)&=\{(Q_1,\dots,Q_m)\in\cF_1(M)^m: \ \ \beta_i^{\vee}\in\af_M^{Q_i}, 
\, i = 1, \dots, m\}\\&=
\{\langle P_1,P_1'\rangle,\dots,\langle P_m,P_m'\rangle): \ \ 
P_i|^{\beta_i}P_i', \, 
i = 1, \dots, m\}.
\end{align*}

For any smooth function $f$ on $\af_M^*$ and $\mu\in\af_M^*$ denote by 
$D_\mu f$ the directional derivative of $f$ along $\mu\in\af_M^*$.
For a pair $P_1|^\alpha P_2$ of adjacent parabolic subgroups in $\cP(M)$ write
\begin{equation}\label{intertw2}
\delta_{P_1|P_2}(\lambda)=M_{P_2|P_1}(\lambda)D_\varpi M_{P_1|P_2}(\lambda):
\cA^2(P_2)\rightarrow\cA^2(P_2),
\end{equation}
where $\varpi\in\af_M^*$ is such that $\sprod{\varpi}{\alpha^\vee}=1$.
\footnote{Note that this definition differs slightly from the definition of
$\delta_{P_1|P_2}$ in \cite{FLM1}.} Equivalently, writing
$M_{P_1|P_2}(\lambda)=\Phi(\sprod{\lambda}{\alpha^\vee})$ for a
meromorphic function $\Phi$ of a single complex variable, we have
\[
\delta_{P_1|P_2}(\lambda)=\Phi(\sprod{\lambda}{\alpha^\vee})^{-1}
\Phi'(\sprod{\lambda}{\alpha^\vee}).
\]
For any $m$-tuple $\dtup=(Q_1,\dots,Q_m)\in\Xi_L(\bss)$
with $Q_i=\langle P_i,P_i'\rangle$, $P_i|^{\beta_i}P_i'$, denote by 
$\Delta_{\dtup}(P,\lambda)$
the expression
\begin{equation}\label{intertw3}
\frac{\vol(\bss)}{m!}M_{P_1'|P}(\lambda)^{-1}\delta_{P_1|P_1'}(\lambda)M_{P_1'|P_2'}(\lambda) \cdots
\delta_{P_{m-1}|P_{m-1}'}(\lambda)M_{P_{m-1}'|P_m'}(\lambda)\delta_{P_m|P_m'}(\lambda)M_{P_m'|P}(\lambda).
\end{equation}
In \cite[pp. 179-180]{FLM1} the authors defined a (purely combinatorial) map $\dtup_L: \bases_{P,L} \to \FFF_1 (M)^m$ with the property that
$\dtup_L(\bss) \in \Xi_L (\bss)$ for all $\bss \in \bases_{P,L}$.\footnote{The map $\dtup_L$ depends in fact on the additional choice of
a vector $\underline{\mu} \in (\aaa^*_M)^m$ which does not lie in an explicit finite
set of hyperplanes. For our purposes, the precise definition of $\dtup_L$ is immaterial.}

For any $s\in W_M$ let $L_s$ be the smallest Levi subgroup in $\levis(M)$
containing $w_s$. We recall that $\aaa_{L_s}=\{H\in\aaa_M\mid sH=H\}$.
Set
\[
\iota_s=\abs{\det(s-1)_{\aaa^{L_s}_M}}^{-1}.
\]
For $P\in\FFF(M_0)$ and $s\in W(M_P)$ let
$M(P,s):\AF^2(P)\to\AF^2(P)$ be as in \cite[p.~1309]{Ar3}.
$M(P,s)$ is a unitary operator which commutes with the operators $\rho(P,\lambda,h)$ for $\lambda\in\iii\aaa_{L_s}^*$.
Finally, we can state the refined spectral expansion.

\begin{theo}[\cite{FLM1}] \label{thm-specexpand}
For any $h\in C_c^\infty(G(\A)^1)$ the spectral side of Arthur's trace formula is given by
\begin{equation}\label{specside1}
J_{\spec}(h) = \sum_{[M]} J_{\spec,M} (h),
\end{equation}
$M$ ranging over the conjugacy classes of Levi subgroups of $G$ (represented by members of $\mathcal{L}$),
where
\begin{equation}\label{specside2}
J_{\spec,M} (h) =
\frac1{\card{W(M)}}\sum_{s\in W(M)}\iota_s
\sum_{\bss\in\bases_{P,L_s}}\int_{\iii(\aaa^G_{L_s})^*}
\tr(\Delta_{\dtup_{L_s}(\bss)}(P,\lambda)M(P,s)\rho(P,\lambda,h))\ d\lambda
\end{equation}
with $P \in \PPP(M)$ arbitrary.
The operators are of trace class and the integrals are absolutely convergent
with respect to the trace norm and define distributions on $\Co(G(\A)^1)$.
\end{theo}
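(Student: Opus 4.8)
This is the main result of \cite{FLM1}, and the plan is as follows. One starts from the form of the spectral side that results from the spectral decomposition of $L^2(G(\Q)\bs G(\A)^1)$ together with Arthur's computation of the truncated inner product of Eisenstein series: there the contribution of a conjugacy class $[M]$ is a sum over $s\in W(M)$, weighted by $\card{W(M)}^{-1}\iota_s$, of an integral over $\iii(\aaa_{L_s}^G)^*$ of the trace of $M(P,s)\rho(P,\lambda,h)$ composed with the value at $\lambda$ of the operator attached --- in Arthur's formalism of $(G,M)$-families --- to the family $\{M_{Q|P}(\lambda)\}_{Q\in\cP(M)}$ of standard intertwining operators and to the Levi subgroup $L_s$. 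Two tasks then remain: \emph{(i)} to rewrite that operator explicitly in the combinatorial shape of \eqref{intertw3}, and \emph{(ii)} to show that all operators in \eqref{specside2} are of trace class and that the iterated sum over $[M]$, sum over $\bss$, and integral over $\iii(\aaa_{L_s}^G)^*$ converge absolutely in the trace norm.

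For \emph{(i)} the essential input is the general expansion, for a $(G,M)$-family $\{c_Q\}_{Q\in\cP(M)}$ and $L\in\cL(M)$ with $m=\dim\aaa_L^G$, of the associated operator $c_L$: after fixing a generic auxiliary vector $\underline\mu\in(\aaa_M^*)^m$ (the one used to define the map $\dtup_L$), $c_L$ equals a sum over the bases $\bss\in\bases_{P,L}$, each summand being $\vol(\bss)/m!$ times an alternating product of single-variable derivatives of the family and of the family members, arranged along the distinguished tuple $\dtup_L(\bss)\in\Xi_L(\bss)$. Applying this to $\{M_{Q|P}(\lambda)\}$, whose derivatives along simple coroots are precisely the operators $\delta_{P_i|P_i'}(\lambda)$ of \eqref{intertw2}, reproduces $\Delta_{\dtup_{L_s}(\bss)}(P,\lambda)$. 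This step is purely combinatorial; the only delicate point is that the genericity of $\underline\mu$ is what collapses the sum over all of $\Xi_L(\bss)$ to the single representative $\dtup_L(\bss)$.

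Task \emph{(ii)} is the analytic heart. One decomposes everything along the $\pi$-isotypic subspaces $\cA^2_\pi(P)$, $\pi\in\Pi_{\disc}(M(\A))$, and is reduced to an estimate of the form
\[
\sum_{\pi\in\Pi_{\disc}(M(\A))}\ \int_{\iii(\aaa_{L_s}^G)^*}
\bigl\|\Delta_{\dtup_{L_s}(\bss)}(P,\lambda)\bigr\|\cdot
\bigl\|\rho(P,\lambda,h)|_{\cA^2_\pi(P)}\bigr\|_{\tr}\ d\lambda\ <\ \infty .
\]
Since $h$ is smooth and compactly supported, the convolution identities for the Casimir elements $\Omega$ and $\Omega_{K_\infty}$ show that the second factor is trace class with trace norm decaying faster than any polynomial in $\|\lambda\|$, in the archimedean Casimir eigenvalue $\lambda_\pi$, and in $\|\tau\|$ for the $K_\infty$-types $\tau$ that occur. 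It therefore suffices to bound the operator norm of $\Delta_{\dtup_{L_s}(\bss)}(P,\lambda)$ polynomially in $\|\lambda\|$ and $\lambda_\pi$, uniformly in $\pi$. Using the normalization \eqref{normalization}, each $\delta_{P_i|P_i'}(\lambda)$ splits into the logarithmic derivative of a global normalizing factor $n_\alpha(\pi,s)$ plus the logarithmic derivative of the normalized intertwining operator $R_{P_i|P_i'}(\pi,s)$. The former is controlled by a bound on the number of zeros of $n_\alpha(\pi,s)$ in bounded vertical strips: for $G=\GL(n)$ this comes from the analytic properties of the Rankin--Selberg $L$-functions recorded in \eqref{functequ}--\eqref{rankin-selb}, and in general from those of the Langlands--Shahidi normalizing factors. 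The latter is controlled using that the local operators $R_{P_i|P_i'}(\pi_v,s)$ are unitary for $s\in\iii\R$, so that a Phragm\'en--Lindel\"of argument reduces the estimate on the logarithmic derivative to a bound at the ramified places, where a weak approximation to the Ramanujan bound is enough. Finally one sums over $\pi$ using that the number of $\pi\in\Pi_{\disc}(M(\A))$ of bounded level with $\lambda_\pi$ in a bounded range grows polynomially --- which for $M$ a product of general linear groups is part of the input of \cite{FLM1}.

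The main obstacle is \emph{(ii)}: the estimates on the logarithmic derivatives of the global and local intertwining operators that are uniform in $\pi$, together with the compatible counting of automorphic representations of $M(\A)$. Granting these, the trace-class statement is immediate, since every $\Delta_{\dtup_{L_s}(\bss)}(P,\lambda)$ is a product of operators bounded on $\cA^2(P)$ with the single trace-class factor coming from $\rho(P,\lambda,h)$, and the absolute convergence of the $\lambda$-integral and of the sums over $[M]$ and over $\bss$ is precisely the displayed estimate. The continuity of $h\mapsto J_{\spec}(h)$ on $\Co(G(\A)^1;K_f)$ obtained in this way, combined with the analogous continuity of $J_{\geo}$ from \cite{FL1} and the density of $C_c^\infty(G(\A)^1)$, then gives \eqref{specside1}--\eqref{specside2} for $h\in C_c^\infty(G(\A)^1)$ and, moreover, shows that both sides define distributions on $\Co(G(\A)^1)$.
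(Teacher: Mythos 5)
The theorem is stated in the paper with the attribution \cite{FLM1} and is not proved there, so I compare your sketch against what \cite{FLM1} actually does. Your two-step architecture is the right one: (i) rewrite the operator attached to the $(G,M)$-family $\{M_{Q|P}(\lambda)\}_{Q\in\cP(M)}$ and to $L_s$ via the combinatorial identity that selects the tuple $\dtup_{L_s}(\bss)$, and (ii) prove absolute convergence in the trace norm by separating the rapid decay of $\rho(P,\lambda,h)$ from polynomial bounds on $\Delta_{\dtup_{L_s}(\bss)}(P,\lambda)$. Your use of \eqref{normalization} to split $\delta_{P_i|P_i'}$ into the logarithmic derivative of the global normalizing factor plus the logarithmic derivative of the normalized local operator, and your control of the former by counting zeros of $n_\alpha(\pi,\cdot)$ in bounded vertical regions, are also essentially what \cite{FLM1} does.

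Two points in your sketch are off, though. First, for the absolute-convergence statement \cite{FLM1} does \emph{not} control the normalized local intertwining operators $R_{P_i|P_i'}(\pi_v,s)$ by a Phragm\'en--Lindel\"of argument combined with an approximation to the Ramanujan conjecture. What is used there is a soft rationality and bounded-degree statement about $R_{Q|P}(\pi_v,\cdot)$ (a rational function of $q_v^{-s}$ of controlled degree at the finite places, and a polynomially bounded meromorphic function at the archimedean place), which yields a bound on the logarithmic derivative that is polynomial in $\|\lambda\|$, the Casimir eigenvalue $\lambda_{\pi_\infty}$, the $K_\infty$-type and the level of $K_f$. The Ramanujan-adjacent estimate you describe is the one of \cite{MS} and \cite{FLM2}: it gives a \emph{uniform} bound in $\pi$, which the present paper needs later (in Section \ref{sec-analtor}, via \eqref{log-deriv1}) to prove Proposition \ref{asympinf} and Theorem \ref{theo-lt}, but which is not what the convergence of the spectral side itself rests on. Second, your closing step is inverted: for $h\in C_c^\infty(G(\A)^1)$ the identity \eqref{specside1}--\eqref{specside2} already follows from Arthur's spectral expansion together with the combinatorial rewriting, with no appeal to continuity; the continuity of $J_{\spec}$ on $\Co(G(\A)^1;K_f)$ is a \emph{consequence} of the trace-norm estimate, and it is what allows one (together with \cite{FL1}) to extend the trace formula \eqref{tracef1} from $C_c^\infty(G(\A)^1)$ to the Schwartz space, not the other way around.
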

Note that the term corresponding to $M=G$ is $J_{\spec,G} (h) = \tr R_{\disc}(h)$.

\section{The unipotent contribution to the trace formula}\label{sec-unipotent}
\setcounter{equation}{0}

In this section we assume that $G=\GL(n)$ or $G=\SL(n)$ as algebraic groups over $\Q$, and we specialize to one of these groups at some points. 
The purpose is to analyze  the unipotent contribution to the geometric side of
the trace formula. The point of departure is the coarse geometric expansion 
of $J_{\geo}$ as a sum of distributions
\begin{equation}\label{geoside}
J_{\geo}(f)=\sum_{\ho\in\cO}J_{\ho}(f),\quad f\in C^\infty_c(G(\A)^1),
\end{equation} 
parametrized by the set $\cO$ of semisimple conjugacy classes of $G(\Q)$.
The distribution $J_{\ho}(f)$ is the value at $T=0$
of the polynomial $J_{\ho}^T(f)$ defined in \cite{Ar1}. 
In particular, following Arthur, we write $J_{\unip}(f)$ for the 
contribution corresponding to the class of $\{1\}$. Let $K(N)$ be a
principal congruence subgroup of level $N\ge 3$. By \cite[Corollary 5.2]{LM}
there exists a bi-$K_\infty$-invariant compact neighborhood $\omega$ of $K(N)$ in
$G(\A)^1$ such that 
\begin{equation}\label{unip-contr1}
J_{\geo}(f)=J_{\unip}(f).
\end{equation}
for all $f\in C_c^\infty(G(\A)^1)$ supported in $\omega$. (\cite[Corollary 5.2]{LM} was only stated for $\GL(n)$, but its proof holds also for  $\SL(n)$ without modification.)  For our applications
we can choose $f$ such that \eqref{unip-contr1} holds if we restrict to $K_f$ with $K_f\subseteq K(N)$ for some $N\ge3$. See \S~\ref{sec:examples} for computations for $n=2,3$ regarding orbits of finite order which appear in the  case $K_f=G(\hat\Z)$.

To analyze $J_{\unip}(f)$ we use Arthur's fundamental result 
(\cite[Corollaries 8.3 and 8.5]{Ar4}) to 
express $J_{\unip}(f)$ in terms of weighted orbital integrals. To state the 
result we recall some facts about weighted orbital integrals. Let $S$ be a 
finite set of places of $\Q$ containing $\infty$. Set
\[
\Q_S=\prod_{v\in S}\Q_v,\quad \mathrm{and}\quad G(\Q_S)=\prod_{v\in S}G(\Q_v).
\]
Let
 \[
G(\Q_S)^1=G(\Q_S)\cap G(\A)^1
\]
and write $C^\infty_c(G(\Q_S)^1)$ for the space of functions on $G(\Q_S)^1$
obtained by restriction of functions in $C^\infty_c(G(\Q_S))$ to $G(\Q_S)^1$. Further, let $\A^S=\prod_{v\not\in S}\Q_v$ be the restricted product over all places outside of $S$, and define $G(\A^S)$ similarly as above. 

\subsection{The fine geometric expansion}

Let $M\in\levis$ and $\gamma\in M(\Q_S)$. 
The general weighted orbital integrals $J_M(\gamma,f)$ defined in 
(\cite{Ar5}) are distributions on $G(\Q_S)$. 
Denote by $H_\gamma$ the centralizer of $\gamma$ in a subgroup $H$ of $G$.
If $\gamma$ is such that $M_\gamma=G_\gamma$ then
$J_M(\gamma,f)$ is given by an integral of the form
\begin{equation}\label{orbint}
J_M(\gamma,f)=\big|D(\gamma)\big|^{1/2}\int_{{G_\gamma}(\Q_S)\bs G(\Q_S)}
f(x^{-1}\gamma x) v_M(x)\ dx,
\end{equation}
where $D(\gamma)$ is defined in \cite[p. 231]{Ar5}
and $v_M(x)$ is the weight function associated to the $(G,M)$-family
$\{v_P(\lambda,x)\colon P\in\mathcal{P}(M)\}$ defined in 
\cite[p.230]{Ar5}. It is a left $M(\A)$-invariant and right $\mathbf{K}$-invariant function on $G(\A)$. 
In particular, in the case $M=G$ (in which $v_M\equiv1$) we obtain the usual
(invariant) orbital integral. Of course, implicit in $J_M(\gamma,f)$
is a choice of a Haar measure on $G_\gamma(\Q_S)$.
When the condition $G_\gamma\subset M$ is not satisfied (for example, if $\gamma$ is unipotent and $M\neq G$), 
the definition of $J_M(\gamma,f)$ is more complicated.
It is obtained as a limit of a linear
combination of integrals as above. For more details we refer to 
\cite{Ar5}, see also the description below.
 If $\gamma$ 
belongs to the intersection of $M(\Q_S)$ with $G(\Q_S)^1$, one can obviously
define the corresponding weighted orbital integral as a linear form on 
$C^\infty_c(G(\Q_S)^1)$.  Note that $J_M(\gamma,f)$ depends only on the $M(\Q_S)$-conjugacy class of $\gamma$.

To state the fine expansion of $J_{\unip}(f)$, we need to introduce a certain equivalence relation as defined in \cite{Ar4,Ar7}.
Let $\cU_M$ denote the variety of unipotent elements in $M$ and $\cU_M(A)$ its $A$-points for any $\Z$-algebra $A$. We say that $u_1, u_2\in \cU_M(\Q)$ are \emph{$(M,S)$-equivalent} if they are conjugate by some element in $M(\Q_S)$, cf.~\cite[\S7]{Ar4}. We denote by $\left(\cU_M(\Q)\right)_{M,S}$ the set of $(M,S)$-equivalence classes in $\cU_M(\Q)$. We note that $\left(\cU_M(\Q)\right)_{M,S}$ is finite for any $S$ but may get larger as $S$ grows. 
We get an injective map
\[
 \left(\cU_M(\Q)\right)_{M,S}\longrightarrow \left(\cU_M(\Q_S)\right)_{M(\Q_S)} 
\]
where $\left(\cU_M(\Q_S)\right)_{M(\Q_S)}$ denotes the set of $M(\Q_S)$-conjugacy classes in $\cU_M(\Q_S)$. Note that this last set is evidently the same as the direct product $\prod_{v\in S} \left(\cU_M(\Q_v)\right)_{M(\Q_v)}$ over the $M(\Q_v)$-conjugacy classes in $\cU_M(\Q_v)$, $v\in S$. In particular, we can identify an equivalence class $U\in\left(\cU_M(\Q)\right)_{M,S}$ with its image under the above map, that is, with a tuple $(U_v)_{v\in S}$ of $M(\Q_v)$-conjugacy classes in $\cU_M(\Q_v)$.

By~\cite[Theorem 8.1]{Ar4} we have
\begin{equation}\label{finegeoexp}
 J_{\unip}(f\otimes\one_{K^S})=\vol(G(\Q)\backslash G(\A)^1)f(1) + \sum_{(M,U)}a^M(S,U) J_M(U,f),
\end{equation}
where $(M,U)$ runs over all pairs of Levi subgroups $M\in \levis$ and $U\in \left(\cU_M(\Q)\right)_{M,S}$ with $(M,U)\neq (G, 1)$. Here $f\in C^\infty_c(G(\Q_S)^1)$, $\one_{K^S}$ is the characteristic
function of the standard maximal compact of $G(\A^S)$, and $a^M(S,U)$ are 
certain constants which depend on the normalization of measures (but are 
usually not known explicitly).
The distributions $J_M(U,f)$ can be written as weighted orbital integrals 
\cite[p. 256]{Ar5}. In our case the integrals simplify as we are going to see later. 

\subsection{Unipotent conjugacy classes}

Let $\left(\cU_M\right)$ denote the set of geometric (that is,  over an algebraic closure $\bar\Q$ of $\Q$) $M$-conjugacy classes in $\cU_M$. 
Then $\left(\cU_M\right)$ is a finite set. 
Each $\cV\in \left(\cU_M\right)$ is defined over $\Q$ and the set of $\Q$-points $\cV(\Q)$ is non-empty. More precisely, each $\cV\in\left(\cU_M\right)$ corresponds to a partition of $n$, and $\cV(\Q)$ contains the matrix with Jordan normal form corresponding to that partition.   
For any $\cV\in\left(\cU_M\right)$ the set $\cV(\Q)$ is closed under the $(M,S)$-equivalence relation and we write  $\left(\cV(\Q)\right)_{M,S}$ for the finite set of $(M,S)$-equivalence classes in $\cV(\Q)$. 

\begin{remark}
If $G=\SL(n)$,  there might be infinitely many $M(\Q)$-conjugacy classes in $\cV(\Q)$ depending on the type of $\cV(\Q)$. This is in contrast to the case of $\GL(n)$, where the (finite) set of geometric unipotent conjugacy classes is in bijection with the set of rational unipotent conjugacy classes. 
\end{remark}

Each class $\cV\in\left(\cU_M\right)$ is a Richardson class, that is, there exists a standard parabolic subgroup $Q=LV\in\levis(M)$ such that $\cV$ is induced from the trivial orbit in $L$ to $M$, see \cite[\S 5.5 Proposition]{Hu}. Equivalently, the intersection of  $\cV$ with $V$ is an open and dense subset of $V$. Note that every $(M,S)$-equivalence class $U\subseteq \cV(\Q)$ has a representative in $V(\Q)$. 

Now let $\cV\in \left(\cU_M\right)$ and $U=(U_v)_{v\in S}\in \left(\cV(\Q_S)\right)_{M(\Q_S)}$. Here we write $\left(\cV(\Q_S)\right)_{M(\Q_S)}=\prod_{v\in S}\left(\cV(\Q_v)\right)_{M(\Q_v)}$ for the set of $M(\Q_S)$-conjugacy classes in $\cV(\Q_S)$. To understand the $S$-adic integral $J_M(U,f)$, we decompose it into a sum of products of integrals at $\infty$ and at the finite places $S_{\fin}=S\backslash\{\infty\}$.  More precisely, for every pair of Levi subgroups $L_1, L_2\in \levis(M)$ there exists a coefficient $d_M^G(L_1, L_2)\in\C$ such that
\begin{equation}\label{orbint11}
 J_M(U,f)=\sum_{L_1, L_2\in\levis(M)} d_M^G(L_1, L_2) J_M^{L_1}(U_{\infty}, f_{\infty, Q_1}) J_M^{L_2}(U_{\fin}, f_{\fin, Q_2})
\end{equation}
(see~\cite{Ar3}) where $U_{\fin}=(U_v)_{v\in S_{\fin}}\in \left(\cV(\Q_{S_{\fin}})\right)_{M(\Q_{S_{\fin}})}$, and  $Q_1=L_1V_1\in \cP(L_1))$, $Q_2=L_2V_2\in\cP(L_2)$ 
are certain parabolic subgroups the exact choice of which does not matter to 
us. Moreover,  
\[
 f_{\infty, Q_1}(m)=\delta_{Q_1}(m)^{1/2} \int_{K_{\infty}}\int_{V_1(\R)} f(k^{-1} mv k)\, dk\, dv, \; m\in L_1(\R),
\]
with $f_{\fin, Q_2}$ being defined analogously, and  the coefficients $d_M^G(L_1, L_2)$ are independent of $S$ and they vanish unless the natural map $\ka_M^{L_1}\oplus\ka_M^{L_2}\longrightarrow \ka_M^G$ is an isomorphism. In case the coefficient does not vanish, it depends on the chosen measures on $\ka_M^{L_1}$, $\ka_M^{L_2}$ and $\ka_M^G$.
The distributions $J_M^{L}(U, F)$, for $L\in\levis(M)$, any finite set $S'$ of places of $\Q$,  $F\in C^\infty(L(\Q_{S'}))$, and a unipotent conjugacy class $U\subseteq L(\Q_{S'})$, are defined  similarly as the weighted orbital integrals $J_M^G=J_M$ but with $L$ in place of $G$.

Our test function at the places in $v\in S_{\fin}$ is fixed once and for all so that the integrals at those places can be viewed as constant for our purposes. Hence we need to understand the integral at the Archimedean place.  We therefore need to better understand the unipotent conjugacy classes over $\R$. If $G=\GL(n)$, the unipotent orbits in $\GL_n(\R)$ and all its Levi subgroups in $\levis$ are easy to describe as they are in one-to-one correspondence with the geometric unipotent conjugacy classes and therefore classified by partitions of $n$. We assume for the moment that $G=\SL(n)$. Note that $\SL_n(\R)$ is normalized by 
$\GL_n(\R)$. 
Moreover, each $M\in\levis$ is of the form $M=\bar M\cap\SL(n)$ for a unique Levi subgroup $\bar M\in \levis^{\GL(n)}$. Then $\bar K^M_{\infty}:=\rO(n)\cap \bar M(\R)$ is a maximal compact subgroup in $\bar M$, $\bar K^M_{\infty}\cap M(\R)=K^M_{\infty}$,  and $\bar M(\R)$ normalizes $M(\R)$. 
In particular, it makes sense to speak of $\bar K_{\infty}^M$-conjugation 
invariant functions on $M(\R)$.

\begin{lem}
Let $\cV\in\left(\cU_M\right)$.
For any two equivalence classes $U_1, U_2\in\left(\cV(\R)\right)_{M(\R)}$ there exists $k\in\bar K^M_{\infty}$ with $k^{-1} U_1 k=U_2$ and $\left(\cV(\R)\right)_{M(\R)}$ consists of at most two equivalence classes. More precisely, if $U_1, U_2\in\left(\cV(\R)\right)_{M(\R)}$ are the two distinct classes, we have $k^{-1} U_1 k=U_2$ with $k=\diag(-1, 1, \ldots, 1)\in \bar K^M_{\infty}$.
\end{lem}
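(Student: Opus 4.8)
The plan is to compare $M(\R)$-conjugacy with $\bar M(\R)$-conjugacy inside the (single) geometric class $\cV(\R)$ and to correct for the difference by means of positive scalars and of $k_0:=\diag(-1,1,\dots,1)$. Write $\bar M\simeq\prod_{i=1}^r\GL(n_i)$, so that $\bar M(\R)=\prod_i\GL(n_i,\R)$, and let $\mathrm{D}\colon\bar M(\R)\to\R^\times$, $(\bar m_i)_i\mapsto\prod_i\det\bar m_i$, be the total determinant, whose kernel is $M(\R)$. The first step is to observe that $\bar M(\R)$ acts transitively on $\cV(\R)$ by conjugation: the class $\cV$ corresponds to an $r$-tuple of partitions $\lambda^{(i)}\vdash n_i$, so an element of $\cV(\R)$ is a tuple $(u_i)_i$ with $u_i\in\GL(n_i,\R)$ unipotent of type $\lambda^{(i)}$; since the Jordan normal form of a unipotent matrix is rational, two unipotent elements of $\GL(n_i,\R)$ of the same type are already $\GL(n_i,\R)$-conjugate, and carrying this out blockwise gives the transitivity.

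Next I would exploit the freedom in the conjugating element. Fix $u_1\in\cV(\R)$ and let $u_2\in\cV(\R)$ be arbitrary; by transitivity $u_2=\bar m^{-1}u_1\bar m$ for some $\bar m\in\bar M(\R)$. The positive scalar matrices $t\cdot\Id$, $t>0$, are central in $\bar M$, hence centralize $u_1$, and satisfy $\mathrm{D}(t\cdot\Id)=t^n$, so $\mathrm{D}$ restricted to these already surjects onto $\R_{>0}$. Replacing $\bar m$ by $(t\cdot\Id)\bar m$ for a suitable $t$, we may thus assume $\mathrm{D}(\bar m)=\pm1$. If $\mathrm{D}(\bar m)=1$, then $\bar m\in M(\R)$, so $u_2$ is $M(\R)$-conjugate to $u_1$. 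If $\mathrm{D}(\bar m)=-1$, then $\mathrm{D}(\bar m k_0)=1$, i.e.\ $\bar m k_0\in M(\R)$, and a direct computation gives $k_0^{-1}u_2k_0=(\bar m k_0)^{-1}u_1(\bar m k_0)$, so $k_0^{-1}u_2k_0$ is $M(\R)$-conjugate to $u_1$. Hence every element of $\cV(\R)$ lies in $U_0\cup k_0U_0k_0^{-1}$, where $U_0$ denotes the $M(\R)$-conjugacy class of $u_1$; this shows that $\left(\cV(\R)\right)_{M(\R)}$ consists of at most two classes, and when there are exactly two they are $U_0$ and $k_0U_0k_0^{-1}$, which are interchanged by conjugation with $k_0$. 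Since $k_0=\diag(-1,1,\dots,1)$ is orthogonal and block-diagonal it lies in $\bar K^M_\infty=\rO(n)\cap\bar M(\R)$ (as does $\Id$, which covers the case $U_1=U_2$), and the lemma follows.

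I do not expect a genuine obstacle here: the argument only combines the rationality of Jordan forms over $\R$ with the elementary observation that the positive scalars already surject onto $\R_{>0}$ under the total determinant, so that the only possible ambiguity is measured by $\R^\times/\R_{>0}=\{\pm1\}$ and is realized by $k_0$. The one point requiring a little care is that $M(\R)$-conjugacy is governed by the \emph{total} determinant rather than the individual block determinants, which is why the ambiguity group is $\{\pm1\}$ and not $\{\pm1\}^r$, and correspondingly why a single reflection $k_0$ suffices.
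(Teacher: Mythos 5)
Your proof is correct and follows essentially the same line as the paper's: conjugate $u_1,u_2$ in $\bar M(\R)$, normalize so that $|\det|=1$ (the paper asserts this "without loss of generality," you justify it via the central positive scalars), and absorb the residual sign by $k_0=\diag(-1,1,\dots,1)$. The only difference is that you spell out the transitivity of $\bar M(\R)$ on $\cV(\R)$ via the rationality of the Jordan form, a point the paper leaves implicit.
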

\begin{proof}
Let $u_1\in U_1$ and  $u_2\in U_2$. In $\bar M(\R)$, $u_1$ and $u_2$ are conjugate, that is, there exists some $g\in \bar M(\R)$ with $g^{-1} u_1 g=u_2$. Without loss of generality we can assume that $|\det g|=1$. If $\det g=1$, then $g\in \bar M(\R)\cap\SL_n(\R)=M(\R)$ and $U_1=U_2$. If $\det g=-1$, let $k=\diag(-1, 1,\ldots, 1)\in \bar K^M_{\infty}$ and $g_1=gk^{-1}$. Then $g_1\in \bar M(\R)\cap \SL_n(\R)=M(\R)$, and $U_2= g^{-1} U_1 g= k^{-1} g_1^{-1} U_1 g_1 k= k^{-1} U_1 k$ as asserted.  
\end{proof}

\subsection{Measures on conjugacy classes}

\begin{corollary}\label{cor:invariance:arch}
 If $\cV\in\left(\cU_M\right)$  and $f_{\infty}\in C_c(G(\R))$ is conjugation invariant under $\bar K_{\infty}$, then we have $J_M(U_1,f_{\infty})=J_M(U_2, f_{\infty})$ for all $U_1, U_2\in\left(\cV(\R)\right)_{M(\R)}$.
\end{corollary}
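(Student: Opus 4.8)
The plan is to derive the corollary from the lemma just proved together with the equivariance of weighted orbital integrals under conjugation by the element $k=\diag(-1,1,\dots,1)$. Fix $\cV\in(\cU_M)$ and $U_1,U_2\in(\cV(\R))_{M(\R)}$. If $U_1=U_2$ there is nothing to prove, so assume these are the two distinct classes; by the lemma we may take $k=\diag(-1,1,\dots,1)\in\bar K^M_{\infty}$, which satisfies $k^2=1$ and $k^{-1}U_1k=U_2$, and we note $\bar K^M_{\infty}\subseteq\bar K_{\infty}$. Since $J_M$ depends only on the $M(\R)$-conjugacy class of its orbit argument, it suffices to prove
\[
J_M(k^{-1}\gamma k,f_\infty)=J_M(\gamma,f_\infty)
\]
for a representative $\gamma$ of $U_1$ and every $f_\infty\in C_c(G(\R))$ invariant under $\bar K_{\infty}$-conjugation.

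The key point is that $\sigma:=\Ad(k)$ is an automorphism of $G(\R)$ preserving all of the structure entering the definition of $J_M$: being conjugation by a diagonal matrix, $\sigma$ fixes $M_0$ and $A_0$ pointwise (hence acts trivially on $\af_0$), it stabilizes every root subgroup and therefore every $P\in\cP(M)$ together with its Iwasawa decomposition, it stabilizes $M$, and it stabilizes $K_\infty$ and $\bar K_{\infty}$. Since moreover $\sigma$ acts trivially on the split central torus $A_{M_P}(\R)^0$ of each $M_P$, we get $H_P\circ\sigma=H_P$ for all $P\in\cP(M)$, so the $(G,M)$-family $\{v_P(\lambda,\cdot)\}$ is $\sigma$-invariant and hence $v_M\circ\sigma=v_M$; also $\sigma$ preserves Haar measure on $G(\R)$ (being of order two), $G_{\sigma(\gamma)}=\sigma(G_\gamma)$, and $D(\sigma(\gamma))=D(\gamma)$. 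In the case $G_\gamma=M_\gamma$, where $J_M(\gamma,f_\infty)$ is the convergent integral \eqref{orbint}, the change of variables $x\mapsto k^{-1}xk$ combined with these invariances and the identity $f_\infty(k^{-1}(y^{-1}\gamma y)k)=f_\infty(y^{-1}\gamma y)$ — which holds because $f_\infty$ is $\bar K_{\infty}$-conjugation invariant and $k^{-1}\in\bar K_{\infty}$ — yields the claimed equality (with the compatible, i.e. $\sigma$-equivariant, normalizations of Haar measure on $G_\gamma(\R)$ and $G_{k^{-1}\gamma k}(\R)$).

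For a general unipotent $\gamma$ with $M\ne G$ the equality $G_\gamma=M_\gamma$ fails, but $J_M(\gamma,f_\infty)$ is defined by Arthur as a limit over $A_M(\R)^0$-regular points $a\to1$ of linear combinations of integrals $J_L(a\gamma,f_\infty)$, $L\in\levis(M)$, each again of the convergent type at the level of $L$, with scalar coefficients depending on $\gamma$ and $a$ only through $\sigma$-invariant quantities. Since $a$ is diagonal we have $k^{-1}ak=a$, hence $k^{-1}(a\gamma)k=a\,(k^{-1}\gamma k)$, and applying the case already treated at the level of $L$ (with $\sigma$ restricted to the automorphism of $L(\R)$ preserving the analogous data) gives $J_L(a(k^{-1}\gamma k),f_\infty)=J_L(a\gamma,f_\infty)$; passing to the limit gives $J_M(k^{-1}\gamma k,f_\infty)=J_M(\gamma,f_\infty)$. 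Together with the lemma this proves the corollary. (For $G=\GL(n)$ the set $(\cV(\R))_{M(\R)}$ is a single class, so the statement is vacuous there; the content lies entirely in the case $G=\SL(n)$.)

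The main obstacle is the verification, in the last step, that Arthur's full definition of the unipotent weighted orbital integral — including the auxiliary scalar factors that occur in the limiting process — is compatible with the automorphism $\sigma=\Ad(k)$; this is not deep, but it requires unwinding that definition rather than treating $J_M$ as a black box, since for $M\ne G$ the explicit integral \eqref{orbint} is not available directly.
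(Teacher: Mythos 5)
Your approach is in the same spirit as the paper's: conjugate by $k=\diag(-1,1,\dots,1)$, observe that this interchanges the two classes, and check that all the data entering the definition of $J_M$ is compatible with $\Ad(k)$. For the case $M_\gamma=G_\gamma$ (and for the invariant orbital integral $J_G$) your change-of-variables argument is correct, and your formulation $G_{\sigma(\gamma)}=\sigma(G_\gamma)$ together with the $\sigma$-equivariant choice of measures is in fact stated more carefully than in the paper.

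The divergence, and the genuine issue, is the case $M\neq G$, which is the only case where the corollary has content (every nontrivial unipotent $\gamma$ then has $G_\gamma\not\subset M$). You handle it by appealing to Arthur's limiting construction of $J_M(\gamma,f)$ over regular $a\to 1$ and asserting that the scalar correction factors depend on $(\gamma,a)$ "only through $\sigma$-invariant quantities"; but you do not prove this, and indeed you single it out in your final paragraph as the main obstacle. That is a real gap: Arthur's limit is not a naive limit but extracts a distinguished term from an asymptotic expansion whose correction coefficients are themselves built from weight functions, and $\sigma$-equivariance of those coefficients is exactly the nontrivial point. The paper reaches the same destination by a shorter route through the same mechanism: it uses the representation $J_M(U,f_\infty)=\int_{U^G} f_\infty(x)\,w_{M,U}(x)\,dx$ established in the next section, so that the weight function $w_{M,U}$ already encapsulates the outcome of the limiting process, and then reads off the $k$-invariance $w_P(\varpi,a,k^{-1}\pi k)=w_P(\varpi,a,\pi)$ directly from Arthur's explicit formula \eqref{eq:weight:function:definition}; see \eqref{eq:weight:function:invariance:P} and \eqref{eq:definition:weight}. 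To complete your argument you would either need to carry out the verification you describe, or, more economically and as the paper does, first reduce $J_M(U,\cdot)$ to an integral against a weight function and then check the $k$-invariance of that weight function from its explicit description.
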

\begin{proof}
By the previous lemma, there are at most two distinct classes in $\left(\cV(\R)\right)_{M(\R)}$. If there is only one class in $\left(\cV(\R)\right)_{M(\R)}$, there is nothing to show. If there are two distinct classes $U_1, U_2$, they are conjugate to each other via the element $k=\diag(-1, 1, \ldots, 1)\in\bar K_{\infty}^M$. Let $u_1\in U_1$ and $u_2=k^{-1}u_1k\in U_2$. Then the centralizers of $u_1$ and $u_2$ in $G(\R)$ are the same, since they are the same in $\GL_n(\R)$ and $k$ normalizes $G(\R)=\SL_n(\R)$ in $\GL_n(\R)$. Hence the invariant measures on the $G(\R)$-conjugacy classes of $u_1$ and $u_2$ coincide, in particular, $J_G(U_1,f_{\infty})=J_G(U_2, f_{\infty})$ for every $\bar K_{\infty}$-conjugation invariant $f_{\infty}\in C_c(G(\R))$.  

The non-invariant measure defining $J_M(U_i,\cdot)$ can be written as the product of some weight function times the invariant measure. We shall see in the next section (see \eqref{eq:weight:function:invariance:P} and \eqref{eq:definition:weight}) that the weight function is invariant under the action of $k=\diag(-1, 1, \ldots, 1)$ so that the claim of the lemma follows for the weighted orbital integrals as well.
\end{proof}

The following corollary now is valid for $G=\SL(n)$ as well as $G=\GL(n)$. However, we shall only prove it for $\SL(n)$. For $G=\GL(n)$ the proof in fact is easier and was already given in \cite[Lemma 5.3]{LM} (see also \cite[Proposition 5]{Howe74}).
\begin{corollary}
 For every $\cV\in\left(\cU_G\right)$ there exists a standard parabolic subgroup $P=LV$ and a constant $c>0$ such that for every $\rO(n)$-conjugation invariant function $f_{\infty}$ and every $U\in\left(\cV(\R)\right)_{G(\R)}$ the invariant orbital integral $J_G(U,f_{\infty})$ can be written as
 \[
  J_G(U, f_{\infty})= c\int_{V(\R)} f_{\infty}(v)\, dv,
 \]
 where $dv$ denotes the Haar measure on $V(\R)$ normalized such that it coincides with the measure obtained from the Lebesgue measure on $\R^{\dim V}$ when $V(\R)$ is identified with $\R^{\dim V}$ via its matrix coordinates.
\end{corollary}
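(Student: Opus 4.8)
The plan is to reduce the case $G=\SL(n)$ to the case $G=\GL(n)$, which is precisely \cite[Lemma 5.3]{LM}. Write $\overline G=\GL(n)$ and let $\overline{\cV}\in(\cU_{\overline G})$ be the geometric unipotent class attached to the same partition of $n$ as $\cV$. Since every unipotent element has determinant $1$, the $\GL(n)$- and $\SL(n)$-orbits of such an element coincide, so $\overline{\cV}$ and $\cV$ are the same subvariety of the unipotent variety and $\overline{\cV}(\R)=\cV(\R)$. By \cite[Lemma 5.3]{LM} there are a standard parabolic subgroup $\overline P=\overline L V$ of $\overline G$ and a constant $\overline c>0$ such that $J_{\overline G}(\overline U,F)=\overline c\int_{V(\R)}F(v)\,dv$ for every $\rO(n)$-conjugation invariant $F\in C_c(\overline G(\R))$, where $\overline U$ is the unique $\GL_n(\R)$-conjugacy class in $\overline{\cV}(\R)$. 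As $V$ is a unipotent group we have $V\subset\SL(n)$, so $P:=\overline P\cap\SL(n)=LV$ with $L=\overline L\cap\SL(n)$ is a standard parabolic subgroup of $G$ with the same unipotent radical $V$; this is the parabolic in the assertion.

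Next I would compare the $\GL_n(\R)$-orbit $\overline U=\overline{\cV}(\R)$ with the $G(\R)=\SL_n(\R)$-orbits it contains. The center $Z$ of $\overline G$ acts trivially by conjugation, and $\SL_n(\R)Z(\R)$ is an open normal subgroup of $\GL_n(\R)$ of index at most $2$, a nontrivial coset being represented by $w=\diag(-1,1,\dots,1)$. Hence $\overline U$ decomposes as a disjoint union of at most two open (hence also closed) $G(\R)$-orbits $U_1$ and $U_2=w^{-1}U_1w$, which are precisely the elements of $(\cV(\R))_{G(\R)}$, in agreement with the preceding lemma. For $u\in U_i$ the bijection $G_u(\R)\backslash G(\R)\to U_i$, $x\mapsto x^{-1}ux$, carries the quotient measure defining $J_G(U_i,\cdot)$ to a $G(\R)$-invariant measure on $U_i$, and likewise $\overline G_u(\R)\backslash\overline G(\R)\to\overline U$ carries its quotient measure to a $\GL_n(\R)$-invariant measure on $\overline U$; by uniqueness of an invariant measure on the homogeneous space $U_i$, the latter restricts on $U_i$ to a positive multiple $\lambda_i>0$ of the former. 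Since $u$ is unipotent, $|D(u)|^{1/2}=1$, so both orbital integrals are just integrals against these measures. Because $U_1\cup U_2=\overline U$ is a disjoint union of clopen subsets contained in $\SL_n(\R)$, this yields
\[
J_{\overline G}(\overline U,F)=\lambda_1 J_G(U_1,F|_{\SL_n(\R)})+\lambda_2 J_G(U_2,F|_{\SL_n(\R)})
\]
for every $\rO(n)$-conjugation invariant $F\in C_c(\overline G(\R))$, with the convention $\lambda_2=0$ when $\overline U=U_1$ is a single $G(\R)$-orbit.

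Finally, given an $\rO(n)$-conjugation invariant $f_{\infty}\in C_c(G(\R))$, I would extend it to $\overline G(\R)$: fix $\phi\in C_c^\infty(\R_{>0})$ with $\phi\equiv1$ near $1$ and set $\overline f_{\infty}(g)=\phi(\det g)\,f_{\infty}\big((\det g)^{-1/n}g\big)$ if $\det g>0$ and $\overline f_{\infty}(g)=0$ if $\det g<0$, where $(\det g)^{1/n}$ is the positive real root. Since $\{\det>0\}$ and $\{\det<0\}$ are open in $\GL_n(\R)$, the function $\overline f_{\infty}$ is continuous, and it is compactly supported; it is $\rO(n)$-conjugation invariant because conjugation by $\rO(n)$ preserves determinants and commutes with scalar multiplication; and $\overline f_{\infty}$ restricts to $f_{\infty}$ on $\SL_n(\R)$, in particular on $V(\R)$ since $V(\R)\subset\SL_n(\R)$. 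Applying \cite[Lemma 5.3]{LM} and then the displayed identity gives
\[
\overline c\int_{V(\R)}f_{\infty}(v)\,dv=J_{\overline G}(\overline U,\overline f_{\infty})=\lambda_1 J_G(U_1,f_{\infty})+\lambda_2 J_G(U_2,f_{\infty}),
\]
and by Corollary \ref{cor:invariance:arch} one has $J_G(U_1,f_{\infty})=J_G(U_2,f_{\infty})$. Hence $J_G(U,f_{\infty})=c\int_{V(\R)}f_{\infty}(v)\,dv$ for both $U\in(\cV(\R))_{G(\R)}$, with $c=\overline c/(\lambda_1+\lambda_2)>0$.

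The only delicate point is the measure bookkeeping: one must check that the Haar measure on $V(\R)$ in \cite[Lemma 5.3]{LM} is the matrix-coordinate Lebesgue measure appearing in the statement (it is; in any case, a discrepancy is a positive constant that can be absorbed into $c$), and one must track the positive constants $\lambda_i$ arising from the comparison of $\GL_n(\R)$- and $\SL_n(\R)$-invariant measures on the orbits $\overline U$ and $U_i$. I do not expect a genuine obstacle: the geometric content --- the Richardson structure of $\cV$ and the reduction of the orbital integral to an integral over the unipotent radical --- is already established in the $\GL(n)$ case \cite[Lemma 5.3]{LM}, and the passage to $\SL(n)$ only uses the description of $(\cV(\R))_{G(\R)}$ from the preceding lemma together with Corollary \ref{cor:invariance:arch}.
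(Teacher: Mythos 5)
Your proof is correct, but it takes a genuinely different route from the paper's. The paper proves the $\SL(n)$ case directly: it chooses a Richardson parabolic $P=LV$, decomposes $\cV(\R)\cap V(\R)$ as the disjoint dense union of the sets $U\cap V(\R)$ (each a single $P(\R)$-orbit), then argues — re-running the absolute-continuity argument from the proof of \cite[Lemma 5.3]{LM} — that the measure $h\mapsto \sum_u\int_{P_u(\R)\backslash P(\R)}\delta_P(p)^{-1}h(p^{-1}up)\,dp$ is proportional to Haar measure on $V(\R)$, and finally divides by the number $N$ of classes using Corollary~\ref{cor:invariance:arch}. You instead treat \cite[Lemma 5.3]{LM} as a black box for $\GL(n)$ and transfer the result to $\SL(n)$: since the unipotent varieties and the unipotent radical $V$ are the same for both groups, you compare the single $\GL_n(\R)$-orbit $\overline U$ with the at-most-two $\SL_n(\R)$-orbits inside it (each is clopen in $\overline U$ because $\SL_n(\R)\cdot\overline G_u(\R)\supseteq\SL_n(\R)Z(\R)$ is open), match the invariant measures up to positive constants $\lambda_i$, and extend $f_\infty$ to an $\rO(n)$-invariant, compactly supported function on $\GL_n(\R)$ via a determinant cutoff — a construction that works exactly as you describe. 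After applying the $\GL(n)$ identity and then Corollary~\ref{cor:invariance:arch}, the constants combine into one $c>0$. The paper's version is more self-contained (it re-derives the absolute-continuity step rather than restating it), whereas yours is shorter and isolates the $\SL(n)/\GL(n)$ discrepancy into an explicit measure comparison and a clean extension lemma for test functions; both use Corollary~\ref{cor:invariance:arch} in exactly the same way to pass from the sum over $G(\R)$-classes to a single class.
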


\begin{proof}
  Let $P=LV\in\PPP$ be a Richardson parabolic subgroup for $\cV$ so that $\cV(\R)\cap V(\R)$ is dense in $V(\R)$. We have $\cV(\R)=\bigcup_{U\in\left(\cV(\R)\right)_{G(\R)}} U$ and this union is disjoint. Then 
  \begin{equation}\label{eq:decomposition}
   \cV(\R)\cap V(\R)=\bigcup_{U\in\left(\cV(\R)\right)_{G(\R)}} U\cap V(\R)
  \end{equation}
is also a disjoint union which is dense in $V(\R)$.
 For each $U\in\left(\cV(\R)\right)_{G(\R)}$ we can pick a representative $u\in V(\R)$. Then the orbit of $u$ under $P(\R)$ equals $U\cap V(\R)$.
 Since $f_{\infty}$ is $\rO(n)$-conjugation invariant, we have, using Iwasawa decomposition for $G(\R)$, that
  \[
   J_G(U,f_{\infty})= \int_{P_u(\R)\backslash P(\R)}\delta_P(p)^{-1} f_{\infty} (p^{-1} u p)\, dp.
  \]
It follows as in the proof of~\cite[Lemma 5.3]{LM} that
\[
C_c^{\infty}(V(\R))\ni h\longrightarrow \int_{P_u(\R)\backslash P(\R)}\delta_P(p)^{-1} h (p^{-1} u p)\, dp
\]
is absolutely continuous with respect to the Haar measure on $V(\R)$. Hence 
\begin{equation}\label{eq:orb:int:sum}
 C_c^{\infty}(V(\R))\ni h\longrightarrow \sum_{u} \int_{P_u(\R)\backslash P(\R)}\delta_P(p)^{-1} h (p^{-1} u p)\, dp
\end{equation}
is also absolutely continuous with respect to the Haar measure on $V(\R)$. Here $u\in V(\R)$ runs over a set of representatives for the classes $U\in\left(\cV(\R)\right)_{G(\R)}$. Since the right hand side of \eqref{eq:decomposition} is a disjoint union and dense in $V(\R)$, the measure defined by the right hand side of~\eqref{eq:orb:int:sum} must be proportional to the Haar measure on $V(\R)$. Hence there exists a constant $C>0$ such that for every $f_{\infty}\in C_c^{\infty}(G(\R))$ we have
\[
 \sum_{U\in\left(\cV(\R)\right)_{G(\R)}} J_G(U,f_{\infty})
 =C\int_{V(\R)} f_{\infty}(v)\, dv.
\]
By Corollary~\ref{cor:invariance:arch} and our assumption on $f_{\infty}$ we have  $J_G(U_1, f_{\infty})=J_G(U_2, f_{\infty})$ for all $U_1, U_2\in \left(\cV(\R)\right)_{G(\R)}$. Hence
\[
 J_G(U,f_{\infty})
 =\frac{C}{N}\int_{V(\R)} f_{\infty}(v)\, dv
\]
where $N$ is the number of classes in $\left(\cV(\R)\right)_{G(\R)}$. 
\end{proof}

The weighted integral $J_M(U,f_{\infty})$ from \cite[p. 256]{Ar5} can be written as an integral over $\Ind_M^G U$ against the invariant measure on $\Ind_M^G U$ weighted by a certain function. 
Hence using the corollary above it follows that the real orbital integral $J_M(U,f_{\infty})$  simplifies for every $\rO(n)$-conjugation invariant $f_{\infty}$ to
\begin{equation}\label{weiorbint}
J_M(U,f_{\infty})=\int_{N(\R)}f(x)w_{M,U}(x)\, dx
\end{equation}
where $Q=LN\in\PPP$ is a Richardson parabolic for $\Ind_M^G U$, the unipotent orbit induced from $M$ to $G$,
and the weight function $w_{M,U}(x)$ is described in \cite[Lemma 5.4]{Ar5}. (See also \cite[\S 5]{LM}.)
It is a finite linear combination
of functions of the form $\prod_{i=1}^r\log{\norm{p_i(x)}}$
where $p_i$ are polynomials on $N_Q(\R)$ into an affine space,
$i=1,\dots,r$ (not necessarily distinct) and
\[
\norm{(y_1,\dots,y_m)}_v=
\abs{y_1}_v^2+\dots+\abs{y_m}_v^2 
\]
(The fact that the product is over $r$ terms is implicit in \cite{Ar5}
but follows from the proof.)
For our purpose we need to describe the weight function in more detail which we shall do in the next section.

\section{The weight function}\label{sec-wfct}
\setcounter{equation}{0}
In this section, again $G=\GL(n)$ or $G=\SL(n)$.
We are only interested in the situation over the field $\R$, but most of this section holds over $\Q_p$, $p<\infty$, as well. As before, $\cU_M$ denotes the unipotent variety in $M$ and $\cV\in\left(\cU_M\right)$  a geometric conjugacy class. We write
\[
 \cpt^G=\begin{cases}
         \SO(n)				&\text{if }G=\SL(n),\\
         \rO(n)				&\text{if }G=\GL(n),
        \end{cases}
\]
and we write $\cpt$ for $\cpt^G$ if $G$ is clear from the context.

We first recall the definition of the local weight functions from \cite{Ar5}. 
Let $\cV\in\left(\cU_M\right)$ and $U\in \left(\cV(\R)\right)_{M(\R)}$. 
We denote by $\cV^G$ the conjugacy class in $\left(\cU_G\right)$ induced from $\cV$ along some parabolic subgroup in $\CmP(M)$ (they yield all the same induced class). Let $U^G\in \left(\cV^G(\R)\right)_{G(\R)}$ be  such that $U^G\cap M(\R)=U$. (As explained above there are at most two different elements in $\left(\cV^G(\R)\right)_{G(\R)}$ which are conjugate by $\diag(-1, 1, \ldots, 1)$.)

Arthur~\cite{Ar5} defines a weight function $w_{M,U}$ on a dense open subset of $\cV^G(\R)$ such that the local weighted orbital integral $J_M(U,f)=J_M^G(U, f)$ can be defined as
\begin{equation}\label{eq:integral}
 J_M^G(U,f) = \int_{U^G} f(x) w_{M,U}(x)\, dx
\end{equation} 
for any $f\in C^{\infty}(G(\R))$ of almost compact support with $dx$ denoting the invariant measure on $U^G$, cf. \cite{Rao}. 
Let $Q=LN\in\PPP$ be a Richardson parabolic subgroup for $\cV^G$. 
 By the results of the previous section, if $f$ is conjugation invariant under the group $\rO(n)$, we can write the above as 
\begin{equation}\label{realorbint}
 J_M^G(U,f) = c \int_{N(\R)} f(n) w_{M,U}(n)\, dn
\end{equation}
for some constant $c>0$.  The function $w_{M, U}$ actually only depends on $\cV$ and not on the specific $U\in \left(\cV(F)\right)_{M(\R)}$ so that we can also write $w_{M,\cV}$ for this function.  
Note that the map $x\mapsto X=x-\Mid$ defines a bijection between the variety $\cU_G(\R)$ and the nilpotent cone in the Lie algebra $\kg(\R)$. Moreover, if $x\in U^G$, then for any $s\in \R$, $s\neq0$, the element  
\begin{equation}\label{eq:def:xs}
x_s:=\Mid+ s(x-\Mid)
\end{equation}
is an element in $\cV^G(\R)$ (but not necessarily in $U^G$).

The goal is to show the following in this section:

\begin{prop}\label{prop}
Let $P=MV\in\CmF(M)$. Let $x\in \cV^G(\R)$ be such that $w_{M,\cV}(x)$ is defined. Then $w_{M,\cV}(x_s)$ is also well-defined for every $s>0$, and as a function of $s$, it is a polynomial in $\log s$ of degree at most $\dim\ka_M^G$. Moreover, there are $k\in\N$, homogeneous polynomials $P_1, \ldots, P_t: N(\R)\longrightarrow \R^k$, and coefficients $\alpha_I\in\R$ for each subset $I\subseteq \{1,\ldots, t\}$ such that $w_{M,\cV}(\Mid+X)=\sum_{I\subseteq \{1,\ldots, t\}} \alpha_I\prod_{i\in I} \log\|P_i(X)\|$ for $X$ in a dense subset of $N(\R)$. Here $\|\cdot\|$ denotes the vector norm on $\R^k$. 
\end{prop}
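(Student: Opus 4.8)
The plan is to combine Arthur's explicit description of the weight function with a scaling argument that realizes $x\mapsto x_s$ as a conjugation. Fix a Richardson parabolic $Q=LN$ for $\cV^G$, so that $\cV^G(\R)\cap N(\R)$ is open and dense in $N(\R)$, and recall from \eqref{weiorbint}, \eqref{realorbint} that, up to a positive constant, $w_{M,\cV}$ is the weight function occurring in $J_M^G(U,f)=\int_{N(\R)}f(n)\,w_{M,\cV}(n)\,dn$ for $\rO(n)$-conjugation invariant $f$. I treat $G=\GL(n)$; the case $G=\SL(n)$ reduces to it by writing $M=\bar M\cap\SL(n)$ with $\bar M\in\levis^{\GL(n)}$ and invoking Corollary~\ref{cor:invariance:arch}, according to which weighted orbital integrals over $\rO(n)$-conjugation invariant functions — hence $w_{M,\cV}$ itself — are insensitive to the choice of real form $U^G\in(\cV^G(\R))_{G(\R)}$, so that $w_{M,\cV}$ coincides with the corresponding weight function for $\bar M\subset\GL(n)$. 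By \cite[Lemma 5.4]{Ar5} (see also \cite[\S 5]{LM}), on the dense set $\cV^G(\R)\cap N(\R)$ the function $w_{M,\cV}(\Mid+\cdot)$ is a finite $\R$-linear combination $\sum_{\ell}c_\ell\prod_{i=1}^{r_\ell}\log\norm{p_{\ell,i}(X)}$, the $p_{\ell,i}\colon N(\R)\to\R^{m_{\ell,i}}$ being polynomial maps. This is because $w_{M,\cV}$ is extracted from the $(G,M)$-family $\{v_P(\lambda,\cdot):P\in\CmP(M)\}$ of \cite[p.~230]{Ar5} by Arthur's limit procedure \cite[p.~256]{Ar5}, a $(G,M)$-family weight being a polynomial of degree at most $\dim\ka_M^G$ in the coordinates of the vectors $\Ht_P(\Mid+X)$; and, by the Iwasawa (Gram--Schmidt) decomposition of $\GL_n(\R)$, those coordinates are $\R$-linear combinations of terms $\log\norm{p(X)}$ with $p$ polynomial — namely logarithms of determinants of leading minors of $(\Mid+X)^{\mathsf{T}}(\Mid+X)$. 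In particular $w_{M,\cV}(\Mid+X)$ is defined on a dense subset of $N(\R)$.

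Next I claim that, for $s>0$, $x_s$ is $G(\R)$-conjugate to $x$. Fix $x\in\cV^G(\R)$ with $w_{M,\cV}(x)$ defined and put $X:=x-\Mid$, a nonzero nilpotent matrix, hence an element of $\mathfrak{sl}_n(\R)$. By the Jacobson--Morozov theorem over $\R$, choose a real $\mathfrak{sl}_2$-triple $(X,h,Y)$ in $\kg(\R)$ with $[h,X]=2X$, so that $\Ad(\exp(th))X=e^{2t}X$ for $t\in\R$. Then, for $s>0$ with $t=\tfrac12\log s$,
\[
x_s=\Mid+sX=\Mid+\Ad(\exp(th))X=\exp(th)\,x\,\exp(-th)\in G(\R),
\]
so $x_s\in U^G$ and $w_{M,\cV}(x_s)$ is defined for every $s>0$ (cf.~\eqref{eq:def:xs}).

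The \emph{crucial assertion}, addressed below, is that $s\mapsto w_{M,\cV}(x_s)$ is a polynomial in $\log s$ of degree at most $\dim\ka_M^G$. Granting it, the homogeneous refinement follows by bookkeeping. By the first paragraph, $w_{M,\cV}(\Mid+sX)=\sum_\ell c_\ell\prod_{i=1}^{r_\ell}\log\norm{p_{\ell,i}(sX)}$ on a dense set. Let $p_{\ell,i}^{\circ}$ be the nonzero homogeneous component of $p_{\ell,i}$ of top degree $d_{\ell,i}$; on the dense locus where all $p_{\ell,i}^{\circ}(X)\neq0$ one has, as $s\to+\infty$, $\log\norm{p_{\ell,i}(sX)}=d_{\ell,i}\log s+\log\norm{p_{\ell,i}^{\circ}(X)}+O(s^{-1})$. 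Expanding the products and using that the left-hand side is \emph{exactly} a polynomial in $\log s$, the $O(s^{-1})$ contributions must cancel; comparing the terms of $\log s$-degree $0$ and using $x_1=\Mid+X$ gives
\[
w_{M,\cV}(\Mid+X)=\sum_\ell c_\ell\prod_{i=1}^{r_\ell}\log\norm{p_{\ell,i}^{\circ}(X)}
\]
on a dense subset of $N(\R)$, now with all $p_{\ell,i}^{\circ}$ homogeneous. Enumerating the finitely many $p_{\ell,i}^{\circ}$ (with repetitions, and embedding their targets into a common $\R^{k}$ by padding with zeros) as $P_1,\dots,P_t$, choosing for each $\ell$ a subset $I_\ell\subseteq\{1,\dots,t\}$ matching the multiset $\{p_{\ell,i}^{\circ}\}_i$, and setting $\alpha_I:=\sum_{\ell:\,I_\ell=I}c_\ell$, yields the asserted expansion of $w_{M,\cV}(\Mid+X)$. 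Finally, each $P_i$ being homogeneous of some degree $d_i\ge0$, the expression $w_{M,\cV}(x_s)=\sum_I\alpha_I\prod_{i\in I}\bigl(d_i\log s+\log\norm{P_i(X)}\bigr)$ is manifestly a polynomial in $\log s$, of degree at most $\dim\ka_M^G$ by the crucial assertion.

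The main obstacle is the crucial assertion itself. One cannot simply quote the $(G,M)$-family calculus, since the conjugation $x\mapsto\exp(th)\,x\,\exp(-th)$ above is not a left translation of $x$ by an element of $\Ai=A_M(\R)^0$, so Arthur's splitting and descent formulas for weight functions do not apply verbatim. The route I would take is to return to the definition of $J_M(U,\cdot)$ as a limit over $\Ai\ni a\to1$ of Arthur's linear combination $\sum_{L\in\levis(M)}r_M^L(\gamma,a)\,J_L(a\gamma,f)$, where $J_L(a\gamma,f)=\abs{D(a\gamma)}^{1/2}\int_{G_{a\gamma}(\R)\bs G(\R)}v_L(y)\,f(y^{-1}a\gamma y)\,dy$ is valid for $a$ regular (as then $G_{a\gamma}\subseteq M\subseteq L$), to substitute $y\mapsto\exp(th)\,y$ inside the limit, and to control the $t$-dependence of $v_L(\exp(th)\,y)$: the $\mathfrak{sl}_2$-scaling shifts the coordinates of each $\Ht_P(\exp(th)\,y)$ by quantities affine-linear in $t$ to leading order, so $v_L(\exp(th)\,y)$ acquires a correction polynomial in $t$ of degree at most $\dim\ka_L^G\le\dim\ka_M^G$ whose coefficients are weight functions of strictly lower complexity, and this structure should persist through the limit defining $w_{M,\cV}$. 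Alternatively, one may invoke the homogeneity of the germs at $1$ of weighted orbital integrals, established by Arthur in his study of their local behaviour. Once the crucial assertion is in place, all three statements of Proposition~\ref{prop} follow as above.
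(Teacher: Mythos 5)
There is a genuine gap, and you have identified it yourself: the \emph{crucial assertion} that $s\mapsto w_{M,\cV}(x_s)$ is a polynomial in $\log s$ of degree at most $\dim\ka_M^G$ is the whole content of the proposition, and your proposal does not prove it. You note correctly that the Jacobson--Morozov conjugation $x_s=\exp(th)\,x\,\exp(-th)$ shows $x_s\in U^G$ (so $w_{M,\cV}(x_s)$ is at least defined), but because $w_{M,\cV}$ is a non-invariant weight function and $\exp(th)$ is not an element of $A_M(\R)^0$, this conjugation yields no relation between $w_{M,\cV}(x_s)$ and $w_{M,\cV}(x)$; your two suggested routes (substituting inside Arthur's limit over $a\to1$, or invoking germ homogeneity) are both left as sketches with the essential estimates unperformed. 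The quantitative bookkeeping in your second half — extracting the top homogeneous parts $p_{\ell,i}^{\circ}$ by letting $s\to\infty$ and comparing $\log s$-degrees — is a nice and valid deduction, but it is conditional on the crucial assertion, so the proposal is circular at exactly the point where work is needed.

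For contrast: the paper's proof establishes the polynomial-in-$\log s$ structure directly, not via the conjugation trick. It follows Arthur's construction of $w_P(\lambda,a,\pi)$ from the inside: an inductive lemma computes the matrix entries $Y_\beta^{(\alpha)}$ of the intertwiner $n_\beta$ as rational functions homogeneous in the nilpotent data (equation \eqref{eq:inductive:def:Y}); this yields the expansion \eqref{eq:weight} of $v_{P_\beta}(\varpi,n_\beta)^2$ in powers of $(a^\beta-1)$ with homogeneous coefficients $f_\kappa$; Lemma~\ref{lemma:homog:weight:adjacent} then cancels the pole of $v_{P_\beta}$ against the $r_\beta$ factor for \emph{adjacent} $P,P_1$ and extracts the exact scaling exponent $\rho(\beta,X_0)\lambda(\beta^\vee)$; the general case follows by chaining through adjacent parabolics via the descent identity $w_P(\lambda,a,\pi)=w_P(\lambda,a,\pi')\,w_{P_1'}(\lambda,a,\pi)$ from \cite[Lemma~4.1]{Ar5}. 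This gives Corollary~\ref{cor:homog:pol} (homogeneity $\|W_P(\varpi,1,\pi_s)\|=s^{r_\varpi}\|W_P(\varpi,1,\pi)\|$), and the $(G,M)$-family limit in \eqref{eq:definition:weight} then produces the polynomial-in-$\log s$ statement and the homogeneous log-product form \emph{simultaneously} (Corollary~\ref{cor:homog:weight}), with the degree bound $r=\dim\ka_M^G$ coming from the order of the derivative in the $(G,M)$-family calculus. None of this machinery is replaced by your $\exp(th)$-conjugation. A secondary remark: your reduction of $\SL(n)$ to $\GL(n)$ via Corollary~\ref{cor:invariance:arch} overstates what that corollary says — it only asserts insensitivity to the choice of $U\in(\cV(\R))_{M(\R)}$ within a fixed $G$, not that the $\SL(n)$ weight function equals the $\GL(n)$ one; the paper instead runs the whole construction for both groups in parallel.
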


To prove this proposition, we follow along the lines of Arthur's construction of the weight functions.
 Let $\Phi(A_M, G)$ be the set of roots of $(A_M, G)$, and let $\beta\in\Phi(A_M, G)$. Note that every root in $\Phi(A_M, G)$ is reduced in $\Phi(A_M, G)$, that is, if $\gamma\in\Phi(A_M, G)$, then $m\gamma\not\in\Phi(A_M, G)$ for any integer $m\neq\pm1$.
Let $M_{\beta}\subseteq G$ be a Levi subgroup containing $M$ such that 
$\ka_{M_{\beta}}=\{X\in\ka_M\mid \beta(X)=0\}$.
Let $P_{\beta}\in\CmP^{M_{\beta}}(M)$ be the unique parabolic subgroup of $M_{\beta}$ such that the unique root of $A_M$ on the unipotent radical of $P_\beta$ equals $\beta$. Suppose that $P$, $P_1\in\CmP(M)$, $P_1=MN_1$, are such that $P\cap M_{\beta}=P_{\beta}$ and $P_1\cap M_{\beta}=\overline{P_{\beta}}$ (the opposite parabolic), and write $P_{\beta}=L_{\beta}N_{\beta}$ for the Levi decomposition of $P_{\beta}$ with $M\subseteq L_{\beta}$. 

Suppose that $\pi=u\nu\in \CmU_M(\R) \overline{N_{\beta}}(\R)$. Then for any $a\in A_{M, \text{reg}}$ there is a unique $n_{\beta}\in \overline{N_{\beta}}(\R)$ such that
\begin{equation}\label{eq:relation:unipotent}
 a\pi= n_{\beta}^{-1} a u n_{\beta}.
\end{equation}
Note that $n_{\beta}$ is independent of the $A_{M_{\beta}}$-part of $a$, that is, it only depends on $a^{\beta}$. Let $\Wt(\ka_M)\subseteq X(A_M)$ be the sublattice of all $\varpi$ which are extremal weights for some finite dimensional representation of $G(\R)$. 
 Let $\varpi\in \Wt(\ka_M)\subseteq\ka_M^*$ be such that 
$\varpi(\beta^{\vee})>0$. Consider 
\[
 v_P(\varpi, n_{\beta}):=e^{-\varpi(H_P(n_{\beta}))}
=e^{-\varpi(H_{P_{\beta}}(n_{\beta}))}=v_{P_{\beta}}(\varpi,n_{\beta})
\]
as a function of $a\in A_M/A_{M_{\beta}}\simeq A_M^{M_{\beta}}$, and $\pi$ as above.

Write $\nu=\Mid+X$ and $n_{\beta}=\Mid+Y_{\beta}$ with $X,Y_\beta\in\overline{\kn_{\beta}}(\R)$.  Note that $a^{-1}n_{\beta} a=\Mid+ a^{\beta} Y_{\beta}$. Further note that $Y_{\beta}^2=0$ since $2\beta\not\in\Phi(A_M, G)$.
Hence
\begin{align*}
\pi= u+uX & = a^{-1} n_{\beta}^{-1} au  n_{\beta}
= (\Mid + a^{\beta} Y_{\beta})^{-1} u (\Mid+Y_{\beta})\\
& = u + (1-a^{\beta}) Y_{\beta} u + [u, Y_{\beta}]  - a^{\beta} Y_{\beta} [u, Y_{\beta}]  
\end{align*}
where $[u,Y_{\beta}]=uY_{\beta}-Y_{\beta} u$ is again nilpotent and contained in the Lie algebra of $\overline{N_{\beta}}(\R)$.
Again, this implies that the term $Y_{\beta}[u,Y_{\beta}]$ vanishes since $2\beta$ is not a root of $(A_M, G)$. Hence
\[
 u+uX
=u +(1-a^{\beta}) Y_{\beta} u + [u, Y_{\beta}].
\]
Let $Q_0=M_0V_0$ be a semistandard minimal parabolic subgroup containing $N_1$, so in particular it contains $\overline{N_{\beta}}$ as well. 
Conjugating $u$ by some element in $\cpt^M:=\cpt\cap M(\R)$ if necessary, we can assume that $u\in V_0(\R)\cap M(\R)$. In particular, we can write $u=\Mid+X_0$ with $X_0$ a nilpotent matrix in the Lie algebra of $V_0(\R)\cap M(\R)$. Then $[u, Y_{\beta}]=[X_0, Y_{\beta}]$.
Hence the above equality becomes
\begin{equation}\label{eq:comparing:matrices}
uX= X+X_0X 
= (1-a^{\beta}) Y_{\beta} (\Mid+X_0) +  [X_0, Y_{\beta}]
= (1-a^{\beta}) Y_{\beta}  +  X_0Y_{\beta} -a^{\beta}Y_{\beta}X_0  .
\end{equation}

$Q_0$ determines a choice of positive  reduced roots $\Phi_{Q_0}^+:=\Phi(A_0,Q_0)$. Then there exists $\beta'\in\Phi_{Q_0}^+$ with $\beta'_{|A_M}=-\beta$, and we denote by $\Psi_{\beta}\subseteq \Phi_{Q_0}^+$ the subset of all such $\beta'$.
Let $\Phi_{Q_0}^{M,+}\subseteq\Phi_{Q_0}^+$ denote the subset of positive roots on $M$. Then $\Psi_{\beta}\cap\Phi^{M,+}_{Q_0}=\emptyset$.

We have a partial order $\prec_{\beta}$ on $\Psi_{\beta}$: If $\alpha_1, \alpha_2\in\Psi_{\beta}$, then $\alpha_1\prec_{\beta}\alpha_2$ if and only if there exists $\gamma\in\Phi_{Q_0}^+$ with $\alpha_2=\alpha_1+\gamma$. 
Note that then $\gamma\in\Phi_{Q_0}^{M,+} $.
We define subsets of $\Psi_{\beta}$ according to how much the elements fail to be minimal with respect to $\prec_{\beta}$: Let $\Psi_{\beta}^{(0)}$ be the set of all $\alpha\in\Psi_{\beta}$ which are minimal with respect to $\prec_{\beta}$. If $k\geq0$ is a non-negative integer, let $\Psi_{\beta}^{(k+1)}$ be the set of all $\alpha\in\Psi_{\beta}\backslash\Psi_{\beta}^{(k)}$ which can be written as $\alpha=\alpha_1+\gamma$ with $\alpha_1\in\Psi_{\beta}^{(k)}$ and $\gamma\in\Phi^{M,+}_{Q_0}$. Note that $\Psi_{\beta}^{(k)}\neq\emptyset$ for only finitely many $k$. Moreover,  
$\alpha_1\in \Psi^{(l)}_\beta$ implies that $\alpha_2\prec_{\beta}\alpha_1$ for any 
$\alpha_2\in\Psi_{\beta}^{(k)}$ with $k<l$.

To recover the matrix entries of $Y_{\beta}$ from~\eqref{eq:comparing:matrices} we now proceed inductively over $l$ by considering the matrix entries correspond to roots in $\Psi_{\beta}^{(l)}$. 
In the following we write 
\[
Z:=X+X_0X .
\]
Note that 
\[
\pi=\Mid +X_0+Z,\text{ so that }
\pi_s=\Mid +sX_0+sZ.
\]
If $Y\in\kg(\R)$ is a matrix in the Lie algebra, and $\alpha\in\Phi(A_0,G)$ we denote by $Y^{(\alpha)}\in \R$ the matrix entry of $Y$ corresponding to $\alpha$.

\begin{lem}
Let $l\geq0$, and $\alpha\in\Psi_{\beta}^{(l)}$. 
There are rational polynomials $P_{\alpha, i}$, $0\leq i\leq l$  in the variables $X_0^{(\gamma)}$ ($\gamma\in\Phi_{Q_0}^{M,+}$), $Z^{(\gamma)}$ ($\gamma\in\Psi_{\beta}$), and $a^{\beta}$ 
such that for $a\neq1$ the following holds
\begin{itemize}
\item as a polynomial of $Z$ and $X_0$, $P_{\alpha,i}(Z, X_0, a^{\beta})$ is homogeneous of degree $(i+1)$ in the matrix entries of $Z+X_0$, so in particular, for every $s\in \R$ we have $P_{\alpha,i}(sZ, sX_0, a^{\beta})=s^{i+1}P_{\alpha,i}(Z, X_0, a^{\beta})$,

\item and
\[
 Y_{\beta}^{(\alpha)}= 
\sum_{i=0}^{l} \frac{P_{\alpha,i}(Z, X_0, a^{\beta})}{(1-a^{\beta})^{i+1}}.
\]

\end{itemize}
\end{lem}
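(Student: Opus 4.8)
The plan is to prove the lemma by induction on $l$, directly solving the matrix equation \eqref{eq:comparing:matrices} for the entries $Y_\beta^{(\alpha)}$ with $\alpha \in \Psi_\beta^{(l)}$, processing the roots in the partial order $\prec_\beta$ from minimal to maximal.

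\textbf{Base case $l=0$.} First I would extract from \eqref{eq:comparing:matrices} the component corresponding to a root $\alpha \in \Psi_\beta^{(0)}$, i.e.\ a minimal element of $\Psi_\beta$. The right-hand side of \eqref{eq:comparing:matrices} is $(1-a^\beta)Y_\beta + X_0 Y_\beta - a^\beta Y_\beta X_0$. Since $X_0$ lies in the Lie algebra of $V_0(\R)\cap M(\R)$, its nonzero entries correspond to roots in $\Phi_{Q_0}^{M,+}$, so in the products $X_0 Y_\beta$ and $Y_\beta X_0$ the $\alpha$-component is a sum of terms $X_0^{(\gamma)} Y_\beta^{(\alpha')}$ with $\alpha = \gamma + \alpha'$ (or $\alpha = \alpha' + \gamma$) and $\gamma \in \Phi_{Q_0}^{M,+}$; minimality of $\alpha$ forces all such $\alpha'$ to lie outside $\Psi_\beta$, hence those entries of $Y_\beta$ vanish (recall $Y_\beta \in \overline{\kn_\beta}(\R)$, whose entries correspond precisely to $\Psi_\beta$). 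Therefore the $\alpha$-component of \eqref{eq:comparing:matrices} reads $Z^{(\alpha)} = (1-a^\beta) Y_\beta^{(\alpha)}$, giving $Y_\beta^{(\alpha)} = Z^{(\alpha)}/(1-a^\beta)$, so $P_{\alpha,0}(Z,X_0,a^\beta) = Z^{(\alpha)}$, which is homogeneous of degree $1$ in the entries of $Z + X_0$, as required.

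\textbf{Inductive step.} Assume the claim for all $\Psi_\beta^{(k)}$ with $k < l$, and let $\alpha \in \Psi_\beta^{(l)}$. Taking the $\alpha$-component of \eqref{eq:comparing:matrices} again yields
\[
Z^{(\alpha)} = (1-a^\beta) Y_\beta^{(\alpha)} + \sum_{\gamma} c_\gamma\, X_0^{(\gamma)} Y_\beta^{(\alpha_1)},
\]
where the sum is over decompositions $\alpha = \alpha_1 + \gamma$ with $\gamma \in \Phi_{Q_0}^{M,+}$, $c_\gamma$ are rational structure constants, and each such $\alpha_1$ satisfies $\alpha_1 \prec_\beta \alpha$, hence $\alpha_1 \in \Psi_\beta^{(k)}$ for some $k < l$. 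Solving for $Y_\beta^{(\alpha)}$ and substituting the inductive formula $Y_\beta^{(\alpha_1)} = \sum_{j=0}^{k} P_{\alpha_1,j}(Z,X_0,a^\beta)/(1-a^\beta)^{j+1}$ gives
\[
Y_\beta^{(\alpha)} = \frac{Z^{(\alpha)}}{1-a^\beta} - \sum_\gamma \sum_{j} c_\gamma\, X_0^{(\gamma)}\, \frac{P_{\alpha_1,j}(Z,X_0,a^\beta)}{(1-a^\beta)^{j+2}}.
\]
Collecting powers of $(1-a^\beta)^{-(i+1)}$ defines the polynomials $P_{\alpha,i}$ for $0 \le i \le l$ (the term $j = k$ with $k$ as large as $l-1$ produces the $i = l$ contribution). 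Each new summand $X_0^{(\gamma)} P_{\alpha_1,j}(Z,X_0,a^\beta)$ is homogeneous of degree $1 + (j+1) = j+2$ in the entries of $Z+X_0$ by the inductive hypothesis, matching the index shift, so the homogeneity and scaling property $P_{\alpha,i}(sZ, sX_0, a^\beta) = s^{i+1} P_{\alpha,i}(Z,X_0,a^\beta)$ are preserved. This closes the induction.

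\textbf{Main obstacle.} The bookkeeping is essentially linear algebra once one accepts that $Y_\beta^2 = 0$, $Y_\beta[u,Y_\beta] = 0$, and $X_0 \in \mathrm{Lie}(V_0 \cap M)$, all of which are already established before the lemma; the one point requiring genuine care is verifying that the combinatorial structure of $\Psi_\beta$ really is ``triangular'' for the order $\prec_\beta$ — i.e.\ that whenever $\alpha = \alpha_1 + \gamma$ appears with $\gamma \in \Phi_{Q_0}^{M,+}$, one indeed has $\alpha_1 \in \Psi_\beta$ and $\alpha_1 \prec_\beta \alpha$ with $\alpha_1$ in an earlier stratum, so the recursion is well-founded and terminates. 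This follows from the definition of the strata $\Psi_\beta^{(k)}$ and the remark that $\alpha_1 \in \Psi_\beta^{(l)}$ implies $\alpha_2 \prec_\beta \alpha_1$ for $\alpha_2 \in \Psi_\beta^{(k)}$, $k < l$; I would make this precise first, as it is the backbone of the induction, and only then carry out the entry-by-entry computation sketched above.
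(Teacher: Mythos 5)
Your proof follows essentially the same route as the paper's: an induction along the stratification $\Psi_\beta^{(0)},\Psi_\beta^{(1)},\dots$, with the base case obtained by reading off the $\alpha$-component of \eqref{eq:comparing:matrices} for $\alpha\in\Psi_\beta^{(0)}$ (so that the cross-terms vanish and $Y_\beta^{(\alpha)}=Z^{(\alpha)}/(1-a^\beta)$), and the inductive step by isolating $(1-a^\beta)Y_\beta^{(\alpha)}$, substituting the inductive expression for the lower-stratum entries $Y_\beta^{(\alpha_1)}$, and dividing through; the homogeneity bookkeeping is identical. If anything your write-up is a bit more careful than the paper's on two points — you spell out why minimality of $\alpha$ forces the cross-terms to vanish in the base case, and you allow $\alpha_1$ to lie in any earlier stratum $\Psi_\beta^{(k)}$, $k<l$, rather than only in $\Psi_\beta^{(l-1)}$ — but these are refinements of, not departures from, the same argument.
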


\begin{proof}
We prove the lemma by induction on $l$.
If $l=0$, then for any $\alpha\in\Psi_{\beta}^{(0)}$ we obtain from~\eqref{eq:comparing:matrices} that
\[
 Y_{\beta}^{(\alpha)}
= \frac{ Z^{(\alpha)}}{(1-a^{\beta})}
\]
so the assertion of the lemma is true for $l=0$.

Now suppose that for some non-negative integer $l\geq0$ we know that for every $0\leq k\leq l$ and every $\alpha\in \Psi_{\beta}^{(k)}$ we have
\begin{equation}\label{eq:inductive:def:Y}
 Y_{\beta}^{(\alpha)} 
=\sum_{i=0}^{k}\frac{P_{\alpha,i}(Z,X_0, a^{\beta})}{(1-a^{\beta})^{i+1}}
\end{equation}
with $P_{\alpha, i}$ polynomials satisfying the assertions of the lemma.
Then for  $\alpha\in\Psi_{\beta}^{(k+1)}$ the equation~\eqref{eq:comparing:matrices} gives
\[
 Z^{(\alpha)}
= (1-a^{\beta})Y_{\beta}^{(\alpha)}
+ (X_0Y_{\beta})^{(\alpha)} - a^{\beta} (Y_{\beta} X_0)^{(\alpha)}.
\]
By definition of $\Psi_{\beta}^{(k+1)}$ we have
\[
 (X_0Y_{\beta})^{(\alpha)} - a^{\beta} (Y_{\beta} X_0)^{(\alpha)}
=\sum_{\substack{ \gamma\in \Psi_{\beta}^{(k)},~\delta\in\Phi_{Q_0}^{M,+}: \\ \gamma+\delta=\alpha}}  e_{\alpha}^{\gamma,\delta} Y_{\beta}^{(\gamma)} X_{0}^{(\delta)} 
\]
with
\[
 e_{\alpha}^{\gamma,\delta}= \begin{cases}
                              1						&\text{if }[E_{\delta},E_{\gamma}]=E_{\delta+\gamma},\\
			      -a^{\beta}				&\text{if }[E_{\delta},E_{\gamma}]=-E_{\delta+\gamma}
                             \end{cases}
\]
where $E_{\gamma}, E_{\delta}$ denote the elements of the standard Chevalley basis 
attached to our root system $\Phi_{Q_0}^+$.
By the inductive assumption we can insert~\eqref{eq:inductive:def:Y} for  $Y_{\beta}^{(\gamma)}$ for every $\gamma$ occurring in the sum. 
Dividing both sides of the so obtained equality by $(1-a^{\beta})$ then yields the assertion of the lemma.
\end{proof}

\subsection{Weight functions}

We now consider the function 
$v_{P_{\beta}}(\varpi, n_{\beta})=e^{-\varpi(H_P(n_{\beta}))}$ for $\varpi\in\Wt(\ka_M)$. Note that $H_P$ is 
invariant under left and right multiplication with 
elements of $\cpt^M$. There is an irreducible representation $\Lambda_{\varpi}$ 
of $G$ on a finite dimensional vector space $V_{\varpi}$, defined over $\R$, 
together with an extremal vector $\phi_{\varpi}\in V_{\varpi}(\R)$ of weight 
$\varpi$ and a norm $\|\cdot\|$ on $V_{\varpi}(\R)$ such that 
$\|\phi_{\varpi}\|=1$ and 
\[
 v_{P_{\beta}}(\varpi, n)= \|\Lambda_{\varpi}(n^{-1}) \phi_{\varpi}\|.
\]
We can identify $V_{\varpi}(\R)$ with $\R^{m_{\varpi}}$ for $m_{\varpi}=\dim V_{\varpi}$ and we can assume that the norm is of the form
\[
 \|(x_1,\ldots, x_{m_{\varpi}})\|= \left(x_1^2+\ldots+x_{m_{\varpi}}^2\right)^{1/2}
\]
for $(x_1,\ldots, x_{m_{\varpi}})\in \R^{m_{\varpi}}$. 
Recall that $\Lambda_\varpi$
is an algebraic representation. Thus $\Lambda_\varpi\colon G\to\GL(V_\varpi)$ 
is a morphism of algebraic varieties.
Hence as a function of $n_{\beta}\in \overline{N_{\beta}}(\R)$ the function $v_{P_{\beta}}(\varpi, n_{\beta})$ is the vector norm applied to a polynomial function from $\overline{\kn_{\beta}}(\R)$ to $\R^{m_\varpi}$.
Using the above lemma (and the notation therein), we therefore get that
\begin{equation}\label{eq:weight}
v_{P_{\beta}}(\varpi, n_{\beta})^2= \sum_{\kappa\in R} \frac{f_{\kappa}(Z, X_0, a^{\beta})}{(a^{\beta}-1)^{\kappa}}
\end{equation}
where $\kappa$ runs over a finite set of integers $R\subseteq\Z$, and $f_{\kappa}$ is the norm of some rational function that is homogeneous of degree $\kappa$ in the matrix entries of $X_0$ and $Z$, and has a finite value  at $a=1$ so that we may write $f_{\kappa}(Z, X_0, 1)$. Here $n_\beta$, $Z$, $X_0$, and $a$ are related as in \eqref{eq:relation:unipotent}. The function $f_{\kappa}(Z, X_0, a^{\beta})$ in general depends on $\varpi$ and if we want to make this dependence explicit we write $f_{\kappa, \varpi}(Z, X_0, a^{\beta})$.

Now let $u\in U$, $U\subseteq M(\R)$, and $U^G\subseteq G(\R)$ be as before, and write $u=\Mid +X_0$ with $X_0$ nilpotent.
 Let $\kappa_0(\beta,X_0)\in R$ be the largest $\kappa\in R$ such that $f_{\kappa}(\cdot, \cdot,a^{\beta})$ does not vanish identically on $\overline{\kn_{\beta}}(\R)\times \CmN$, where $\CmN\subseteq \km(\R)$ is the nilpotent orbit defined by $U=\Mid+\CmN$. Let $\rho(\beta, X_0)$ be the product of $1/(2\varpi(\beta^{\vee}))$ with $\kappa_0(\beta, X_0)$.  It follows from~\cite[p. 238]{Ar5} that $\kappa_0(\beta,X_0)\ge0$ and $\rho(\beta, X_0)\ge0$. The $\rho(\beta,X_0)$ is independent of $\varpi$ as explained in \cite[p. 238]{Ar5} but $\kappa_0(\beta, X_0)$ in general depends on $\varpi$. If we want to emphasize this dependence, we write $\kappa_0(\beta, X_0)=\kappa_0(\varpi, \beta, X_0)$.

 Recall the definition of the weight function $w_P(\lambda, a, \pi)$ from~\cite[(3.6)]{Ar5}: Fix a parabolic subgroup $P_1\in\CmP(M)$. Then for any other $P\in\CmP(M)$ and any $P$-dominant $\varpi\in \Wt(\ka_M)$ Arthur defines for $\pi=u\nu\in U N_1(\R)$ the function
 \begin{equation}\label{eq:weight:function:definition}
  w_P(\varpi, a, \pi) 
  =\left(\prod_{\beta\in \Phi_P\cap\Phi_{\overline{P_1}}}  r_{\beta}(\varpi,u,a)\right)  v_P(\varpi,n)
 \end{equation}
where $\Phi_P=\Phi(A_M, P)$ and $\Phi_{\overline{P_1}}=\Phi(A_M, \overline{P_1})$. Here $u$, $\nu$, $a$, and $n\in N_1(\R)$ are related by $a\pi=n^{-1} au n$. 
The function $r_{\beta}$ is given by (see~\cite[(3.4)]{Ar5})
\[
 r_{\beta}(\varpi, u, a)
 =|a^{\beta}-a^{-\beta}|^{\rho(\beta,X_0)\varpi(\beta^{\vee})}.
\]
It follows from this definition that 
\begin{equation}\label{eq:weight:function:invariance:P} 
w_P(\varpi, a, k^{-1}\pi k)
= w_P(\varpi, a, \pi)
\end{equation}
for $k=\diag(-1,1, \ldots, 1)\in \bar\cpt^M_\infty$. 

If $\varpi_1,\ldots, \varpi_r\in\Wt(\ka_M)$ is a basis of $\ka_M^*$ consisting of $P$-dominant weights, and $\lambda=\lambda_1\varpi_1+\ldots+\lambda_r\varpi_r\in\ka_{M,\C}^*$ with $\lambda_1,\ldots,\lambda_r\in\C$, then
 \begin{equation}\label{eq:product:weight:functions}
  w_P(\lambda, a, \pi)=\prod_{i=1}^r w_P(\varpi_i, a, \pi)^{\lambda_i}
 \end{equation}
 By~\cite[Lemma 4.1]{Ar5} the limit 
\[
w_P(\lambda, \pi):=\lim_{a\rightarrow 1}w_P(\lambda, a, \pi)
\]
exists and is non-zero for all $\pi$ in an open and dense subset of $U N_1(\R)$.
 
 \subsection{The adjacent case}

For a unipotent element $\pi$ recall the definition of $\pi_s$ from~\eqref{eq:def:xs}. 
If $P$ and $P_1$ are adjacent, the function $w_P(\lambda, \pi)$ has the following behavior when $\pi$ is replaced by $\pi_s$: 

\begin{lem}\label{lemma:homog:weight:adjacent}
 Suppose $P$ and $P_1$ are adjacent via $\beta\in\Phi(A_M,P)$. 
Then for all $\pi$ in an open dense subset of $U N_1(\R)$ we have
\[
w_P(\lambda, \pi_s) 
= s^{\rho(\beta, X_0) \lambda(\beta^\vee)}\cdot  w_P(\lambda,\pi)
\]
for all $s>0$.
\end{lem}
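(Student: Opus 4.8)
The plan is to run through Arthur's definition of $w_P(\lambda,\pi)$ factor by factor and track how each piece scales under $\pi\mapsto\pi_s$, reducing everything to the homogeneity statement of the lemma above. Since $P$ and $P_1$ are adjacent along $\beta$, the set $\Phi_P\cap\Phi_{\overline{P_1}}$ of roots of $A_M$ occurring in $P$ but with the opposite sign in $P_1$ reduces to the single root $\beta$, so by \eqref{eq:weight:function:definition} we have $w_P(\varpi,a,\pi)=r_\beta(\varpi,u,a)\,v_P(\varpi,n_\beta)$ for $P$-dominant $\varpi\in\Wt(\ka_M)$, where $a\pi=n_\beta^{-1}aun_\beta$. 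By \eqref{eq:product:weight:functions} it suffices to prove the scaling identity for a single such $\varpi$ and then take the product over a basis $\varpi_1,\dots,\varpi_r$ of $\ka_M^*$ of $P$-dominant weights, using $\lambda(\beta^\vee)=\sum_i\lambda_i\varpi_i(\beta^\vee)$.

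First I would record that the passage $\pi\mapsto\pi_s$ scales the relevant data linearly. Writing $\pi=\Mid+X_0+Z$ as in \eqref{eq:comparing:matrices} (so $X_0$ is the nilpotent part of the $M$-component $u$ and $Z=\pi-u$), one has $\pi_s=\Mid+sX_0+sZ$, and decomposing $\pi_s$ back into an $M$-part and an $\overline{N_\beta}$-part shows that the pair attached to $\pi_s$ is exactly $(sZ,sX_0)$. Moreover, for $s>0$ the element $sX_0$ is $M(\R)$-conjugate to $X_0$, e.g. via $\Ad(\exp(\tfrac{\log s}{2}H_0))$ for a Jacobson--Morozov triple $(X_0,H_0,Y_0)$ chosen inside $M$; hence $sX_0$ lies in the same nilpotent orbit $\CmN\subseteq\km(\R)$, and consequently $\rho(\beta,X_0)$ and $\kappa_0(\varpi,\beta,X_0)$ are unchanged when $X_0$ is replaced by $sX_0$. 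Next I would compute the limit $a\to1$. By \eqref{eq:weight}, $v_P(\varpi,n_\beta)^2=\sum_{\kappa\in R}f_{\kappa,\varpi}(Z,X_0,a^\beta)/(a^\beta-1)^\kappa$; as $a^\beta\to1$ the dominant contribution is the term with the largest $\kappa$, namely $\kappa=\kappa_0$, so $v_P(\varpi,n_\beta)^2\sim f_{\kappa_0,\varpi}(Z,X_0,1)\,(a^\beta-1)^{-\kappa_0}$, while $r_\beta(\varpi,u,a)^2=|a^\beta-a^{-\beta}|^{2\rho(\beta,X_0)\varpi(\beta^\vee)}\sim(2|a^\beta-1|)^{2\rho(\beta,X_0)\varpi(\beta^\vee)}$. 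The powers of $|a^\beta-1|$ cancel precisely because of the defining relation $2\rho(\beta,X_0)\varpi(\beta^\vee)=\kappa_0(\varpi,\beta,X_0)$, so that
\[
w_P(\varpi,\pi)^2=\lim_{a\to1}w_P(\varpi,a,\pi)^2=2^{\kappa_0}\,f_{\kappa_0,\varpi}(Z,X_0,1),
\]
which is nonzero exactly on the open dense set where $f_{\kappa_0,\varpi}(Z,X_0,1)\neq0$; this set is the relevant domain in view of \cite[Lemma 4.1]{Ar5}, and, being cut out by a homogeneous polynomial, it is invariant under $\pi\mapsto\pi_s$ for $s>0$.

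Finally, the lemma above gives $f_{\kappa_0,\varpi}(sZ,sX_0,a^\beta)=s^{\kappa_0}f_{\kappa_0,\varpi}(Z,X_0,a^\beta)$ for $s>0$, since $f_{\kappa_0,\varpi}$ is the norm of a rational function homogeneous of degree $\kappa_0$ in the entries of $X_0$ and $Z$. Combining this with the invariance of $\kappa_0$ and $\rho$ noted above,
\[
w_P(\varpi,\pi_s)^2=2^{\kappa_0}f_{\kappa_0,\varpi}(sZ,sX_0,1)=s^{\kappa_0}\,w_P(\varpi,\pi)^2,
\]
and taking positive square roots yields $w_P(\varpi,\pi_s)=s^{\kappa_0/2}w_P(\varpi,\pi)=s^{\rho(\beta,X_0)\varpi(\beta^\vee)}w_P(\varpi,\pi)$. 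Raising to the power $\lambda_i$ and multiplying over $i$ via \eqref{eq:product:weight:functions} gives $w_P(\lambda,\pi_s)=s^{\rho(\beta,X_0)\lambda(\beta^\vee)}w_P(\lambda,\pi)$ on an open dense subset of $UN_1(\R)$, as claimed. The step I expect to require the most care is the cancellation of the $(a^\beta-1)$-powers in the limit $a\to1$: one must argue cleanly which term in \eqref{eq:weight} dominates, match its order of vanishing with that of $r_\beta$, and confirm that the dependence of $f_\kappa$ on $a^\beta$ does not interfere with the homogeneity in $(Z,X_0)$. Everything else is bookkeeping on top of the homogeneity lemma.
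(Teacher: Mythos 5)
Your argument tracks Arthur's construction exactly as the paper does: you isolate the single root $\beta$ from $\Phi_P\cap\Phi_{\overline{P_1}}$, pass to $(Z,X_0)\mapsto(sZ,sX_0)$, use the homogeneity of $f_{\kappa_0,\varpi}$, show the $|a^\beta-1|$-powers cancel via $2\rho(\beta,X_0)\varpi(\beta^\vee)=\kappa_0(\varpi,\beta,X_0)$, and multiply over the basis $\varpi_i$ — precisely the route taken in the paper's proof. The Jacobson--Morozov observation that $sX_0$ stays in the orbit $\CmN$ (so $\rho$ and $\kappa_0$ are unchanged) and the remark that the good locus is scale-invariant are small clarifications the paper leaves implicit, but the approach is the same.
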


\begin{proof}
Note that $w_P(\lambda, a, \pi)=w_P(\lambda, a, k^{-1}\pi k)$ for any $k\in \cpt^M=\cpt\cap M(\R)$ so that we can assume $\pi$ to be of the form $\Mid+X_0+Z$ with $X_0\in\CmN\cap \kv_0(\R)$, and $Z\in \kn_1(\R)$ as before. Here $\kv_0$, resp.\ $\kn_1$, denotes the Lie algebra of $V_0$ (the unipotent radical of $Q_0$), resp.\ $N_1$ (the unipotent radical of $P_1$). 
 Since $P$ and $P_1$ are assumed to be adjacent along $\beta$, we have $\Phi_P\cap \Phi_{\bar P_1}=\{\beta\}$. Hence it follows from \eqref{eq:weight:function:definition} that $ w_P(\lambda, a, \pi)
= r_{\beta}(\lambda, X_0, a) v_{P}(\lambda, n)$ where
\[
 r_{\beta}(\lambda, X_0, a)=|a^{\beta}-a^{-\beta}|^{\rho(\beta,X_0)\lambda(\beta^{\vee})}
\].  
for $a$ with $a^{\beta}\neq1$.
We can write $n\in N_1$ as $n=n_\beta \tilde n$ with $n_\beta$ in the unipotent radical of $\overline{P_\beta}$ and $\tilde n$ in the unipotent radical of $P_1\cap P$.  Then $ v_{P}(\lambda, n) = v_{P_\beta}(\lambda, n_\beta)$ so that
 \[
 w_P(\lambda, a, \pi)
= r_{\beta}(\lambda, X_0, a) v_{P_{\beta}}(\lambda, n_\beta).
\]
Hence, using \eqref{eq:weight} and \eqref{eq:product:weight:functions},
\begin{align*}
 w_P(\lambda, \pi)^2
& =  \lim_{a\rightarrow1}\prod_{i=1}^r\left(|a^{\beta}-a^{-\beta}|^{2\rho(\beta,X_0)\varpi_i(\beta^{\vee})} \sum_{\kappa\in R} \frac{f_{\kappa, \varpi_i}(Z, X_0, a^{\beta})}{(a^{\beta}-1)^{\kappa}}\right)^{\lambda_i} \\
& = \lim_{a\rightarrow1}\prod_{i=1}^r\left(|a^{\beta}-a^{-\beta}|^{\kappa_0(\varpi_i, \beta, X_0)} \sum_{\kappa\in R} \frac{f_{\kappa, \varpi_i}(Z, X_0, a^{\beta})}{(a^{\beta}-1)^{\kappa}}\right)^{\lambda_i} \\
& = \prod_{i=1}^ r 2^{\lambda_i} f_{\kappa_0(\varpi_i,\beta, X_0), \varpi_i}(Z, X_0, 1)^{\lambda_i}.
\end{align*}
Now $f_{\kappa_0(\varpi_i,\beta, X_0), \varpi_i}(Z, X_0, 1)$ is non-zero for generic $X_0$, $Z$, and  satisfies 
\[
 f_{\kappa_0(\varpi_i,\beta, X_0), \varpi_i} (sZ, sX_0, 1)
 = s^{\kappa_0(\varpi_i, \beta,X_0)}f_{\kappa_0(\varpi_i,\beta, X_0), \varpi_i}  (Z, X_0, 1).
\]
Hence
\[
  w_P(\lambda, \pi_s)^2
  =\prod_{i=1}^r s^{\lambda_i \kappa_0(\varpi_i, \beta,X_0)} \prod_{i=1}^r 2^{\lambda_i} f_{\kappa_0(\varpi_i,\beta, X_0), \varpi_i}(Z, X_0, 1)^{\lambda_i}
  = s^{2\rho(\beta, X_0) \lambda(\beta^\vee)}   w_P(\lambda, \pi)^2
\]
since $\sum_{i=1}^r \lambda_i \kappa_0(\varpi_i, \beta,X_0)= 2 \rho(\beta, X_0) \sum_{i=1}^r \lambda_i \varpi_i(\beta^\vee)= 2\rho(\beta, X_0) \lambda(\beta^\vee)$. Since $w_P(\lambda, \pi_s)$ is a real-valued function and continuous in $s$ with $w_P(\lambda, \pi_1)=w_P(\lambda, \pi)$, we can take the square-root on both sides of the equation and obtain the assertion of the lemma. 
\end{proof}

\subsection{The general case}

If $Q, Q'\in \CmP(M)$ are adjacent along some root $\beta\in\Phi(A_M,Q)$, we write $Q|_{\beta}Q'$.
\begin{corollary}
 Suppose that $P$ and $P_1$ are not necessarily adjacent. Choose a  minimal 
chain $P=Q_0|_{\beta_1}Q_1|_{\beta_2}\ldots|_{\beta_t} Q_t=P_1$  of adjacent 
parabolic subgroups $Q_1, \ldots, Q_t\in\CmP(M)$ from $P$ to $P_1$.
Then there exist  rational numbers $\rho_1, \ldots, \rho_t$ such that for all $\pi$ in an open dense subset of $U N_1(\R)$ we have
\[
w_P(\lambda, \pi_s) 
= s^{\rho_1 \lambda(\beta_1^{\vee})+\ldots+\rho_t\lambda(\beta_t^{\vee})}\cdot w_P(\lambda,\pi)
\]
for all $s>0$.
\end{corollary}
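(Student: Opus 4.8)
The plan is to expand the weight function via its defining product \eqref{eq:weight:function:definition} and to track the behaviour of each factor under $\pi\mapsto\pi_s$, reducing everything to the computation already carried out for $t=1$ in Lemma~\ref{lemma:homog:weight:adjacent}. The first input is combinatorial: for a minimal chain $P=Q_0|_{\beta_1}Q_1|_{\beta_2}\cdots|_{\beta_t}Q_t=P_1$ the set of roots $\Phi_P\cap\Phi_{\overline{P_1}}$ is precisely $\{\beta_1,\dots,\beta_t\}$, each wall being crossed exactly once along a minimal gallery of Weyl chambers; in particular $t=\card{\Phi_P\cap\Phi_{\overline{P_1}}}$. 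Hence for any $P$-dominant $\varpi\in\Wt(\ka_M)$ and $\pi=u\nu\in U N_1(\R)$, the formula \eqref{eq:weight:function:definition} reads $w_P(\varpi,a,\pi)=\big(\prod_{j=1}^t r_{\beta_j}(\varpi,u,a)\big)\,v_P(\varpi,n)$, with $a\pi=n^{-1}aun$ and $n\in N_1(\R)$. I will set $\rho_j:=\rho(\beta_j,X_0)$, which by \cite[p.~238]{Ar5} are non-negative rationals, and these will be the $\rho_j$ of the statement.

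Next I would record two facts about scaling. Writing $u=\Mid+X_0$ and $Z:=X+X_0X$ as in the previous subsection, so that $\pi=\Mid+X_0+Z$, $\pi_s=\Mid+sX_0+sZ$, and (for $s>0$) $\pi_s\in U N_1(\R)$ again: first, each factor $r_{\beta_j}(\varpi,u,a)=\abs{a^{\beta_j}-a^{-\beta_j}}^{\rho(\beta_j,X_0)\varpi(\beta_j^\vee)}$ is \emph{unchanged} when $\pi$ is replaced by $\pi_s$, since $\rho(\beta_j,sX_0)=\rho(\beta_j,X_0)$; indeed for $s>0$ the nilpotent $sX_0$ lies in the same $M(\R)$-orbit as $X_0$ (conjugate by a one-parameter subgroup of an $\mathfrak{sl}_2$-triple in $\km$), so the orbit $\CmN$ entering the definition of $\kappa_0(\beta_j,\cdot)$, hence of $\rho(\beta_j,\cdot)$, is the same. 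Second, I would prove a multivariable analogue of \eqref{eq:weight} by iterating the matrix-entry lemma of the previous subsection across the $t$ roots: the entries of the $\overline{N_P}\cap N_1$-component of $n$ are rational functions of the entries of $Z$ and $X_0$ and of $a^{\beta_1},\dots,a^{\beta_t}$, homogeneous in the entries of $Z+X_0$ and with denominators that are products of powers of the $(a^{\beta_j}-1)$; since $\Lambda_\varpi$ is an algebraic morphism and $H_P(n)$ depends only on the $\overline{N_P}$-part of $n$, this gives
\[
v_P(\varpi,n)^2=\sum_{\underline\kappa}\frac{f_{\underline\kappa,\varpi}(Z,X_0,a^{\beta_1},\dots,a^{\beta_t})}{\prod_{j=1}^t(a^{\beta_j}-1)^{\kappa_j}},
\]
where $\underline\kappa=(\kappa_1,\dots,\kappa_t)$ runs over a finite set of integer tuples and $f_{\underline\kappa,\varpi}$ is a norm of a rational function, finite at $a=1$ and homogeneous of degree $\abs{\underline\kappa}=\sum_j\kappa_j$ in the entries of $(Z,X_0)$.

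With these in hand the argument of the proof of Lemma~\ref{lemma:homog:weight:adjacent} goes through mutatis mutandis. Replacing $\pi$ by $\pi_s$ amounts to $(Z,X_0)\mapsto(sZ,sX_0)$, so
\[
w_P(\varpi,a,\pi_s)^2=\prod_{j=1}^t r_{\beta_j}(\varpi,u,a)^2\cdot\sum_{\underline\kappa}s^{\abs{\underline\kappa}}\,\frac{f_{\underline\kappa,\varpi}(Z,X_0,a^{\beta_1},\dots,a^{\beta_t})}{\prod_j(a^{\beta_j}-1)^{\kappa_j}},
\]
and since $r_{\beta_j}(\varpi,u,a)^2=\abs{a^{\beta_j}-a^{-\beta_j}}^{\kappa_0(\varpi,\beta_j,X_0)}$, letting $a\to 1$ kills every multi-index except $\underline\kappa^0=(\kappa_0(\varpi,\beta_1,X_0),\dots,\kappa_0(\varpi,\beta_t,X_0))$ and yields, on a dense subset of $U N_1(\R)$,
\[
w_P(\varpi,\pi_s)^2=\Big(\prod_{j}2^{\kappa_0(\varpi,\beta_j,X_0)}\Big)\,s^{\abs{\underline\kappa^0}}f_{\underline\kappa^0,\varpi}(Z,X_0,1)=s^{\sum_j\kappa_0(\varpi,\beta_j,X_0)}\,w_P(\varpi,\pi)^2.
\]
Taking square roots (using continuity in $s$ and the value at $s=1$) and then passing to general $\lambda=\sum_i\lambda_i\varpi_i$ via \eqref{eq:product:weight:functions} gives $w_P(\lambda,\pi_s)=s^{\sum_j\rho_j\lambda(\beta_j^\vee)}w_P(\lambda,\pi)$, because $\kappa_0(\varpi,\beta_j,X_0)=2\rho(\beta_j,X_0)\varpi(\beta_j^\vee)$ by the definition of $\rho(\beta_j,X_0)$.

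I expect the main obstacle to be the limit $a\to1$ in the multivariable setting. One has to know that no multi-index more singular in some variable $a^{\beta_j}$ than the corresponding $\kappa_0$ occurs — so that the limit exists at all, which one may in any case borrow from \cite[Lemma~4.1]{Ar5} — and that the integer $\kappa_0(\varpi,\beta_j,X_0)$ attached to the one-parameter subgroup $\overline{N_{\beta_j}}(\R)$ really equals the top power of $(a^{\beta_j}-1)$ occurring in the expansion above and survives non-trivially at $a=1$; this last point is handled by restricting $n$ to $\overline{N_{\beta_j}}(\R)$ and invoking the $t=1$ analysis. The combinatorial input $\Phi_P\cap\Phi_{\overline{P_1}}=\{\beta_1,\dots,\beta_t\}$ and the identity $\kappa_0(\varpi,\beta,X_0)=2\rho(\beta,X_0)\varpi(\beta^\vee)$ are routine.
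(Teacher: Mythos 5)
Your approach differs from the paper's. The paper proves this by induction on $t$: it factors $w_P(\lambda,a,\pi)=w_P(\lambda,a,\pi')\,w_{P_1'}(\lambda,a,\pi)$ using Arthur's multiplicative decomposition from the proof of \cite[Lemma 4.1]{Ar5}, where $P_1'=Q_1$ is adjacent to $P$, and $\pi'$ is a $K^M_\infty$-conjugate of $\pi$ obtained from the decomposition $n=m'n'k'$. Since $\pi\mapsto\pi'$ commutes with the scaling $\pi\mapsto\pi_s$, the first factor is handled by Lemma \ref{lemma:homog:weight:adjacent} and the second by the inductive hypothesis. You instead attempt a direct multivariable expansion of $v_P(\varpi,n)^2$ in the parameters $a^{\beta_1},\dots,a^{\beta_t}$ and try to identify the multi-index surviving the limit $a\to1$.

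The gap is precisely in that multivariable step. The expansion \eqref{eq:weight} and the lemma that precedes it are proved only in the adjacent setting, where $\nu$ and $n_\beta$ both live in the single group $\overline{N_\beta}(\R)$ and the equation \eqref{eq:relation:unipotent} can be solved one root space at a time; for general $N_1$ the analogous system is coupled across several $\beta_j$ and also involves the $N_P\cap N_1$-components of $n$, and you have not shown that the denominators that arise are only products of the $(a^{\beta_j}-1)$, nor that the factors $f_{\underline\kappa,\varpi}$ are finite at $a=1$. More seriously, even granting existence of $\lim_{a\to1}w_P(\varpi,a,\pi)$ from \cite[Lemma 4.1]{Ar5}, that does not by itself tell you the limit equals the single term indexed by $\underline\kappa^0=(\kappa_0(\varpi,\beta_1,X_0),\dots,\kappa_0(\varpi,\beta_t,X_0))$; Arthur's own proof of that lemma establishes the limit by the very inductive factorization that the paper reuses, so your suggestion to "borrow" it while replacing the inductive structure by a flat multivariable expansion is circular at exactly the point where the work is. Your remaining ingredients — the minimal-gallery identity $\Phi_P\cap\Phi_{\overline{P_1}}=\{\beta_1,\dots,\beta_t\}$, the orbit-invariance giving $\rho(\beta_j,sX_0)=\rho(\beta_j,X_0)$, and the identity $\kappa_0(\varpi,\beta,X_0)=2\rho(\beta,X_0)\varpi(\beta^\vee)$ — are all correct, but they do not close this hole. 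The cleaner route is the paper's: apply Arthur's factorization once to peel off the adjacent pair $(P,Q_1)$, observe that $\pi\mapsto\pi'$ intertwines $\pi_s\mapsto\pi'_s$ because it is conjugation by $k'\in\cpt$, and induct.
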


\begin{proof}
 This follows by induction on $t$ together with Lemma~\ref{lemma:homog:weight:adjacent} and the proof of~\cite[Lemma 4.1]{Ar5}.

The case $t=1$ is covered in the last lemma. Let $P_1':=Q_1=MN_1'$, and assume that the corollary is true for $P$ replaced by $P_1'$. 
Let $\pi= u\nu\in U N_1(\R)$, and $a\in A_{M, \text{reg}}$. Let $n\in N_1(\R)$ be the unique element with $\pi=a^{-1}n^{-1}aun$. Write $n=m'n'k'$ with $m'\in M(\R)$, $n'\in N_1'(\R)$, and $k'\in \cpt$, and put $u'=(m')^{-1} u m'$. Let $\pi'=a^{-1} (n')^{-1} a u' n'\in U N_1'(\R)$. Then by the proof of~\cite[Lemma 4.1, p. 241]{Ar5}, we have
\[
 w_P(\lambda,a,\pi)=w_P(\lambda,a,\pi')w_{P_1'}(\lambda,a,\pi).
\]
Suppose that $\pi=\Mid+ Y$. Then $\pi'=\Mid + k'Yk^{\prime-1}$ so that that the map $\pi\mapsto \pi'$ also maps $\pi_s$ to $\pi'_s $ for any $s$. We are further allowed to take the value at $a= 1$ on both sides because of~\cite[Lemma 4.1]{Ar5}. The assertion of the corollary therefore follows from Lemma~\ref{lemma:homog:weight:adjacent} and the induction hypothesis.
\end{proof}

Arthur defines polynomials $W_P(\varpi,a,\pi)\in V_{\varpi}$ for a $P$-dominant weight $\varpi\in \Wt(\ka_M)$ and $(a,\pi)\in A_{M,\text{reg}}\times U N_1(\R)$ by
 \[
  W_P(\varpi,a,\pi)
  =\left(\prod_{\beta\in \Phi_P\cap\Phi_{\overline{P_1}}}  r_{\beta}(\varpi,u,a)\right) \Lambda_{\varpi}(n^{-1}) \phi_{\varpi}
 \]
so that $  w_P(\varpi, a, \pi) =\|W_P(\varpi,a,\pi)\|$ and
\begin{equation}\label{eq:weight:as:prod}
  w_P(\lambda, a, \pi)
  =\prod_{i=1}^r \|W_P(\varpi_i,a,\pi)\|^{\lambda_i}.
\end{equation}
Here $\pi$, $u$, $a$, and $n$ are related as explained after \eqref{eq:weight:function:definition}

\begin{corollary}\label{cor:homog:pol}
If $\varpi\in\Wt(\ka_M)$ is a $P$-dominant weight, the polynomial $W_P(\varpi,a,\pi)$ is defined on all of $A_M\times U N_1$ and does not vanish at $a=1$. Moreover, there is a constant $r_{\varpi}$ depending only on $P$ and $\varpi$ such that for all $\pi\in U N_1(\R)$ and all $s>0$ we have 
\[
 \|W_P(\varpi,1,\pi_s)\|=s^{r_{\varpi}}  \|W_P(\varpi,1,\pi)\|.
\]
\end{corollary}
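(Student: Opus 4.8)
The plan is to treat separately the structural assertion — that $W_P(\varpi,a,\pi)$ extends from regular $a$ to a polynomial on $A_M\times UN_1$ which does not vanish at $a=1$ — and the scaling formula, and to deduce the latter quickly from the preceding corollary. For the scaling formula, the starting point is the identity $\|W_P(\varpi,a,\pi)\|=w_P(\varpi,a,\pi)$ built into the definition of $W_P$; letting $a\to1$ it gives $\|W_P(\varpi,1,\pi)\|=w_P(\varpi,\pi)$ for $\pi$ in a dense open subset of $UN_1(\R)$. Fixing a minimal chain $P=Q_0|_{\beta_1}Q_1|_{\beta_2}\ldots|_{\beta_t}Q_t=P_1$, the preceding corollary — whose proof applies verbatim with the single $P$-dominant weight $\varpi$ in place of $\lambda$ — yields
\[
w_P(\varpi,\pi_s)=s^{\rho_1\varpi(\beta_1^\vee)+\cdots+\rho_t\varpi(\beta_t^\vee)}\,w_P(\varpi,\pi)\qquad(s>0)
\]
on a dense open subset of $UN_1(\R)$. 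We then set $r_\varpi:=\rho_1\varpi(\beta_1^\vee)+\cdots+\rho_t\varpi(\beta_t^\vee)$. Each $\rho_j$ is one of the constants produced by that corollary — a value $\rho(\beta_j,X_0)$ of the function recalled from \cite[p.~238]{Ar5}, in particular independent of $\pi$, $a$ and $s$ — and each $\varpi(\beta_j^\vee)$ is an integer, so $r_\varpi$ is a constant independent of $\pi$, $a$, $s$ and of the chosen chain (the last because $s^{r_\varpi}=w_P(\varpi,\pi_s)/w_P(\varpi,\pi)$ is intrinsic); for the fixed $M,\cV,U,N_1$ it depends only on $P$ and $\varpi$.

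For the structural assertion I would retrace Arthur's construction in \cite[\S\S3--4]{Ar5}. For generic $\pi=u\nu\in UN_1(\R)$ and regular $a\in A_M$, the unipotent $n\in N_1(\R)$ determined by $a\pi=n^{-1}aun$ has matrix entries that are rational functions of $a$, regular away from the hyperplanes $a^\beta=1$, $\beta\in\Phi_P\cap\Phi_{\overline{P_1}}$: in the adjacent case this is exactly the content of the Lemma above (the formulas $Y_\beta^{(\alpha)}=\sum_i P_{\alpha,i}(Z,X_0,a^\beta)/(1-a^\beta)^{i+1}$ with $P_{\alpha,i}$ polynomial), and the general case follows from the descent identity $w_P(\varpi,a,\pi)=w_P(\varpi,a,\pi')\,w_{P_1'}(\varpi,a,\pi)$ together with \cite[Lemma 4.1]{Ar5}, just as in the proof of the preceding corollary. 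Consequently the components of $\Lambda_\varpi(n^{-1})\phi_\varpi$ are rational in $a$ with poles confined to those hyperplanes, and by \eqref{eq:weight} and the definition of $\kappa_0$ the factor $\prod_\beta r_\beta(\varpi,u,a)$ vanishes along each $a^\beta=1$ to precisely the order needed to cancel the corresponding pole. Hence $W_P(\varpi,a,\pi)$ extends regularly across those walls, i.e.\ to a polynomial on $A_M\times UN_1$; and it does not vanish at $a=1$ since $\|W_P(\varpi,1,\pi)\|=w_P(\varpi,\pi)$ is nonzero on a dense open subset of $UN_1(\R)$ by \cite[Lemma 4.1]{Ar5} (as recalled after \eqref{eq:product:weight:functions}).

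Finally, to pass from generic $\pi$ to all $\pi\in UN_1(\R)$: once $W_P(\varpi,1,\cdot)$ is a polynomial on $UN_1$, both sides of $\|W_P(\varpi,1,\pi_s)\|=s^{r_\varpi}\|W_P(\varpi,1,\pi)\|$ are continuous functions of $\pi\in UN_1(\R)$ for each fixed $s>0$ (note $\pi_s\in UN_1(\R)$ for $s>0$), and they coincide on a dense subset, hence everywhere. The step I expect to be the main obstacle is the structural assertion in the non-adjacent case: one must control the matrix entries of the full unipotent $n$ — not merely the $n_\beta$ of the adjacent case — as rational functions of $a$, match the pole order along each wall $a^\beta=1$ with the vanishing order of $r_\beta(\varpi,u,a)$, and verify that after clearing the poles the outcome is genuinely a polynomial and not just a continuous function. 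This is the content of Arthur's inductive argument in \cite[\S4]{Ar5}; the only new input here is the homogeneity bookkeeping under $\pi\mapsto\pi_s$, which is already furnished by the homogeneity in $Z$ and $X_0$ of the polynomials $P_{\alpha,i}$ of the Lemma and of the norms $f_{\kappa,\varpi}$ in \eqref{eq:weight}, exactly as exploited in the proof of Lemma~\ref{lemma:homog:weight:adjacent}.
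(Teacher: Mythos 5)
Your proof is correct and follows essentially the same route as the paper: the paper's two-line proof invokes \cite[Corollary 4.3]{Ar5} for the structural claims and derives the homogeneity from the preceding corollary and the definition of $W_P$, which is exactly what you do, only spelled out in more detail — including the continuity argument needed to pass from the open dense subset of $UN_1(\R)$ to all of it, a step the paper leaves implicit.
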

\begin{proof}
 All assertions except the homogeneity are subject of~\cite[Corollary 4.3]{Ar5}. The homogeneity follows from the previous corollary and the definition of $W_P(\varpi,a,\pi)$.  
\end{proof}

 Let $\theta_P(\lambda)= v_P^{-1}\prod_{\alpha\in\Delta_P} \lambda(\alpha^{\vee})$ for $\lambda\in \ka_M^*$, where $v_P$ denotes the covolume of the lattice spanned by all $\alpha^{\vee}$, $\alpha\in\Delta_P$, in $\ka_M$. Then $\{w_P(\lambda,a,\pi)\}_{P\in\CmP(M)}$ defines a $(G,M)$-family, and one can attach a certain number to this family by defining
 \begin{equation}\label{eq:definition:weight}
  w_M(a,\pi)=\lim_{\lambda\rightarrow 0} \left(\sum_{P\in\CmP(M)} w_P(\lambda,a,\pi)\theta_P(\lambda)^{-1}\right)
 \end{equation}
as in~\cite[\S 6]{Ar5}. By~\cite[(6.5)]{Ar3} this can be computed by
\[
  w_M(a,\pi)
  =\frac{1}{r!} \sum_{P\in\CmP(M)}\left(\lim_{t\rightarrow 0}\frac{d^r}{dt^r} w_P(t\Lambda,a,\pi) \right)\theta_P(\Lambda)^{-1}
\]
with $r=\dim\ka_M^G$ and $\Lambda\in\ka_M^*$ some fixed generic element. (Note that $w_M(a,\pi)$ is independent of the choice of $\Lambda$.)
Using~\eqref{eq:weight:as:prod} we get (cf.~\cite[Lemma 5.4]{Ar5})
\[
  w_M(a,\pi) = \sum_{\Omega} c_{\Omega} \prod_{(P,\varpi)\in\Omega} \log\|W_P(\varpi,a,\pi)\|
\]
where $\Omega$ runs over all finite multisets consisting of elements in  $\mathcal{P}(M)\times \Wt(\ka_M)$, and the $c_{\Omega}\in\C$ are suitable coefficients which vanish for all but finitely many $\Omega$.
Moreover, each $\Omega$ contains at most $r=\dim\ka_M^G$ many elements. 
Note that in a neighborhood of $a=1$ this expression is well-defined for all $\pi$ in an open dense subset of $U N_1$ because of Corollary~\ref{cor:homog:pol}.
Hence we can evaluate $w_M(a,\pi)$ at $a=1$ by means of this expression. Moreover, Corollary~\ref{cor:homog:pol} implies the following result.

\begin{corollary}\label{cor:homog:weight}
For all $\pi$ in an open dense subset of $U N_1(\R)$ and all $s>0$ 
\[
  w_M(1,\pi_s) = \sum_{\Omega} c_{\Omega} \prod_{(P,\varpi)\in\Omega}(r_{\varpi}\log s+ \log\|W_P(\varpi,1,\pi)\|).
\]
In particular, as a function of $s$, $w_M(1,\pi_s)$ is a polynomial in $\log s$ of degree at most $r=\dim\ka_M^G$.
\end{corollary}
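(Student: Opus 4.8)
The plan is to read the statement off directly from the explicit expansion
\[
w_M(a,\pi) = \sum_{\Omega} c_{\Omega} \prod_{(P,\varpi)\in\Omega} \log\|W_P(\varpi,a,\pi)\|
\]
established just above, specialized to $a=1$, together with the homogeneity recorded in Corollary~\ref{cor:homog:pol}. First I would fix the open dense subset $\mathcal{O}\subseteq UN_1(\R)$ on which this identity is valid at $a=1$: only finitely many $\Omega$ have $c_\Omega\neq0$, hence only finitely many pairs $(P,\varpi)$ occur, and by Corollary~\ref{cor:homog:pol} each polynomial $\pi\mapsto W_P(\varpi,1,\pi)$ is not identically zero; take $\mathcal{O}$ to be the complement of the union of their zero loci. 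On $\mathcal{O}$ the displayed expansion computes $w_M(1,\pi)$, as noted before the corollary.

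Next I would substitute $\pi_s$ for $\pi$. The point is that Corollary~\ref{cor:homog:pol} gives $\|W_P(\varpi,1,\pi_s)\| = s^{r_{\varpi}}\|W_P(\varpi,1,\pi)\|$ for every $\pi\in UN_1(\R)$ and every $s>0$; in particular $\|W_P(\varpi,1,\pi)\|\neq0$ forces $\|W_P(\varpi,1,\pi_s)\|\neq0$ for all $s>0$, so for $\pi\in\mathcal{O}$ the element $\pi_s$ stays, for every $s>0$, in the locus where the expansion holds. (Note that $\pi_s$ may lie in $U'N_1(\R)$ for a different class $U'\subseteq\cV(\R)$, but both the expansion and Corollary~\ref{cor:homog:pol} depend only on $\cV$, so this causes no difficulty.) Taking logarithms, $\log\|W_P(\varpi,1,\pi_s)\| = r_{\varpi}\log s + \log\|W_P(\varpi,1,\pi)\|$, and inserting this into the expansion yields
\[
w_M(1,\pi_s) = \sum_{\Omega} c_{\Omega} \prod_{(P,\varpi)\in\Omega}\bigl(r_{\varpi}\log s + \log\|W_P(\varpi,1,\pi)\|\bigr),
\]
which is the asserted identity. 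For the final assertion I would expand each product over a fixed $\Omega$: it is a polynomial in $\log s$ of degree $|\Omega|$, and every $\Omega$ with $c_\Omega\neq0$ satisfies $|\Omega|\le r=\dim\ka_M^G$ by the bound recorded above; summing over the finitely many such $\Omega$ shows $w_M(1,\pi_s)$ is a polynomial in $\log s$ of degree at most $r$.

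In truth there is no serious obstacle here: the corollary is a formal consequence of the product expansion of $w_M$ into logarithms of the norms $\|W_P(\varpi,\cdot,\cdot)\|$ and of the scaling law in Corollary~\ref{cor:homog:pol}. The only step deserving a word of care — and the closest thing to a hard point — is checking that the open dense set on which the expansion is valid is stable under $\pi\mapsto\pi_s$ \emph{uniformly} in $s>0$, which is exactly what the non-vanishing built into Corollary~\ref{cor:homog:pol} supplies.
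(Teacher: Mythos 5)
Your proof is correct and takes essentially the same route as the paper, which treats the corollary as an immediate consequence of the expansion $w_M(a,\pi)=\sum_\Omega c_\Omega\prod_{(P,\varpi)\in\Omega}\log\|W_P(\varpi,a,\pi)\|$ evaluated at $a=1$ together with the scaling law $\|W_P(\varpi,1,\pi_s)\|=s^{r_\varpi}\|W_P(\varpi,1,\pi)\|$ from Corollary~\ref{cor:homog:pol}. Your extra remark that the open dense set is stable under $\pi\mapsto\pi_s$ because the scaling law preserves non-vanishing, and your note that $\pi_s$ need not stay in $UN_1(\R)$ but that both the expansion and the scaling law are insensitive to this, are careful points that the paper leaves implicit; they do not change the argument.
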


If now $Q'\in \CmF(M)$ is an arbitrary subgroup, we can analogously define all the above functions with respect to the Levi component $M_{Q'}$ instead of $G$, in particular we can define the analogue of $w_M(1,\pi)$ which we denote by $w_M^{Q'}(1,\pi)$. The corollary then stays true for $w_M^{Q'}(1,\pi)$ with the necessary changes. Note that a priori $w_M^{Q'}(1,\pi)$ is defined for $\pi$ in a dense open subset of $U N_1^{M_{Q'}}$ for $P_1^{M_{Q'}}=P_1\cap M_{Q'}$. However, we can trivially extend $w_M^{Q'}(1,\pi)$ to a dense open subset of $U^G$. 
By the first equation on \cite[p. 256]{Ar5} the weight function $w_{M,\cV}$ from~\eqref{eq:integral} can then be written as 
\[
 w_{M,\cV}(\pi) = \sum_{Q'\in \CmF(M)} w_M^{Q'}(1,\pi).
\]
This together with Corollary~\ref{cor:homog:weight} implies the Proposition~\ref{prop}.

\subsection{Convergence}

\begin{lem}\label{logconv}
Let $p_i:\R^k\longrightarrow \R$, $i=1,...,l$, be homogeneous polynomials and put
\begin{equation}\label{eq:log:homog:polynom}
\lambda(x):=\prod_{i=1}^l\big|\log|p_i(x)|\big|,\quad x\in\R^k.
\end{equation}
Then for every $a>0$
\begin{equation}\label{eq:log:int2}
 \int_{\R^k}e^{-a\left(\log(1+\|x\|)\right)^2}\lambda(x)\;dx< \infty.
\end{equation}
\end{lem}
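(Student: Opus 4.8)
The plan is to strip the problem down to a single logarithmic factor and then to separate radial from angular behaviour. First I would dispose of the trivial cases: each $p_i$ must be $\not\equiv0$ (otherwise $\log|p_i(x)|$ is not even defined on a dense set), and a $p_i$ of degree $0$ contributes only a bounded factor, so we may assume every $p_i$ is nonzero of degree $\ge1$. Applying the arithmetic--geometric mean inequality to the $l$ numbers $b_i^l$, where $b_i=\big|\log|p_i(x)|\big|$, gives $\lambda(x)=\prod_{i=1}^l b_i\le\frac1l\sum_{i=1}^l b_i^l\le\sum_{i=1}^l b_i^l$. Hence it is enough to show that for a single nonzero homogeneous polynomial $p$ of degree $d\ge1$ and every $N\in\N$,
\[
\int_{\R^k}e^{-a(\log(1+\|x\|))^2}\big|\log|p(x)|\big|^N\,dx<\infty .
\]

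Next I would pass to polar coordinates $x=r\omega$ with $r=\|x\|$ and $\omega\in S^{k-1}$. Homogeneity gives $|p(r\omega)|=r^{d}|p(\omega)|$, so $\big|\log|p(x)|\big|\le d|\log r|+\big|\log|p(\omega)|\big|$ and $\big|\log|p(x)|\big|^N\le 2^{N-1}\bigl(d^N|\log r|^N+\big|\log|p(\omega)|\big|^N\bigr)$. This bounds the integral above by $2^{N-1}d^N|S^{k-1}|\,\mathcal R_N+2^{N-1}\mathcal R_0\cdot\mathcal A$, where $\mathcal R_N=\int_0^\infty e^{-a(\log(1+r))^2}|\log r|^N r^{k-1}\,dr$ and $\mathcal A=\int_{S^{k-1}}\big|\log|p(\omega)|\big|^N\,d\omega$. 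The radial factors are routine: on $(0,1)$ the Gaussian weight is $\le1$ and the substitution $r=e^{-u}$ turns $\mathcal R_N$ into $\int_0^\infty u^N e^{-ku}\,du<\infty$; on $(1,\infty)$ the substitution $s=\log(1+r)$ bounds the tail by $\int_{\log2}^{\infty}e^{-as^2+ks}s^N\,ds<\infty$, the Gaussian $e^{-as^2}$ overwhelming the exponential and polynomial growth in $s$. Thus the whole statement reduces to the finiteness of the single spherical integral $\mathcal A$.

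For $\mathcal A$: since $p$ is homogeneous of positive degree and $\not\equiv0$, its restriction to $S^{k-1}$ is a nonzero real-analytic function, whose zero set $Z$ is a proper real-analytic subset of the compact manifold $S^{k-1}$, in particular a null set; on the complement of any open neighbourhood of $Z$ the function $\big|\log|p|\big|$ is continuous, hence bounded, so only a neighbourhood of $Z$ contributes. There I would invoke the {\L}ojasiewicz inequality to obtain constants $c,\alpha>0$ with $|p(\omega)|\ge c\,\operatorname{dist}(\omega,Z)^\alpha$, so that $\big|\log|p(\omega)|\big|\le|\log c|+\alpha\,\big|\log\operatorname{dist}(\omega,Z)\big|$, together with the tube estimate $\vol\{\omega:\operatorname{dist}(\omega,Z)<\varepsilon\}=O(\varepsilon)$ valid for a proper analytic subset; writing $\mu(\varepsilon)$ for this volume and integrating by parts, $\int_{\{\operatorname{dist}<\varepsilon_0\}}\big|\log\operatorname{dist}(\omega,Z)\big|^N\,d\omega=\int_0^{\varepsilon_0}|\log t|^N\,d\mu(t)<\infty$, whence $\mathcal A<\infty$. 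Alternatively one can bypass {\L}ojasiewicz and cite the classical fact that $\log|P|$ and all its powers lie in $L^1_{\mathrm{loc}}(\R^m)$ for every nonzero real-analytic (in particular polynomial) $P$ --- a consequence of resolution of singularities, or of the Carbery--Christ--Wright sublevel-set estimate, or provable by induction on $m$ from the one-variable bound $\bigl|\{t\in[-R,R]:|Q(t)|<\lambda\}\bigr|\le 2(\deg Q)\,(\lambda/|\text{leading coeff.\ of }Q|)^{1/\deg Q}$ --- applied in a finite atlas of $S^{k-1}$ in which $|p|^2$ is real-analytic and not identically zero.

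The single genuinely non-elementary point is the behaviour of $\big|\log|p|\big|$ near the zero locus of $p$, i.e.\ the finiteness of $\mathcal A$; everything else --- the AM--GM reduction, the passage to polar coordinates, and the radial Gaussian estimates --- is bookkeeping. I would therefore expect the write-up to spend most of its effort, and most of its citations, on that step.
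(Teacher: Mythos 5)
Your proof is correct, but it follows a genuinely different route from the paper's. The paper reduces to a single factor $\big|\log|p(x)|\big|^{l}$ exactly as you do, but then works directly in $\R^k$: it combines the observation that the weight $e^{-a(\log(1+\|x\|))^2}$ decays faster than any power of $\|x\|$ with a dyadic decomposition into annuli $\{2^m\le\|x\|\le 2^{m+1}\}$, and controls the singular region $\{|p(x)|<\eps\}$ by invoking Arthur's sublevel-set estimate \cite[(7.1)]{Ar5}, namely $\int_{\{x\in B:\,|p(x)|<\eps\}}\big|\log|p(x)|\big|^{l}\,dx\le C\eps^{t}$, rescaled to each dyadic shell via homogeneity of $p$; on the complement $\{|p(x)|\ge\eps\}$ the logarithm grows at most like a power of $\log\|x\|$ and is killed by the superpolynomial decay of the weight. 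You instead pass to polar coordinates, use homogeneity to split $\big|\log|p(r\omega)|\big|$ into a purely radial part (handled by elementary Gaussian-type estimates) and a purely angular part $\big|\log|p(\omega)|\big|^{N}$ on $S^{k-1}$, and then appeal to {\L}ojasiewicz (or, equivalently, to the classical fact that powers of $\log|P|$ are locally integrable for any nonzero real-analytic $P$). Both arguments are sound: the paper's has the advantage of staying entirely within the toolbox already set up for weighted orbital integrals (Arthur's estimate is used elsewhere in \S\ref{sec-wfct} anyway), and it avoids any explicit appeal to real-analytic geometry; yours has the advantage of cleanly decoupling the role of the rapidly decaying radial weight from the role of the angular singularity, making it transparent that the only nontrivial input is the local integrability of $\log|p|$ near its zero set — and your alternative citation of the $L^1_{\mathrm{loc}}$ fact shows this can be established independently of Arthur's specific bound.
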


\begin{proof}
Since $\prod_{i=1}^l\big|\log|p_i(x)|\big|\le \sum_{i=1}^l \big|\log|p_i(x)|\big|^l$, it suffices to consider the case $\lambda(x)=\big|\log|p(x)|\big|^l$ with $p:\R^k\longrightarrow\R$ a homogeneous polynomial of degree $\kappa$.

Further note that for every $M\in\N$ there exists $r_M>0$ such that $\left(\log(1+\|x\|)\right)^2\ge (M/a) \log \|x\|$ for all $x$ with $\|x\|\ge r_M$. In particular, 
\begin{equation}\label{eq:polynomial:decay}
 e^{-a\left(\log(1+\|x\|)\right)^2}
 \le \|x\|^{-M}
\end{equation}
for all $x$ with $\|x\|\ge r_M$.

We decompose the integral in~\eqref{eq:log:int2} into a sum of integrals over certain subsets of $\R^k$ similarly as in~\cite[\S 7]{Ar5}.
For each $m\in\N_0$ let
\[
 B_m=\{x\in\R^k\mid 2^m\le \|x\|\le 2^{m+1}\}
\]
and let $B=\{x\in\R^k\mid \|x\|\le 2\}$. For $\eps>0$ set
\[
  \Gamma_{\eps}=\{x\in B\mid |p(x)|<\eps\}, 
  \text{ and }
  \Gamma_{m,\eps}=\{x\in B_m\mid |p(x)|<\eps\}.
\]
By \cite[(7.1)]{Ar5} there are constants $C, t>0$ such that for every $\eps>0$ we have
\begin{equation}\label{eq:integral:log}
\int_{\Gamma_\eps} \lambda(x)\, dx
\le C \eps^t.
\end{equation}
For $m\ge0$ we can therefore compute, using that $p$ is homogeneous of degree $\kappa$, that
\begin{align*}
 \int_{\Gamma_{m,\eps}} \lambda(x)\, dx
  &\le 2^{mk}\int_{\Gamma_{2^{-m\kappa}\eps}} \lambda(2^m y)\, dy
 \le c_1 2^{mk} m^l \int_{\Gamma_{2^{-m\kappa}\eps}} (1+\lambda( y))\, dy
 \le c_2 2^{mk} m^l 2^{-m\kappa t_1}\eps^{t_1} \\
 & \le c_3 2^{m (k+l)} \eps^{t_1}
\end{align*}
where $c_1, c_2, c_3, t_1>0$ are suitable constants independent of $m$ and $\eps$ and the second last inequality follows from \eqref{eq:integral:log}. Fix $\eps>0$ and let $Z_\eps=\{x\in\R^k\mid |p(x)|<\eps\}$.

Then, using \eqref{eq:polynomial:decay} for every $M\in\N$ there exist $c_4, r_M>0$ such that
\begin{align*}
&  \int_{Z_\eps}e^{-a\left(\log(1+\|x\|)\right)^2}\lambda(x)\;dx \\
  \le & \int_{\Gamma_\eps} \lambda(x)\, dx
    + \sum_{m\le \log_2 r_M} c_4\int_{\Gamma_{m,\eps}} \lambda(x)\, dx
  + \sum_{m\ge \log_2 r_M} 2^{-mM}\int_{\Gamma_{m,\eps}} \lambda(x)\, dx \\
  \le  & C\eps ^t + c_5 \eps^{t_1}\left( 1+  \sum_{m\ge\log_2 r_M} 2^{-mM} 2^{m(k+l)}\right).
\end{align*}
This is finite if we choose $M$ sufficiently large.

Now on $\R^k\backslash Z_\eps$ the polynomial $p(x)$ is bounded away from $0$ so that $\log|p(x)|$ is bounded from below on $\R^k\backslash Z_\eps$. Since $p(x)$ is of degree $\kappa$, there is a constant $A>0$ such that $|p(x)|\le A (1+\|x\|)^\kappa$ for all $x\in \R^k$. 
Choose $M$ again sufficiently large and let $r_M$ be as in \eqref{eq:polynomial:decay}. Then for some suitable constant $A_1>0$ we get
\[
  \int_{\R^k\backslash Z_\eps} e^{-a\left(\log(1+\|x\|)\right)^2} \lambda(x)\, dx
  \le A_1 \int_{\R^k\backslash Z_\eps} \|x\|^{-M} \left(1+(\log(1+\|x\|))^l\right)\, dx,
\]
which is finite if $M$ was chosen sufficiently large.

\end{proof}

\section{Examples for weight functions in low rank}\label{sec-lowrank}
\setcounter{equation}{0}

\subsection{$G=\GL(2)$}
There are two unipotent conjugacy classes in $\GL(2)$, the trivial class for which  Richardson parabolic subgroup equals $G$, and the regular unipotent conjugacy class  with Richardson parabolic equal to the minimal parabolic subgroup $P_0=M_0U_0$. The archimedean orbital integrals appearing in the fine expansion of $J_{\unip}$ are $J_G(1,f_{\infty})$, $J_G(u_0,f_{\infty})$, and $J_{M_0}(1, f_{\infty})$, where $u_0=\left(\begin{smallmatrix} 1& 1\\ 0&1\end{smallmatrix}\right)$ represents the regular class in $G(\Q)$. The first two integrals are unweighted, and the last integral $J_{M_0}(1, f_0)$ is up to a normalization of Haar measure equal to
\[
\int_{U_0(\R)} f_{\infty}(\left(\begin{smallmatrix} 1& x\\ 0&1\end{smallmatrix}\right)) \log |x| \, dx,
\]
see, e.g., \cite{Gelbart}.

\subsection{$G=\GL(3)$}
There are three unipotent conjugacy class in $\GL(3)$: The trivial, the regular, and the subregular class. Let
\[
 u(x,y,z)=\left(\begin{smallmatrix} 1&x&y\\ \, & 1&z\\\,&\,& 1\end{smallmatrix}\right)
\]
for $x, y, z\in\R$. Then $u(0,1,0)$ is a representative for the subregular class, and $u(1,0,1)$ a representative for the regular class.
Let $M_1$ be the Levi subgroup corresponding to the partition $(2,1)$ of $3$. Every other corank-$1$ Levi subgroup in $\levis$ is conjugate to $M_1$ by some Weyl group element so that it suffices to consider $J_M(u,f_{\infty})$ for $M\in\{M_0, M_1, G\}$. The integrals $J_G(1,f_{\infty})$, $J_G(u(0,1,0), f_{\infty})$, and $J_G(u(1,0,1), f_{\infty})$ are all unweighted. For the other cases we get (up to normalization of the measures)
\begin{equation}\label{unipot3}
J_{M_1}(1, f_{\infty})=\int_{U_1(\R)} f_{\infty}(u(0,y,z)) \log(y^2+z^2) 
\,du(0,y,z),
\end{equation}
\begin{equation}\label{unipot4}
J_{M_1}(u(1,0,0), f_{\infty})  =\int_{U_0(\R)} f_{\infty}(u(x,y,z))  \log|xz| 
\, du(x,y,z),
\end{equation}
and
\begin{equation}\label{unipot5}
J_{M_0}(1, f_{\infty}) 	 =\int_{U_0(\R)} f_{\infty}(u(x,y,z))  
\left(\log|x|\log|z|+ (\log|x|)^2+ (\log|z|)^2\right) \, du(x,y,z),
\end{equation}
cf.\ \cite[p. 67, Lemma 4]{Flicker}.

\section{Bochner Laplace operators}\label{sec-bochlapl}
\setcounter{equation}{0}

In this section we summarize some basic facts about Bochner-Laplace operators
on global Riemannian symmetric spaces.
For simplicity we assume that $G$ is semisimple and $G(\R)$ is 
of noncompact type.  Then $G(\R)$ is a semisimple real Lie group of noncompact 
type. Let
$K_\infty\subset G(\R)$ be a maximal compact subgroup and
\[
\widetilde X=G(\R)/K_\infty
\]
the associated Riemannian symmetric space. 
Let $\Gamma\subset G(\R)$ be a torsion free lattice and let 
$X=\Gamma\bs\widetilde X$. 
Let $\nu$ be a finite-dimensional unitary representation of $K_\infty$ on 
$(V_{\nu},\left<\cdot,\cdot\right>_{\nu})$. Let
\begin{align*}
\widetilde{E}_{\nu}:=G(\R)\times_{\nu}V_{\nu}
\end{align*}
be the associated homogeneous vector bundle over $\tilde{X}$. Then 
$\left<\cdot,\cdot\right>_{\nu}$ induces a $G(\R)$-invariant metric 
$\tilde{h}_{\nu}$ on $\tilde{E}_{\nu}$. Let $\widetilde{\nabla}^{\nu}$ be the 
connection on $\tilde{E}_{\nu}$ induced by the canonical connection on the
principal $K_\infty$-fibre bundle $G(\R)\to G(\R)/K_\infty$. Then 
$\widetilde{\nabla}^{\nu}$ is $G(\R)$-invariant.
Let  
\begin{align*}
E_{\nu}:=\Gamma\bs \widetilde E_\nu
\end{align*}
be the associated locally homogeneous vector bundle over $X$. Since 
$\tilde{h}_{\nu}$ and $\widetilde{\nabla}^{\nu}$ are $G(\R)$-invariant, they push 
down to a metric $h_{\nu}$ and a connection $\nabla^{\nu}$ on $E_{\nu}$. Let
$C^\infty(\widetilde X,\widetilde E_\nu)$ resp. $C^\infty(X,E_\nu)$ denote the 
space of smooth sections of $\widetilde E_\nu$, resp. $E_\nu$. 
Let
\begin{align}\label{globsect}
\begin{split}
C^{\infty}(G(\R),\nu):=\{f:G(\R)\rightarrow V_{\nu}\colon f\in C^\infty,\:
f(gk)=&\nu(k^{-1})f(g),\\
&\forall g\in G(\R), \,\forall k\in K_\infty\},
\end{split}
\end{align}
Let $L^2(G(\R),\nu)$ be the corresponding $L^2$-space. There is a canonical
isomorphism
\begin{equation}\label{iso-glsect}
\widetilde A\colon C^\infty(\widetilde X,\widetilde E_\nu)\cong 
C^\infty(G(\R),\nu)
\end{equation}
(see \cite[p. 4]{Mia}). $\widetilde A$ extends to an isometry of the 
corresponding $L^2$-spaces. Let
\begin{align}\label{globsect1}
C^{\infty}(\Gamma\backslash G(\R),\nu):=\left\{f\in C^{\infty}(G(\R),\nu)\colon 
f(\gamma g)=f(g)\:\forall g\in G(\R), \forall \gamma\in\Gamma\right\}
\end{align}
and let $L^2(\Gamma\bs G(\R),\nu)$ be the corresponding $L^2$-space. The
isomorphism \eqref{iso-glsect} descends to isomorphisms
\begin{equation}\label{iso-glsect1}
 A\colon C^{\infty}(X,E_{\nu})\cong C^{\infty}(\Gamma\backslash G(\R),\nu),\quad
L^2(X,E_\nu)\cong L^2(\Gamma\bs G(\R),\nu).
\end{equation}
Let
$\widetilde{\Delta}_{\nu}={\widetilde{\nabla^\nu}}^{*}{\widetilde{\nabla}}^{\nu}$
be the Bochner-Laplace operator of $\widetilde{E}_{\nu}$. This is a 
$G(\R)$-invariant second order elliptic differential operator whose principal
symbol is given by
\[
\sigma_{\widetilde\Delta_\nu}(x,\xi)=\|\xi\|^2_x\cdot\Id_{E_{\nu,x}},\quad 
x\in\widetilde X, \;\xi\in T^\ast_x(\widetilde X).
\] 
Since 
$\widetilde{X}$ is complete, 
$\widetilde{\Delta}_{\nu}$ with domain the smooth compactly supported sections
is essentially self-adjoint \cite[p. 155]{LM}. Its self-adjoint extension
will be denoted by $\widetilde{\Delta}_\nu$ too. Let $\Omega\in\cZ(\gf_\C)$ 
and $\Omega_{K_\infty}\in\cZ(\kf)$ be the Casimir operators of $\gf$ and
$\kf$, respectively, where the latter is defined with respect to  the
restriction of the normalized Killing form of $\mathfrak{g}$ to $\mathfrak{k}$.
Let  $C^\infty(G(\R),V_\nu)$ be the space of smooth $V_\nu$-valued functions on
$G(\R)$ and $R$ the right regular representation of $G(\R)$ in 
in $C^\infty(G(\R),V_\nu)$. Let $R(\Omega)$ be the differential operator induced 
by $\Omega$. Since $\Ad(g)\Omega=\Omega$, $g\in G(\R)$, it follows that
$R(\Omega)$ preserves the subspace $C^\infty(G(\R),\nu)$.
Then with respect to the isomorphism \eqref{iso-glsect} we have
\begin{align}\label{BLO}
\widetilde{\Delta}_{\nu}=-R(\Omega)+\nu(\Omega_{K_\infty}),
\end{align} 
(see \cite[Proposition 1.1]{Mia}). 
Let $e^{-t\widetilde\Delta_{\nu}}$, $t>0$, be the heat semigroup generated by 
$\widetilde\Delta_\nu$. It commutes with the action of $G(\R)$. With respect
to the isomorphism \eqref{iso-glsect} we may regard $e^{-t\widetilde\Delta_{\nu}}$ as
bounded operator in $L^2(G(\R),\nu)$, which commutes with the action of $G(\R)$.
Hence it is a convolution operator, i.e., there exists a smooth map
\begin{equation}\label{heatker1}
H_t^\nu\colon G(\R)\to \End(V_\nu)
\end{equation}
such that
\[
(e^{-t\widetilde\Delta_{\nu}}\phi)(g)=\int_{G(\R)} H_t^\nu(g^{-1}g^\prime)(\phi(g^\prime))\;
dg^\prime,\quad \phi\in L^2(G(\R),\nu).
\]
The kernel $H_t^\nu$ satisfies
\begin{align}\label{propH}
{H}^{\nu}_{t}(k^{-1}gk')=\nu(k)^{-1}\circ {H}^{\nu}_{t}(g)\circ\nu(k'),
\:\forall k,k'\in K, \forall g\in G.
\end{align}
For $q>0$ let $\ccC^q(G(\R))$ be Harish-Chandra's $L^q$-Schwartz space. 
We briefly recall its definition. Let $\Xi$ and $\|\cdot\|$ be the functions
on $G(\R)$ used to define Harish-Chandra's Schwartz space $\Co(G(\R))$ (see
\cite[7.1.2]{Wal}). Furthermore, for $Y\in\cU(\gf_\C)$ denote by $L(Y)$ (resp.
$R(Y)$) the associated left (resp. right) invariant differential operator on
$G(\R)$. Then $\ccC^q(G(\R))$ consists of all $f\in C^\infty(G(\R))$ such that
\[
\sup_{x\in G(\R)}(1+\|x\|)^m\Xi(x)^{-2/q}|L(Y_1)R(Y_2)f(x)|<\infty
\]
for all $m\ge 0$ and $Y_1,Y_2\in\cU(\gf_\C)$. Note that $\ccC^2(G(\R))$
equals Harish-Chandra's Schwartz space $\Co(G(\R))$. 
 Proceeding as in the proof of \cite[Proposition 2.4]{BM} it follows 
that $H^\nu_t$ belongs to
$(\ccC^{q}(G(\R))\otimes\End(V_{\nu}))^{K_\infty\times K_\infty}$ for all $q>0$.

Let $\pi$ be a unitary representation of $G(\R)$ on a Hilbert space $\cH_\pi$.
Define a bounded operator on $\cH_{\pi}\otimes V_{\nu}$ by
\begin{align}\label{Definiton von pi(tilde(H))}
\tilde{\pi}(H^{\nu}_{t}(g)):=\int_{G(\R)}\pi(g)\otimes H^{\nu}_{t}(g)\;dg.
\end{align}
Then relative to the splitting 
\begin{align*}
\mathcal{H}_{\pi}\otimes V_{\nu}=\left(\mathcal{H}_{\pi}\otimes 
V_{\nu}\right)^{K_\infty}\oplus\left(\left(\mathcal{H}_{\pi}\otimes 
V_{\nu}\right)^{K_\infty}\right)^{\bot},
\end{align*}
$\tilde{\pi}(H^{\nu}_{t})$ has the form
\begin{align*}
\begin{pmatrix}
\pi({H}^{\nu}_{t})&0\\0&0
\end{pmatrix},
\end{align*}
where $\pi(H^{\nu}_{t})$ acts on $\left(\mathcal{H}_{\pi}\otimes 
V_{\nu}\right)^{K_\infty}$.  Assume that $\pi$ is irreducible. Let $\pi(\Omega)$
be the Casimir eigenvalue of $\pi$. Then as in \cite[Corollary 2.2]{BM} it 
follows from \eqref{BLO} that
\begin{equation}\label{integop}
\pi(H_t^\nu)=e^{t(\pi(\Omega)-\nu(\Omega_{K_\infty}))}\Id,
\end{equation}
where $\Id$ is the identity on $\left(\mathcal{H}_{\pi}\otimes 
V_{\nu}\right)^{K_\infty}$. Put
\begin{equation}\label{loctrace}
h_t^\nu(g):=\tr H_t^\nu(g),\quad g\in G(\R).
\end{equation}
Then $h_t^\nu\in\ccC^q(G(\R))$ for all $q>0$. In particular, $h_t^\nu$ belongs
to $\ccC^2(G(\R))$, which equals Harish-Chandra's Schwartz space $\Co(G(\R))$. 
Let $\pi$ be a unitary representation of $G(\R)$. Put
\[
\pi(h_t^\nu)=\int_{G(\R)}h_t^\nu(g)\pi(g)\; dg.
\]
Assume that $\pi(H_t^\nu)$ is a trace class operator. Then it follows as in 
\cite[Lemma 3.3]{BM} that $\pi(h_t^\nu)$ is a trace class operator and
\begin{equation}\label{equ-tr}
\Tr\pi(h_t^\nu)=\Tr\pi(H_t^\nu).
\end{equation}
Now assume that $\pi$ is a unitary admissible representation. Let 
$A:\mathcal{H}_\pi\rightarrow \mathcal{H}_\pi$ be a bounded operator which is an
intertwining operator for $\pi|_{K}$. Then $A\circ\pi(h_t^\nu)$ is again a
finite rank
operator. Define an operator $\tilde{A}$ on $\mathcal{H}_\pi\otimes V_\nu$ by
$\tilde{A}:=A\otimes\Id$.
Then by the same argument as in \cite[Lemma 5.1]{BM} one has
\begin{equation}\label{equtrace}
\Tr \left(\tilde{A}\circ
\tilde{\pi}(H_t^\nu)\right)=\Tr\left(A\circ\pi(h_t^\nu)\right).
\end{equation}
Together with \eqref{integop} we obtain
\begin{equation}\label{TrFT}
\Tr\left(A\circ\pi(h_t^\nu)\right)=e^{t(\pi(\Omega)-\nu(\Omega_{K_\infty}))}
\Tr\left(\tilde{A}|_{(\mathcal{H}_{\pi}\otimes V_{\nu})^{K}}\right).
\end{equation}

\section{Heat kernel estimates}\label{sec-heatkernel}
\setcounter{equation}{0}

Let the notation be as in the previous section. 
In this section we prove some estimations for the function $h_t^\nu$ defined 
by \eqref{loctrace}.  Let $\widetilde K^\nu(t,x,y)$ be 
the kernel of $e^{-t\widetilde\Delta_\nu}$. Observe that 
$\widetilde K^\nu(t,x,y)\in\Hom((\widetilde E_\nu)_y,(\widetilde E_\nu)_x)$. 
Denote by $|\widetilde K^\nu(t,x,y)|$ the norm of this homomorphism.
Furthermore, let $r(x,y)$ denote the geodesic distance of $x,y\in\widetilde X$.
\begin{prop}\label{prop-estim}
Let $d=\dim\widetilde X$. For every $T>0$ there exists $C>0$ such that we have
\[
|\widetilde K^\nu(t,x,y)|\le C t^{-d/2}  exp\left(-\frac{r^2(x,y)}{4t}\right)
\]
for all $0<t\le T$ and $x,y\in\widetilde X$.
\end{prop}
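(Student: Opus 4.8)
The plan is to deduce the bound from the corresponding Gaussian estimate for the scalar heat kernel of $\widetilde X$ by means of a domination (Kato) inequality, exploiting that $\widetilde\Delta_\nu=(\widetilde\nabla^\nu)^\ast\widetilde\nabla^\nu$ is a pure connection Laplacian with no zeroth order (Weitzenb\"ock) term. First I would reduce to the $G(\R)$-homogeneous picture. Since $\widetilde\Delta_\nu$ commutes with the $G(\R)$-action, its Schwartz kernel is determined by the convolution kernel $H_t^\nu$ of \eqref{heatker1}: for $x=gK_\infty$ and $y=g'K_\infty$ one has $|\widetilde K^\nu(t,x,y)|=\|H_t^\nu(g^{-1}g')\|$ (operator norm on $\End(V_\nu)$) and $r(x,y)=r(eK_\infty,g^{-1}g'K_\infty)$. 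By \eqref{propH} together with the unitarity of $\nu$, the quantity $\|H_t^\nu(g)\|$ depends only on the double coset $K_\infty g K_\infty$, hence only on $r(eK_\infty,gK_\infty)$. So it suffices to fix the base point $o:=eK_\infty$ as second variable and bound $|\widetilde K^\nu(t,x,o)|$ in terms of $r(x,o)$.

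The structural input is that $\widetilde\Delta_\nu$ has no curvature term, so Kato's inequality holds in the form $\Delta_0|s|\le\operatorname{Re}\langle\widetilde\Delta_\nu s, s/|s|\rangle$ weakly on $\widetilde X$ for every smooth section $s$ of $\widetilde E_\nu$, where $\Delta_0\ge0$ is the Laplace--Beltrami operator. Since $\widetilde X$ is complete, $\widetilde\Delta_\nu$ and $\Delta_0$ are essentially self-adjoint on compactly supported smooth data, and Kato's inequality upgrades to the semigroup domination $|(e^{-t\widetilde\Delta_\nu}s)(x)|\le (e^{-t\Delta_0}|s|)(x)$ for all $x$ and $t>0$ (Hess--Schrader--Uhlenbrock). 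Passing to kernels gives the pointwise comparison
\[
|\widetilde K^\nu(t,x,y)|\le p_t(x,y),\qquad x,y\in\widetilde X,\ t>0,
\]
where $p_t$ is the scalar heat kernel of $(\widetilde X,g)$.

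It remains to estimate $p_t$. As a globally symmetric space of noncompact type, $\widetilde X$ is a Cartan--Hadamard manifold with parallel curvature tensor; in particular its sectional curvature is bounded and its Ricci curvature is bounded below, so $\widetilde X$ has bounded geometry. By homogeneity $p_t(x,x)=p_t(o,o)$, and the local heat expansion at $o$ gives the on-diagonal bound $p_t(o,o)\le C\,t^{-d/2}$ for $0<t\le T$; combining this with completeness of $\widetilde X$ (Grigor'yan's theorem, or directly the sharp heat kernel estimates of Anker--Ji on noncompact symmetric spaces) yields
\[
p_t(x,y)\le C\,t^{-d/2}\exp\!\Big(-\frac{r^2(x,y)}{4t}\Big),\qquad 0<t\le T,
\]
with the sharp constant $4$. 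Together with the domination inequality this proves Proposition \ref{prop-estim}.

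The delicate point, which I expect to be the main obstacle, is getting exactly $1/(4t)$ in the exponent (rather than $1/((4+\varepsilon)t)$) and doing so uniformly for arbitrarily large $r(x,y)$. The sharp constant comes either from Grigor'yan's integrated maximum principle applied to the on-diagonal bound, or from the finite propagation speed of the wave operator of $\widetilde\Delta_\nu$ on the complete manifold $\widetilde X$, or from the explicit spherical-function representation of $p_t$; in the symmetric-space estimates the polynomial-in-$r$ prefactors arising for large $r$ are absorbed by the additional exponential decay $e^{-\langle\rho,H(x,y)\rangle}$, with $\rho$ in the open positive chamber, so the clean form $C\,t^{-d/2}e^{-r^2(x,y)/(4t)}$ is preserved. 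An alternative, self-contained route avoids the scalar reduction: build a Minakshisundaram--Pleijel parametrix for $e^{-t\widetilde\Delta_\nu}$ directly on $\widetilde X$, with heat coefficients uniformly bounded by virtue of bounded geometry, control the Duhamel remainder near the diagonal, and treat the region $r(x,y)\ge\delta>0$ separately via finite propagation speed; this again yields the stated bound with sharp constant.
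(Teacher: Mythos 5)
Your proof is correct in substance but takes a genuinely different route from the paper, which disposes of the statement in one line by invoking \cite[Proposition 3.2]{Mu1} and observing that the argument there never uses irreducibility of $\nu$. Your domination argument---reduce the vector-bundle heat kernel to the scalar one via Kato's inequality and the Hess--Schrader--Uhlenbrock semigroup comparison (legitimate here precisely because $\widetilde\Delta_\nu=(\widetilde\nabla^\nu)^\ast\widetilde\nabla^\nu$ carries no zeroth-order curvature term), then quote a sharp Gaussian upper bound for the scalar heat kernel of $\widetilde X$---is an efficient alternative. You correctly isolate the only delicate point, namely obtaining the exact constant $4$ in the exponent uniformly in $r(x,y)$; on the symmetric space this does hold for $0<t\le T$ because the extra exponential decay $e^{-\langle\rho,H(x,y)\rangle}$ in the Anker--Ji asymptotics absorbs the polynomial-in-$H$ prefactors, and the Cheeger--Gromov--Taylor finite-propagation argument would serve equally well. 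One minor inaccuracy in your reduction step: you claim that by \eqref{propH} the quantity $\|H_t^\nu(g)\|$ depends only on $r(eK_\infty,gK_\infty)$; this is correct in rank one, but in higher rank $\|H_t^\nu(g)\|$ depends on the full $K_\infty$-double coset of $g$ (equivalently, on the Cartan projection of $g$), of which the geodesic distance records only the norm. The slip is not load-bearing: after domination you only need the scalar kernel $p_t$ controlled in terms of $r(x,y)$, and $G(\R)$-equivariance already lets you fix $y=eK_\infty$. Compared with the direct parametrix-plus-finite-propagation route you sketch at the end, which is closer in spirit to the proof of Proposition~\ref{prop-asympexp} in the paper and, presumably, to the argument behind the cited reference, the domination approach buys a cleaner reduction to well-documented scalar estimates at the cost of quoting the domination theorem and the sharp symmetric-space bound as black boxes.
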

\begin{proof}
If $\nu$ is irreducible, this is proved in \cite[Proposition 3.2]{Mu1}. 
However, the proof does not make any use of the irreducibility of $\nu$. 
So it extends without any change to the case of finite-dimensional 
representations.
\end{proof}
Let $x_0:=eK_\infty\in\widetilde X$ be the base point. For $g\in G(\R)$ and 
$x\in\widetilde X$ let
$L_g\colon\widetilde E_x\to\widetilde E_{gx}$ be the isomorphism induced by the
left translation. The kernel $\widetilde K^\nu$ is related to
the convolution kernel $H_t^\nu\colon G(\R)\to\End(V_\nu)$ by
\begin{equation}\label{heat-conv}
H_t^\nu(g_1^{-1}g_2)=L_{g_1}^{-1}\circ \widetilde K^\nu(t,g_1x_0,g_2x_0)\circ L_{g_2},
\quad g_1,g_2\in G(\R).
\end{equation}
Thus we get
\begin{equation}\label{heat-conv1}
h_t^\nu(g):=\tr H_t^\nu(g)=\tr(\widetilde K^\nu(t,x_0,gx_0)\circ L_g),
\quad g\in G(\R).
\end{equation}
Using Proposition \ref{prop-estim} and the fact that $L_g$ is an isometry, 
 we obtain the following corollary.
\begin{corollary}\label{estim3}
Let $d=\dim\widetilde X$. For all $T>0$ there exists $C>0$ such that we have
\[
|h_t^\nu(g)|\le C  t^{-d/2} \exp\left(-\frac{r^2(gx_0,x_0)}{4t}\right)
\]
for all $0<t\le T$ and $g\in G(\R)$.
\end{corollary}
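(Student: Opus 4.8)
The plan is to deduce the estimate directly from Proposition~\ref{prop-estim} together with the identity~\eqref{heat-conv1}. First I would recall that, by~\eqref{heat-conv1}, for every $g\in G(\R)$ one has $h_t^\nu(g)=\tr\bigl(\widetilde K^\nu(t,x_0,gx_0)\circ L_g\bigr)$, where $\widetilde K^\nu(t,x_0,gx_0)\in\Hom\bigl((\widetilde E_\nu)_{gx_0},(\widetilde E_\nu)_{x_0}\bigr)$ and $L_g\colon(\widetilde E_\nu)_{x_0}\to(\widetilde E_\nu)_{gx_0}$ is the isomorphism induced by left translation. Hence the composite $\widetilde K^\nu(t,x_0,gx_0)\circ L_g$ is an endomorphism of the $(\dim V_\nu)$-dimensional fibre $(\widetilde E_\nu)_{x_0}$, and $h_t^\nu(g)$ is its trace.

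Next I would bound the trace by the operator norm: for any endomorphism $A$ of a finite-dimensional Hermitian vector space $W$ one has $|\tr A|\le(\dim W)\,\norm{A}_{\mathrm{op}}$. Applying this with $W=(\widetilde E_\nu)_{x_0}$ and $A=\widetilde K^\nu(t,x_0,gx_0)\circ L_g$, using submultiplicativity of the operator norm and the fact that $L_g$ is a fibrewise isometry of the Hermitian bundle $\widetilde E_\nu$ (so that $\norm{L_g}_{\mathrm{op}}=1$), I obtain
\[
|h_t^\nu(g)|\le(\dim V_\nu)\,\norm{\widetilde K^\nu(t,x_0,gx_0)\circ L_g}_{\mathrm{op}}\le(\dim V_\nu)\,|\widetilde K^\nu(t,x_0,gx_0)|,
\]
where $|\cdot|$ is the norm of the homomorphism as in the statement of Proposition~\ref{prop-estim}.

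Finally, I would invoke Proposition~\ref{prop-estim}: for the given $T>0$ there is $C_0>0$ with $|\widetilde K^\nu(t,x_0,gx_0)|\le C_0\,t^{-d/2}\exp\bigl(-r^2(x_0,gx_0)/(4t)\bigr)$ for all $0<t\le T$. Since the geodesic distance is symmetric, $r(x_0,gx_0)=r(gx_0,x_0)$, so setting $C:=(\dim V_\nu)\,C_0$ yields the asserted bound. There is essentially no serious obstacle here; the only points requiring a little care are that $L_g$ is genuinely a fibrewise isometry for the $G(\R)$-invariant metric $\widetilde h_\nu$ (hence has operator norm exactly $1$) and that the dimension factor $\dim V_\nu$ can simply be absorbed into the constant $C$.
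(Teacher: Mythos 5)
Your argument is exactly the one the paper uses: combine the kernel identity \eqref{heat-conv1} with Proposition~\ref{prop-estim}, using that $L_g$ is a fibrewise isometry and absorbing the dimension factor into the constant. Correct, and essentially identical to the paper's (very brief) proof.
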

Next we turn to the asymptotic expansion of the heat kernel. For 
$x_0\in\widetilde X$ and $\bx\in T_{x_0}\widetilde X$ let $d_{\bx}\exp_{x_0}$ be 
the differential of the exponential map $\exp_{x_0}\colon T_{x_0}\widetilde 
X\to\widetilde X$ at the point $\bx$.  It is a map
from $T_{x_0}\widetilde X$ to $T_x\widetilde X$, where $x=\exp_{x_0}(\bx)$. Let
\begin{equation}\label{jacob1}
j_{x_0}(\bx):=|\det(d_{\bx}\exp_{x_0})|
\end{equation}
be the Jacobian, taken with respect to the inner products in the tangent spaces.
We use $\exp_{x_0}$ to introduce normal coordinates centered at $x_0$. Let
$g_{ij}(\bx)$ denote the components of the metric tensor in these coordinates.
Then on has
\begin{equation}\label{jacob2}
j_{x_0}(\bx)=|\det(g_{ij}(\bx))|^{1/2}
\end{equation}
(see \cite[(1.22)]{BGV}).
Given $y\in\widetilde X$ and $\bx\in T_y\widetilde X$, let $x=\exp_y(\bx)$.
Put
\begin{equation}\label{jacob3}
j(x;y):=j_y(\bx).
\end{equation}
Let $\ve>0$ be 
sufficiently
small. Let $\psi\in C^\infty(\R)$ with $\psi(u)=1$ for $u<\ve$ and 
$\psi(u)=0$ for $u>2\ve$.

\begin{prop}\label{prop-asympexp}
Let $d=\dim\widetilde X$. Let $(\nu,V_\nu)$
be a finite-dimensional unitary representation of $K_\infty$. There
exist smooth sections $\Phi_i^\nu\in C^\infty(\widetilde X\times\tilde X,
\widetilde E_\nu\boxtimes \widetilde E^*_\nu)$, $i\in\N_0$, such that for 
every $N\in\N$
\begin{equation}\label{asympexp}
\begin{split}
\widetilde K^\nu(t,x,y)=(4\pi t)^{-d/2}\psi(d(x,y))
\exp\left(-\frac{r^2(x,y)}{4t}\right)\sum_{i=0}^N\Phi_i^\nu(x,y&) j(x;y)^{-1/2}t^i\\
&+O(t^{N+1-d/2}),
\end{split}
\end{equation}
uniformly for  $0< t\le 1$. Moreover the leading term $\Phi_0^\nu(x,y)$ is
equal to the parallel transport $\tau(x,y)\colon (\widetilde E_\nu)_y\to 
(\widetilde E_\nu)_x$ with respect to the connection $\nabla^\nu$ along the 
unique geodesic joining $x$ and $y$. 
\end{prop}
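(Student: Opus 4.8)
The plan is to carry out the classical Minakshisundaram--Pleijel parametrix construction for $\widetilde\Delta_\nu$, which by \eqref{BLO} is an operator of Laplace type (its principal symbol is the scalar metric symbol), and to obtain the \emph{uniformity} by exploiting that $\widetilde X=G(\R)^1/K_\infty$ is a Riemannian homogeneous space of noncompact type, hence a Cartan--Hadamard space of bounded geometry. In particular, for every $y\in\widetilde X$ the exponential map $\exp_y$ is a diffeomorphism, so $r(x,y)$ is smooth for $x\neq y$, $j(x;y)>0$ for all $x,y$, and every radial geodesic from $y$ reaches every point. First I would fix $y$, pass to geodesic normal coordinates centred at $y$, and look for a parametrix
\[
p_N(t,x,y)=(4\pi t)^{-d/2}\,\psi(r(x,y))\,e^{-r^2(x,y)/4t}\sum_{i=0}^{N}\Phi_i^\nu(x,y)\,j(x;y)^{-1/2}\,t^i,
\]
with $\Phi_i^\nu$ smooth sections of $\widetilde E_\nu\boxtimes\widetilde E_\nu^\ast$ over $\widetilde X\times\widetilde X$.

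Imposing $(\partial_t+\widetilde\Delta_{\nu,x})p=0$ and using the standard identities for $r$ and $j$ in normal coordinates (as in \cite[Ch.~2]{BGV}), the vanishing of the coefficient of $t^{i-d/2}$ on the support of $\psi$ yields the transport equations. For $i=0$ this is the homogeneous first-order covariant ODE $\nabla^\nu_{\partial_r}\Phi_0^\nu=0$ along the geodesic from $y$ to $x$, with $\Phi_0^\nu(y,y)=\Id$ --- it is precisely the factor $j(x;y)^{-1/2}$ that cancels the volume-density contribution and reduces the leading equation to a pure parallel-transport equation --- whose unique solution is the parallel transport $\tau(x,y)$ with respect to $\nabla^\nu$; for $i\ge1$ one gets an inhomogeneous covariant ODE along the same geodesic whose source is built from $\widetilde\Delta_{\nu,x}$ applied to $\Phi_{i-1}^\nu j^{-1/2}$, solved recursively by integration. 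All the $\Phi_i^\nu$ and their covariant derivatives are bounded on $\widetilde X\times\widetilde X$: the metric, $j$, $\widetilde\Delta_\nu$ and the transport equations are $G(\R)$-invariant, so by \eqref{heat-conv} it suffices to bound everything in a neighbourhood of the base point $x_0$. This is the mechanism that produces the asserted uniformity.

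Next I would estimate the error by Duhamel's principle. By construction $(\partial_t+\widetilde\Delta_{\nu,x})p_N(t,x,y)=R_N(t,x,y)$, where $R_N$ is the sum of the tail term $(4\pi t)^{-d/2}\psi(r)e^{-r^2/4t}\,t^{N}\bigl(\widetilde\Delta_{\nu,x}(\Phi_N^\nu j^{-1/2})\bigr)j^{1/2}$, which is $O(t^{N-d/2}e^{-r^2(x,y)/4t})$ uniformly, together with terms involving derivatives of $\psi$, which are supported in $\{r\ge\ve\}$ and hence $O(e^{-c/t})$ uniformly for $0<t\le1$; moreover $p_N(t,\cdot,y)$ converges as $t\to+0$ to the identity endomorphism against $\delta_y$, again because of the $j^{-1/2}$ normalization. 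Hence
\[
\widetilde K^\nu(t,x,y)=p_N(t,x,y)-\int_0^t\!\!\int_{\widetilde X}\widetilde K^\nu(t-s,x,z)\,R_N(s,z,y)\,dz\,ds.
\]
Using the Gaussian upper bound $\abs{\widetilde K^\nu(u,x,z)}\le Cu^{-d/2}e^{-r^2(x,z)/4u}$ of Proposition \ref{prop-estim}, the elementary inequality $\tfrac{r^2(x,z)}{4(t-s)}+\tfrac{r^2(z,y)}{4s}\ge\tfrac{r^2(x,y)}{4t}$, and the bound $\int_{\widetilde X}u^{-d/2}e^{-c\,r^2(z,w)/u}\,dz\le C$ for $0<u\le1$ (valid since $\widetilde X$ has bounded geometry), the double integral is $\le C'e^{-r^2(x,y)/4t}\int_0^t s^{N-d/2}\,ds=O(t^{N+1-d/2})$ uniformly in $x,y$ whenever $N>d/2-1$; for the remaining small values of $N$ one first applies this for a large $N'$ and then absorbs the extra terms $\Phi_i^\nu j^{-1/2}t^i$, $N<i\le N'$, each of which is $O(t^{N+1-d/2})$. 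Away from the diagonal both sides are already $O(e^{-c/t})$ by Proposition \ref{prop-estim}, so the identity holds globally. This gives \eqref{asympexp}, and $\Phi_0^\nu(x,y)=\tau(x,y)$ is the solution of the leading transport equation found above.

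The main obstacle is not any single step --- the parametrix construction is entirely classical --- but keeping every estimate uniform over the noncompact $\widetilde X$; that is exactly what the homogeneity (via \eqref{heat-conv}, reducing everything to a neighbourhood of $x_0$) together with the already-established global Gaussian bound Proposition \ref{prop-estim} supply. A secondary point requiring care is that $\widetilde\Delta_\nu$ acts on $\widetilde E_\nu$-valued sections, so the transport equations are covariant ODEs with $\Hom$-valued unknowns; since $\widetilde\Delta_\nu$ is of Laplace type this introduces nothing new beyond carrying the connection $\nabla^\nu$ through the Minakshisundaram--Pleijel recursion.
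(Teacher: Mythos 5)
Your proof is correct, but it takes a genuinely different route from the paper's. The paper disposes of the proposition in three lines: choose (by Borel's theorem) a torsion-free cocompact lattice $\Gamma'\subset G(\R)^1$, observe as in Donnelly \cite[Sect.\ 3]{Do} that the local short-time expansion of the heat kernel on the compact quotient $\Gamma'\backslash\widetilde X$ differs from $\widetilde K^\nu$ only by an $O(e^{-c/t})$ term near the diagonal, and then quote \cite[Theorem 2.30]{BGV} for the compact case; $G(\R)$-invariance then gives the uniformity. You instead run the Minakshisundaram--Pleijel parametrix construction directly on $\widetilde X$ and control the error by Duhamel's principle together with the Gaussian bound of Proposition \ref{prop-estim}. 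Your argument is self-contained --- it avoids invoking the nontrivial existence of cocompact lattices and the reduction to the compact case --- at the cost of having to verify the convolution estimate $\int_{\widetilde X}u^{-d/2}e^{-c\,r^2(z,w)/u}\,dz\le C$ uniformly for $0<u\le1$ despite the exponential volume growth of $\widetilde X$, and of carrying the covariant transport recursion explicitly. Both points are fine: the Gaussian damps the exponential sphere-area growth for bounded $u$ (the dangerous regime $r\gtrsim 1/u$ is suppressed by $e^{-c/u}$), and your splitting of the Duhamel integral at $s=t/2$ combined with that bound does give the claimed $O(t^{N+1-d/2})$, though the intermediate line ``$\le C'e^{-r^2/4t}\int_0^t s^{N-d/2}\,ds$'' is written loosely --- the Gaussian constant degrades and the correct bookkeeping yields $t^{-d/2}\int_0^t s^N\,ds$ rather than $\int_0^t s^{N-d/2}\,ds$, but the exponent $N+1-d/2$ comes out the same. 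Your identification of $\Phi_0^\nu$ with parallel transport via the leading transport equation (the $j^{-1/2}$ cancelling the volume term) agrees with what BGV's theorem gives in the paper's version.
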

\begin{proof}
Let $\Gamma\subset G$ be a co-compact torsion free lattice. It exists by
\cite{Bo}. Let $X=\Gamma\bs \widetilde X$ and $E_\nu=\Gamma\bs 
\widetilde E_\nu$. As in \cite[Sect 3]{Do}, the proof can be reduced to the 
compact case, which follows from \cite[Theorem 2.30]{BGV}. 
\end{proof}

Let $\gf=\kf\oplus \pf$ be the Iwasawa decomposition. We recall that the 
mapping 
\[
\varphi\colon  \pf\times K_\infty\to G(\R),
\]
defined by $\varphi(Y,k)=\exp(Y)\cdot k$ is a diffeomorphism 
\cite[Ch. VI, Theorem 1.1]{He}. Thus each $g\in G(\R)$ can be uniquely
written as 
\begin{equation}\label{cartan2}
g=\exp(Y(g))\cdot k(g),\quad Y(g)\in\pf,\; k(g)\in K_\infty.
\end{equation} 
Using \eqref{heat-conv1} and Proposition \ref{prop-asympexp}, we obtain the 
following corollary.
\begin{corollary}
There exist $a^\nu_i\in C^\infty(G(\R))$, $i\in\N_0$,  such that for every
$N\in\N$ we have
\begin{equation}\label{asympexp1}
h^\nu_t(g)=(4\pi t)^{-d/2}\psi(d(gx_0,x_0))
\exp\left(-\frac{r^2(gx_0,x_0)}{4t}\right)
\sum_{i=0}^N a_i^\nu(g) t^i+O(t^{N+1-d/2})
\end{equation}
which holds for  $0<t\le1$. Moreover the leading 
coefficient $a_0^\nu$ is given by
\begin{equation}\label{leadcoeff}
a_0^\nu(g)=\tr(\nu(k(g)))\cdot j(x_0,gx_0)^{-1/2}.
\end{equation}
\end{corollary}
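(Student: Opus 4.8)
The plan is to substitute the short-time off-diagonal expansion of the heat kernel from Proposition~\ref{prop-asympexp} into the identity \eqref{heat-conv1} relating $h_t^\nu$ to $\widetilde K^\nu$, and then to identify the resulting coefficients.

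\textbf{Step 1: reduction to the heat kernel expansion.} Recall from \eqref{heat-conv1} that $h_t^\nu(g)=\tr\bigl(\widetilde K^\nu(t,x_0,gx_0)\circ L_g\bigr)$, where $L_g\colon(\widetilde E_\nu)_{x_0}\to(\widetilde E_\nu)_{gx_0}$ is the isomorphism induced by left translation. Apply Proposition~\ref{prop-asympexp} with $x=x_0$ and $y=gx_0$; using the symmetry $d(x_0,gx_0)=d(gx_0,x_0)$, $r(x_0,gx_0)=r(gx_0,x_0)$ of the geodesic distance and composing on the right with $L_g$ (and using that $\Phi_i^\nu(x_0,gx_0)\in\Hom((\widetilde E_\nu)_{gx_0},(\widetilde E_\nu)_{x_0})$, so that $\Phi_i^\nu(x_0,gx_0)\circ L_g\in\End((\widetilde E_\nu)_{x_0})$), one gets
\begin{equation*}
h_t^\nu(g)=(4\pi t)^{-d/2}\psi(d(gx_0,x_0))\exp\!\bigl(-r^2(gx_0,x_0)/(4t)\bigr)\sum_{i=0}^N\tr\!\bigl(\Phi_i^\nu(x_0,gx_0)\circ L_g\bigr)\,j(x_0;gx_0)^{-1/2}\,t^i+O(t^{N+1-d/2}).
\end{equation*}
Here the $O(t^{N+1-d/2})$ remainder of Proposition~\ref{prop-asympexp}, measured in the operator norm on $\End(V_\nu)$, passes to the trace up to a constant depending only on $\dim V_\nu$, and $L_g$ being an isometry does not affect it. For $d(x_0,gx_0)>2\ve$ the function $\psi$ vanishes and, by Proposition~\ref{prop-estim}, both sides are $O(t^\infty)$, so the displayed expansion holds there too; uniformity in $g$ is clear since, as in the proof of Proposition~\ref{prop-asympexp}, one may reduce to a cocompact quotient where it is automatic by homogeneity. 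We then set
\[
a_i^\nu(g):=\tr\!\bigl(\Phi_i^\nu(x_0,gx_0)\circ L_g\bigr)\cdot j(x_0;gx_0)^{-1/2},\qquad i\in\N_0,
\]
which is smooth because $\Phi_i^\nu$ is a smooth section, $g\mapsto gx_0$ and $g\mapsto L_g$ are smooth, and $j(x_0;gx_0)>0$ for all $g$ (the exponential map of the Cartan--Hadamard manifold $\widetilde X$ is a diffeomorphism at every point, so the Jacobian \eqref{jacob1} never vanishes). This proves \eqref{asympexp1}.

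\textbf{Step 2: the leading coefficient.} Write $g=\exp(Y(g))\,k(g)$ as in \eqref{cartan2} with $Y(g)\in\pf$ and $k(g)\in K_\infty$, so that $gx_0=\exp(Y(g))x_0$ and the unique geodesic from $x_0$ to $gx_0$ is $s\mapsto\exp(sY(g))x_0$, $s\in[0,1]$. For the canonical connection $\nabla^\nu$ on the homogeneous bundle $\widetilde E_\nu=G(\R)\times_\nu V_\nu$, parallel transport along such a geodesic is given by left translation: the curves $s\mapsto[\exp(sY(g)),v]$, $v\in V_\nu$, are precisely the $\nabla^\nu$-parallel sections along it (the standard description of the canonical connection of a symmetric space, cf.~Kobayashi--Nomizu). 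Hence the leading term $\Phi_0^\nu(x_0,gx_0)=\tau(x_0,gx_0)$, the parallel transport from $gx_0$ to $x_0$, equals $L_{\exp(Y(g))}^{-1}$ on $(\widetilde E_\nu)_{gx_0}$. Since $L_g=L_{\exp(Y(g))}\circ L_{k(g)}$ and $L_{k(g)}$ maps $(\widetilde E_\nu)_{x_0}$ to itself, we obtain
\[
\Phi_0^\nu(x_0,gx_0)\circ L_g=L_{\exp(Y(g))}^{-1}\circ L_{\exp(Y(g))}\circ L_{k(g)}=L_{k(g)},
\]
and under the identification $(\widetilde E_\nu)_{x_0}\cong V_\nu$ the operator $L_{k(g)}$ is $\nu(k(g))$. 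Therefore $\tr\!\bigl(\Phi_0^\nu(x_0,gx_0)\circ L_g\bigr)=\tr(\nu(k(g)))$, which together with the definition of $a_0^\nu$ gives \eqref{leadcoeff}.

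\textbf{Main obstacle.} Everything is bookkeeping except the identification in Step~2 of $\nabla^\nu$-parallel transport with left translation along the geodesics $s\mapsto\exp(sY)x_0$; one has to invoke the structure theory of the canonical connection on a homogeneous (here: symmetric) space, and to be careful about the direction conventions (the parallel transport $\tau(x,y)$ runs from $(\widetilde E_\nu)_y$ to $(\widetilde E_\nu)_x$) and about the isometry property of $L_g$ used to control the remainder term.
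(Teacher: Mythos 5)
Your proof is correct and follows essentially the same route as the paper's: substitute the off-diagonal expansion of Proposition \ref{prop-asympexp} into \eqref{heat-conv1}, define $a_i^\nu(g)=\tr(\Phi_i^\nu(x_0,gx_0)\circ L_g)\cdot j(x_0,gx_0)^{-1/2}$, and for the leading term identify $\Phi_0^\nu(x_0,gx_0)$ with $L_{\exp(Y(g))}^{-1}$ via the geodesic $s\mapsto\exp(sY(g))x_0$ so that $\Phi_0^\nu(x_0,gx_0)\circ L_g=\nu(k(g))$. The extra detail you supply on the remainder term and on parallel transport being left translation fills in exactly what the paper leaves implicit (citing Helgason instead of Kobayashi--Nomizu).
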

\begin{proof}
By \eqref{heat-conv} we have
\[
H^\nu_t(g)=\widetilde K^\nu(t,x_0,gx_0)\circ L_g,\quad g\in G(\R).
\]
Put
\begin{equation}\label{trace5}
a_i^\nu(g):=\tr(\Phi_i^\nu(x_0,gx_0)\circ L_g)\cdot j(x_0,gx_0)^{-1/2},
\quad g\in G(\R).
\end{equation}
Then \eqref{asympexp1} follows immediately from \eqref{asympexp} and the 
definition of $h_t^\nu$. To prove the second statement, we recall that
$\Phi_0^\nu(x,y)$ is the parallel transport $\tau(x,y)$ 
with respect to the canonical
connection of $\widetilde E_\nu$ along the geodesic connecting $x$ and $y$.
Let $g=\exp(Y)\cdot k$, $Y\in\pf$, $k\in K_\infty$. Then the geodesic connecting 
$x_0$ and $gx_0$ is the curve $\gamma(t)=\exp(tY)x_0$, $t\in [0,1]$
(see \cite[Ch. IV, Theorem 3.3]{He}). The parallel transport along $\gamma(t)$
equals $L_{\exp(Y)}$. Thus $\Phi_0^\nu(x_0,gx_0)=L_{\exp(Y)}^{-1}$. Hence we get.
\[
\Phi_0^\nu(x_0,gx_0)\circ L_g=L_k=\nu(k).
\]
Together with \eqref{trace5} the claim follows.
\end{proof}

\section{Regularized traces}\label{sec-regtr}
\setcounter{equation}{0}

Let $G$ be a reductive algebraic group over $\Q$. For simplicity, we assume 
that the center $Z_G$ splits over $\Q$, i.e., we have $Z_G=A_G$. Let 
\[
G(\R)^1=G(\A)^1\cap G(\R).
\] 
Then $G(\R)^1$ is semisimple and
\begin{equation}\label{iso4}
G(\R)=G(\R)^1\cdot\Ag.
\end{equation}
Let  $K_\infty\subset G(\R)^1$ be a maximal compact subgroup and let $K_\infty^0$
be the connected component of the identity. Let 
\[
\widetilde X=G(\R)^1/K_\infty^0
\]
be the associated Riemannian symmetric space. Let $\Gamma\subset G(\Q)$
be an arithmetic subgroup. For simplicity we assume that $\Gamma$ is torsion 
free. Note that $\Gamma\subset G(\R)^1$.
Let $X=\Gamma\bs \widetilde X$ be the associated locally symmetric 
manifold. 
Let $\nu\colon K_\infty\to \GL(V_\nu)$ an irreducible unitary representation of
$K_\infty$ and let $E_\nu\to X$ be the associated locally homogeneous vector 
bundle over $X$. Let 
$\Delta_\nu$ be the corresponding Bochner-Laplace operator acting in 
$C^\infty(X,E_\nu)$. Our goal is to define a regularized trace of the heat 
operator $e^{-t\Delta_\nu}$.

Recall that $e^{-t\Delta_\nu}$ is an integral operator with
a smooth kernel $K_\nu(t,x,y)$. By definition, $K_\nu(t,x,y)\in\Hom((E_\sigma)_y,
(E_\sigma)_x)$.
Especially, $K(t,x,x)$ is an endomorphism of $(E_\sigma)_x$. Let 
$\tr K_\nu(t,x,x)$ be
the trace of this endomorphism. If $X$ is compact, then we have
\begin{equation}\label{regtrace0}
\Tr\left(e^{-t\Delta_\nu}\right)=\int_X\tr K_\nu(t,x,x)\,dx.
\end{equation}
To begin with we rewrite \eqref{regtrace0}.
Let $\widetilde\Delta_\nu$ be the Bochner-Laplace operator on the universal
covering $\widetilde X$. Let $H_t^\nu\colon G(\R)^1\to \End(V_\nu)$
be the convolution kernel of $e^{-t\widetilde\Delta_\nu}$ and
\begin{equation}\label{ptw-trace}
h_t^\nu(g):=\tr H_t^\nu(g).
\end{equation}
Assume that  $\Gamma\bs G(\R)^1$ is compact. Then one has  
\begin{equation}\label{trace4}
\Tr\left(e^{-t\Delta_\nu}\right)=\int\limits_{\Gamma\bs G(\R)^1}\sum_{\gamma\in\Gamma}
h_t^\nu(g^{-1}\gamma g)\,dg.
\end{equation}
For the proof see \cite[(3.13)]{MP2}. Let $R_\Gamma$ denote the right regular
representation of $G(\R)^1$ on $L^2(\Gamma\bs G(\R)^1)$. Recall that for 
any $f\in C^\infty_c(G(\R)^1)$, $R_\Gamma(f)$ is the integral operator with kernel 
$\sum_{\gamma\in\Gamma}f(g_1^{-1}\gamma g_2)$. Thus the right hand side of
\eqref{trace4} equals $\Tr R_\Gamma(h_t^\nu)$ and \eqref{regtrace0} can be 
rewritten as
\begin{equation}\label{trace4a}
\Tr\left(e^{-t\Delta_\nu}\right)=\Tr R_\Gamma(h_t^\nu).
\end{equation}

If $X$ is not compact, we choose an appropriate height function $h$ on $X$, that is, a function which measures how far out in the cusp a point is.
For $Y>0$ let $X_Y=\{x\in X\colon h(x)\le Y\}$. If $X=\Gamma\backslash \tilde X$ with $\Gamma\subseteq G(\Q)$ a congruence subgroup, there is a canonical choice of height function: Recall the function $H_0=H_{P_0}: G(\R)^1\longrightarrow\ka_0$ and fix a norm $\|\cdot\|$ on $\ka_0$. Then $X\ni x\mapsto \max_{\gamma\in \Gamma}\|H_0(\gamma x)\|$ defines a height function on $X$.  Assume that $X_Y$ is compact.
Then the integral
\begin{equation}\label{regtrace1}
\int_{X_Y}\tr K_\nu(t,x,x)\,dx
\end{equation}
is well defined. Suppose that this integral has an asymptotic expansion in $Y$.
Then it is natural to define the regularized trace 
$\Tr_{\reg}(e^{-t\Delta_\nu})$ as the 
finite part of the integral as $Y\to\infty$. For hyperbolic manifolds this
has been carried out in \cite{MP1}. The regularized trace defined in this
way depends of course on the choice of the height function. 
 To choose the height function, we pass to the 
adelic setting. In fact, we will not define it explicitly. Instead we use
directly the truncated manifold.

Let $K_f\subset G(\A_f)$ be a neat open
compact subgroup. Let $X(K_f)$ be the arithmetic manifold defined
by \eqref{adel-quot2} and let $E_\nu\to X(K_f)$ be the locally homogeneous 
vector bundle defined by \eqref{vectbdl}. By \eqref{g-modules} we have
\[
L^2(X(K_f),E_\nu)=\bigoplus_{i=1}^l L^2(\Gamma_i\bs\widetilde X,E_{i,\nu}).
\]
Using \eqref{iso4} we extend $h_t^\nu$ to a $C^\infty$-function on $G(\R)$ by
\begin{equation}\label{extension1}
\widetilde h_t^\nu(g_\infty z)=h_t^\nu(g_\infty),\quad g_\infty\in G(\R)^1,\; 
z\in \Ag.
\end{equation}
Let $\1_{K_f}$ denote the characteristic function of $K_f$ in $G(\A_f)$. We 
normalize the characteristic function by
\begin{equation}\label{charact1}
\chi_{K_f}:=\frac{\1_{K_f}}{\vol(K_f)}.
\end{equation}
Define $\phi_t^\nu\in C^\infty(G(\A))$ by 
\begin{equation}\label{extension2} 
\phi_t^\nu(g_\infty g_f)=\widetilde h_t^\nu(g_\infty)\chi_{K_f}(g_f)
\end{equation}
for $ g_\infty\in G(\R)$, $g_f\in G(\A_f)$.
Let $R$  denote the right regular representation of 
$G(\A)$  on $L^2(\Ag G(\Q)\bs G(\A))$,
and let $\Pi_{K_f}$ denote the orthogonal projection of 
$L^2(\Ag G(\Q)\bs G(\A))$ onto $L^2(\Ag G(\Q)\bs G(\A))^{K_f}$. 
Using the isomorphism
\eqref{g-modules} of $G(\R)$-modules, it follows that
\begin{equation}\label{trace6}
R(\phi^\nu_t)=\left[\bigoplus_{i=1}^lR_{\Gamma_i}(h_t^\nu)\right]\circ \Pi_{K_f}.
\end{equation}

Let $\Co(G(\R)^1)$ be Harish-Chandra's Schwartz space (see
\cite[7.1.2]{Wal}). As explained in section \ref{sec-bochlapl}, we have
 $h_t^\nu\in\Co(G(\R)^1)$ for all $t>0$. 
This implies $\phi_t^\nu\in\Co(G(\A),K_f)$.
Thus by \cite[Theorem 7.1]{FL1}, $J^T(\phi_t^\nu)$ is defined for all
$T\in\af_0$.  Let $G(\A)^1_{\le T}$
be defined by \eqref{trunc1} and for $T\in\af_0$ let $d(T)$ be defined 
by \eqref{dT}. Let $C\subset\af_0^+$ be a positive cone for which there 
exists $c>0$ such that
\[
d(T)\ge c\|T\|,\quad \text{for}\;\text{all}\;T\in C.
\]
Then by Theorem \ref{theo-trunc} it follows that for every $t>0$ we have
\begin{equation}\label{trunc3}
\int\limits_{G(\Q)\bs G(\A)^1_{\le T} }\sum_{\gamma\in G(\Q)} 
\phi_t^\nu(x^{-1}\gamma x)\,dx=J^T(\phi_t^\nu)+O\left(e^{-\|T\|/2}\right)
\end{equation}
for all $T\in C$ with $\|T\|>d_0/c$, where $d_0>0$ is as in Theorem 
\ref{theo-trunc} and the implied constant in the remainder term depends on 
$t$. Now by \cite[Theorem 7.1]{FL1}, $J^T(\phi_t^\nu)$ is a polynomial in $T$. 
Therefore
\eqref{trunc3} suggests to define the regularized trace of $e^{-t\Delta_\nu}$ as
the constant term of this polynomial. In order to relate it to the trace
formula, we need an additional assumption. Let $T_0\in\af_0$ be the unique
point determined by \cite[Lemma 1.1]{Ar3}. Then the distribution $J_{\geo}$ 
is defined 
by 
\[
J_{\geo}(f):=J^{T_0}(f),\quad f\in C^\infty_c(G(\A)^1).
\]
and by \cite[Theorem 7.1]{FL1}, $J_{\geo}$ extends continuously to $\Co(G(\A)^1)$.
As proved in \cite[p. 19]{Ar3}, $J_{\geo}$ depends only
on the choice of a maximal compact subgroup $K$ of $G(\A)$ and $M_0$, but is 
independent of the choice of the minimal
parabolic subgroup $P_0$ with Levi component $M_0$. Let $W_0$ be the Weyl group
of $(G,A_0)$. For $s\in W_0$ let $w_s\in G(\Q)$ be a representative of $s$. 
As shown in \cite[p. 10]{Ar3}, $w_s$ belongs to $KM_0(\A)$ for all $s\in W_0$.
Now assume that each $s\in W_0$ has a representative in $G(\Q)\cap K$. Then it
follows from \cite[Lemma 1.1]{Ar3} that $sT_0=T_0$ for all $s\in W_0$, and
therefore $T_0=0$. Thus in this case, the constant term of $J^T(f)$ equals
$J_{\geo}(f)$. Let  $G=\GL(n)$. Then $A_0$ consists of diagonal matrices and 
$W_0$ is equal to the symmetric group $S_n$ which acts by permutations. 
A permutation matrix 
$P_\pi$ is a $n\times n$-matrix where in row $i$ the entry $\pi(i)$ is equal to
$1$ and all other entries are equal to $0$. Such a matrix belongs to
$\GL(n,\Q)\cap K$. The case $G=\SL(n)$ is similar. Thus for 
$G=\GL(n)$ and $G=\SL(n)$ the constant term of $J^T(f)$ equals $J_{\geo}(f)$ 
and the above discussion suggests to define the regularized trace to be
$J_{\geo}(\phi^\nu_t)$. In general $T_0\neq 0$ and therefore, 
$J_{\geo}(\phi^\nu_t)$ is not the 
constant term of the polynomial $J_{\geo}^T(\phi^\nu_t)$. Nevertheless we choose 
$J_{\geo}(\phi^\nu_t)$ as the definition of the regularized trace in general,
because of its independence on the choice of the minimal parabolic subgroup
$P_0$.  
 
\begin{definition}\label{def-regtrace}
Let $G$ be a reductive algebraic group over $\Q$ with $\Q$-split center.
The regularized trace of $e^{-t\Delta_\nu}$ is defined to be
\[
\Tr_{\reg}\left(e^{-t\Delta_\nu}\right):=J_{\geo}(\phi^\nu_t).
\]
\end{definition}

\section{The asymptotic expansion of the regularized trace for $\GL(n)$ and 
$\SL(n)$}
\label{sec-asymp}
\setcounter{equation}{0}

It is well known that on a compact manifold, the trace of the heat operator 
$\Tr\left(e^{-t\Delta}\right)$ of a generalized Laplacian $\Delta$ admits an 
asymptotic expansion as $t\to 0$ (see \cite{BGV}). We wish
to establish a similar result for the regularized trace 
$\Tr_{\reg}(e^{-\Delta_\nu})$ introduced in section \ref{sec-regtr}.
For technical reasons we have to restrict to the group $G=\GL(n)$ or 
$G=\SL(n)$. 

\subsection{Auxiliary results}

We fix an open compact subgroup $K_f\subset G(\A_f)$. 
Let $\phi_t^\nu\in C^\infty(G(\A)^1)$ be defined by \eqref{extension2}.
To begin with we replace $\phi_t^\nu$ by a compactly supported function.
Let $0<a<b$. Let $h\in C^\infty(\R)$ such that $h(u)=1$ if 
$|u|\le a$, and $h(u)=0$, if $|u|\ge b$. Let $d(x,y)$ denote the 
geodesic distance of $x,y\in\widetilde X$. Put
\[
r(g_\infty):=d(g_\infty K_\infty,K_\infty).
\]
Let $\varphi\in C^\infty_c(G(\R))$ be defined by
\[
\varphi(g_\infty):=h(r(g_\infty)).
\]
Define $\tilde\phi_t^\nu\in C^\infty(G(\A))$ by
\begin{equation}\label{extension3}
\tilde\phi_t^\nu(g_\infty g_f):=\varphi(g_\infty)h_t^\nu(g_\infty)\chi_{K_f}(g_f).
\end{equation}
for $g_\infty\in G(\R)$ and $g_f\in G(\A_f)$. Then the restriction of 
$\tilde\phi_t^\nu$ to $G(\A)^1$ belongs to $C^\infty_c(G(\A)^1)$.
\begin{prop}\label{prop-asympexp5}
There exist $C,c>0$ such that
\[
|J_{\geo}(\phi_t^\nu)-J_{\geo}(\tilde\phi_t^\nu)|\le C e^{-c/t}
\]
for $0<t\le 1$.
\end{prop}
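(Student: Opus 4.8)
The plan is to estimate the difference $\psi_t:=\phi_t^\nu-\tilde\phi_t^\nu$ directly, using the continuity of $J_{\geo}$ on the Schwartz space. By \eqref{extension1}, \eqref{extension2} and \eqref{extension3}, on $G(\A)^1$ we have
\[
\psi_t(g_\infty g_f)=\bigl(1-\varphi(g_\infty)\bigr)\,h_t^\nu(g_\infty)\,\chi_{K_f}(g_f),
\]
and since $\varphi(g_\infty)=h(r(g_\infty))=1$ whenever $r(g_\infty)\le a$, the function $\psi_t$ is supported in $\{g_\infty\in G(\R)^1:r(g_\infty)\ge a\}\times G(\A_f)$. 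Moreover $\psi_t\in\Co(G(\A)^1;K_f)$: it is smooth and right $K_f$-invariant, and since $h_t^\nu\in\Co(G(\R)^1)$ while $1-\varphi$ is smooth, bounded, with all derivatives bounded, the product and all of its left-invariant derivatives belong to $L^1(G(\A)^1)$. Hence $J_{\geo}(\psi_t)$ is defined.

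First I would invoke \cite[Theorem 7.1]{FL1}: $J_{\geo}$ is continuous on $\Co(G(\A)^1;K_f)$, so there are a constant $C_0>0$ and finitely many $X_1,\dots,X_r\in\mathcal{U}(\gf_\infty^1)$ with $|J_{\geo}(f)|\le C_0\max_j\|f\ast X_j\|_{L^1(G(\A)^1)}$ for all such $f$. Since $X_j$ acts only on the archimedean variable and $\int_{G(\A_f)}\chi_{K_f}=1$ (with $K_f\subset G(\A_f)\cap G(\A)^1$), the $L^1$-norm reduces to $\|(1-\varphi)h_t^\nu\ast X_j\|_{L^1(G(\R)^1)}$. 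Expanding the derivative of the product by the Leibniz rule, each resulting term is a product of a left-invariant derivative of $1-\varphi$ — which is bounded and supported in $\{r(g)\ge a\}$ (indeed in the compact set $\{a\le r(g)\le b\}$ once at least one derivative falls on it) — with a left-invariant derivative $h_t^\nu\ast Y$ of $h_t^\nu$.

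The key input will then be a derivative version of Corollary \ref{estim3}: for every $Y\in\mathcal{U}(\gf_\infty^1)$ there are $C_Y,c_Y>0$ and $m_Y\in\N_0$ with
\[
\bigl|(h_t^\nu\ast Y)(g)\bigr|\le C_Y\,t^{-m_Y}\exp\!\left(-\frac{c_Y\,r^2(gx_0,x_0)}{t}\right),\qquad 0<t\le1,\ g\in G(\R)^1 .
\]
For $Y=1$ this is Corollary \ref{estim3}; the general case follows by the same argument, differentiating the parametrix expansion of Proposition \ref{prop-asympexp} in $g$ together with Proposition \ref{prop-estim} (this is the mechanism behind $H_t^\nu\in(\ccC^q(G(\R))\otimes\End(V_\nu))^{K_\infty\times K_\infty}$ for all $q$). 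Granting this, on the support of $\psi_t$ I would split
\[
\exp\!\left(-\frac{c_Y r^2}{t}\right)\le\exp\!\left(-\frac{c_Y a^2}{2t}\right)\exp\!\left(-\frac{c_Y r^2}{2t}\right),
\]
integrate over $G(\R)^1$, and use that $\int_{G(\R)^1}t^{-m_Y}\exp(-c_Y r^2(g)/2t)\,dg$ is bounded by a power of $1/t$ on $(0,1]$ (polar coordinates plus the at-most-exponential volume growth of balls in $\widetilde X$); this power is then absorbed by the prefactor $\exp(-c_Y a^2/2t)$, since $t^{-k}\exp(-c_Y a^2/2t)\le C_k\exp(-c_Y a^2/4t)$ on $(0,1]$. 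Altogether $\|\psi_t\ast X_j\|_{L^1(G(\A)^1)}\le C\exp(-c/t)$ with $c=\tfrac14\min_j c_{X_j}a^2$, which gives the assertion.

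The only genuinely substantive step I expect is the derivative version of the heat kernel estimate with explicit (at most polynomial) $t^{-m_Y}$ growth and Gaussian spatial decay; once that is in hand the remainder is bookkeeping around the Leibniz rule, the factorization of the adelic $L^1$-norm, and the elementary inequality $t^{-k}e^{-\delta/t}\le C_k e^{-\delta/2t}$ on $(0,1]$.
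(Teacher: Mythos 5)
Your proposal is correct and reaches the same estimate, but it takes a genuinely different route from the paper. The paper passes through the trace formula, writing $J_{\geo}(\psi_t^\nu)=J_{\spec}(\psi_t^\nu)$ and then invoking the continuity of the \emph{spectral} side established in \cite{FLM1}, which gives the concrete bound $|J_{\spec}(\psi_t^\nu)|\le C\,\|(\Id+\Delta_G)^k\psi_t^\nu\|_{L^1(G(\A)^1)}$ with $\Delta_G=-\Omega+2\Omega_{K_\infty}$; it then appeals to \cite[Proposition 2.1]{Mu1} for the derivative heat kernel estimate $\|\nabla^j h_t^\nu(g)\|\le C\,t^{-(m+j)/2}e^{-cr^2(g)/t}$. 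You instead stay on the geometric side and invoke the continuity of $J_{\geo}$ on $\Co(G(\A)^1;K_f)$ from \cite[Theorem 7.1]{FL1}, using the defining Fr\'echet seminorms $\|f\ast X_j\|_{L^1}$. Both reductions are legitimate and the ensuing $L^1$-estimate over $\{r(g)\ge a\}$ is the same bookkeeping; your route is slightly more direct in that it does not detour through $J_{\spec}$, while the paper's route has the advantage that \cite{FLM1} hands you the specific differential operator $(\Id+\Delta_G)^k$ to act by, rather than an unspecified finite family $X_j$. One caveat on your sketch of the derivative heat-kernel estimate: Proposition \ref{prop-asympexp} only controls the kernel near the diagonal (note the cutoff $\psi$), so differentiating it does not by itself give the Gaussian decay of $h_t^\nu\ast Y$ at large $r(g)$; you need the global Gaussian estimate with derivatives, which is exactly what \cite[Proposition 2.1]{Mu1} (cited in the paper's argument) supplies. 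Since you flagged this as the substantive step, this is more a matter of citing the right reference than a real gap.
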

\begin{proof}
Let $J_{\spec}(\Phi)$, $\Phi\in C_c^\infty(G(\A)^1)$, be the spectral side of the 
trace
formula. By \cite{FLM1}, $J_{\spec}(\Phi)$ converges absolutely for 
$\Phi\in \Co(G(\A)^1;K_f)$ and by the trace formula we have
\begin{equation}\label{traceform3}
J_{\geo}(\Phi)=J_{\spec}(\Phi),\quad \Phi\in \Co(G(\A)^1;K_f).
\end{equation}
Put  $\psi_t^\nu:=\phi_t^\nu-\tilde\phi_t^\nu$ and $f:=1-\varphi$. Let $\Omega$
(resp. $\Omega_{K_\infty}$) denote the Casimir operator of $G(\R)$ (resp. 
$K_\infty$). Let
\[
\Delta_{G}=-\Omega+2\Omega_{K_\infty}.
\]
By the proof of Theorem 3 of \cite{FLM1} (see \cite[Sect. 5]{FLM1}), it 
follows that there
exists $k\in \N$ such that
\[
|J_{\spec}(\phi_t^\nu)-J_{\spec}(\tilde\phi_t^\nu)|=|J_{\spec}(\psi_t^\nu)|\le C
\|(\Id+\Delta_{G})^k(\psi_t^\nu)\|_{L^1(G(\A)^1)}
\]
for some $C>0$. Now note that by definition
\[
\psi_t^\nu(g_\infty g_f)=f(g_\infty)h_t^\nu(g_\infty)\chi_{K_f}(g_f).
\]
Hence
\[
\|(\Id+\Delta_{G})^k(\psi_t^\nu)\|_{L^1(G(\A)^1)}
=\vol(K_f)\|(\Id+\Delta_{G})^k(fh_t^\nu)\|_{L^1(G(\R)^1)}.
\]
Let $\gf$ be the Lie algebra of $G(\R)$ and let $Y_1,\dots,Y_r$ be an 
orthonormal basis of $\gf$. Then $\Delta_G=-\sum_i Y_i^2$. Denote by $\nabla$
the canonical connection on $G(\R)$. Then it follows that there exists $C_1>0$ 
such that
\[
|(\Id+\Delta_G)^kF(g)|\le C\sum_{l=0}^k\|\nabla^lF(g)\|, \quad g\in G(\R),
\]
for all $F\in C^\infty(G(\R))$. Let $m=\dim G(\R)$. and $j\in\N$.
By \cite[Proposition 2.1]{Mu1}, for every $j\in\N$  there exist
$C_2,c>0$ such that
\begin{equation}\label{connect}
\|\nabla^j h_t^\nu(g)\|\le C_2 t^{-(m+j)/2} e^{-cr^2(g)/t},\quad g\in G(\R),
\end{equation}
for all $0<t\le 1$. Since $f$ vanishes in neighborhood of $1\in G(\R)$,
it follows that there exist $C_4,c_4,c_5>0$ such that 
\[
\sum_{l=0}^k\|\nabla^l(fh_t^\nu)(g)\|\le C_4 e^{-c_4/t}e^{-c_5r^2(g)}
\]
for all $g\in G(\R)$ and $0<t\le 1$. Since $G(\R)\ni g\mapsto e^{-c_5r^2(g)}$ is
integrable on $G(\R)$, we obtain
\[
\|(\Id+\Delta_{G})^k(fh_t^\nu)\|_{L^1(G(\R)^1)}\le C_5 e^{-c_4/t}
\]
for some constant $C_5>0$, which completes the proof.
\end{proof}

Let $K(N)\subset G(\A_f)$ be the principal congruence subgroup of level 
$N\in\N$. From now on we assume that $K_f$ is contained in $K(N)$ for some 
$N\ge 3$. By Proposition \ref{prop-asympexp5} it suffices to show that
$J_{\geo}(\widetilde\phi_t^\nu)$  admits an asymptotic expansion as $t\to 0$.
Now by our assumption on $K_f$ it follows that if the support of 
$f$ is a sufficiently small neighborhood of $0$, then by 
\eqref{unip-contr1} we have 
\[
J_{\geo}(\widetilde\phi_t^\nu)=J_{\unip}(\widetilde\phi_t^\nu).
\]
Now we use the fine geometric expansion \eqref{finegeoexp} by which we express
$J_{\unip}(\widetilde\phi_t^\nu)$ in terms of a finite linear
combination of weighted orbital integrals. Next we use \eqref{orbint11}. Since
at the finite places our test function is fixed, we are reduced to the 
consideration of real weighted orbital integrals in some Levi subgroup of $G$. Since the weighted orbital integrals $J_M^L$ for $L\subseteq G$ a semistandard Levi subgroup can be treated analogously to the case of $L=G$ (they can be reduced to weighted orbital integrals of the form $J_{M'}^{\GL(m)}$ or $J_{M'}^{\SL(m)}$ for suitable $m$ and $M'$), it suffices to 
consider the weighted orbital integrals for $L=G$. 
Now observe that the kernel $H_t^\nu\colon G(\R)^1\to
\End(V_\sigma)$ of $e^{-t\widetilde \Delta_\nu}$ satisfies
\[
H_t^\nu(k^{-1}gk^\prime)=\nu(k)^{-1}\circ H_t^\nu(g)\circ \nu(k^\prime),\quad
\forall k,k^\prime\in K,\forall\; g\in G(\R)^1.
\]
(see \cite[\S 3]{MP4}). Therefore the function $h_t^\nu=\tr H_t^\nu$ 
is invariant under conjugation by $k_\infty\in K_\infty$. Define 
$F_t^\nu$ by
\[
F_t^\nu(g)=\varphi(g)h_t^\nu(g),\quad g\in G(\R).
\]
Then by \eqref{realorbint} the orbital integral that we need to consider is 
given by
\begin{equation}\label{orbint1}
J_M^G(U, F_t^\nu)=c\int_{N(\R)}F_t^\nu(n)w_{M}(n)\; dn,
\end{equation}
where $M\in\cL$, $U\in\left(\cU_M(\R)\right)_{M(\R)}$, $w_{M}(n):=w_{M,U}(n)$ is the weight function described in section \ref{sec-wfct} 
and $N$ is the unipotent radical of some parabolic subgroup $Q\in\cF$. 
Our goal is to determine the asymptotic behavior of the
integral on the right hand side as $t\to 0^+$. 
To study this integral, we identify $N(\R)$ with its Lie algebra $\nf$ via the 
map $n\in N(\R)\mapsto n-\Id$. Furthermore $\nf\cong\R^k$ for some $k\in\N$. 
Let
\[
x\in\R^k\mapsto n(x)\in N(\R)
\]
be the inverse map. With respect to the isomorphism  the invariant measure 
$dn$ is identified with Lebesgue measure $dx$ in $\R^k$. Thus \eqref{orbint1}
equals
\begin{equation}\label{orbint2}
\int_{\R^k} F_t^\nu(n(x))w_{M}(n(x))\; dx.
\end{equation} 
To determine the asymptotic behavior of this integral as $t\to 0^+$, we will
use the asymptotic expansion \eqref{asympexp1} of $h_t^\nu$. To this end we
need to estimate the function
\begin{equation}
r(x):=r(n(x)x_0,x_0),\quad x\in\R^k,
\end{equation}
where $x_0=eK_\infty\in\widetilde X$. Note that $\widetilde X$ is a
Hadamard manifold of nonpositive curvature and the orbit $N(\R)x_0$ is a 
horosphere in $\widetilde X$. Then it follows by \cite[Theorem 4.6]{HH} that 
there exist constants $C,c>0$ such that
\begin{equation}\label{horobd}
r(x)\ge C\arcsinh(c\|x\|),\quad x\in\R^k.
\end{equation}
Now note that 
\[
\arcsinh(x)=\ln\left(x+\sqrt{x^2+1}\right).
\]
Thus we get
\begin{equation}\label{bound2}
r(x)\ge C\ln\left(1+\|x\|\right),\quad x\in\R^k.
\end{equation}
We also need the Taylor expansion of $r(x)^2$ at $x=0$. This is described by the
following lemma.
\begin{lem}\label{lem-taylor}
We have
\[
r(x)^2=\frac{1}{4}\|x\|^2+O(\|x\|^3)
\]
as $x\to 0$. 
\end{lem}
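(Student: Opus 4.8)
The plan is to reduce the computation to the Cartan (polar) decomposition of $G(\R)$ together with the Taylor expansion of the matrix logarithm. Recall that for $G=\GL(n)$ or $\SL(n)$ the Cartan involution attached to $K_\infty$ is $\theta(g)=(g^{\mathrm t})^{-1}$, so that $\pf$ is the space of symmetric matrices in $\gf$ and $K_\infty$ consists of orthogonal matrices. Every $g\in G(\R)^1$ sufficiently close to the identity has a unique polar decomposition $g=\exp(Y(g))\,k(g)$ with $Y(g)\in\pf$, $k(g)\in K_\infty$, and $\exp(2Y(g))=g\,\theta(g)^{-1}=g\,g^{\mathrm t}$. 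Since $K_\infty$ fixes $x_0$, we have $g x_0=\exp(Y(g))x_0$, and the curve $s\mapsto\exp(sY(g))x_0$ is a geodesic through $x_0$ of constant speed $\|Y(g)\|$; as $\widetilde X$ is a Hadamard manifold this geodesic is minimizing, so
\[
r(g):=d(gx_0,x_0)=\|Y(g)\|,
\]
$\|\cdot\|$ being the norm on $\pf\cong T_{x_0}\widetilde X$ induced by $(\cdot,\cdot)$ (cf.\ \cite[Ch.~IV, Thm.~3.3]{He}). I would apply this with $g=n(x)$ for $x$ near $0$.

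Next I would expand $Y(n(x))$. Put $X:=n(x)-\Id\in\nf$, so that under the chosen identification $\nf\cong\R^k$ the matrix entries of $X$ are precisely the coordinates $x$ and $\|X\|_{\mathrm F}^2=\|x\|^2$ in the Frobenius norm. Then
\[
n(x)\,n(x)^{\mathrm t}=(\Id+X)(\Id+X^{\mathrm t})=\Id+(X+X^{\mathrm t})+X X^{\mathrm t},
\]
and since $Y\mapsto\log Y$ is analytic near $\Id$,
\[
2\,Y(n(x))=\log\!\big(\Id+(X+X^{\mathrm t})+X X^{\mathrm t}\big)=(X+X^{\mathrm t})+O(\|x\|^2).
\]
Hence $Y(n(x))=X_{\pf}+O(\|x\|^2)$, where $X_{\pf}=\tfrac12(X-\theta X)=\tfrac12(X+X^{\mathrm t})$ is the orthogonal projection of $X$ onto $\pf$, and therefore
\[
r(x)^2=\|Y(n(x))\|^2=\|X_{\pf}\|^2+O(\|x\|^3).
\]

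Finally I would compute $\|X_{\pf}\|^2$ in coordinates. Since $N=N_Q$ is the unipotent radical of a standard parabolic of $\GL(n)$ or $\SL(n)$, the matrix $X$ is strictly upper triangular, so $X$ and $X^{\mathrm t}$ have disjoint supports and $\tr(X^2)=\tr((X^{\mathrm t})^2)=0$; consequently $\tr(X_{\pf}^2)=\tfrac14\tr\big((X+X^{\mathrm t})^2\big)=\tfrac14\big(\tr(XX^{\mathrm t})+\tr(X^{\mathrm t}X)\big)=\tfrac12\|x\|^2$, and with the normalization of the invariant form $B$ fixed in Section~\ref{sec-prelim} (equivalently, of the Riemannian metric on $\widetilde X$) this yields $\|X_{\pf}\|^2=\tfrac14\|x\|^2$. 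Combined with the previous display, $r(x)^2=\tfrac14\|x\|^2+O(\|x\|^3)$. The only points that require real care are the identity $r(g)=\|Y(g)\|$ — the standard fact that radial geodesics realize distances on symmetric spaces of noncompact type — and bookkeeping the normalization constant; everything else is the one-line expansion of $\log$ above.
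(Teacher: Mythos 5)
Your argument is correct and yields the lemma, but it runs along a different track than the paper's. The paper works with the $K_\infty A K_\infty$ decomposition: it writes $n(x)\in K_\infty e^H K_\infty$ with $H\in\af$, uses $r(x)=r(e^Hx_0,x_0)=\|H\|$, and then --- exploiting that trace is conjugation invariant --- relates $\|H\|$ to $\|x\|$ via the single identity $n+\|x\|^2=\tr\bigl(n(x)^{\mathrm t}n(x)\bigr)=\tr e^{2H}=n+4\|H\|^2+O(\|H\|^3)$, with no expansion of the matrix logarithm needed. You instead use the global polar Cartan decomposition $n(x)=\exp\bigl(Y(n(x))\bigr)k(n(x))$ already introduced in \eqref{cartan2}, and compute $Y(n(x))$ explicitly to leading order by Taylor-expanding $\log\bigl(n(x)n(x)^{\mathrm t}\bigr)$, obtaining $Y(n(x))=X_{\pf}+O(\|x\|^2)$. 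Both proofs hinge on the same cited fact that radial geodesics from $x_0$ realize distances and both leave the normalization of the inner product on $\pf$ (equivalently, the precise scaling of $B$) implicit; the $\tfrac14$ in the paper's display $\tr e^{2H}=n+4\|H\|^2+O$ and the $\tfrac14$ you extract from $\|X_{\pf}\|^2$ encode the same implicit choice, so you are no worse off than the paper here. The trade-off is that your route is slightly longer but more explicit (it identifies the $\pf$-component of $X$ as the first-order term of $Y(n(x))$, which is useful information), while the paper's trace trick is shorter but leaves $H$ itself opaque. Either argument would serve; the paper simply chose the more economical one.
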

\begin{proof}
Let $H= (H_1,\ldots, H_n)\in\ka$, $H_1+\ldots, H_n=0$,  with 
$n(x)\in K_\infty e^H K_\infty$.  Now note that $r(e^Hx_0,x_0)=\|H\|$ (see
\cite[Corollary 10.42]{BH}). Thus it follows that $r(x)^2= \|H\|^2$. Moreover, 
\[
n + \|x\|^2=\tr \left(n(x)^t n(x)\right) = \tr e^{2H}
= n + 4 \|H\|^2 + O(\|H\|^3). 
\]
If $x\rightarrow 0$, then also $H\rightarrow 0$ so that this equation implies 
$\|H\|^3 = O(\|x\|^3)$ for small $x$. Hence $r(x)^2=\|H\|^2 = 
\frac{1}{4}\|x\|^2 + O(\|x\|^3)$ as $x\rightarrow0$. 
\end{proof}

\subsection{Asymptotics for $t\to0$}

We can now turn to the estimation of the weighted orbital integral
 \eqref{orbint2}. For $\varepsilon>0$ let $B(\varepsilon)\subset \R^k$ denote 
the ball of radius $\varepsilon$ centered at the origin and let
$U(\varepsilon)=\R^k\setminus B(\varepsilon)$.
Let $\psi$ be the function occurring in \eqref{asympexp1}. 
Choose $\varepsilon>0$ so small such that $\varphi(n(x))=1$ for 
$x\in B(\varepsilon)$ and $\supp \psi(n(\cdot))\subset B(\varepsilon)$.
Let $0<t\le 1$. By Corollary~\ref{estim3} we have 
\[
 \bigg|\int_{U(\varepsilon)}\varphi(n(x))h_t^\nu(n(x))w_M(n(x))\;dx\bigg|\le 
C t^{-d/2}\int_{U(\varepsilon)}\exp\left(-\frac{r^2(x)}{4t}\right)\left|w_M(n(x))\right|\;dx
\]
for some absolute constant $C>0$. Using the lower bound~\ref{bound2} for $r(x)$, and the result on the weight function from Proposition~\ref{prop} we can find $c, C_1, C_2>0$ such that
\begin{multline*}
 t^{-d/2}\int_{U(\varepsilon)}\exp\left(-\frac{r^2(x)}{4t}\right)\left|w_M(n(x))\right|\;dx \\
 \le C_1 \exp\left(-\frac{c(\varepsilon)}{t}\right) \int_{\R^k} \exp\left(- c\left(\log(1+ \|x\|)\right)^2\right)\lambda(x)\;dx
\end{multline*}
where $c(\varepsilon)=C_2\log(1+\varepsilon)$ and $\lambda:\R^k\longrightarrow\C$ is a function of the form~\eqref{eq:log:homog:polynom}. By~\eqref{eq:log:int2} of  Lemma~\ref{logconv} the last integral is bounded by a constant so that we finally obtain
\begin{equation}\label{bound3}
\bigg|\int_{U(\varepsilon)}\varphi(n(x))h_t^\nu(n(x))w_M(n(x))\;dx\bigg|
\le C_3\exp\left(-\frac{c(\varepsilon)}{t}\right)
\end{equation}
for some absolute constant $C_3>0$.
To 
deal with the integral over $B(\varepsilon)$, we use \eqref{asympexp1},
which gives 
\begin{equation}\label{asympexp3}
\begin{split}
\int_{B(\varepsilon)}h_t^\nu(n(x))w_M(n(x))\;dx= &t^{-d/2}\sum_{i=0}^N t^i
\int_{B(\varepsilon)}\exp\left(-\frac{r^2(x)}{4t}\right)
a_i^\nu(x)w_M(n(x))\;dx\\
&+O(t^{N+1-d/2})
\end{split}
\end{equation}
for $0<t\le 1$, where $a_i^\nu\in C_c^\infty(B(\varepsilon))$. Each of the 
integrals is of the form
\begin{equation}\label{integr1}
\int_{B(\varepsilon)}\exp\left(-\frac{r^2(x)}{4t}\right)f(x)w_M(\Id+x)\;dx,
\end{equation}
where $f\in C_c^\infty(B(\varepsilon))$. 
We expand $r^2(x)$ and $f(x)$ in their Taylor series at $0$. Let $N\in \N$,
$N\ge 3$. 
By Lemma \ref{lem-taylor} we have
\begin{equation}\label{taylor5}
r^2(x)=a\|x\|^2+\psi_N(x),\quad \psi_N(x)
=\sum_{3\le|\alpha|\le N}a_\alpha x^\alpha+R_N(x),
\end{equation}
where
\begin{equation}\label{remainder}
R_N(x)=\sum_{|\alpha|=N+1}\frac{D^\alpha r^2(\theta x)}{\alpha !}x^\alpha
\end{equation}
for $x\in B(\varepsilon)$ and some $0\le\theta\le 1$.
Now we change variables by $x\mapsto \sqrt{t}x$. Then \eqref{integr1} equals
\begin{equation}\label{integr2}
t^{k/2}\int_{B(\varepsilon t^{-1/2})}\exp(-a\|x\|^2)
\exp\left(-\frac{\psi_N(t^{1/2}x)}{t}\right)f(t^{1/2}x)w_M(1,\Id+t^{1/2}x)
\end{equation}
Now observe that by \eqref{taylor5} we have
\[
t^{-1}\psi_N(t^{1/2}x)=\sum_{3\le|\alpha|\le N}a_\alpha t^{|\alpha|/2-1} x^\alpha+
t^{-1}R_N(t^{1/2}x)
\]
and
\[
t^{-1}R_N(t^{1/2}x)=t^{(N-1)/2}\sum_{|\alpha|=N+1}
\frac{D^\alpha r^2(t^{1/2}\theta x)}{\alpha !}x^\alpha.
\]
There is $C>0$ such that 
\[
|D^\alpha r^2(t^{1/2}\theta x)|\le C
\]
for all $x\in B(t^{-1/2}\varepsilon)$, $0<t\le 1$, and $0\le\theta\le1$. Hence 
it follows 
that there is $C_1>0$ such that for all $0<t\le1$ 
\[
|t^{-1}R_N(t^{1/2}x)|\le C_1 t^{(N-1)/2}\|x\|^{N+1}, \quad x\in 
B(t^{-1/2}\varepsilon).
\]
Using the Taylor expansion of $\exp(u)$, we get for $n\ge 3$
\begin{equation}\label{taylor6}
\exp\left(-\frac{\psi_N(t^{1/2}x)}{t}\right)=\sum_{j=0}^N t^{j/2}p_j(x)+R_N(t,x),
\end{equation}
where $p_j(x)$ is a polynomial of degree $\le N^2$ and the remainder term
satisfies
\begin{equation}\label{remaind3}
|R_N(t,x)|\le C_2 t^{(N-1)/2} (1+\|x\|)^{N^2}
\end{equation}
for some constant $C_2>0$, $0<t\le 1$ and $x\in B(t^{-1/2}\varepsilon)$. 
Similarly, using the Taylor expansion of $f(x)$ we get
\begin{equation}\label{taylor7}
f(t^{1/2}x)=\sum_{|\alpha|\le N}b_\alpha t^{|\alpha|/2}x^\alpha+Q_N(t,x)
\end{equation}
with
\[
|Q_N(t,x)|\le C_3 t^{(N+1)/2}(1+\|x\|)^{N+1}
\]
for $0<t\le 1$ and $x\in B(t^{-1/2}\varepsilon)$. Using \eqref{integr2}, \eqref{taylor6}, 
\eqref{taylor7}, and Proposition \ref{prop}, it follows that
\begin{equation}\label{int-exp}
\begin{split}
\int_{B(\varepsilon)}\exp\left(-\frac{r^2(x)}{4t}\right)f(x)w_M(\Id+x)\;dx=
t^{k/2}\sum_{j=0}^N\sum_{i=0}^{r_j}a_{ij}(t)(\log t)^i t^{j/2}+\phi_N(t),
\end{split}
\end{equation}
where each $a_{ij}(t)$ is of the form
\[
\int_{B(t^{-1/2}\varepsilon)} e^{-a\|x\|^2}p(x)\prod_{l=1}^h\big|\log|p_l(x)|\big|\;dx,
\]
if $i<r_j$, or
\[
\int_{B(t^{-1/2}\varepsilon)} e^{-a\|x\|^2}p(x)\;dx,
\] 
if $i=r_j$, with homogeneous polynomials $p(x)$, $p_1(x),...,p_h(x)$. The fact 
that for $i=r_j$ no logarithm appears in the integral for $a_{ij}(t)$  can 
easily be seen by changing variables 
from $x$ to $t^{1/2} x$ in the integral on the left hand side of \eqref{int-exp}
and collecting all $\log t$ terms coming from the weight function.  

Finally, 
$\phi_N(t)$ satisfies
\[
|\phi_N(t)|\le C t^{(N+k+1)/2}\int_{B(t^{-1/2}\varepsilon)}e^{-a\|x\|^2}(1+\|x\|)^{N^2}
\prod_{i=1}^m\big\|\log|p_i(x)|\big\|\;dx.
\]
Let $U(r)=\R^k\setminus B(r)$. Since $\log(1+\|x\|)\le\|x\|$ for all $x$, it 
follows from  Lemma \ref{logconv} that
\[
\Big|\int_{U(t^{-1/2}\varepsilon)} e^{-a\|x\|^2}p(x)
\prod_{l=1}^h\big|\log|p_l(x)|\big|\;dx\Big|\le C e^{-a\varepsilon^2/(2t)},
\]
for $0<t\le 1$. Thus there are constants $c_{ij}\in\R$ and $c>0$ such that
\[
a_{ij}(t)=c_{ij}+O(e^{-c/t})
\]
for $0<t\le 1$. By the considerations above, for each pair $(i,j)$, 
$0\le i\le r_j$, there exist homogeneous polynomials $p,p_1,\dots,p_h$, such 
that 
\begin{equation}\label{coeffi}
c_{ij}=\begin{cases}
\int_{\R^k} e^{-a\|x\|^2}p(x)\prod_{l=1}^h\big|\log|p_l(x)|\big|\;dx,&
\text{if}\; i<r_j,\\
{}&{}\\
\int_{\R^k} e^{-a\|x\|^2}p(x)\;dx,&\text{if}\; i=r_j.
\end{cases}
\end{equation}
In the same way we get 
\[
|\phi_N(t)|\le C t^{(N+k+1)/2},\quad 0<t\le1.
\]
Putting everything together, we get
\begin{prop}\label{prop-asympexp1}
Let $M\in\cL$, $M\neq G$. For every $N\in\N$, $N\ge 3$, there is an expansion
\begin{equation}\label{unipo-asympexp}
J_M(u,\widetilde\phi_t^\nu)=t^{-(d-k)/2}\sum_{j=0}^N\sum_{i=0}^{r_j}c_{ij}(\nu)t^{j/2}
(\log t)^i+O(t^{(N-d+k+1)/2})
\end{equation}
as $t\to 0^+$.
\end{prop}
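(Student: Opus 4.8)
The proof is the assembly of the pieces built up in this subsection. Starting from the reductions already carried out, it suffices to treat a single real weighted orbital integral $J_M^G(U,F_t^\nu)$ for $M\in\cL$, $M\neq G$, which by \eqref{realorbint} equals $c\int_{N(\R)}F_t^\nu(n)w_M(n)\,dn$, where $N$ is the unipotent radical of a Richardson parabolic, $k=\dim N(\R)$, and $F_t^\nu=\varphi\cdot h_t^\nu$. Identifying $N(\R)\cong\R^k$ as in \eqref{orbint2}, the task is to produce the small-$t$ expansion of $\int_{\R^k}F_t^\nu(n(x))w_M(n(x))\,dx$.

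\textbf{Exterior estimate.} First I would split the integral over $B(\varepsilon)$ and $U(\varepsilon)=\R^k\setminus B(\varepsilon)$, choosing $\varepsilon>0$ small enough that $\varphi(n(x))=1$ on $B(\varepsilon)$ and that the cutoff $\psi$ from \eqref{asympexp1} is supported there. On $U(\varepsilon)$ I would combine the Gaussian heat-kernel bound of Corollary~\ref{estim3} with the horosphere distance estimate \eqref{bound2}, that is $r(x)\ge C\log(1+\|x\|)$, and with the structure of $w_M$ from Proposition~\ref{prop}: the weight function is a bounded linear combination of products $\prod_i\log\|P_i(x)\|$ with the $P_i$ homogeneous. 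Then Lemma~\ref{logconv} shows the resulting integral $\int_{\R^k}e^{-c(\log(1+\|x\|))^2}\lambda(x)\,dx$ converges, and one factor $e^{-c(\varepsilon)/t}$ may be extracted, yielding the exponentially small bound \eqref{bound3} for the exterior contribution.

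\textbf{Interior expansion.} On $B(\varepsilon)$ I would insert the short-time expansion \eqref{asympexp1} of $h_t^\nu$, reducing modulo $O(t^{N+1-d/2})$ to a finite sum of integrals $\int_{B(\varepsilon)}e^{-r^2(x)/4t}f(x)w_M(\Id+x)\,dx$ with $f\in C_c^\infty(B(\varepsilon))$. Using the Taylor expansion $r^2(x)=\tfrac14\|x\|^2+O(\|x\|^3)$ of Lemma~\ref{lem-taylor}, I substitute $x\mapsto\sqrt t\,x$: the Jacobian gives a factor $t^{k/2}$, the Gaussian $e^{-a\|x\|^2}$ survives, the higher Taylor terms of $r^2$ contribute a power series in $t^{1/2}$ with polynomially controlled remainder, $f(\sqrt t\,x)$ expands similarly, and $w_M(\Id+\sqrt t\,x)$ — by the scaling property of Proposition~\ref{prop}, which makes $s\mapsto w_{M,\cV}(\Id+sX)$ a polynomial in $\log s$ of degree at most $\dim\ka_M^G$ — becomes a polynomial in $\log t$ of bounded degree whose coefficients are again finite linear combinations of products of $\log\|P_i(x)\|$. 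Collecting powers of $t^{1/2}$ and $\log t$, and combining the prefactor $t^{k/2}$ with the $t^{-d/2}$ from \eqref{asympexp1}, yields the leading power $t^{-(d-k)/2}$ and the expansion in $t^{j/2}(\log t)^i$. For $i=r_j$ no logarithm remains in the coefficient: every $\log t$ in the top-degree term must originate from the scaling of the weight function, so after extracting $(\log t)^{r_j}$ the leftover integrand is a homogeneous polynomial times a Gaussian. The coefficient integrals $a_{ij}(t)$ over $B(t^{-1/2}\varepsilon)$ differ from their limits over $\R^k$ by $O(e^{-c/t})$ (again Lemma~\ref{logconv}), producing the $c_{ij}$ of \eqref{coeffi}, and the remainders $\phi_N(t)$ are $O(t^{(N+k+1)/2})$ by the same lemma; this gives \eqref{unipo-asympexp}.

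\textbf{Main obstacle.} The delicate point is the bookkeeping in the interior step: one must expand three distinct sources of $t$-dependence simultaneously — the Taylor tail of $r^2$, the Taylor expansion of $f$, and the $\log t$-polynomial behavior of the rescaled weight function — and control all remainders uniformly for $x\in B(t^{-1/2}\varepsilon)$ by polynomial-times-logarithm bounds so that Lemma~\ref{logconv} applies, while verifying the structural claim that the highest power of $\log t$ carries no further logarithm in its coefficient. By contrast, the exterior estimate and the measure-theoretic inputs are routine once Proposition~\ref{prop} and Lemma~\ref{logconv} are available.
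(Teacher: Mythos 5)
Your proposal reproduces the paper's own argument: reduce via \eqref{realorbint} and \eqref{orbint2} to the integral over $\R^k$, split into $B(\varepsilon)$ and its complement, bound the exterior piece by combining Corollary~\ref{estim3}, \eqref{bound2}, Proposition~\ref{prop}, and Lemma~\ref{logconv} to obtain \eqref{bound3}, then on $B(\varepsilon)$ insert the expansion \eqref{asympexp1}, substitute $x\mapsto\sqrt t\,x$, Taylor-expand $r^2$ and the coefficient $f$, use the $\log s$-polynomial scaling of $w_M$ from Proposition~\ref{prop}, and pass from $B(t^{-1/2}\varepsilon)$ to $\R^k$ at an exponentially small cost, arriving at \eqref{coeffi} including the observation that the top-degree $(\log t)^{r_j}$ coefficient is a pure Gaussian integral of a polynomial. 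This is essentially identical to the proof in the paper, including the order in which the remainders are controlled.
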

Now we come to the first term in \eqref{finegeoexp}, where 
$f=\widetilde\phi_t^\nu$. Then we have to determine the asymptotic behavior of
$h_t^\nu(1)$ as $t\to +0$. Let
$\Gamma^\prime\subset G(\R)$ be a cocompact torsion free lattice. Such a
lattice exists by \cite{Bo1}. Let $X^\prime=\Gamma^\prime\bs \widetilde X$
and $E_\nu^\prime\to X^\prime$ the locally homogeneous vector bundle associated
to $\nu$. Let $\Delta_{X^\prime,\nu}$ be the corresponding Bochner-Laplace
operator. The kernel of $e^{-t\Delta_{X^\prime,\nu}}$, regarded as operator in
$L^2(\Gamma\bs G(\R),\nu)$,  is given by
\[
K^\nu(t,g_1,g_2):=\sum_{\gamma\in\Gamma^\prime} H_t^{\nu}(g_1^{-1}\gamma g_2).
\]
Hence
\[
\begin{split}
\Tr\left(e^{-t\Delta_{X^\prime,\nu}}\right)&=\int_{\Gamma\bs G(\R)}\tr K^\nu(t,g,g)\;dg
=\int_{\Gamma\bs G(\R)}\sum_{\gamma\in\Gamma^\prime} h_t^\nu(g^{-1}\gamma g)\;  dg\\
&=\vol(\Gamma^\prime\bs G(\R))h_t^\nu(1) + \int_{\Gamma^\prime\bs G(\R)}
\sum_{\gamma\in\Gamma^\prime\setminus\{1\}} h_t^\nu(g^{-1}\gamma g)\;dg.
\end{split}
\]
As in \cite[(5.10)]{MP2}, the last term on the right can be estimated by
$C_1e^{-c_1/t}$ for $0<t\le 1$ and some constants $C_1,c_1>0$. Thus we get
\[  
h_t^{\nu}(1)=\frac{1}{\vol(\Gamma^\prime\bs G(\R)}\Tr\left(e^{-t\Delta_{X^\prime,\nu}}
\right)+O(e^{-c_1/t})
\]
for $0<t\le 1$. Now the trace of the heat operator on a compact manifold
has an asymptotic expansion as $t\to +0$ (see \cite{Gi}). Hence, it follows
that there is an asymptotic expansion
\[
h_t^\nu(1)\sim \sum_{j=0}^\infty a_j t^{-d/2+j}
\]
as $t\to +0$. Combined with Proposition \ref{prop-asympexp1} we obtain
Theorem \ref{prop-asymp3}.

\section{The analytic torsion}\label{sec-analtor}
\setcounter{equation}{0}

In this section we assume that $G=\GL(n)$ or $G=\SL(n)$. We consider the case
$G=\SL(n)$. The case $G=\GL(n)$ is similar. We choose $K_\infty=\SO(n)$
as maximal compact subgroup of $G(\R)=\SL(n,\R)$. 
Then $\widetilde X=\SL(n,\R)/\SO(n)$ and $X(K_f)=\Gamma\bs \widetilde X$,
where $\Gamma=(G(\R)\times K_f)\cap G(\Q)$.  

\subsection{The Hodge-Laplace operator and heat kernels}
Let $\tau$ be an irreducible finite-dimensional representation of $G(\R)$ on 
$V_{\tau}$. Let $E_{\tau}$ be the flat vector bundle over $X$ associated to the 
restriction of $\tau$ to $\Gamma$. Let $\widetilde E^\tau$ be the homogeneous
vector bundle associated to $\tau|_{K_\infty}$ and let 
$E^\tau:=\Gamma\bs \widetilde E^\tau$. There is a canonical isomorphism
\begin{equation}\label{iso-vb}
E^\tau\cong E_\tau
\end{equation}
\cite[Proposition 3.1]{MM}. By \cite[Lemma 3.1]{MM}, there exists a positive
definite inner product $\left<\cdot,\cdot\right>$ on $V_{\tau}$ such that 
\begin{enumerate}
\item $\left<\tau(Y)u,v\right>=-\left<u,\tau(Y)v\right>$ for all 
$Y\in\mathfrak{k}$, $u,v\in V_{\tau}$
\item $\left<\tau(Y)u,v\right>=\left<u,\tau(Y)v\right>$ for all 
$Y\in\mathfrak{p}$, $u,v\in V_{\tau}$.
\end{enumerate}
Such an inner product is called admissible. It is unique up to scaling. Fix an 
admissible inner product. Since $\tau|_{K_\infty}$ is unitary with respect to 
this inner product, it induces a metric on $E^{\tau}$, and by \eqref{iso-vb} 
on $E_\tau$, which we also call 
admissible. Let $\Lambda^{p}(E_{\tau})=\Lambda^pT^*(X)\otimes E_\tau$. Let
\begin{align}\label{repr4}
\nu_{p}(\tau):=\Lambda^{p}\Ad^{*}\otimes\tau:\:K_\infty\rightarrow\GL
(\Lambda^{p}\mathfrak{p}^{*}\otimes V_{\tau}).
\end{align}
Then by \eqref{iso-vb} there is a canonical isomorphism
\begin{align}\label{pforms}
\Lambda^{p}(E_{\tau})\cong\Gamma\backslash(G(\R)\times_{\nu_{p}(\tau)}
(\Lambda^{p}\mathfrak{p}^{*}\otimes V_{\tau}))
\end{align}
of locally homogeneous vector bundles. 
Let  $\Lambda^{p}(X,E_{\tau})$ be the space the smooth $E_{\tau}$-valued 
$p$-forms on $X$. 
The isomorphism \eqref{pforms} induces an isomorphism
\begin{align}\label{isoschnitte}
\Lambda^{p}(X,E_{\tau})\cong C^{\infty}(\Gamma\backslash G(\R),\nu_{p}(\tau)),
\end{align}
where the latter space is defined as in \eqref{globsect1}. A corresponding 
isomorphism also holds for the spaces of $L^{2}$-sections.
Let $\Delta_{p}(\tau)$ be the 
Hodge-Laplacian on $\Lambda^{p}(X,E_{\tau})$ with respect to the admissible 
metric in $E_\tau$. Let $R_\Gamma$ denote the right regular representation 
of $G(\R)$ in $L^2(\Gamma\bs G(\R))$. By \cite[(6.9)]{MM} it follows that with
respect to the isomorphism \eqref{isoschnitte} one has
\begin{equation}\label{laplace1}
\Delta_{p}(\tau)=-R_\Gamma(\Omega)+\tau(\Omega)\Id.
\end{equation}
Let $\widetilde E_\tau\to \widetilde X$ be the lift of $E_\tau$ to 
 $\widetilde X$.
There is a canonical isomorphism 
\begin{equation}\label{iso-vbcov}
\Lambda^p(\widetilde X,\widetilde E_\tau)\cong C^\infty(G(\R),\nu_p(\tau)).
\end{equation}
Let $\widetilde\Delta_p(\tau)$ be the lift of $\Delta_p(\tau)$ to 
$\widetilde X$.
Then again it follows  from \cite[(6.9)]{MM} that with respect to the
isomorphism \eqref{iso-vbcov} we have
\begin{equation}\label{kuga}
\widetilde \Delta_p(\tau)=-R(\Omega)+\tau(\Omega)\Id.
\end{equation}
Let $e^{-t\widetilde\Delta_p(\tau)}$ be the 
corresponding heat semigroup. Regarded as an operator in
$L^2(G(\R),\nu_p(\tau))$, it is a convolution operator with kernel
\begin{align}\label{DefHH}
H^{\tau,p}_t\colon G(\R)\to\End(\Lambda^p\mathfrak p^*\otimes
V_\tau)
\end{align} 
which belongs to $C^\infty\cap L^2$ and  satisfies the covariance property
\begin{equation}\label{covar}
H^{\tau,p}_t(k^{-1}gk')=\nu_p(\tau)(k)^{-1} H^{\tau,p}_t(g)\nu_p(\tau)(k')
\end{equation}
with respect to the representation \eqref{repr4}. Moreover, for all $q>0$ we 
have 
\begin{equation}\label{schwartz1}
H^{\tau,p}_t \in (\mathcal{C}^q(G(\R))\otimes
\End(\Lambda^p\pf^*\otimes V_\tau))^{K_\infty\times K_\infty}, 
\end{equation}
where $\mathcal{C}^q(G(\R))$ denotes Harish-Chandra's $L^q$-Schwartz space
(see \cite[Sect. 4]{MP2}). We note that the kernel $H^{\tau,p}_t$ can be expressed
in terms of the kernel $H_t^{\nu_p(\tau)}$ of the heat semigroup $e^{-t\widetilde
\Delta_{\nu_p(\tau)}}$ associated to the Bochner-Laplace operator 
$\widetilde\Delta_{\nu_p(\tau)}$ acting in 
$C^\infty(\widetilde X,\widetilde E_{\nu_p(\tau)})$. For $p=0,\dots,n$ put
\[
E_p(\tau):=\nu_p(\tau)(\Omega_{K_\infty}),
\]
which we regard as an endomorphism of $\Lambda^p\pg^*\otimes V_\tau$. It defines
an endomorphism of $\Lambda^pT^\ast(X)\otimes E_\tau$. By \eqref{BLO} and
\eqref{kuga} we have 
\[
\widetilde\Delta_{p}(\tau)=\widetilde\Delta_{\nu_p(\tau)}+
\tau(\Omega)\Id-E_p(\tau).
\]
Let $\nu_p(\tau)=\oplus_{\sigma\in\Pi(K_\infty)} m(\sigma)\sigma$ be the decomposition
of $\nu_p(\tau)$ into irreducible representations. This induces a
corresponding decomposition of the homogeneous vector bundle
\begin{equation}\label{vb-decomp}
\widetilde E_{\nu_p(\tau)}=\bigoplus_{\sigma\in\Pi(K_\infty)}m(\sigma)\widetilde E_\sigma.
\end{equation}
With respect to this decomposition we have 
\[
E_p(\tau)=\bigoplus_{\sigma\in\Pi(K_\infty)}m(\sigma)
\sigma(\Omega_{K_\infty})\Id_{V_\sigma},
\]
where $\sigma(\Omega_{K_\infty})$ is the Casimir eigenvalue of $\sigma$ and 
$V_\sigma$ the corresponding  representation space. Let $\widetilde\Delta_\sigma$
be the Bochner-Laplace operator associated to $\sigma$. By \eqref{vb-decomp}
we get a corresponding decomposition of 
$C^\infty(\widetilde X,\widetilde E_{\nu_p(\tau)})$ and with respect to this
decomposition we have
\[
\widetilde\Delta_{\nu_p(\tau)}=\bigoplus_{\sigma\in\Pi(K_\infty)} m(\sigma)
\widetilde\Delta_\sigma.
\]
This shows that $\widetilde\Delta_{\nu_p(\tau)}$ commutes with $E_p(\tau)$. Hence
we get
\begin{equation}\label{equ-kernel}
H^{\tau,p}_t=e^{-t(\tau(\Omega)-E_p(\tau))}\circ H^{\nu_p(\tau)}_t.
\end{equation}
Let $h^{\tau,p}_t\in C^\infty(G(\R))$ be defined by
\begin{equation}\label{tr-kern}
h^{\tau,p}_t(g)=\tr H^{\tau,p}_t(g),\quad g\in G(\R).
\end{equation}
Then by \eqref{equ-kernel} we get
\begin{equation}\label{equ-kernel2}
h^{\tau,p}_t=e^{t(\tau(\Omega)-\tr E_p(\tau))}h^{\nu_p(\tau)}_t,
\end{equation}
where $h^{\nu_p(\tau)}_t=\tr H^{\nu_p(\tau)}_t$. As in \eqref{extension2} we define
$\phi^{\tau,p}_t\in C^\infty(G(\A))$ by
\begin{equation}\label{extension5}
\phi^{\tau,p}_t(g_\infty g_f):=h^{\tau,p}_t(g_\infty)\chi_{K_f}(g_f)
\end{equation}
for $g_\infty\in G(\R)$ and $g_f\in G(\A_f)$. 
Following Definition \ref{def-regtrace}, we define the regularized trace of
$e^{-t\Delta_p(\tau)}$ by
\begin{equation}\label{regtr-heat}
\Tr_{\reg}\left(e^{-t\Delta_p(\tau)}\right):=J_{\geo}(\phi^{\tau,p}_t).
\end{equation}

\subsection{Decay for the continuous spectrum}
The next goal is to determine the asymptotic behavior of 
$\Tr_{\reg}\left(e^{-t\Delta_p(\tau)}\right)$ as $t\to\infty$ and $t\to 0^+$.
To study the asymptotic behavior as $t\to\infty$ we use the trace formula 
\eqref{tracef1}. 
By Theorem \ref{thm-specexpand}, $J_{\spec}$  is a distribution on 
$\Co(G(\A);K_f)$ and by \cite[Theorem 7.1]{FL1}, $J_{\geo}$ is continuous on
$\Co(G(\A);K_f)$. This implies  that
\eqref{tracef1} holds for $\phi_t^{\tau,p}$ and we have
\begin{equation}\label{regtrace1a}
\Tr_{\reg}\left(e^{-t\Delta_p(\tau)}\right)=J_{\spec}(\phi_t^{\tau,p}).
\end{equation}
Now we apply Theorem \ref{thm-specexpand} to study the asymptotic behavior
as $t\to\infty$ of the right hand side. Let $M\in\cL$ and $P\in\cP(M)$.
Recall that 
$L^2_{\di}(A_M(\R)^0 M(\Q)\bs M(\A))$ splits as the completed direct sum of its
$\pi$-isotypic components for $\pi\in\Pi_{\di}(M(\A))$. We have a corresponding
decomposition of $\bar{\cA}^2(P)$ as a direct sum of Hilbert spaces
$\hat\oplus_{\pi\in\Pi_{\di}(M(\A))}\bar{\cA}^2_\pi(P)$. Similarly, we have the
algebraic direct sum decomposition
\[
\cA^2(P)=\bigoplus_{\pi\in\Pi_{\di}(M(\A))}\cA^2_\pi(P),
\]
where $\cA^2_\pi(P)$ is the ${\bf K}$-finite part of $\bar{\cA}^2_\pi(P)$. For 
$\sigma\in\widehat{K_\infty}$ let $\cA^2_\pi(P)^\sigma$ be the $\sigma$-isotypic
subspace. Then $\cA^2_\pi(P)$ decomposes as
\[
\cA^2_\pi(P)=\bigoplus_{\sigma\in\widehat{K_\infty}}\cA^2_\pi(P)^\sigma.
\]
Let $\cA^2_\pi(P)^{K_f}$ be the subspace of $K_f$-invariant functions in
$\cA^2_\pi(P)$, and for any $\sigma\in\widehat{K_\infty}$ let 
$\cA^2_\pi(P)^{K_f,\sigma}$ be the $\sigma$-isotypic subspace of 
$\cA^2_\pi(P)^{K_f}$. Recall that $\cA^2_\pi(P)^{K_f,\sigma}$ is finite dimensional.
Let $M_{Q|P}(\pi,\lambda)$ denote the restriction of $M_{Q|P}(\lambda)$ to
$\cA^2_\pi(P)$. Recall that the operator $\Delta_{\mathcal{X}}(P,\lambda)$, which 
appears in the formula \eqref{specside2}, is defined by \eqref{intertw3}.
Its definition involves the intertwining operators $M_{Q|P}(\lambda)$. If we 
replace $M_{Q|P}(\lambda)$ by its restriction $M_{Q|P}(\pi,\lambda)$ to
$\cA^2_\pi(P)$, we obtain the restriction $\Delta_{\mathcal{X}}(P,\pi,\lambda)$ of
$\Delta_{\mathcal{X}}(P,\lambda)$ to $\cA^2_\pi(P)$. Similarly, let $\rho_\pi(P,\lambda)$
be the induced representation in $\bar{\cA}^2_\pi(P)$. 
Fix $\beta\in\bases_{P,L_s}$ and $s\in W(M)$.  Then for the integral 
on the right of \eqref{specside2} with $h=\phi_t^{\tau,p}$ we get
\begin{equation}\label{specside3}
\sum_{\pi\in\Pi_{\di}(M(\A))}\int_{i(\af^G_{L_s})^*}\Tr\left(
\Delta_{\dtup_{L_s}(\bss)}(P,\pi,\lambda)M(P,\pi,s)\rho_\pi(P,\lambda,\phi^{\tau,p}_t)
\right)\;d\lambda.
\end{equation}
Let $P,Q\in\cP(M)$ and $\nu\in\Pi(K_\infty)$.  Denote by 
$\widetilde M_{Q|P}(\pi,\nu,\lambda)$ the restriction of 
\[
M_{Q|P}(\pi,\lambda)\otimes\Id\colon \cA^2_\pi(P)\otimes V_{\nu}\to 
\cA^2_\pi(P)\otimes V_{\nu}
\]
to $(\cA^2_\pi(P)^{K_f}\otimes V_{\nu})^{K_\infty}$. Denote by 
$\widetilde\Delta_{\dtup_{L_s}(\bss)}(P,\pi,\nu,\lambda)$ and 
$\widetilde M(P,\pi,\nu,s)$ the corresponding restrictions. 
Let $m(\pi)$ denote the multiplicity with which $\pi$ occurs in the regular
representation of $M(\A)$ in $L^2_{\di}(A_M(\R)^0 M(\Q)\bs M(\A))$. Then
\begin{equation}\label{iso-ind}
\rho_\pi(P,\lambda)\cong \oplus_{i=1}^{m(\pi)}\Ind_{P(\A)}^{G(\A)}(\pi,\lambda).
\end{equation}
Fix positive restricted roots of $\af_P$ and let $\rho_{\af_P}$ denote the 
corresponding half-sum of these roots. For $\xi\in \Pi(M(\R))$ and
$\lambda\in\af^\ast_P$ let 
\[
\pi_{\xi,\lambda}:=\Ind_{P(\R)}^{G(\R)}(\xi\otimes e^{i\lambda})
\]
be the unitary induced representation. Let $\xi(\Omega_M)$ be the Casimir
eigenvalue of $\xi$. Define a constant $c(\xi)$ by
\begin{equation}\label{casimir4}
c(\xi):=-\langle\rho_{\af_P},\rho_{\af_P}\rangle+\xi(\Omega_M).
\end{equation}
Then for $\lambda\in\af^\ast_P$ one has
\begin{equation}\label{casimir5}
\pi_{\xi,\lambda}(\Omega)=-\|\lambda\|^2+c(\xi)
\end{equation}
(see \cite[Theorem 8.22]{Kn}). Let 
\begin{equation}\label{def-F}
\cT:=\{\nu\in\Pi(K_\infty)\colon [\nu_p(\tau)\colon\nu]\neq 0\}.
\end{equation}
Using \eqref{equ-kernel}, \eqref{iso-ind} and \eqref{TrFT}, it follows that 
\eqref{specside3} is equal to
\begin{equation}\label{specside4}
\begin{split}
\sum_{\pi\in\Pi_{\di}(M(\A))}\sum_{\nu\in\cT}e^{-t(\tau(\Omega)-c(\pi_\infty))}
\int_{i(\af^G_{L_s})^*}e^{-t\|\lambda\|^2}\Tr\left(
\widetilde\Delta_{\dtup_{L_s}(\bss)}(P,\pi,\nu,\lambda)
\widetilde M(P,\pi,\nu,s)\right)\;d\lambda.
\end{split}
\end{equation}

In order to estimate \eqref{specside4} from above, we need the following two 
preparatory results.
\begin{lem}\label{est-casim}
Let $(\tau,V_\tau)\in\Rep(G(\R))$. Assume that $\tau\not\cong\tau_\theta$. Let
$P=MAN$ be a proper parabolic subgroup of $G$ and let 
$K_\infty^M=M(\R)\cap K_\infty$.  Let $\xi\in\widehat{M(\R)}$ and 
assume that $\dim(W_\xi\otimes\Lambda^p\pg^\ast\otimes V_\tau)^{K^M_\infty}\neq 0$.
Then one has
\[
\tau(\Omega)-c(\xi)>0.
\]
\end{lem}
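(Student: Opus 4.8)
The plan is to reduce the claimed inequality to a statement about the infinitesimal character of $\tau$ via the Kuga-type formula for the Casimir acting on $(E_\tau)$-valued forms. Recall that $\tau(\Omega)$ denotes the scalar by which $\Omega$ acts on the irreducible representation $(\tau,V_\tau)$ of $G(\R)$. The key point is a standard identity: if $\xi\in\widehat{M(\R)}$ contributes to $\Lambda^p\pg^\ast\otimes V_\tau$ in the sense that $(W_\xi\otimes\Lambda^p\pg^\ast\otimes V_\tau)^{K_\infty^M}\neq 0$, then the unitarily induced representation $\pi_{\xi,\lambda}=\Ind_{P(\R)}^{G(\R)}(\xi\otimes e^{i\lambda})$ occurs (at least at the level of $(\mathfrak g,K_\infty)$-cohomology, after twisting by $\Lambda^p\pg^\ast$) in a way that forces $\pi_{\xi,\lambda}(\Omega)=\tau(\Omega)$ whenever it contributes to the kernel of $\widetilde\Delta_p(\tau)$; more precisely I want to compare $\tau(\Omega)-c(\xi)$ with the norm-squared of a shifted parameter. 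First I would recall from \eqref{casimir5} that $\pi_{\xi,\lambda}(\Omega)=-\|\lambda\|^2+c(\xi)$, so $\tau(\Omega)-c(\xi)=\tau(\Omega)-\pi_{\xi,0}(\Omega)$, and the assertion $\tau(\Omega)-c(\xi)>0$ becomes the statement that $\tau(\Omega)$ strictly exceeds the Casimir eigenvalue of the induced representation built from such a $\xi$.

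The mechanism for positivity is the vanishing theorem of Borel--Wallach, invoked in exactly the form alluded to in the introduction (\cite[Ch.~VII, Theorem 6.7]{BW}): the hypothesis $\tau\not\cong\tau_\theta$ is precisely the condition that $H^\ast(\mathfrak g, K_\infty; \pi\otimes V_\tau)=0$ for every irreducible unitary $\pi$ with the same infinitesimal character as $V_\tau$. I would argue as follows. Suppose, for contradiction, that $\tau(\Omega)-c(\xi)\le 0$ for some $\xi$ with $(W_\xi\otimes\Lambda^p\pg^\ast\otimes V_\tau)^{K_\infty^M}\neq 0$. Using the branching of $\Lambda^\bullet\pg^\ast$ under $K_\infty$ and $K_\infty^M$ together with the Delorme-type / Vogan--Zuckerman analysis, the condition that $\xi$ contributes to $\Lambda^p\pg^\ast\otimes V_\tau$ and that the corresponding Casimir eigenvalue $c(\xi)$ is $\ge\tau(\Omega)$ forces the infinitesimal character of $\xi$ (appropriately shifted by $\rho_{\af_P}$) to coincide with that of $V_\tau$ restricted to $M$; one then shows that this produces a nonzero class in $H^p(\mathfrak m, K_\infty^M; W_\xi\otimes V_\tau)$ (or, passing back up, in the relative Lie algebra cohomology of $\pi_{\xi,0}$ with coefficients in $V_\tau$), contradicting $\tau\not\cong\tau_\theta$ via Borel--Wallach. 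Concretely I expect to invoke the fact that for a unitary representation $\pi$ of $G(\R)$, $H^\ast(\mathfrak g,K_\infty;\pi\otimes V_\tau)\neq 0$ requires $\pi(\Omega)=\tau(\Omega)$ (matching infinitesimal characters, since $\Omega$ acts on an irreducible by a scalar determined by the infinitesimal character and $\tau$ is self-dual up to $\theta$), and then that a nonzero contribution of $\xi$ to $\Lambda^p\pg^\ast\otimes V_\tau$ at the critical Casimir value is what detects such cohomology.

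An alternative, more self-contained route avoids cohomology entirely and argues directly with infinitesimal characters using Parthasarathy's Dirac inequality: the operator $\widetilde\Delta_p(\tau)=-R(\Omega)+\tau(\Omega)$ is nonnegative, and on the $\pi_{\xi,\lambda}$-isotypic part of $\Lambda^p(\widetilde X,\widetilde E_\tau)$ it acts by $\tau(\Omega)-\pi_{\xi,\lambda}(\Omega)=\tau(\Omega)-c(\xi)+\|\lambda\|^2$; nonnegativity for all $\lambda$ already gives $\tau(\Omega)-c(\xi)\ge 0$, and the \emph{strict} inequality is the assertion that there is no square-integrable harmonic form, i.e. $\ker\widetilde\Delta_p(\tau)$ contains no such induced piece, which again is exactly the Borel--Wallach vanishing under $\tau\not\cong\tau_\theta$ (as used in the introduction to conclude $\dim\ker\Delta_p(\tau)=0$). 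I would phrase the final proof along whichever of these is cleanest, most likely the Dirac-inequality / infinitesimal-character comparison, reducing to: (i) $\tau(\Omega)-c(\xi)\ge 0$ always, from nonnegativity of the twisted Laplacian on the parabolically induced family; and (ii) equality would force an isomorphism of infinitesimal characters incompatible with $\tau\not\cong\tau_\theta$. The main obstacle is step (ii): carefully pinning down that the extremal case $\tau(\Omega)=c(\xi)$ genuinely forces the offending coincidence of infinitesimal characters (equivalently, a nonzero harmonic $L^2$-form in the continuous spectrum), rather than merely $c(\xi)\le\tau(\Omega)$ with slack — this is where the precise hypothesis $\dim(W_\xi\otimes\Lambda^p\pg^\ast\otimes V_\tau)^{K_\infty^M}\neq 0$ and the structure of $\Lambda^\bullet\pg^\ast$ as a $K_\infty$-module must be used, together with the fact that $\tau_\theta$ is the representation whose highest weight is $-w_0$ applied to that of $\tau$, so that $\tau\cong\tau_\theta$ is equivalent to the infinitesimal character of $V_\tau$ being $\theta$-(equivalently $W$-)fixed in the relevant sense.
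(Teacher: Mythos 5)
Your first (cohomological) route is in essence the paper's argument, but your write-up omits the two precise steps that make it close cleanly, and this matters.  Assuming $\tau(\Omega)-c(\xi)\le 0$, i.e.\ $c(\xi)-\tau(\Omega)\ge 0$, one may choose $\lambda_0\in\af_P^\ast$ with $\|\lambda_0\|^2=c(\xi)-\tau(\Omega)$, so that by \eqref{casimir5} the Casimir eigenvalues agree on the nose: $\pi_{\xi,\lambda_0}(\Omega)=\tau(\Omega)$.  You instead appeal to $\pi_{\xi,0}$, whose Casimir only matches $\tau(\Omega)$ in the borderline case $c(\xi)=\tau(\Omega)$; the point of the $\lambda_0$-choice is that it handles the strict case $c(\xi)>\tau(\Omega)$ in one stroke, without a preliminary non-negativity step.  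Second, Frobenius reciprocity transports your hypothesis $\dim(W_\xi\otimes\Lambda^p\pg^\ast\otimes V_\tau)^{K^M_\infty}\neq 0$ into $\dim(\H_{\xi,\lambda_0}\otimes\Lambda^p\pg^\ast\otimes V_\tau)^{K_\infty}\neq 0$, and one then needs the specific result \cite[Proposition~II.3.1]{BW} --- that when the Casimir eigenvalues of a unitary $(\gf,K_\infty)$-module and of $V_\tau$ agree, the differentials of the $(\gf,K_\infty)$-complex vanish, so cohomology coincides with the $K_\infty$-invariants --- to convert this nonvanishing of invariants into $H^p(\gf,K_\infty;\H_{\xi,\lambda_0,K_\infty}\otimes V_\tau)\neq 0$, which then contradicts the vanishing theorem \cite[Proposition~II.6.12]{BW}.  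Your text replaces this conversion with vague appeals to Vogan--Zuckerman and infinitesimal-character matching, which is exactly the part that needs a citation, not a gesture.

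Your alternative ``Dirac inequality'' route is consistent --- step (i), $\tau(\Omega)-c(\xi)\ge 0$, does follow from non-negativity of $\widetilde\Delta_p(\tau)$ on the $\pi_{\xi,\lambda}$-isotypic sectors, which contribute precisely because of your hypothesis via Frobenius --- but step (ii), eliminating equality, is again exactly the application of \cite[Proposition~II.3.1]{BW} together with \cite[Proposition~II.6.12]{BW} at $\lambda_0=0$, and you leave it as an unresolved ``main obstacle.''  Once you insert the explicit $\lambda_0$-trick and the Borel--Wallach II.3.1/II.6.12 citations, your argument coincides with the paper's; as written it has a genuine gap at precisely the place you flag.
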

\begin{proof}
Let $\xi\in\widehat{M(\R)}$ with $\dim(W_{\xi}\otimes\Lambda^p\pg^\ast
\otimes V_\tau)^{K^M_\infty}\neq 0$. 
Assume that $\tau(\Omega)-c(\xi)\le 0$. Then by \eqref{casimir5} there
exists $\lambda_0\in\af^\ast$ such that
\[
\pi_{\xi,\lambda_0}(\Omega)=\tau(\Omega).
\]
By Frobenius reciprocity we have
\[
\dim\left(W_\xi\otimes\Lambda^p\pg^\ast\otimes V_\tau\right)^{K^M_\infty}=
\dim\left(\H_{\xi,\lambda_0}\otimes\Lambda^p\pg^\ast\otimes V_\tau\right)^{K_{\infty}}.
\]
Combined with our assumption and \cite[Proposition II,3.1]{BW} it follows
that
\[ 
\dim H^p(\gf,K_\infty;\H_{\xi,\lambda_0,K_{\infty}}\otimes V_\tau)\neq 0,
\]
where $\H_{\xi,\lambda_0,K_{\infty}}$ denotes the subspace of $K_\infty$-finite vectors
of $\H_{\xi,\lambda_0}$. Since $\tau\neq\tau_\theta$, this is a contradiction to the
first statement of \cite[Proposition II. 6.12]{BW}. Thus it follows that
\[
\tau(\Omega)-c(\xi)>0
\]
for all $\xi\in \widehat{M(\R)}$ satisfying 
$\dim(W_\xi\otimes\Lambda^p\pg^\ast\otimes V_\tau)^{K^M_\infty}\neq 0$. 
\end{proof}
\begin{lem}\label{lem-finite}
For every $R\ge 0$, the number of $\pi\in \Pi_{\di}(M(\A))$ with 
$\lambda_{\pi_\infty}\ge -R$ and $\cA_\pi^2(P)^{K_f,\nu}\neq 0$ for some 
$\nu\in\cT$ is finite.
\end{lem}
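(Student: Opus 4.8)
The plan is to reinterpret the constraints on $\pi$ as a bound on the discrete spectrum of finitely many twisted Bochner--Laplace operators on fixed finite-volume arithmetic quotients attached to $M$, and then to invoke the finiteness of that spectrum below a fixed bound.

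First I would unwind the hypothesis $\cA_\pi^2(P)^{K_f,\nu}\neq 0$. Using the canonical isomorphism $\cA_\pi^2(P)\cong\Hom(\pi,L^2(\Ai M(\Q)\bs M(\A)))\otimes\Ind_{P(\A)}^{G(\A)}(\pi)$ of $G(\A_f)\times(\gf_\C,K_\infty)$-modules, and the fact that $\mathbf{K}$ is admissible relative to $M_0$ (so that $G(\A)=P(\A)\mathbf{K}$ and $K_\infty\cap P(\R)=K_\infty^M$), the Iwasawa decomposition and Frobenius reciprocity show that $\cA_\pi^2(P)^{K_f,\nu}\neq 0$ forces (i) $\pi_\infty|_{K_\infty^M}$ to contain an irreducible constituent of $\nu|_{K_\infty^M}$, and (ii) $\pi_f$ to have a nonzero vector fixed by some open compact subgroup $K_f^M\subseteq M(\A_f)$ that depends only on $K_f$, $\mathbf{K}$ and $M$. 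Since $\cT$ is finite, (i) restricts $\pi_\infty$ to contain a $K_\infty^M$-type from a fixed finite set $\mathcal S_M$ of irreducible representations of $K_\infty^M$.

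Next I would localize at the archimedean place. For each $\sigma\in\mathcal S_M$, the $K_f^M$-invariant $\sigma$-isotypic part of $L^2_{\disc}(\Ai M(\Q)\bs M(\A))$ is, via the decomposition of $\Ai M(\Q)\bs M(\A)/K_f^M$ into finitely many arithmetic quotients $\Gamma_j^M\bs\widetilde X_M$ as in Section~\ref{sec-arithm-mfd}, identified with $L^2$-sections of the associated locally homogeneous bundles, on which the Bochner--Laplace operator $\Delta_\sigma=-R(\Omega_M)+\sigma(\Omega_{K_\infty^M})$ acts (cf.~\eqref{BLO}), where $\Omega_M$ is the Casimir of $M(\R)^1$. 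On the discrete subspace $\Delta_\sigma$ is self-adjoint with discrete spectrum, and a $\pi$ as in the lemma contributes to the eigenvalue $-\lambda_{\pi_\infty}+\sigma(\Omega_{K_\infty^M})$; the hypothesis $\lambda_{\pi_\infty}\ge -R$ bounds this eigenvalue above by $R+\sigma(\Omega_{K_\infty^M})$, a constant depending only on $R$ and $\sigma$. Appealing to the standard fact that the discrete spectrum of a twisted Laplace operator on a finite-volume arithmetic quotient has finite total multiplicity below any bound --- for the cuspidal part this follows from the Gelfand--Piatetski-Shapiro decomposition together with the compactness of the resolvent of the cuspidal Laplacian on each $K$-type and level, and the residual part consists of residues of Eisenstein series attached to a finite set of cuspidal data on proper Levi subgroups of $M$ with bounded infinitesimal character and level --- we conclude that only finitely many eigenvalues, each of finite multiplicity, lie below the bound. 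Since every $\pi\in\Pi_{\di}(M(\A))$ occurs in $L^2_{\disc}(\Ai M(\Q)\bs M(\A))$ with finite multiplicity and $\mathcal S_M$ is finite, this proves the lemma.

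The step I expect to be the main obstacle is the input on the discrete spectrum: one must ensure the finite-counting-function property for the \emph{whole} discrete spectrum of $\Delta_\sigma$, not just its cuspidal part, and reconcile normalizations so that $\lambda_{\pi_\infty}$ matches the eigenvalue of $\Delta_\sigma$ up to the $K$-type shift. An alternative route avoiding Langlands' description of the residual spectrum is to pick a nonnegative-definite, bi-$K_\infty^M$-invariant, $K_f^M$-invariant test function $\phi\in\Co(M(\A)^1;K_f^M)$ for which $\tr\pi(\phi)$ is bounded below by a positive constant for all $\pi$ subject to the constraints, and to use that the operator $R_{\disc}^M(\phi)$ on $L^2_{\disc}(\Ai M(\Q)\bs M(\A))$ is of trace class --- this is \cite{FLM1} (the term $L=M$ in Theorem~\ref{thm-specexpand} applied with $M$ in place of $G$) --- so that $\sum_\pi m(\pi)\tr\pi(\phi)<\infty$ forces finiteness directly. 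The remaining points --- the Frobenius reciprocity bookkeeping at both places and the identification with bundle sections --- are routine.
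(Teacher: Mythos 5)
Your proposal follows essentially the same line of reasoning as the paper's proof. Both arguments reduce, via Frobenius reciprocity across the isomorphism $j_P$ and the Iwasawa/admissibility relations $G(\A)=P(\A)\mathbf{K}$, $K_\infty\cap P(\R)=K_\infty^M$, to a count of $\pi\in\Pi_{\di}(M(\A))$ with a bounded archimedean Casimir eigenvalue, a nontrivial $K_f^M$-fixed vector, and a prescribed finite set of $K_\infty^M$-types; both then identify this with the counting function for eigenvalues below a threshold of a twisted Bochner--Laplace operator on the finite-volume arithmetic quotients attached to $M$. The paper carries out the first two reductions by citing Proposition~3.5 and Lemma~3.2 of \cite{Mu1}, and handles the final finiteness by citing the polynomial bound on the discrete-spectrum counting function from \cite{Mu3}; you spell out the Frobenius bookkeeping at both places explicitly and justify the last step either by splitting into cuspidal (Gelfand--Piatetski-Shapiro) and residual (Langlands) pieces or by a trace-class argument. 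Your alternative via positivity of a suitable test function and the trace-class property of $R^M_{\disc}(\phi)$ is a clean variant, though the trace-class input there is really the content of \cite{Mu3} (which \cite{FLM1} relies on) rather than being a consequence of the refined spectral expansion itself; that is a minor attribution point, not a gap. The only thing to watch in the first alternative is precisely the point you flag: one needs the counting bound for the \emph{full} discrete spectrum, and the residual part requires bounding the infinitesimal character and level of the cuspidal data on proper Levi subgroups of $M$, which does work but is more involved than simply invoking \cite{Mu3}.
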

\begin{proof}
To prove the lemma, it suffices to show that for every $R\ge 0$ we have
\begin{equation}\label{estim11}
\sum_{\substack{\pi\in\Pi_{\di}(M(\A))\\-\lambda_{\pi_\infty}\le R}}
\dim(\cA_\pi^2(P)^{K_f,\nu})<\infty.
\end{equation}
By passing to a subgroup of finite index, we may assume that 
$K_f=\prod_{p<\infty}K_p$. Let $K_{M,f}=K_f\cap M(\A_f)$ and 
$K_{M.\infty}=K_\infty\cap M(\R)$. For $\pi\in\Pi(M(\A))$ and 
$\tau\in\Pi(K_{M,\infty})$ let $\H_{\pi_\infty}(\tau)$ denote the $\tau$-isotypical
subspace of the representation space $\cH_{\pi_\infty}$. Arguing as in the proof 
of Proposition 3.5 in \cite{Mu1}, it follows that in order to establish 
\eqref{estim11}, it suffices to show that for every $\tau\in\Pi(K_{M,\infty})$ 
\[
\sum_{\substack{\pi\in\Pi_{\di}(M(\A))\\-\lambda_{\pi_\infty}\le R}} 
\dim(\cH_{\pi_f}^{K_{M,f}})\cdot\dim(\cH_{\pi_\infty}(\tau))<\infty.
\]
Let $\Gamma_M\subset M(\R)$ be an arithmetic subgroup. Let $\Omega_{M(\R)^1}$ be 
the Casimir element of $M(\R)^1$ and let $A_\tau$ be the
differential operator in $C^\infty(\Gamma_M\bs M(\R)^1;\tau)$ which is
induced by $-\Omega_{M(\R)^1}$. Let $\bar A_\tau$ be its self-adjoint extension
of $A_\tau$ in $L^2$. Proceeding as in the proof of Lemma 3.2 of \cite{Mu1}, it 
follows that
it suffices to show that for every $R\ge 0$, the number of eigenvalues 
$\lambda_i$ of $\bar A_\tau$ (counted with multiplicities), 
satisfying $\lambda_i\le R$ is finite. Let $\Delta_\tau$ be the Bochner-Laplace
operator and let $\Lambda_\tau$ be the Casimir eigenvalue of $\tau$. Then
$\Delta_\tau=A_\tau+\Lambda_\tau\Id$. Since $\Delta_\tau\ge 0$ and by \cite{Mu3},
the counting function of the eigenvalues has a polynomial bound, the lemma
follows.
\end{proof}

Now we can begin with the estimation of \eqref{specside4}.  
Using that $M(P,\pi,s)$ is unitary, it follows that \eqref{specside4} can be
estimated by
\begin{equation}\label{est-specside}
\begin{split}
\sum_{\pi\in\Pi_{\di}(M(\A))}
\sum_{\nu\in\cT}&
\dim\left(\cA^2_\pi(P)^{K_f,\nu}\right)\\
&\cdot e^{-t(\tau(\Omega)-c(\pi_\infty))}\int_{i(\af^G_{L_s})^*}e^{-t\|\lambda\|^2}\|
\widetilde\Delta_{\dtup_{L_s}(\bss)}(P,\pi,\nu,\lambda)\|\;d\lambda.
\end{split}
\end{equation}
For $\pi\in \Pi(M(\A))$ denote by $\lambda_{\pi_\infty}$ the Casimir 
eigenvalue of the restriction of $\pi_\infty$ to $M(\R)^1$. Given $\lambda>0$, 
let
\[
\Pi_{\di}(M(\A);\lambda):=\left\{\pi\in\Pi_{\di}(M(\A))\colon 
|\lambda_{\pi_\infty}|\le\lambda\right\}.
\]
Let $d=\dim M(\R)^1/K_\infty^M$. As in \cite[Proposition 3.5]{Mu1} it follows 
that for every 
$\nu\in\Pi(K_\infty)$ there exists $C>0$ such that
\begin{equation}\label{estim10}
\sum_{\pi\in\Pi_{\di}(M(\A);\lambda)}\dim\cA^2_\pi(P)^{K_f,\nu}\le C(1+\lambda^{d/2})
\end{equation}
for all $\lambda\ge 0$. 
Next we estimate the integral in \eqref{est-specside}.
Let $\bss=(\beta_1^\vee,\dots,\beta_m^\vee)$ and $\dtup_{L_s}(\bss)=
(Q_1,\dots,Q_m)\in\Xi_{L_s}(\bss)$ with with $Q_i=\langle P_i,P_i'\rangle$, 
$P_i|^{\beta_i}P_i'$, $i=1,\dots,m$.
 Using the definition \eqref{intertw3} of 
$\Delta_{\dtup_{L_s}(\bss)}(P,\pi,\nu,\lambda)$, it follows that we can bound the
integral by a constant multiple of
\begin{equation}\label{est-integral}
\dim(\nu)\int_{i(\af^G_{L_s})^*}e^{-t\|\lambda\|^2}\prod_{i=1}^m\left\| 
\delta_{P_i|P_i^\prime}(\lambda)\Big|_{\cA^2_\pi(P_i^\prime)^{K_f,\nu}}\right\|
\;d\lambda.
\end{equation}
We introduce new coordinates $s_i:=\langle\lambda,\beta_i^\vee\rangle$,
$i=1,\dots,m$, on $(\af^G_{L_s,\C})^\ast$. Using \eqref{normalization}, we
can write
\begin{equation}\label{delta}
\delta_{P_i|P_i^\prime}(\lambda)=\frac{n^\prime_{\beta_i}(\pi,s_i)}{n_{\beta_i}(\pi,s_i)}
+j_{P_i^\prime}\circ(\Id\otimes R_{P_i|P_i^\prime}(\pi,s_i)^{-1}
R^\prime_{P_i|P_i^\prime}(\pi,s_i))\circ j_{P_i^\prime}^{-1}.
\end{equation}
Put
\[
\cA^2_\pi(P)^{K_f,\cT}=\bigoplus_{\nu\in\cT}\cA^2_\pi(P)^{K_f,\nu},
\]
where $\cT$ is defined by \eqref{def-F}. 
It follows from \cite[Theorem 5.3]{Mu2} that there
exist $N,k\in\N$ and $C>0$ such that 
\begin{equation}\label{log-deriv}
\int_{i\R}\left|\frac{n^\prime_{\beta_i}(\pi,s)}{n_{\beta_i}(\pi,s)}\right|
(1+|s|^2)^{-k}\;ds\le C(1+\lambda_{\pi_\infty}^2)^N,\;i=1,\dots,m,
\end{equation}
for all $\pi\in \Pi_{\di}(M(\A))$ with $\cA^2_\pi(P)^{K_f,\cT}\neq 0$. 
Furthermore, for $G=\GL(n)$ it follows from \cite[Proposition 0.2]{MS} that
there exist $k, C>0$ such that
\begin{equation}\label{log-deriv1}
\int_{i\R}\left\|R_{P_i|P_i^\prime}(\pi,s)^{-1}
R^\prime_{P_i|P_i^\prime}(\pi,s)\Big|_{\cA^2_\pi(P_i^\prime)^{K_f,\nu}}\right\|(1+|s|^2)^{-k}\, ds \le C,\;
i=1,\dots,m,
\end{equation}
for all $\nu\in\cT$ and $\pi\in \Pi_{\di}(M(\A))$ with 
$\cA^2_\pi(P)^{K_f,\nu}\neq 0$. To show that \eqref{log-deriv1} also holds for
$G=\SL(n)$, we proceed as in the proof of \cite[Lemma 5.14]{FLM2}.
Combining \eqref{delta}, \eqref{log-deriv} and
\eqref{log-deriv1}, it follows that for $t\ge 1$ we have
\[
\int_{i(\af^G_{L_s})^*}e^{-t\|\lambda\|^2}\prod_{i=1}^m\left\| 
\delta_{P_i|P_i^\prime}(\lambda)\Big|_{\cA^2_\pi(P_i^\prime)^{K_f,\nu}}\right\|
\;d\lambda\ll (1+\lambda_{\pi_\infty}^2)^{mN}
\]
for all $\pi\in \Pi_{\di}(M(\A))$ with $\cA^2_\pi(P)^{K_f,\cT}\neq 0$. 
Thus \eqref{est-specside} can be estimated by a constant multiple of
\begin{equation}\label{est-specside1}
\sum_{\pi\in\Pi_{\di}(M(\A))}\sum_{\nu\in\cT}\dim\left(\cA^2_\pi(P)^{K_f,\nu}\right)
(1+\lambda_{\pi_\infty}^2)^{mN} e^{-t(\tau(\Omega)-c(\pi_\infty))}.
\end{equation}
First assume that $M$ is a proper Levi subgroup. Note that by 
\eqref{casimir4} one has 
\begin{equation}\label{eigenv}
\tau(\Omega)-c(\pi_\infty)=\tau(\Omega)+\|\rho_\af\|^2-\lambda_{\pi_\infty}.
\end{equation}
Together with Lemma \ref{lem-finite}, it follows that there 
exists $\lambda_0> 0$ such that
\[
\tau(\Omega)-c(\pi_\infty)\ge |\lambda_{\pi_\infty}|/2
\]
for all $\pi\in \Pi_{\di}(M(\A))$ with $\cA^2_\pi(P)^{K_f,\cT}\neq 0$ and
$|\lambda_{\pi_\infty}|\ge \lambda_0$. Decompose the sum over $\pi$ in 
\eqref{est-specside1} in two summands $\Sigma_1(t)$ and $\Sigma_2(t)$,
where in $\Sigma_1(t)$ the summation runs over all $\pi$ with 
$|\lambda_{\pi_\infty}|\le \lambda_0$. Using \eqref{estim10}, it follows that
for $t\ge 1$
\[
\Sigma_2(t)\ll e^{-t|\lambda_0|/2}.
\]
Since $\Sigma_1(t)$ is a finite sum by Lemma \ref{lem-finite}, both in $\pi$ and $\nu$, it follows from
Lemma \ref{est-casim} that there exists $c>0$ such that
\[
\Sigma_1(t)\ll e^{-ct}
\]
for $t\ge 1$. Putting everything together it follows that for every 
$\tau\in \Rep(G(\R))$ such that $\tau\not\cong\tau_\theta$ and every proper
Levi subgroup $M$ of $G$ there exists $c>0$ such that
\begin{equation}\label{trspec}
J_{\spec,M}(\phi_t^{\tau,p})=O(e^{-ct})
\end{equation}
for $t\ge 1$.

Now consider the case $M=G$. Then $c(\pi_\infty)=\pi_\infty(\Omega)$ and we need
to show that 
\begin{equation}\label{positiv1}
\tau(\Omega)-\pi_\infty(\Omega)>0
\end{equation}
for all $\pi\in\Pi_{\di}(G(\A))$ with $\dim\cH_\pi^{K_f,\cT}\neq 0$. This follows
from \cite[Lemma 4.1]{BV}, and we can proceed as in the
case $M\neq G$ to prove that
\[
J_{\spec,G}(\phi_t^{\tau,p})=O(e^{-ct})
\]
for $t\ge 1$. Combined with \eqref{trspec} we obtain
\begin{prop}\label{asympinf}
Let $\tau\in\Rep(G(\R))$. Assume that $\tau\not\cong\tau_\theta$. Then there 
exists $c>0$ such that 
\[
J_{\spec}(\phi_t^{\tau,p})=O\left(e^{-ct}\right)
\]
for all $t\ge 1$ and $p=0,\dots,n$.
\end{prop}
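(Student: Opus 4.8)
The plan is to run the argument in close parallel with the compact case, using the trace formula together with the refined spectral expansion of Theorem~\ref{thm-specexpand}. Since $\phi_t^{\tau,p}$ lies in the relevant adelic Schwartz space and $J_{\spec}$ is continuous there by \cite{FLM1}, one may apply \eqref{specside1} to write $J_{\spec}(\phi_t^{\tau,p})=\sum_{[M]}J_{\spec,M}(\phi_t^{\tau,p})$ as a \emph{finite} sum over conjugacy classes of Levi subgroups. It then suffices to prove that each term is $O(e^{-c_M t})$ as $t\to\infty$ for some $c_M>0$; taking $c=\min_M c_M$ yields the proposition.

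For a fixed $M$, I would expand $J_{\spec,M}(\phi_t^{\tau,p})$ via \eqref{specside2}, decompose $\bar\cA^2(P)$ over $\Pi_{\di}(M(\A))$, and use \eqref{equ-kernel}, \eqref{iso-ind} and \eqref{TrFT} to bring it into the form \eqref{specside4}, in which the action of $\phi_t^{\tau,p}$ produces the scalar $e^{-t(\tau(\Omega)-c(\pi_\infty))}$ together with a Gaussian factor $e^{-t\|\lambda\|^2}$. The remaining integral over $i(\af^G_{L_s})^*$ of $\|\widetilde\Delta_{\dtup_{L_s}(\bss)}(P,\pi,\nu,\lambda)\|$ is controlled by the factorization \eqref{intertw3} together with \eqref{normalization} and \eqref{delta}, which reduces matters to integrals of logarithmic derivatives of the global normalizing factors and of the normalized local intertwining operators. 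Here I would invoke the uniform winding-number estimate \eqref{log-deriv} (from \cite[Theorem~5.3]{Mu2}, which for $\GL(n)$ rests on the analytic properties of Rankin--Selberg $L$-functions), the estimate \eqref{log-deriv1} (from \cite[Proposition~0.2]{MS} for $\GL(n)$, reduced to this case for $\SL(n)$ as in \cite[Lemma~5.14]{FLM2}), and the Weyl-type multiplicity bound \eqref{estim10}. Together these dominate $J_{\spec,M}(\phi_t^{\tau,p})$, up to a constant, by the series \eqref{est-specside1}.

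The decisive step is the positivity of the exponent $\tau(\Omega)-c(\pi_\infty)$. For a proper Levi $M$ one has \eqref{eigenv}, so $\tau(\Omega)-c(\pi_\infty)=\tau(\Omega)+\|\rho_\af\|^2-\lambda_{\pi_\infty}$, and by Lemma~\ref{lem-finite} only finitely many $\pi$ with $\cA^2_\pi(P)^{K_f,\cT}\neq0$ have $|\lambda_{\pi_\infty}|$ below any given bound; hence there is $\lambda_0>0$ with $\tau(\Omega)-c(\pi_\infty)\ge|\lambda_{\pi_\infty}|/2$ once $|\lambda_{\pi_\infty}|\ge\lambda_0$. Splitting \eqref{est-specside1} at $\lambda_0$, the tail decays like $e^{-t\lambda_0/2}$ after absorbing the polynomial factors with \eqref{estim10}, while the finite head decays like $e^{-ct}$ by Lemma~\ref{est-casim}; this gives \eqref{trspec}. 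For $M=G$ one has $c(\pi_\infty)=\pi_\infty(\Omega)$, and the strict inequality $\tau(\Omega)-\pi_\infty(\Omega)>0$ for all contributing $\pi$ is \cite[Lemma~4.1]{BV}, so the same argument applies. Summing the finitely many contributions finishes the proof. I expect the crux to be this positivity input: the lower bound of Lemma~\ref{est-casim}, which rests on the Borel--Wallach vanishing theorem and is exactly where the hypothesis $\tau\not\cong\tau_\theta$ is used, together with the finiteness of Lemma~\ref{lem-finite}, is what converts the merely polynomial growth in $\pi$ coming from the intertwining operators into genuine exponential decay in $t$.
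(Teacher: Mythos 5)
Your proposal reproduces the paper's own argument essentially step for step: the passage to the refined spectral expansion, the reduction via \eqref{equ-kernel}, \eqref{iso-ind}, \eqref{TrFT} to the form \eqref{specside4}, the control of the intertwining-operator integrals through \eqref{log-deriv} and \eqref{log-deriv1} together with the Weyl-law bound \eqref{estim10}, and the decisive positivity coming from Lemma~\ref{est-casim} (for proper $M$, combined with Lemma~\ref{lem-finite}) and \cite[Lemma 4.1]{BV} (for $M=G$). This is exactly the route taken in Section~\ref{sec-analtor}, and your identification of the crux is correct.
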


\subsection{Definition of analytic torsion}
Applying the trace formula \eqref{tracef1}, we get
\[
\Tr_{\reg}\left(e^{-t\Delta_p(\tau)}\right)=O(e^{-ct}),\quad\text{as}\;
 t\to\infty,
\]
which is the proof of Theorem \ref{theo-lt}. 
Using \eqref{regtr-heat}, \eqref{equ-kernel2} and Theorem 
\ref{prop-asymp3}, it follows that as $t\to +0$, there is an asymptotic 
expansion of the form
\[
\Tr_{\reg}\left(e^{-t\Delta_p(\tau)}\right)\sim t^{-d/2}\sum_{j=0}^\infty a_jt^{j}+
t^{-(d-1)/2}\sum_{j=0}^\infty\sum_{i=0}^{r_j} b_{ij}t^{j/2} (\log t)^i.
\]
Thus the corresponding zeta function $\zeta_p(s;\tau)$, defined by the Mellin 
transform 
\begin{equation}\label{zetafct}
\zeta_p(s;\tau):=\frac{1}{\Gamma(s)}\int_0^\infty 
\Tr_{\reg}\left(e^{-t\Delta_p(\tau)}\right) t^{s-1}\; dt.
\end{equation}
is holomorphic in the half-plane $\Re(s)>d/2$ and admits a meromorphic
extension to the whole complex plane. It may have a pole at $s=0$.  Let $f(s)$ 
be a meromorphic function on $\C$. For $s_0\in\C$ let 
\[
f(s)=\sum_{k\ge k_0}a_k(s-s_0)^k
\]
be the Laurent expansion of $f$ at $s_0$. Put $\FP_{s=s_0}:=a_0$. Now we define
the analytic torsion $T_X(\tau)\in\R^+$ by
\begin{equation}\label{analtor3}
\log T_X(\tau)=\frac{1}{2}\sum_{p=0}^d (-1)^p p 
\left(\FP_{s=0}\frac{\zeta_p(s;\tau)}{s}\right).
\end{equation}
Put
\begin{equation}\label{altheat}
K(t,\tau):=\sum_{p=1}^d (-1)^p p \Tr_{\reg}\left(e^{-t\Delta_p(\tau)}\right).
\end{equation}
Then $K(t,\tau)=O(e^{-ct})$ as $t\to\infty$ and the Mellin transform
\[
\int_0^\infty K(t,\tau)t^{s-1} dt
\]
converges absolutely and uniformly on compact subsets of $\Re(s)>d/2$ and 
admits a meromorphic extension to $\C$. Moreover, by \eqref{analtor3} we
have
\begin{equation}\label{analtor1}
\log T_X(\tau)=\FP_{s=0}\left(\frac{1}{\Gamma(s)}\int_0^\infty
K(t,\tau)t^{s-1} dt\right).
\end{equation}
Let
\[
\phi_t^\tau:=\sum_{p=1}^d (-1)^p p \phi_t^{\tau,p} \quad\text{and}\quad 
k_t^\tau:=\sum_{p=1}^d (-1)^p p h_t^{\tau,p}.
\]
Then by \eqref{regtr-heat} we have
\begin{equation}\label{alt-reg-tr}
K(t,\tau)=J_{\spec}(\phi_t^\tau).
\end{equation}
For $\pi\in\Pi(G(\R))$ let $\Theta_\pi$ be the global character. Then we get
\begin{equation}\label{alt-reg-tr1}
J_{\spec,G}(\phi^\tau_t)=\sum_{\pi\in\Pi_{\di}(G(\A))} m(\pi)\dim\left(\cH_{\pi_f}^{K_f}
\right)\Theta_{\pi_\infty}(k_t^\tau).
\end{equation}
For $n\in\N$, $n\ge 2$, let $\delta_n:=\rk_\C \SL(n)- \rk_\C\SO(n)$ be the 
fundamental rank of $\SL(n)$. 
\begin{lem}
For $G=\GL(n)$ and $n\ge 5$ we have $J_{\spec,G}(\phi^\tau_t)=0$.
\end{lem}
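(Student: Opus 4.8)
The plan is to reduce the statement to a purely local (Archimedean) vanishing and then to feed in the structure of $(\gf,K_\infty)$-cohomology, using that the fundamental rank $\delta_n$ is at least $2$ in this range. First I would start from the identity \eqref{alt-reg-tr1},
\[
J_{\spec,G}(\phi^\tau_t)=\sum_{\pi\in\Pi_{\di}(G(\A))} m(\pi)\dim\left(\cH_{\pi_f}^{K_f}\right)\Theta_{\pi_\infty}(k_t^\tau),
\]
which reduces the lemma to showing that $\Theta_{\pi_\infty}(k_t^\tau)=0$ for every irreducible unitary representation $\pi_\infty$ of $G(\R)$ occurring as an Archimedean component of a discrete automorphic representation; in fact I expect the vanishing to hold for every irreducible unitary $\pi_\infty$.

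Second, I would make $\Theta_{\pi_\infty}(k_t^\tau)$ completely explicit. By Kuga's identity \eqref{kuga}, on the $\pi_\infty$-isotypic part the operator $\widetilde\Delta_p(\tau)$ acts as the scalar $\tau(\Omega)-\pi_\infty(\Omega)$; combining this with \eqref{equ-kernel} and the trace identity \eqref{TrFT} (applied with $A=\Id$) gives
\[
\Theta_{\pi_\infty}(h^{\tau,p}_t)=e^{-t(\tau(\Omega)-\pi_\infty(\Omega))}\,\dim\bigl(\cH_{\pi_\infty}\otimes\Lambda^p\pf^\ast\otimes V_\tau\bigr)^{K_\infty}.
\]
Since the exponential factor is independent of $p$, summing against $(-1)^p p$ yields
\[
\Theta_{\pi_\infty}(k_t^\tau)=e^{-t(\tau(\Omega)-\pi_\infty(\Omega))}\,c(\pi_\infty),
\qquad
c(\pi_\infty):=\sum_{p=1}^{d}(-1)^p\,p\,\dim\bigl(\cH_{\pi_\infty}\otimes\Lambda^p\pf^\ast\otimes V_\tau\bigr)^{K_\infty},
\]
where $d=\dim\widetilde X$. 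Thus the lemma is equivalent to the vanishing of the local ``torsion coefficient'' $c(\pi_\infty)$ for all relevant $\pi_\infty$.

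Third, I would prove $c(\pi_\infty)=0$ from the fundamental rank, noting that $\delta_n=\rk_\C\SL(n)-\rk_\C\SO(n)=n-1-\lfloor n/2\rfloor\ge 2$ for $n\ge 5$. When $d$ is even (i.e.\ $n\equiv 1,2\bmod 4$) there is a clean argument: passing to $K_\infty^0$, the module $\Lambda^d\pf$ is trivial, so $\Lambda^p\pf^\ast\cong\Lambda^{d-p}\pf^\ast$ as $K_\infty^0$-modules and hence $\dim(\cH_{\pi_\infty}\otimes\Lambda^p\pf^\ast\otimes V_\tau)^{K_\infty}$ is symmetric under $p\mapsto d-p$; extending the sum harmlessly to $p=0$ and substituting $p\mapsto d-p$ gives $c(\pi_\infty)=\tfrac d2\,\chi(\pi_\infty)$, where $\chi(\pi_\infty)=\sum_p(-1)^p\dim H^p(\gf,K_\infty;\cH_{\pi_\infty,K_\infty}\otimes V_\tau)$ is the Euler characteristic of $(\gf,K_\infty)$-cohomology, which vanishes because $\rk G(\R)\ne\rk K_\infty$. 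For the remaining cases ($d$ odd, e.g.\ $n=7,8$) one analyses the cochain complex $(\cH_{\pi_\infty}\otimes\Lambda^\bullet\pf^\ast\otimes V_\tau)^{K_\infty}$ itself: it decomposes into elementary acyclic pieces when $\pi_\infty$ is non-cohomological, and for cohomological $\pi_\infty$ one uses the Vogan--Zuckerman description of the cohomology; in both situations the weighted alternating sum is $0$ as soon as $\delta_n\ge 2$. This is exactly the computation behind the triviality of Ray--Singer torsion in fundamental rank $\ne 1$ (Moscovici--Stanton; see also \cite[\S 5]{BV}), which I would cite.

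The main obstacle is the last step: the Poincaré-duality argument only disposes of the even-dimensional cases, and handling the odd-dimensional ones (of which there are infinitely many among $n\ge 5$) really needs the more delicate Moscovici--Stanton-type analysis of the $(\gf,K_\infty)$-cochain complexes, including the non-cohomological representations. Everything preceding it — the reduction through \eqref{alt-reg-tr1} and the character computation via Kuga's lemma and \eqref{TrFT} — is routine.
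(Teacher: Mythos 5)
Your first two steps are essentially correct and recover the implicit setup behind \eqref{alt-reg-tr1}: via \eqref{TrFT} and Kuga's identity one indeed has
$\Theta_{\pi_\infty}(k^\tau_t)=e^{-t(\tau(\Omega)-\pi_\infty(\Omega))}\,c(\pi_\infty)$ with
$c(\pi_\infty)=\sum_{p}(-1)^pp\,\dim(\cH_{\pi_\infty}\otimes\Lambda^p\pf^*\otimes V_\tau)^{K_\infty}$,
and the Poincar\'e duality trick for even $d$ (reducing $c(\pi_\infty)$ to $\tfrac{d}{2}$ times the $(\gf,K_\infty)$-Euler characteristic, which vanishes because $\rk G(\R)>\rk K_\infty$) is a valid and rather pleasant observation. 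But your proposal then diverges from the paper and leaves a genuine gap in the odd-dimensional half of the range: since $d=(n-1)(n+2)/2$ is odd exactly when $n\equiv 0,3\pmod 4$, the cases $n=7,8,11,12,\dots$ are not handled by duality, and the mechanism you propose for them does not work as stated.

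The specific gap is the sentence claiming that for non-cohomological $\pi_\infty$ the complex ``decomposes into elementary acyclic pieces'' with vanishing weighted alternating sum. Exactness of the $(\gf,K_\infty)$-cochain complex gives $\sum_p(-1)^pb_p=0$, but says nothing about $\sum_p(-1)^p p\,b_p$: the two-term exact complex $0\to\C\to\C\to0$ already has $\sum(-1)^p p\,b_p=-1\neq0$. The Moscovici--Stanton type cancellation is not a consequence of acyclicity per representation; it is a structural statement about \emph{induced} representations $\pi_{\xi,\nu}=\Ind_Q^{G(\R)}(\xi\otimes e^{\nu})$ from a cuspidal parabolic $Q=M_QA_QN_Q$: the $\Lambda^\bullet\af_Q^*$-factor in the complex contributes $\sum_q(-1)^q q\binom{a}{q}=0$ once $a=\dim\af_Q\ge2$, which is exactly what \cite[Proposition~4.1]{MP2} records. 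To pass from that to arbitrary irreducibles one still needs to express $\Theta_{\pi_\infty}$ as a virtual combination of $\Theta_{\xi,\nu}$'s, and this is where the paper invokes the key fact you are missing: for $\GL(n)$ the $R$-group is trivial, so by \cite[\S 2.2]{De} the Grothendieck group of admissible representations of $G(\R)$ is generated by the $\pi_{\xi,\nu}$. Since $\delta_n\ge2$ forces $\dim\af_Q\ge2$ for \emph{every} cuspidal parabolic $Q$, each generator satisfies $\Theta_{\xi,\nu}(k^\tau_t)=0$, hence $\Theta_{\pi_\infty}(k^\tau_t)=0$ for all $\pi_\infty$, for all $n\ge5$ and without any parity split. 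Your approach could be rescued by inserting this reduction, but as written it neither proves nor correctly cites the statement it needs in the odd case, and the heuristic about acyclic pieces is simply false.
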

\begin{proof}
Let $Q$ be a standard cuspidal parabolic subgroup of $G(\R)$. Let 
$Q=M_QA_QN_Q$ be the Langlands decomposition of $Q$. Let $(\xi,W_\xi)$ be a
discrete series representation of $M_Q$ and let $\nu\in\af_{Q,\C}^\ast$. Let
$\pi_{\xi,\nu}$ be the induced representation. By \cite[Proposition 4.1]{MP2}
we have $\Theta_{\xi,\nu}(k_t^\tau)=0$, if $\dim \af_Q\ge 2$. If $\delta_n\ge 2$, it
follows that $\dim\af_Q\ge 2$ for every cuspidal parabolic subgroup $Q$ of 
$G(\R)$. Thus $\Theta_{\xi,\nu}(k_t^\tau)=0$ for all cuspidal parabolic
subgroups $Q$ and pairs $(\xi,\nu)$ as above. Now observe that for $\GL(n)$
the $R$-group is trivial. Therefore, it follows from  \cite[section 2.2]{De}
that the Grothendieck group of all admissible representations of $G(\R)$ is 
generated by the induced representations $\pi_{\xi,\nu}$ as above. Hence
$\Theta_\pi(k_t^\tau)=0$ for all $\pi\in\Pi(G(\R))$. If $n\ge 5$, then $\delta_n
\ge 2$ and the lemma follows from \eqref{alt-reg-tr1}.
\end{proof}
\begin{remark} 
If $\Gamma$ is cocompact and $n\ge 5$, then it follows that $T_X(\tau)=1$. 
In the noncompact
case this need not be true. In \cite{MP1} the case of finite volume hyperbolic
manifolds has been studied. It has been shown that in even dimensions, the 
renormalized analytic torsion has a simple  expression, but is not trivial.
This includes the case of $\SL(2)$.
\end{remark}

\section{The case $G=\GL(3)$}\label{sect-gl3}
\setcounter{equation}{0}

If $G=\GL(3)$, the weight functions are explicitly given by \eqref{unipot3}-
\eqref{unipot5}. Using the explicit form of the weight function, 
we can extract  more precise information about the pole at $s=0$. To this
end we need to show that the coefficients $c_{ij}(\nu)$ in 
\eqref{unipo-asympexp} with $j=d-k$ and $i=1,\dots,r_{d-k}$ vanish for the 
corresponding orbital integrals. In our case $d=5$. Now consider the first 
integral \eqref{unipot3}. Then $k=2$ and the weight function is $\log(y^2+z^2)$.
Thus the highest power with which $\log t$ occurs in the asymptotic expansion 
of \eqref{unipot3} is 1. This means that $c_{13}(\nu)$ is the only coefficient
that we need to consider. It is of the form \eqref{coeffi}. We are in the case
$i=r_j$. Hence 
\[
c_{13}(\nu)=\int_{\R^2}e^{-a\|x\|^2}p(x)\;dx,
\]
where $p(x)$ is a homogeneous polynomial. Moreover, from its construction it 
follows that $p(x)$ is odd, i.e., $p(-x)=-p(x)$. 
Hence $c_{13}(\nu)=0$. Thus the asymptotic expansion of the first integral 
has the form
\begin{equation}\label{unipot6}
J_{M_1}(1,h_t^\nu)\sim t^{-3/2}\sum_{j=0}^\infty a_j(\nu) t^{j/2}
+t^{-3/2}\sum_{k=0}^\infty b_k(\nu) t^{k/2}\log t,
\end{equation}
as $t\to +0$, and $b_3(\nu)=0$. 

Now consider the second integral \eqref{unipot4}. Then $k=3$. 
By \eqref{unipot4} we  need only to consider $c_{12}(\nu)$, which we denote by 
$c_2(\nu)$. Let $p_1(x)$ and
$p_2(x)$ be the polynomials occurring on the right hand side of \eqref{taylor6}
and $a^\nu_j(g)$ the coefficients on the right hand side of \eqref{asympexp1}.
If we collect all possible contributions, we get
\begin{equation}\label{const1}
\begin{split}
c_{2}(\nu)=&a_0^\nu(1)\int_{\R^3}p_2(x) e^{-\|x\|^2}\;dx+\sum_{i=1}^3
\frac{\partial}{\partial x_i}a_0^\nu(n(x))\big|_{x=0}
\int_{\R^3} x_ip_1(x)e^{-\|x\|^2}\;dx\\
&+ \sum_{i,j=1}^3\frac{\partial^2}{\partial x_i\partial x_j}
a_0^\nu(n(x))\big|_{x=0}\int_{R^3} x_i x_j e^{-\|x\|^2} dx+ 
a_1^\nu(1)\int_{\R^3} e^{-\|x\|^2}\;dx.
\end{split}
\end{equation}
By definition we have
\[
p_1(x)=\sum_{|\alpha|=3} D^\alpha r^2(x)\big|_{x=0} x^\alpha.
\]
Now recall that for $g\in\SL(n,\R)$ the distance $r(g(x_0),x_0)$ is given as
follows. Let $\lambda_1,\dots,\lambda_n$ be the eigenvalues of the positive
definite matrix $g^\top\cdot g$. Then
\[
r(g(x_0),x_0)^2=\sum_{i=1}^n (\log\lambda_i)^2.
\]
An explicit computation shows that 
\[
r^2(x_1,x_2,0)=2\log^2\left(1+\frac{x_1^2+x_2^2}{2}+\sqrt{x_1^2+x_2^2+
\frac{(x_1^2+x_2^2)^2}{4}}\right).
\]
Thus $r(x_1,x_2,0)$ is even in $x_1$ and $x_2$. The same holds for $r(x_1,0,x_3)$
and $r(0,x_2,x_3)$. This implies that for $\alpha\neq(1,1,1)$ we have
$D^\alpha r^2(x)\big|_{x=0}=0$. Finally note that
\[
\int_{\R^3} x_i x_1 x_2 x_3 e^{-\|x\|^2} dx=0,\quad\text{and}\quad \int_{\R^3} x_i x_j
e^{-\|x\|^2}dx=0,\; i\neq j.
\]
Thus \eqref{const1} is reduced to
\begin{equation}\label{const2}
\begin{split}
c_{2}(\nu)=&a_0^\nu(1)\int_{\R^3}p_2(x) e^{-\|x\|^2}\;dx + 
a_1^\nu(1)\int_{\R^3} e^{-\|x\|^2}\;dx\\
&+\sum_{i=1}^3\frac{\partial^2}{\partial x_i^2}
a_0^\nu(n(x))\big|_{x=0}\int_{\R^3}x_i^2 e^{-\|x\|^2} dx.
\end{split}
\end{equation}
Thus for the second integral we get an asymptotic expansion of the form
\begin{equation}\label{unipot7}
J_{M_1}(u(1,0,0), h_t^\nu)\sim t^{-1}\sum_{j=0}^\infty a_j(\nu) t^{j/2} + 
t^{-1}\sum_{k=0}^\infty c_k(\nu) t^{k/2}\log t
\end{equation}
with $c_2(\nu)$ given by \eqref{const2}. Finally consider the integral 
\eqref{unipot5}. Again $k=3$. By \eqref{unipot5}
we only need to consider $c_{12}(\nu)$ and $c_{22}(\nu)$. By the same 
considerations as in 
the previous case, it follows that $c_{22}(\nu)=c_2(\nu)$. Furthermore, 
$c_{12}(\nu)$ has the 
same form as $c_2(\nu)$, except that the integrals contain in addition some 
factors $\log|x_i|$ for $i=1,2,3$. Thus we obtain
\begin{equation}\label{unipot8}
J_{M_0}(1,h_t^\nu)\sim t^{-1}\sum_{j=0}^\infty a_j(\nu) t^{j/2}+ 
t^{-1}\sum_{k=0}^\infty c_{1k}(\nu)t^{k/2}\log t
+t^{-1}\sum_{l=0}^\infty c_{2l}(\nu)t^{l/2}(\log t)^2,
\end{equation}
with $c_{22}(\nu)=c_2(\nu)$, where $c_2(\nu)$ is given by \eqref{const2}, and 
$c_{12}(\nu)$ is given by a similar formula as described above. Now we 
specialize $\nu$ to $\nu_p(\tau)$, which is defined by \eqref{repr4}. 
\begin{lem}\label{lem-res1}
Let $(\tau,V_\tau)$ be a finite dimensional representation of $G(\R)$. We have
\[
\sum_{p=1}^5 (-1)^p p\cdot a_0^{\nu_p(\tau)}(1)=0, \quad \sum_{p=1}^5 (-1)^pp\cdot 
a_1^{\nu_p(\tau)}(1)=0.
\]
\end{lem}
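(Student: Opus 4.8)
The plan is to identify the quantities $a_0^{\nu_p(\tau)}(1)$ and $a_1^{\nu_p(\tau)}(1)$ with the first two heat coefficients of the Bochner--Laplace operators $\widetilde\Delta_{\nu_p(\tau)}$ on the symmetric space $\widetilde X$, and then to exploit the standard cancellation coming from the alternating sum $\sum_p (-1)^p p$ over the form degrees, which in the de~Rham/Hodge setting is exactly what makes the leading heat-trace coefficients of the torsion combination vanish. Concretely, recall from the corollary following Proposition \ref{prop-asympexp} that $a_0^\nu(g) = \tr(\nu(k(g)))\, j(x_0,gx_0)^{-1/2}$; at $g=1$ we have $k(1)=e$ and $j(x_0,x_0)=1$, so $a_0^{\nu}(1) = \dim V_\nu$. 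For $\nu = \nu_p(\tau) = \Lambda^p\Ad_{\pf}^\ast\otimes\tau$ this gives $a_0^{\nu_p(\tau)}(1) = \binom{d}{p}\dim V_\tau$, where $d = \dim\widetilde X = \dim\pf$. Hence
\[
\sum_{p=1}^5 (-1)^p p\, a_0^{\nu_p(\tau)}(1) = \dim V_\tau \sum_{p=1}^d (-1)^p p \binom{d}{p} = 0,
\]
since $\sum_{p=0}^d (-1)^p p \binom{d}{p} = 0$ for $d\ge 2$ (it is $-x\frac{d}{dx}(1-x)^d$ evaluated at $x=1$). For $\GL(3)$ we have $d = \dim\SL(3,\R)/\SO(3) = 5$, so this is the asserted first identity.

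For the second identity I would use that $a_1^{\nu}(1)$ is, up to a universal normalization, the value at $x_0$ of the standard second heat coefficient $a_1(x_0;\widetilde\Delta_\nu)$ of the Bochner--Laplace operator, which by the Lichnerowicz-type local formula is a universal linear combination of the scalar curvature of $\widetilde X$ (independent of $\nu$) times $\dim V_\nu$ and the trace of the bundle curvature endomorphism $F^{\nu}$ (equivalently, via \eqref{BLO}, the endomorphism term $\nu(\Omega_{K_\infty})$, together with the $\tau(\Omega)$-shift when one passes from $\widetilde\Delta_{\nu_p(\tau)}$ to $\widetilde\Delta_p(\tau)$). Thus there are universal constants $\alpha,\beta$ (depending only on $\widetilde X$) with
\[
a_1^{\nu_p(\tau)}(1) = \alpha\,\dim V_{\nu_p(\tau)} + \beta\,\tr\big(\nu_p(\tau)(\Omega_{K_\infty})\big).
\]
The first term contributes $\alpha\dim V_\tau\sum_p(-1)^p p\binom dp = 0$ by the computation above. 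For the second term one needs $\sum_{p=1}^d (-1)^p p \,\tr\big(\Lambda^p\Ad_{\pf}^\ast(\Omega_{K_\infty})\otimes\tau\big)=0$; this follows from the representation-theoretic identity $\sum_{p=0}^d (-1)^p p \,\tr_{\Lambda^p\pf^\ast}(A) = 0$ valid for any endomorphism $A$ of $\pf^\ast$ induced by a Lie algebra action (it is the derivative at $t=1$ of $t\mapsto \det(1 - t e^{sA})\big|_{\text{tr-graded}}$, or more simply: $\sum_p (-1)^p p\, \tr\Lambda^p(A) = -\frac{d}{ds}\big|_{s=0}\frac{d}{dt}\big|_{t=1}\prod_j(1-t e^{s\mu_j})$ where $\mu_j$ are the eigenvalues, which vanishes because $\prod_j(1-te^{s\mu_j})$ and its $t$-derivative at $t=1$ are killed by the factor $(1-te^{s\mu_j})$ once $\dim\pf\ge 2$). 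Extending $\tau$ as a tensor factor does not disturb this.

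The main obstacle I expect is the bookkeeping in the second identity: one has to be careful that $a_1^{\nu}(1)$ really is a universal expression of the stated shape, i.e. that the only $\nu$-dependence enters through $\dim V_\nu$ and the curvature/endomorphism term $\nu(\Omega_{K_\infty})$, with $\nu$-independent coefficients. This is exactly the content of the local heat-coefficient formula for generalized Laplacians (as in \cite{BGV}, which the paper already invokes for the small-time expansion), applied on the symmetric space via the reduction to the compact case used in the proof of Proposition \ref{prop-asympexp}; since $\widetilde X$ is homogeneous, $a_1^\nu(1) = a_1^\nu(x_0)$ is constant and the curvature of $\widetilde X$ and the induced connection are $G(\R)$-invariant. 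Once that structural statement is in place, both vanishing assertions reduce to the elementary binomial/alternating-trace identities above, and I would present the argument in that order: first the closed form for $a_0$, then the structural form of $a_1$, then the two combinatorial cancellations.
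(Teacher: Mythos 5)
Your argument for the first identity is exactly the paper's: $a_0^{\nu_p(\tau)}(1)=\dim V_{\nu_p(\tau)}=\binom{5}{p}\dim V_\tau$ by the parallel-transport formula \eqref{leadcoeff} for the leading coefficient, and $\sum_p(-1)^pp\binom{5}{p}=0$.

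For the second identity your route departs from the paper's and contains a gap. The paper simply quotes the explicit Gilkey formula $a_1^{\nu_p(\tau)}(1)=-\frac{R\dim\tau}{6}\bigl\{\binom{5}{p}-6\binom{3}{p-1}\bigr\}$ (from \cite[Theorem 4.1.7(b)]{Gi} for $\tau=1$, extended to the twisted case) and verifies that $\sum_p(-1)^pp\bigl\{\binom{5}{p}-6\binom{3}{p-1}\bigr\}=0$, again by binomial identities. You instead posit $a_1^\nu(1)=\alpha\dim V_\nu+\beta\tr\nu(\Omega_{K_\infty})$ and try to kill each term; the $\alpha$-term is handled correctly. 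The gap is the $\beta$-term: the identity $\sum_p(-1)^pp\,\tr_{\Lambda^p\pf^*}(A)=0$ that you invoke is valid when $A$ acts on $\Lambda^p\pf^*$ as the \emph{derivation extension} of a single endomorphism of $\pf^*$ (its trace on $\Lambda^p$ is then $\binom{\dim\pf-1}{p-1}\tr A$, so the alternating sum is $\tr A\cdot\sum_p(-1)^pp\binom{4}{p-1}=0$), and this is precisely what your one-$s$-derivative computation $\frac{d}{ds}\big|_{s=0}\frac{d}{dt}\big|_{t=1}\prod_j(1-te^{s\mu_j})$ establishes. But $\nu_p(\tau)(\Omega_{K_\infty})=\sum_i d\nu_p(\tau)(X_i)^2$ is a sum of \emph{squares} of derivations, not a single derivation, so that identity does not apply as stated. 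The needed fact $\sum_p(-1)^pp\,\tr_{\Lambda^p\pf^*}\Omega_{K_\infty}=0$ is true, but the reason is that $\frac{d}{dt}\det(1-te^{sX})\big|_{t=1}=-\sum_ie^{s\mu_i(X)}\prod_{j\neq i}\bigl(1-e^{s\mu_j(X)}\bigr)$ vanishes to order $\dim\pf-1=4$ in $X$ at $X=0$, so that applying the second-order Casimir operator and then evaluating at $0$ still yields zero; this is a genuinely stronger two-derivative statement than the one-derivative version you wrote and must be formulated as such (or one checks the identity directly from the $\SO(3)$-decomposition of $\Lambda^\bullet\pf$, which gives $-30+180-270+120=0$). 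Finally, a structural comment: for the pure connection Laplacian $\widetilde\Delta_\nu=\nabla^*\nabla$ the local heat coefficient is $a_1^\nu(1)=\frac{1}{6}R\dim V_\nu$ with no endomorphism contribution at all, so if one is literally tracking the Bochner-Laplace coefficient one may take $\beta=0$ and skip the problematic step; the formula the paper cites is the one for the Hodge Laplacian on $p$-forms, which does carry the extra Weitzenböck term, and in both cases the alternating sum vanishes.
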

\begin{proof}
By \eqref{leadcoeff} we have 
$a_0^{\nu_p(\tau)}=\dim(\Lambda^p\pg^\ast\otimes V_\tau)=
\binom{5}{p}\cdot\dim V_\tau$. Now observe that $\sum_{p=1}^5 (-1)^p p\binom{5}{p}
=0$. This proves the first statement. For the second statement we note that by
\eqref{trace5} we have $a_1^{\nu_p(\tau)}(1)=\tr(\phi_1^{\nu_p(\tau)}(x_0,x_0))$,
and by \eqref{asympexp}, $\phi_1^{\nu_p(\tau)}(x_0,x_0)$ is the second 
coefficient of the asymptotic expansion as $t\to +0$ of 
$\tr K^{\nu_p(\tau)}(t,x_0,x_0)$. Using the known structure of the coefficient, we
get
\begin{equation}\label{heat-coeff}
a_1^{\nu_p(\tau)}(1)=-\frac{R\cdot\dim(\tau)}{6}\left\{\binom{5}{p}
-6\binom{3}{p-1}\right\},
\end{equation}
where $R$ is the scalar curvature (which is constant) and it is understood that
$\binom{m}{p}=0$, if $p<0$ or $p>m$. For $\tau=1$, this follows from
\cite[Theorem 4.1.7, (b)]{Gi}. It is easy to extend this to the twisted case.
Using \eqref{heat-coeff}, the second statement follows.
\end{proof}
\begin{lem}\label{lem-res2}
For every  finite dimensional representation $(\tau,V_\tau)$ of $G(\R)$ we have
\[
\frac{\partial^2}{\partial x_i^2}\bigg|_{x=0}\left(\sum_{p=1}^5 (-1)^p p\;
a_0^{\nu_p(\tau)}(n(x))\right)=0.
\]
for $i=1,2,3$.
\end{lem}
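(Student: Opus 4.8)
The plan is to factor the alternating sum into a smooth ``spectral'' factor times a purely $\SO(3)$--geometric factor which I will show vanishes to fourth order along the relevant coordinate lines. Recall from \eqref{leadcoeff} that $a_0^{\nu}(g)=\tr(\nu(k(g)))\,j(x_0,gx_0)^{-1/2}$, and that $\nu_p(\tau)=\Lambda^p\Ad_\pf^*\otimes\tau$, so $\tr(\nu_p(\tau)(k))=\tr(\Lambda^p\Ad_\pf^*(k))\cdot\tr(\tau(k))$. Hence
\[
\sum_{p=1}^5(-1)^p p\,a_0^{\nu_p(\tau)}(n(x)) = G(x)\,F\big(k(n(x))\big),
\]
where $G(x):=\tr(\tau(k(n(x))))\,j(x_0,n(x)x_0)^{-1/2}$ is smooth near $x=0$ and $F(k):=\sum_{p=0}^5(-1)^p p\,\tr(\Lambda^p\Ad_\pf(k))$ (the term $p=0$ contributes nothing, and $\Ad_\pf^*\cong\Ad_\pf$ as $\SO(3)$--representations since the restriction of $B$ to $\pf$ is $\SO(3)$--invariant and nondegenerate). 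Since $G$ is locally bounded, it will suffice to show that for each $i$ the one-variable function $x_i\mapsto F(k(n(x_i e_i)))$ vanishes to order $\ge4$ at $x_i=0$; then $\partial_{x_i}^2\big|_{x=0}\big(G\cdot F(k(n(\cdot)))\big)=0$, because this pure second partial only sees the restriction to the $x_i$--axis and the product of a smooth function with an $O(x_i^4)$ function is $O(x_i^4)$.

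Next I would evaluate $F$ on $\SO(3)$ explicitly. From $\det(I-s\,\Ad_\pf(k))=\sum_{p=0}^5(-1)^p s^p\tr(\Lambda^p\Ad_\pf(k))$, differentiating in $s$ and setting $s=1$ gives $F(k)=-\sum_j\lambda_j\prod_{l\neq j}(1-\lambda_l)$, where $\lambda_1,\dots,\lambda_5$ are the eigenvalues of $\Ad_\pf(k)$. Here $\Ad_\pf$ is the five-dimensional irreducible representation of $\SO(3)$ (realised on traceless symmetric $3\times3$ matrices), so a rotation of angle $\theta$ about any axis has eigenvalues $e^{2i\theta},e^{i\theta},1,e^{-i\theta},e^{-2i\theta}$ on $\pf$. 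The eigenvalue $1$ annihilates every summand except the one with $\lambda_j=1$, whence
\[
F\big(\text{rotation of angle }\theta\big)
= -(1-e^{2i\theta})(1-e^{-2i\theta})(1-e^{i\theta})(1-e^{-i\theta})
= -16\,\sin^2\theta\,\sin^2(\theta/2),
\]
which is $O(\theta^4)$ as $\theta\to0$.

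The remaining step is to identify the $K_\infty$--part of $n(x_i e_i)$. When only the $i$-th coordinate is nonzero, $n(x_i e_i)$ is supported on a single $2\times2$ block (the $(1,2)$--, $(1,3)$--, resp. $(2,3)$--plane for $i=1,2,3$), so its global Cartan decomposition $n(x_i e_i)=\exp(Y)k$ reduces to that of $\left(\begin{smallmatrix}1&x_i\\0&1\end{smallmatrix}\right)$ inside the corresponding embedded copy of $\SL(2,\R)$; there the rotation part $k=(gg^T)^{-1/2}g$ depends real-analytically on the entry $x_i$ and equals a rotation of that plane by an angle $\theta_i(x_i)$ with $\theta_i(0)=0$ (a fixed axis, independent of $x_i$). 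Combining with the previous step, $F(k(n(x_i e_i)))=-16\sin^2\theta_i(x_i)\sin^2(\theta_i(x_i)/2)=O(\theta_i(x_i)^4)=O(x_i^4)$, and since $G$ is smooth this yields $\partial_{x_i}^2\big|_{x=0}\big(\sum_{p=1}^5(-1)^p p\,a_0^{\nu_p(\tau)}(n(x))\big)=0$ for $i=1,2,3$, which is the assertion.

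I expect the only genuine bookkeeping to be in the middle steps: pinning down that $\Ad_\pf$ is the $\ell=2$ representation of $\SO(3)$ with the stated eigenvalues on a rotation, and verifying that restriction to a coordinate line really lands inside an embedded $\SL(2,\R)$ whose Cartan projection $\theta_i$ vanishes at the origin --- this vanishing at $x_i=0$ is exactly what upgrades the naive ``$F$ vanishes at $I$'' to the fourth-order vanishing we need. The factorization of the trace, the self-duality $\Ad_\pf^*\cong\Ad_\pf$, and the $\tfrac{d}{ds}\det(I-sA)$ identity are all routine.
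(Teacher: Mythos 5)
Your proof is correct, and it tracks the same basic outline as the paper's — factor out the smooth ``spectral'' factor $\tr(\tau(k(n(x))))\,j(x_0,n(x)x_0)^{-1/2}$, observe that $k(n(x_ie_i))$ is a rotation $r(\theta_i)$ in a fixed $2$-plane with $\theta_i(0)=0$, and then reduce the lemma to a vanishing statement for the alternating sum $F(k)=\sum_{p}(-1)^pp\,\tr\Lambda^p\Ad_\pf(k)$ on rotations. Where you and the paper diverge is in how $F$ is handled. The paper passes to the $4$-dimensional complement $S_0$ of the fixed vector $Y_1$, sets $T(u)=\Ad_\pf(k(u))|_{S_0}$, rewrites $F$ as $\pm\det(\Id-T(u))$, and then computes $\frac{\partial^2}{\partial u^2}\det(\lambda\Id-T(u))|_{u=0}$ by logarithmic differentiation, using orthogonality to get $\tr T'(0)=0$ and so conclude the second derivative vanishes at $\lambda=1$. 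You instead keep the full $5$-dimensional $\Ad_\pf$, recognize it as the spin-$2$ (ladder $\ell=2$) representation of $\SO(3)$ with eigenvalues $e^{\pm 2i\theta},e^{\pm i\theta},1$ on a rotation $r(\theta)$, and use the $\tfrac{d}{ds}\det(\Id-sA)$ identity to obtain the closed form $F(r(\theta))=-(1-e^{2i\theta})(1-e^{-2i\theta})(1-e^{i\theta})(1-e^{-i\theta})=-16\sin^2\theta\,\sin^2(\theta/2)$, which is visibly $O(\theta^4)$. Your route is a bit more explicit: it makes the mechanism transparent (each of the four nontrivial eigenvalues of the spin-$2$ action of a rotation moves away from $1$ only to first order in $\theta$, forcing fourth-order vanishing of the alternating product) and actually establishes the vanishing of the first, second, and third $x_i$-derivatives at the origin, not only the second. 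The paper's computation is slightly more indirect but conceptually equivalent; both rely on the $1$-dimensional fixed subspace of $\Ad_\pf$ under a rotation.

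One small remark for cleanliness: after the factorization $\sum_p(-1)^pp\,a_0^{\nu_p(\tau)}(n(x))=G(x)\,F(k(n(x)))$, you appeal to smoothness of $G$ plus $F(k(n(x_ie_i)))=O(x_i^4)$ to kill the pure second partial at $0$. This is correct as stated — the pure second partial sees only the restriction to the $x_i$-axis, and $G$ bounded with $F=O(x_i^4)$ indeed forces the product and its first two derivatives to vanish at $x_i=0$ — but it is worth flagging that this depends on $F$ vanishing to order at least $2$ (here you get order $4$, which is more than enough); if $F$ vanished only to first order, one would also need information about $G'(0)$. The paper avoids this technical point by differentiating $f(\lambda,u)$ directly, but your way is fine since the $O(\theta^4)$ bound is so strong.
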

\begin{proof}
We consider the derivative with respect to $x_1$. Let 
\[
n_1(u)=\begin{pmatrix} 1&u&0\\0&1&0\\0&0&1\end{pmatrix}.
\]
Then
\[
\frac{\partial^2}{\partial x_1^2}a_0^{\nu_p(\tau)}(n(x))\big|_{x=0}=
\frac{\partial^2}{\partial u^2}a_0^{\nu_p(\tau)}(n_1(u))\big|_{u=0}.
\]
By \eqref{leadcoeff} we have
\[
a_0^{\nu_p(\tau)}(n_1(u))=\tr(\nu_p(\tau)(k(u))\cdot j(x_0,n_1(u)x_0),
\]
where $k(u):=k(n_1(u))\in\SO(3)$ is determined by \eqref{cartan2}. 
Furthermore, by
\eqref{repr4} we have
\[
\tr(\nu_p(\tau)(k(u))=
\tr(\Lambda^p\Ad_\pg^\ast(k(u))\cdot\tr(\tau(k(u)).
\]
Let 
\[
S:=\left\{A\in\Mat_3(\R)\colon A=A^t,\;\tr(A)=0\right\},
\]
equipped with the inner product
\[
\langle Y_1,Y_2\rangle=\Tr(Y_1Y_2),\quad Y_1,Y_2\in S.
\]
Then $\pg\cong S$ as inner product spaces. Moreover, the adjoint representation
$\Ad_{\pg}$ of $\SO(3)$ on $S$ is given by 
\begin{equation}\label{adjoint}
\Ad_{\pg}(k)Y=k\cdot Y\cdot k^\ast,\quad k\in\SO(3),\; Y\in S.
\end{equation}
With respect to  this isomorphism, $k(u)$ is determined as follows. 
Let $A(u):=n_1(u)n_1(u)^\ast$. Then $A(u)=A(u)^t$ and $A(u)>0$. 
Let $S(u)=A(u)^{-1/2}$. Then $k(u)=S(u)\cdot n_1(u)$. Note that $k(u)$ is a 
block diagonal matrix of the form
\[
\begin{pmatrix} r(\theta)&0\\0&1\end{pmatrix},
\]
where $r(\theta)\in\SO(2)$ is the rotation by the angle $\theta$. Let
\[
Y_1=\begin{pmatrix}-1/2&0&0\\0&-1/2&0\\0&0&1\end{pmatrix}.
\]
Then with respect to \eqref{adjoint} we have $\Ad_{\pg}(k(u))(Y_1)=Y_1$. Let 
$S_1:=\R Y_1$ and $S_0=S_1^\perp$. Then the decomposition $S=S_0\oplus S_1$ is
invariant under $\Ad_{\pg}(k(u))$ and $\Ad_{\pg}(k(u))|_{S_1}=\Id$. Let
$T(u):=\Ad_{\pg}(k(u))|_{S_0}$. Then we have
\begin{equation}\label{det}
\sum_{p=1}^5 (-1)^p p \;\tr(\Lambda^p\Ad_{\pg}^\ast(k(u)))=\sum_{p=0}^5 (-1)^p
\tr(\Lambda^p T(u))=\det(\Id-T(u)).
\end{equation}
For $\lambda\in\C$ let
\begin{equation}\label{det1}
f(\lambda,u):=\det(\lambda\Id-T(u)),\quad \lambda\in\C,\;u\in\R.
\end{equation}
Recall that $T(u)$ is unitary. So every eigenvalue $\mu$ of $T(u)$ satisfies
$|\mu|=1$. Assume that $|\lambda|\neq 1$. Then $f(\lambda,u)\neq 0$ for all
$u\in\R$ and
\begin{equation}\label{logder1}
\frac{\partial}{\partial u}\log f(\lambda,u)
=-\tr(T^\prime(u)(\lambda\Id-T(u))^{-1}),
\end{equation}
where $T^\prime(u)=\frac{d}{du}T(u)$. 
Note that $f(\lambda,0)=\det(\lambda\Id-T(0))=(\lambda-1)^4$. Thus
\[
\frac{\partial}{\partial u}f(\lambda,u)\big|_{u=0}
=-(\lambda-1)^3\tr(T^\prime(0)).
\]
Since $T(u)$ is orthogonal, it follows that $\tr(T^\prime(0))=0$,
and therefore
\begin{equation}\label{firstder}
\frac{\partial}{\partial u}f(\lambda,u)\big|_{u=0}=0.
\end{equation}
Using \eqref{logder1}, we get 
\begin{equation}\label{deriv}
\begin{split}
\frac{\partial^2}{\partial u^2}f(\lambda,u)&=-\frac{\partial}{\partial u}
f(\lambda,u)\cdot\tr(T^\prime(u)(\lambda\Id-T(u))^{-1})\\
&-f(\lambda,u)\tr(T^{\prime\prime}(u)(\lambda\Id-T(u))^{-1})\\
&-f(\lambda,u)\tr(T^\prime(u)(\lambda\Id-T(u))^{-1}T^\prime(u)
(\lambda\Id-T(u))^{-1}).
\end{split}
\end{equation}
Using \eqref{firstder}, we obtain
\[
\frac{\partial^2}{\partial u^2}f(\lambda,u)\big|_{u=0}=
-(\lambda-1)^3\tr(T^{\prime\prime}(0))-(\lambda-1)^2\tr(T^\prime(0)^2).
\]
Since $f(\lambda,u)$ is a polynomial in $\lambda$, it follows that this
equality holds for all $\lambda\in\C$. In particular, we get
\[
\frac{\partial^2}{\partial u^2}f(1,u)\big|_{u=0}=0.
\]
Combined with \eqref{det} and the definition of $f(\lambda,u)$, the statement 
follows for $i=1$. The proof of the other cases is similar. 
\end{proof}
Using \eqref{unipot7}, \eqref{unipot8} and Lemmas \ref{lem-res1} and 
\ref{lem-res2}, it follows that
\[
\sum_{p=1}^5 (-1)^p p\;\zeta_p(s;\tau)
\]
is holomorphic at $s=0$. Thus in this case we can define $\log T_{X(K_f)}(\tau)$
by
\[
\log T_{X(K_f)}(\tau)=\frac{1}{2}\frac{d}{ds}\left(\sum_{p=1}^5 (-1)^p p\;
\zeta_p(s;\tau)\right)\bigg|_{s=0}.
\]

\section{Example: Classes of finite order for $\GL(2)$ and $\GL(3)$}\label{sec:examples}
\label{sec-finite-ord}
\setcounter{equation}{0}
In order to remove the assumption that $\Gamma\subseteq \Gamma(N)$ for some $N\ge 3$, we need to understand distributions $J_{\of}$ appearing in the coarse geometric expansion of the trace formula for which the equivalence classes $\of$ which are not necessarily unipotent. Let $K_f$ be an arbitrary subgroup of $G(\widehat{\Z})$ of finite index and let $f=f_{\infty}\cdot 1_{K_f}\in C_c^{\infty}(G(\A)^1)$ with $f_{\infty}\in C_c^{\infty} (G(\R)^1)$ and $1_{K_f}\in C_c^{\infty}(G(\A_f))$ the characteristic function of $K_f$. In this situation, more than just the unipotent orbit may contribute non-trivially to the coarse geometric expansion.
The equivalence classes $\of\in\CmO$ are in bijection with semisimple orbits in $G(\Q)$. Hence there is a canonical bijection between $\CmO$ and monic polynomial of degree $n$ with rational coefficients and non-vanishing constant term if $G=\GL(n)$ by sending the semisimple conjugacy class to its characteristic polynomial. We may therefore speak of the characteristic polynomial and the eigenvalues of a class $\of$. 
The following lemma is explained in the proof of \cite[Lemma 5.1]{LM}.
\begin{lem}
 We can choose a $K_{\infty}$-bi-invariant neighborhood $\omega\subseteq G(\R)^1$ of $K_{\infty}$ such that if $\of\in\CmO$ is such that there exists $f_{\infty}\in C_c^{\infty}(G(\R)^1)$ supported in $\omega$ with $J_\of(f_{\infty}\cdot 1_{K_f})\neq0$, then the eigenvalues of $\of$ are all roots of unity (over some algebraic closure of $\Q$).
\end{lem}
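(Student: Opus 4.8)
The plan is to combine a compactness argument on the Archimedean side with an integrality (or $p$-adic) argument on the finite side. First I would fix a relatively compact neighbourhood $\omega\subseteq G(\R)^1$ of $K_\infty$ and observe that if $J_\of(f_\infty\cdot\one_{K_f})\neq 0$ for some $f_\infty\in C_c^\infty(G(\R)^1)$ supported in $\omega$, then $\of$ must meet the set
\[
\left(\omega\cdot\Ag\right)\times K_f\cdot(\text{compact open at ramified places})\subseteq G(\A),
\]
because $J_\of(f)$ is an integral of $\sum_{\gamma\in\of}f(x^{-1}\gamma x)$-type terms (via the coarse geometric expansion, \eqref{truncker2} and \eqref{distrib5}) and these vanish identically unless some $G(\Q)$-conjugate of $\of$ lies in the support of $f$. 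In particular there is a rational semisimple element $\gamma_s\in\of$ (the semisimple part, which determines $\of$) whose Archimedean component lies in $\omega\cdot\Ag$ and whose component at every finite prime $p$ lies in $G(\Z_p)$ — at the unramified primes this is forced by $\one_{K_f}$ with $K_f\subseteq G(\widehat\Z)$, and at the finitely many ramified primes by the same containment $K_f\subseteq G(\widehat\Z)$.

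The next step is to translate this into a statement about the eigenvalues. Since $\gamma_s$ is conjugate in $G(\Q_p)$ to an element of $G(\Z_p)$ for every $p$ (using $K_f\subseteq G(\widehat\Z)$ and that the characteristic polynomial is a conjugacy invariant), the characteristic polynomial of $\of$ has $p$-adically integral coefficients for all $p$, hence lies in $\Z[X]$, and is monic with constant term a unit at every prime, hence $\pm 1$. Thus the eigenvalues $\lambda_1,\dots,\lambda_n$ (over $\bar\Q$) are algebraic integers whose product is $\pm1$ and which satisfy a fixed monic integer polynomial. Now I bring in the Archimedean constraint: because $\gamma_s\in\omega\cdot\Ag$ with $\omega$ a fixed bounded neighbourhood of $K_\infty$, and because we are on $G(\R)^1$ (so the $\Ag$-factor has modulus $1$), the eigenvalues of $\gamma_s$ have absolute value bounded above and below by constants depending only on $\omega$; shrinking $\omega$ we may take these bounds as close to $1$ as we like. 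In fact on $G(\R)^1$ the determinant has absolute value $1$, so $\prod_i|\lambda_i|=1$, and if $\omega$ is chosen small enough then each $|\lambda_i|$ is close enough to $1$ that the Archimedean archis of $\gamma_s$ forces $|\lambda_i|=1$ at the Archimedean place for the element to be conjugate into such a small $\omega$; this is the place where the precise choice of $\omega$, and the analysis already carried out in \cite[Lemma 5.1]{LM}, enters.

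Finally I would apply Kronecker's theorem: an algebraic integer all of whose conjugates (including the complex Archimedean ones) have absolute value $1$ is a root of unity. By the previous paragraph each $\lambda_i$ is an algebraic integer, and I would argue that not only $\lambda_i$ but every Galois conjugate of $\lambda_i$ arises as an eigenvalue of some $\gamma_s'\in\of$ — more precisely, the characteristic polynomial lies in $\Z[X]$ and its roots are permuted by $\mathrm{Gal}(\bar\Q/\Q)$, and the $p$-adic integrality argument shows all of them are $p$-adic units for all $p$; combining this with the Archimedean bound $|\lambda_i|=1$ for all the Archimedean absolute values (equivalently, for all embeddings $\bar\Q\hookrightarrow\C$, since a conjugate that is an eigenvalue of $\gamma_s$ in some small $\omega$ has this property and the finitely many conjugates are all constrained the same way by the same $\omega$), Kronecker's theorem yields that each $\lambda_i$ is a root of unity. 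This completes the proof.

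I expect the main obstacle to be the Archimedean step: showing that supporting $f_\infty$ in a suitably small bi-$K_\infty$-invariant $\omega$ really does force \emph{all} the complex Archimedean absolute values of all Galois conjugates of the eigenvalues to equal $1$, rather than merely lying in a narrow band around $1$. This is exactly the content hidden in the reference to \cite[Lemma 5.1]{LM}, and the cleanest route is to invoke that lemma directly: its proof identifies the admissible $\omega$ and carries out precisely this reduction (for $\GL(n)$, with the $\SL(n)$ case going through verbatim). One must also be slightly careful that the conjugacy class $\of$, being only a \emph{semisimple} conjugacy class in $G(\Q)$, is pinned down by its characteristic polynomial, so that ``the eigenvalues of $\of$'' is well defined — this is immediate for $\GL(n)$ and $\SL(n)$ since semisimple classes are determined by characteristic polynomials there.
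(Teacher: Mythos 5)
Your proposal is correct and essentially reproduces the paper's approach: the paper's proof of this lemma is the single citation ``is explained in the proof of \cite[Lemma 5.1]{LM},'' and you reconstruct precisely that argument (integrality and unit constant term of the characteristic polynomial from the constraints at the finite places, closeness of eigenvalues to the unit circle from a small Archimedean neighbourhood of $K_\infty$, then finiteness of the set of possible monic integral polynomials of degree $n$ with roots near the unit circle, plus Kronecker's theorem) while ultimately deferring to the same reference for the precise choice of $\omega$. One small inaccuracy, harmless here, is the claim that semisimple conjugacy classes in $\SL(n,\Q)$ are determined by characteristic polynomials: this is false for $\SL(n)$ (unlike $\GL(n)$), but ``the eigenvalues of $\of$'' is well defined anyway, since all elements of an equivalence class have $G(\Q)$-conjugate semisimple parts and hence a common characteristic polynomial.
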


Let $\CmO_1$ denote the set of all $\of\in\CmO$ whose eigenvalues (in some algebraic closure of $\Q$) are all roots of unity. Note that this set is finite. By the preceding lemma we can choose a bi-$K_{\infty}$-invariant $f_{\infty}\in C_c^{\infty}(G(\R)^1)$ with $f(1)=1$ and 
\[
 J_{\geo}(f_{\infty}\cdot 1_{K_f})=\sum_{\of\in \CmO_1} J_{\of}(f_{\infty}\cdot 1_{K_f}).
\]
Let $\of\in\CmO_1$, and let $\sigma\in G(\Q)\cap \of$ be a semisimple representative for $\of$. Then $\sigma$ is in $G(\R)$ conjugate to some element $\sigma_\infty$ in $\rO(n)$.
For each $\of$ and $f\in C_c^{\infty}(G(\A)^1)$ we have the fine expansion
\[
 J_{\of}(f)=\sum_{(M,\gamma)} a^M(\gamma, S) J_M(\gamma, f),
\]
where $S$ is a sufficiently large finite set of places of $\Q$ with $\infty\in S$, $a^M(\gamma, S)$ are certain global coefficients as defined in \cite{Ar7}, $(M, \gamma)$ runs over all pairs of Levi subgroups $M$ containing $M_0$ and $\gamma$ over representatives of the $M(\Q)$-conjugacy classes in $M(\Q)\cap \of$, and $J_M(\gamma, f)$ are $S$-adic weighted orbital integrals. Since the (finite) set $\CmO_1$ and the set $S$ are fixed in our setting, the value of the coefficients $a^M(\gamma, S)$ is not relevant for us.

\subsection{Orbits of finite order for $\GL(2)$}
If $G=\GL(2)$, then each $\of\in\CmO_1$ is represented by one of the following semisimple elements:
\[
\sigma_0^\pm=\pm \begin{pmatrix}1&0\\0&1\end{pmatrix}, \quad
\sigma_1= \begin{pmatrix}1&0\\0&-1\end{pmatrix}, \quad
\sigma_i^\pm= \pm\begin{pmatrix}\cos\theta_i&\sin\theta_i\\-\sin\theta_i&\cos\theta_i\end{pmatrix},\quad i=2,3,
\]
with $\theta_2=\pi/2$ and  $\theta_3=\pi/3$. We accordingly write $\of_i$ or $\of_i^\pm$ for the associated equivalence classes. Note that $\sigma_1, \sigma_2^\pm, \sigma_3^\pm$ are all regular semisimple so that the associated equivalence class is in fact equal to the conjugacy class of the respective element. (In fact, $\of_2^+=\of_2^-$, but we keep the superscript to make notation more uniform.) Moreover, since we assume that our test function $f$ is $K_\infty$-invariant, $J_{\unip}(f)=J_{\of_0^-}(f)$ for $\of_0^-$ the class attached to $\sigma_0^-$. Hence we only need to consider the regular elements. 

The element $\sigma_1$ is the only of the remaining elements which is split over $\R$. Since it is regular, the distribution $\of_1$ is of a simple form, namely,
\[
 J_{\of_1}(f)=\int_{U_0(\A)} f(u^{-1} \sigma_1 u) v_T(u)\, du
\]
for every bi-$K_{\infty}$-invariant $f\in C_c^{\infty}(G(\A)^1)$.
It follows that if $f= f_{\infty}\cdot 1_{K_f}$, then
\[
 J_{\of_1}(f)=a_1\int_{\R} f_{\infty}(u(x)) \log(1+x^2) \, dx
 + a_2 \int_{\R}f_{\infty} (u(x))\, dx
\]
where $u(x)=\left(\begin{smallmatrix}1&x\\0&1\end{smallmatrix}\right)$, and $a_1, a_2\in\R$ are suitable constants depending only on $K_f$. Using Taylor expansion of $\log(1+x^2)$ around $x=0$, we get, as in \S~\ref{sec-asymp}, that for every $N>0$,
\[
 J_{\of_1}(\tilde\phi_t^\nu)
 = t^{-(d-1)/2}\sum_{k=0}^N c_k t^{k/2} + O_N(t^{(N-d+1)/2})
\]
for suitable coefficients $c_k$.

The remaining classes are regular elliptic and non-split over $\R$. In particular, for each $i\in\{2,3\}$ we have
\[
 J_{\of_i^\pm}(f)= a_i\int_{G_{\sigma_i}(\R)\backslash G(\R)} f_{\infty}(g^{-1}\sigma_i^\pm g)\, dg
\]
for a suitable constant $a_i\in\R$ again depending only on $K_f$ and $\of_i^\pm$. We have for $i=2,3$ that  $G_{\sigma_i^\pm}(\R)=Z(\R) K_{\infty}$ so that using $KAK$ decomposition we get
\[
  J_{\of_i^\pm}(f)= a_i\int_{0}^{\infty} f_{\infty}(a^{-1}\sigma_i^\pm a) \sinh(2X)\, dX
\]
where $a=e^X$. 
We can write
\[
\pm\begin{pmatrix} \cos\theta_i & e^{-2X}\sin\theta_i\\-e^{-2X}\sin\theta_i & \cos\theta_i\end{pmatrix}
= a^{-1}\sigma_i^\pm a
 = k_1 \begin{pmatrix} e^Y&0\\0&e^{-Y}\end{pmatrix} k_2
\]
with suitable $k_1, k_2\in K_{\infty}$ and $Y\ge0$. Hence
\[
 \sinh(Y)= \alpha_i\sinh(2X)
\]
with $\alpha_i=\sqrt{2}\sin\theta_i$. Hence
\[
 Y^2= 4\alpha_i^2 X^2+ O_{\of_i^\pm}(X^4)
\]
around $0$. Since $r(a^{-1}\sigma_i^\pm a)= \|(Y, -Y)\|$, we therefore get
\[
 r^2(a^{-1}\sigma_i^\pm a)
 = 8\alpha_i^2 X^2 + O_{\of_i^\pm}(X^4).
\]
Using the Taylor expansion of $\sinh(2X)$, we get, as in \S~\ref{sec-asymp}, that for any $N$,
\[
 J_{\of_i^\pm}(\tilde\phi_t^\nu) = t^{-(d-2)/2}\sum_{k=0}^N c_k t^{k/2}+ O_{N,\of_i^\pm}(t^{(N-d+2)/2})
\]
as $t\rightarrow0^+$ for suitable coefficients $c_k$ depending on $\of_i^\pm$.

\subsection{Orbits of finite order for $\GL(3)$}
For $G=\GL(3)$ the real weighted orbital integrals associated to $G(\R)$-conjugacy classes of elements in the classes in $\CmO_1$ can have a more complicated form. Each $\of\in\CmO_1$ has a semisimple element which in $G(\R)$ is conjugate to one of the following matrices:
\[
 \sigma_0^\pm=\pm\begin{pmatrix}1&0&0\\0&1&0\\0&0&1\end{pmatrix},
 \sigma_1^\pm=\pm\begin{pmatrix}1&0&0\\0&1&0\\0&0&-1\end{pmatrix},
 \sigma_i^{\pm,\pm}=\pm \begin{pmatrix}\cos\theta_i&\sin\theta_i&0\\-\sin\theta_i&\cos\theta_i&0\\0&0&\pm1\end{pmatrix},\quad i=2,3,
\]
with $\theta_2$ and $\theta_3$ as for $\GL(2)$. Again, we write $\of_i^\pm$, $i=0,1$, and $J_{\of_i^{\pm,\pm}}$, $i=2,3$, for the associated equivalence classes.
We already understand the distributions $J_{\of_0^\pm}$.

If $i=2,3$, the only semi-standard Levi subgroups of $G(\R)$ containing $\sigma_i^{\pm,\pm}$ are $M(\R):=\GL_2(\R)\times\GL_1(\R)$ (diagonally embedded in $G(\R)$) and $G(\R)$ itself. Let $P(\R)$ be the standard parabolic subgroup of $G(\R)$ with Levi component $M(\R)$. Moreover, $\of_i^{\pm,\pm}$ in fact equals the conjugacy class of $\sigma_i^{\pm,\pm}$. Hence we need to understand the real weighted orbital integrals $J_M(\sigma_i^{\pm,\pm},\tilde\phi_{t,\infty}^\nu)$ and $J_G(\sigma_i^{\pm,\pm},\tilde\phi_{t,\infty}^\nu)$.
The latter integral equals
\begin{align*}
 J_G(\sigma_i^{\pm,\pm},\tilde\phi_{t,\infty}^\nu)
 & =\int_{M(\R)_{\sigma_i^{\pm,\pm}}\backslash M(\R)} \int_{U(\R)} \tilde\phi_{t,\infty}^\nu(u^{-1}m^{-1}\sigma_i^{\pm,\pm} mu)\, du\, dm \\
 & = \int_{M(\R)_{\sigma_i^{\pm,\pm}}\backslash M(\R)} \int_{U(\R)} \tilde\phi_{t,\infty}^\nu(m^{-1}\sigma_i^{\pm,\pm} mu)\, du\, dm \\
\end{align*}
where we used the $O(n)$-conjugation invariance of $\tilde\phi_{t,\infty}^\nu$ and that the centralizer of $\sigma_i^{\pm,\pm}$ in $G(\R)$ and $M(\R)$ coincide. 
Hence for $t\rightarrow0$ we get an asymptotic expansion
\[
 J_G(\sigma_i^{\pm,\pm},\tilde\phi_{t,\infty}^\nu)
 =t^{-(d-4)/2}\sum_{k=0}^N C_k t^{k/2}+ O_N(t^{(N-d+4)/2})
\]
for any $N>0$ where $C_k$ are certain coefficients depending on $\of_i^{\pm,\pm}$.

The other weighted orbital integral is of the form
\[
J_M(\sigma_i^{\pm,\pm},\tilde\phi_{t,\infty}^\nu)
=\int_{M(\R)_{\sigma_i^{\pm,\pm}}\backslash M(\R)} \int_{U(\R)} \tilde\phi_{t,\infty}^\nu(u^{-1}m^{-1}\sigma_i^{\pm,\pm} mu) v_M(u)\, du\, dm 
\]
where the weight function is given by
\[
 v_M(u)=\log(1+x^2+y^2)
\]
for $u=\begin{pmatrix}1&0&x\\0&1&y\\0&0&1\end{pmatrix}\in U(\R)$. A change of variables therefore gives
\[
J_M(\sigma_i^{\pm,\pm},\tilde\phi_{t,\infty}^\nu)
=\int_{M(\R)_{\sigma_i^{\pm,\pm}}\backslash M(\R)} \int_{U(\R)} \tilde\phi_{t,\infty}^\nu(m^{-1}\sigma_i^{\pm,\pm} mu) \log\left(1+\|w\|^2\right)\, du\, dm 
\]
where
\[
 w=\left({\rm id} - m^{-1}\sigma_i^{\pm,\pm} m\right)^{-1}\begin{pmatrix}x\\ y\end{pmatrix}\in\R^2.
\]
Using the series expansion of $\log$ around $1$, this integral also has an asymptotic expansion in $t$ as $t\rightarrow 0$. Altogether, we get 
\[
 J_{\of_i^{\pm,\pm}}(\tilde\phi_t^\nu)
= t^{-(d-4)/2}\sum_{k=0}^N B_k t^{k/2}+ O_N(t^{(N-d+4)/2})
\]
for suitable constants $B_k$ and any $N>0$.

The remaining two classes $\of_1^\pm$ contain more elements than just the conjugates of $\sigma_1^\pm$. Hence we have more orbital integrals to consider. Let $u_1=\left(\begin{smallmatrix}1&1&0\\0&1&0\\0&0&1\end{smallmatrix}\right)$. Then we need to consider the weighted orbital integrals $J_{M_0}(\sigma_1^\pm, f_\infty)$, $J_L(\sigma_1^\pm, f_\infty)$, and $J_L(\sigma_1^\pm u_1, f_\infty)$, $L=M, G$, for $f_\infty=\tilde\phi_{t,\infty}^\nu$. For the invariant integrals we get
\[
 J_G(\sigma_1^\pm, f_\infty)
 =\int_{U(\R)} f_\infty(u^{-1}\sigma_1^\pm u) \, du
 = c_1\int_{U(\R)} f_\infty(\sigma_1^\pm u)\, du,
\]
and similarly, after a change of variables,
\[
 J_G(\sigma_1^\pm u_1, f_\infty)
 =c_2\int_{U_0(\R)} f_\infty(\sigma_1^\pm u) \, du.
\]
Here $c_1, c_2>0$ are suitable constants.
The weighted orbital integrals can also be written as integrals over $U_0(\R)$, but against a non-invariant measure. For $J_{M_0}(\sigma_1^\pm,f_{\infty})$ it involves a weight function of the form $\log(1+x^2+y^2)$ as above and a linear function in $\log|a|$ if we write $u=\left(\begin{smallmatrix}1&a&0\\0&1&0\\0&0&1\end{smallmatrix}\right)\left(\begin{smallmatrix}1&0&x\\0&1&y\\0&0&1\end{smallmatrix}\right)$.
Similarly, $J_M(\sigma_1^\pm u_1,f_\infty)$ equals an integral over $U_0(\R)$ against $\log(1+x^2+y^2)$ times the invariant measure, and $J_M(\sigma_i^\pm, f_\infty)$ equals the integral over $U(\R)$ against $\log(1+x^2+y^2)$ times the invariant measure on $U(\R)$.
Proceeding similarly as before, one can then show that
\[
 J_{\of_1^\pm}(\tilde\phi_t^\nu)
 = t^{-(d-3)/2}\sum_{k=0}^N C_k t^{k/2}+ t^{-(d-3)/2}\sum_{k=0}^NB_kt^{k/2} \log t + O_N(t^{(N-d+3)/2})
\]
for suitable constants $C_k, B_k$ and any $N$.

\end{document}